\documentclass[11pt]{article}

\usepackage[a4paper,margin=28mm]{geometry}
\usepackage{amsmath,amssymb,amsthm,mathtools}
\usepackage{enumitem}
\usepackage{microtype}
\usepackage[
  colorlinks=true,
  linkcolor=blue,
  citecolor=blue,
  urlcolor=blue
]{hyperref}
\usepackage{tikz}
\usepackage[colorinlistoftodos]{todonotes}
\usetikzlibrary{positioning,calc,fit,arrows.meta,shapes.multipart}
\newtheorem{theorem}{Theorem}[section]
\newtheorem{proposition}[theorem]{Proposition}
\newtheorem{lemma}[theorem]{Lemma}
\newtheorem{corollary}[theorem]{Corollary}
\theoremstyle{definition}
\newtheorem{definition}[theorem]{Definition} 
\theoremstyle{remark}

\theoremstyle{definition}
\newtheorem{assumption}{Assumption}

\newcommand{\T}{\mathbb T}
\newcommand{\R}{\mathbb R}
\newcommand{\Comm}{\operatorname{Comm}}
\newcommand{\err}{\mathfrak e_N}

\hypersetup{
  pdftitle={Uniform-in-Time and Uniform-in-Temperature Particle Approximations of Coulomb MMD Flows},
  pdfauthor={Junho Lee, Promit Ghosal}
}

\title{Uniform-in-Time and Uniform-in-Temperature Particle Approximations of Coulomb MMD Flows}
\author{
  Junho Lee\thanks{
    Committee on Computational and Applied Mathematics,
    University of Chicago.
    Email: \texttt{jhlee99@uchicago.edu}
  }
  \and
  Promit Ghosal\thanks{
    Department of Statistics,
    University of Chicago.
    Email: \texttt{promit@uchicago.edu}
  }
}
\date{}

\begin{document}
\maketitle

\begin{abstract}
We study particle approximations of maximum mean discrepancy (MMD)
flows with the Coulomb kernel on the torus $\T^d$, $d\ge2$, driven
by thermal noise of strength $\tau\ge0$.  For smooth, strictly
positive initial and target densities and i.i.d.\ initial positions,
we prove that the expected Wasserstein-1 distance between the empirical
measure and the mean-field solution is at most of order $N^{-1/d}$
for $d\ge3$ and $\sqrt{(\log N)/N}$ for $d=2$, uniformly in time
and in the temperature $\tau\ge0$, including the deterministic case
$\tau=0$. At each fixed $\tau>0$, we also prove propagation of chaos in total variation for any $k$-fold marginal distributions of particles, uniformly in time, with a bound of order $\sqrt{k/\tau}$ times the empirical convergence rate. Our proof uses the modulated free-energy method of Bresch, Jabin, and
Wang \cite{BreschJabinWang2023} to reduce the particle convergence
estimate to a relaxation estimate that controls the time integral of
the mean-field velocity gradient uniformly in $\tau\ge0$.
Near zero temperature, where the smoothing effect of the noise
degenerates, we obtain this estimate by recentering the dynamics at
thermal equilibrium and using the damping generated by the Coulomb
interaction.

\end{abstract}
\medskip
\noindent\textbf{Keywords.}
Coulomb interaction; mean-field limit; propagation of chaos; modulated free
energy; low-temperature limit; Wasserstein gradient flow; maximum mean
discrepancy.

\smallskip
\noindent\textbf{2020 Mathematics Subject Classification.}
Primary 35Q82, 60K35; Secondary 35B40, 35Q70, 82C22, 49Q22.

\bigskip
\setcounter{tocdepth}{2}

\section{Introduction}
\label{sec:introduction}

\subsection{The model and main question}
\label{subsec:intro-model}
Let $d\ge2$, and let $\pi$ be a strictly positive probability
density on the unit flat torus $\T^d=\R^d/\mathbb Z^d$.
We study the behavior of the interacting particle system 
\begin{equation}
\begin{cases}
  dX_t^i
  =-
  \left[
    \frac1N\sum_{j\ne i}
    \nabla K_1\bigl(X_t^i-X_t^j\bigr)
    -\nabla K_1*\pi\bigl(X_t^i\bigr)
  \right]dt
  +\sqrt{2\tau}\,dB_i(t),\\
  X_0^1,\ldots,X_0^N\text{ are i.i.d.\ with law }\rho_0,
\end{cases}
  \label{eq:intro-particle-system}
\end{equation}
where $\tau\ge0$ is the temperature and
$\{B_i\}_{i=1}^N$ are independent Brownian motions.
The interaction kernel $K_1$ is the Coulomb kernel, the $s=1$ case of
the periodic Riesz kernels $K_s$ determined on $\T^d$ for $s\ge1$
by $(-\Delta)^sK_s=\delta_0-1$ and $\int_{\T^d}K_s=0$.
In the singularity-exponent convention of the periodic Riesz literature
\cite{ChodronRosenzweigSerfaty2025}, $K_1$ corresponds to the exponent
$d-2$ for the inverse of inter-particle distance in dimensions $d\ge3$ and to the logarithmic case in
dimension $d=2$.

The system \eqref{eq:intro-particle-system} describes particles with
pairwise Coulomb repulsion moving in the external potential
$-K_1*\pi$ \cite{ArmstrongSerfaty2022,Serfaty2026}.
It is also a particle approximation of a Coulomb discrepancy flow
within the maximum mean discrepancy (MMD) framework for distribution
matching \cite{ArbelKorbaSalimGretton2019,BoufadeneVialard2025},
with entropy regularization when $\tau>0$.

In particular, for the configuration $X_N(t):=(X_t^1,\ldots,X_t^N)$, consider the empirical measure
\[
  \mu_{X_N}^t:=\frac1N\sum_{i=1}^N\delta_{X_t^i}.
\]
A formal application of It\^o's lemma suggests
that as the number of particles $N$ increases to $\infty$, $\mu_{X_N}^t$ converges weakly to $\rho_t^\tau$, the solution of the mean-field PDE
\begin{equation}
  \begin{cases}
    \partial_t\rho_t^\tau
  =
  \tau\Delta\rho_t^\tau
  +\nabla\cdot\left(
    \rho_t^\tau
    \bigl(\nabla K_1*(\rho_t^\tau-\pi)\bigr)
  \right)\\
  \rho_{t=0}^\tau=\rho_0.
  \end{cases}
  \label{eq:intro-mean-field-pde}
\end{equation}
This equation is the Wasserstein-$2$ gradient flow
\cite{AmbrosioGigliSavare2008} of the free-energy functional (see \cite{BoufadeneVialard2025,ChodronRosenzweigSerfaty2025} for details)
\begin{equation}
  \mathcal F_\tau(\rho)
  :=
  \frac12\|\rho-\pi\|_{\dot H^{-1}}^2
  +
  \tau\int_{\T^d}\rho\log\rho\,dx.
  \label{eq:intro-free-energy}
\end{equation}
By Plancherel's identity and the definition of
$K_1=(-\Delta)^{-1}$ on mean-zero functions, we can rewrite the negative homogeneous Sobolev norm in \eqref{eq:intro-free-energy} as
\[
  \|\rho-\pi\|_{\dot H^{-1}}^2
  =
  \iint_{\T^d\times\T^d}
  K_1(x-y)\,
  d(\rho-\pi)^{\otimes2}(x,y).
\]
This is the Coulomb energy of the signed measure $\rho-\pi$
\cite{Serfaty2026}, studied under the name Coulomb discrepancy in
\cite{BoufadeneVialard2025,ChodronRosenzweig2026}.
The Coulomb discrepancy has the same quadratic form as squared reproducing kernel Hilbert space (RKHS) MMD \cite{GrettonBorgwardtRaschScholkopfSmola2006}, with $K_1$ in place of an RKHS kernel, which motivates the name Coulomb MMD \cite{BoufadeneVialard2025,ChodronRosenzweig2026}.
For $d\ge2$, however, $K_1$ is singular on the diagonal and the corresponding test space $\dot H^1(\T^d)$ has no bounded point evaluations, so the usual RKHS interpretation does not apply \cite[footnote~1]{ChizatColomboColomboFernandezReal2026}.
The discrepancy is finite for densities of finite $\dot H^{-1}$ energy, while particle comparisons require removal of the infinite diagonal self-interactions \cite{Serfaty2020,ChodronRosenzweigSerfaty2025}.

Stein variational gradient descent (SVGD) \cite{LiuWang2016} also drives
interacting particles toward $\pi$, but its drift involves the score
$\nabla\log\pi$ evaluated at the particles.
In \eqref{eq:intro-particle-system}, by contrast, the target enters only
through the convolution $\nabla K_1*\pi$.
For smooth kernels, such convolutions can be approximated from samples of
$\pi$, but the MMD functional need not be convex along Wasserstein geodesics, which
makes its long-time convergence analysis delicate
\cite{ArbelKorbaSalimGretton2019}.

We expect that, as $t\to\infty$, the solution $\rho_t^\tau$ of \eqref{eq:intro-mean-field-pde} converges
to a \emph{thermal equilibrium} $\rho_\infty^\tau$, a stationary solution of
\eqref{eq:intro-mean-field-pde}
(see \cite{ArmstrongSerfaty2022} for whole-space thermal equilibria), which approaches the target $\pi$ as
$\tau\downarrow0$, and that the empirical measure $\mu_{X_N}^t$
converges to $\rho_\infty^\tau$ as $t,N\to\infty$. We can decompose the error between the empirical measure and the target as
\[
  \mu_{X_N}^t-\pi
  =
  \underbrace{(\mu_{X_N}^t-\rho_t^\tau)}_{\text{finite-particle error}}
  +
  \underbrace{(\rho_t^\tau-\rho_\infty^\tau)}_{\text{relaxation}}
  +\underbrace{(\rho_\infty^\tau-\pi)}_{\text{thermal bias}}.
\]
The last two terms concern the continuum equation alone, and the first
compares the particle system with it.  All three depend on the interaction kernel.  A singular kernel drives
the relaxation of the continuum flow, and the same singularity
complicates the comparison with the particle system.

At zero temperature, for smooth kernels, the continuum MMD flow
need not converge to $\pi$ and may instead approach a
nonoptimal stationary state
\cite[Section~3]{ArbelKorbaSalimGretton2019}.  If the negative Sobolev distance between $\rho_t^0$ and $\pi$,
weighted by $\rho_t^0$, remains uniformly bounded in time,
\cite[Definition~1 and Proposition~7(i)]{ArbelKorbaSalimGretton2019}
gives $O(t^{-1})$ decay of the squared MMD.
For the periodic Riesz family $K_s$,
$\|\rho_t^0-\pi\|_{\dot H^{-1}}$ decays exponentially when $s=1$,
for bounded initial and target densities with positive lower bounds
\cite[Theorem~1.2]{ChizatColomboColomboFernandezReal2026}.
For $s>1$, the flow with kernel $K_s$ converges to $\pi$
at a polynomial rate in $\dot H^{-s}$, under suitable regularity
and smallness assumptions
\cite[Theorem~1.4]{ChizatColomboColomboFernandezReal2026}. We study the rate of convergence $\mu_{X_N}^t$ of the empirical measure of the particles when $s=1$.


We also consider the evolution
of its joint law $f_N^\tau(t)$, which is governed by the \emph{Liouville equation}
\begin{equation}
  \partial_t f_N^\tau
  =
  \sum_{i=1}^N\nabla_{x_i}\cdot\left(
    f_N^\tau\left[
      \frac1N\sum_{j\ne i}\nabla K_1(x_i-x_j)
      -\nabla K_1*\pi(x_i)
    \right]
  \right)
  +\tau\sum_{i=1}^N\Delta_{x_i}f_N^\tau.
  \label{eq:intro-liouville-equation}
\end{equation}
For the i.i.d. initialization in \eqref{eq:intro-particle-system},
$f_N^\tau(0)=\rho_0^{\otimes N}$. For each fixed $k$, we define the $k$-particle marginal as
$f_N^\tau(t)$,
\[
  f_{N;k}^\tau(t,x_1,\ldots,x_k)
  :=
  \int_{(\T^d)^{N-k}}
  f_N^\tau(t,x_1,\ldots,x_N)\,dx_{k+1}\cdots dx_N
\]
and ask whether
$f_{N;k}^\tau(t)\to(\rho_t^\tau)^{\otimes k}$ weakly as $N\to\infty$.
This convergence is the notion of \emph{propagation of chaos} (see the
review \cite{ChaintronDiez2022}).  For exchangeable systems,
propagation of chaos is qualitatively equivalent to convergence in
probability of the empirical measure to a deterministic limit, in the
weak topology of probability measures
\cite{HaurayMischler2014} (see also
\cite[Section~6.2.1]{Serfaty2026} and
\cite[Lemma~3.19]{ChaintronDiez2022}).
A stronger notion is propagation of chaos in total variation
\cite[Definition~3.22]{ChaintronDiez2022}.
For the densities considered here, this is equivalent to
$L^1$-convergence of each fixed marginal to the tensorized
mean-field density.



We ask whether there are bounds, depending on the particle
number $N$ but uniform in time, for
(i) the distance between the empirical measure and the solution of the
mean-field PDE, uniformly in temperature, and
(ii) the distance between the $k$-particle marginal and the $k$-fold
product of the solution of the mean-field PDE, at each fixed positive
temperature. Our main results in what follows address these two questions.

\subsection{Main results}
\label{subsec:intro-main-results}
In this subsection, we  state our two main results. The first one studies the empirical convergence to the mean-field solution uniformly in time and temperature, and the second one shows the propagation of chaos at each positive temperature. We then state how these estimates, together with relaxation of the mean-field solution, bound the distance between the empirical measure and the target. We begin with stating the assumptions for our results.
\begin{assumption}
\label{assump:standing-data}
Throughout the paper, $\pi$ and $\rho_0$ are probability densities
on $\T^d$ such that
\[
  \pi,\rho_0\in C^4(\T^d),
  \qquad
  0<c_\pi\le\pi\le C_\pi,
  \qquad
  0<c_0\le\rho_0\le C_0.
\]
\end{assumption}
We set $c_*:=\min\{c_0,c_\pi\}$ and $C_*:=\max\{C_0,C_\pi\}$.
Let $\err$ be the finite-particle energy correction defined precisely in
\eqref{eq:finite-particle-error}.  Its order is
\[
  \err\asymp
  \begin{cases}
    (\log N)/N, & d=2,\\
    N^{-2/d}, & d\ge3.
  \end{cases}
\]
It comes from sharp Coulomb energy lower bounds
\cite[Corollary~3.5, Eq.~(3.22)]{Serfaty2020},
\cite[Remark~6.5, Eq.~(6.10)]{ChodronRosenzweigSerfaty2025}.

Throughout, for $\tau>0$, $\rho^\tau$ denotes the unique global
classical solution given by Lemma~\ref{lem:positive-temperature-wp}.
At $\tau=0$, $\rho^0$ denotes the unique bounded global weak
solution in the sense of
\cite[Definition~1.1]{ChodronRosenzweig2026}, whose existence and
uniqueness follow from
\cite[Theorem~1.2 and Proposition~2.4]{ChodronRosenzweig2026}.

The particle results below use two solution concepts for
\eqref{eq:intro-liouville-equation}: for $\tau>0$, $f_N^\tau$ is any
global entropy solution (a weak solution satisfying an entropy inequality)
in the sense of
Definition~\ref{def:targeted-entropy-solution} with initial law
$\rho_0^{\otimes N}$. This includes the joint law of the
particle system \eqref{eq:intro-particle-system} by
Proposition~\ref{prop:targeted-entropy-construction}.
At $\tau=0$, where the equation reduces to a
transport equation, $f_N^0$ is the push-forward of
$\rho_0^{\otimes N}$ along the deterministic particle flow.

\begin{theorem}[Uniform empirical mean-field convergence]
\label{thm:uniform-mean-field}
Under Assumption~\ref{assump:standing-data}, let the initial positions in
\eqref{eq:intro-particle-system} be i.i.d.\ with law $\rho_0$, and let
$f_N^\tau$ and $\rho_t^\tau$ be as above.
There exists $C>0$, depending only on $d$, $\rho_0$,
and $\pi$, such that, for every $N\ge2$,
\[
  \sup_{\tau\ge0}
  \operatorname*{ess\,sup}_{t\ge0}
  \mathbb E_{f_N^\tau(t)}\left[
    \mathcal W_1\!\left(\mu_{X_N},\rho_t^\tau\right)^2
  \right]
  \le
  C\err.
\]
Here $\mathcal W_1$ is the $1$-Wasserstein distance on $\T^d$.
At $\tau=0$, the essential supremum may be replaced by
$\sup_{t\ge0}$. 
\end{theorem}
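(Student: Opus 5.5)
We will run the modulated free-energy argument of \cite{BreschJabinWang2023}, specialised to the Coulomb MMD setting. For $\tau\ge0$ we introduce the modulated energy
\[
  E_N(t):=\mathbb E_{f_N^\tau(t)}\Big[\iint_{\Delta^c}K_1(x-y)\,d(\mu_{X_N}-\rho_t^\tau)^{\otimes2}(x,y)\Big],
\]
where the diagonal is removed. When $\tau>0$ we add the relative entropy term $\tau N^{-1}H(f_N^\tau\,|\,(\rho_t^\tau)^{\otimes N})$. The target $\pi$ enters the particle drift and the mean-field drift through the same term $\nabla K_1*\pi$, so it cancels in the modulation. The only field left is $u_t:=\nabla K_1*(\rho_t^\tau-\pi)$, which is the velocity field of the mean-field flow.

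\textbf{Step 1 (time derivative).} Using the entropy inequality of Definition~\ref{def:targeted-entropy-solution} when $\tau>0$, and the deterministic flow when $\tau=0$, we differentiate the modulated free energy $\mathcal E_N$. The dissipation terms come with a good sign and are discarded. What remains is the commutator term
\[
\frac12\,\mathbb E\iint_{\Delta^c}(u_t(x)-u_t(y))\cdot\nabla K_1(x-y)\,d(\mu_{X_N}-\rho_t^\tau)^{\otimes2}.
\]
We bound it by the sharp Coulomb commutator estimate (\cite{Serfaty2020}; periodic version \cite{ChodronRosenzweigSerfaty2025}):
\[
  |\cdot|\le C\,\|\nabla u_t\|_{L^\infty}\,\big(E_N+C\|\rho_t^\tau\|_{L^\infty}\err\big).
\]
In the entropic case, the term $\tau\nabla\log\rho_t^\tau$ is absorbed as in \cite{BreschJabinWang2023}. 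That requires bounds on $\nabla^2\log\rho_t^\tau$, and it is easier to avoid the issue by using the pure modulated energy together with the cancellation of the noise terms in the Itô correction (the Laplacian of $K_1$ is a delta, whose off-diagonal part vanishes). Either way, Grönwall gives
\[
E_N(t)\le e^{C\int_0^t\|\nabla u_s\|_{L^\infty}ds}\Big(E_N(0)+C\err\int_0^t\|\nabla u_s\|_{L^\infty}\,\|\rho_s\|_{L^\infty}ds\Big).
\]

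\textbf{Step 2 (initial data and the $\mathcal W_1$ comparison).} For i.i.d. initial positions the off-diagonal expectation makes $E_N(0)$ vanish up to $O(\err)$. In fact $\mathbb E$ of the off-diagonal energy equals $-\frac1N\iint K_1\,d\rho_0^{\otimes 2}\le C/N$. The coercivity estimate \cite[Corollary~3.5]{Serfaty2020} then gives
\[
\mathcal W_1(\mu,\rho)^2\lesssim\|\mu-\rho\|_{\dot H^{-1}}^2\text{(smeared)}\le C(E_N+\err),
\]
where the middle quantity is the smeared $\dot H^{-1}$ norm, and $\mathcal W_1$ is controlled by it on the torus via Kantorovich duality. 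This yields the theorem once Step 3 is in hand.

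\textbf{Step 3 (main obstacle: uniform relaxation).} The whole argument now rests on two bounds that are uniform in $\tau\ge0$:
\[
\sup_t\|\rho_t^\tau\|_{L^\infty}\le C,\qquad \int_0^\infty\|\nabla u_t\|_{L^\infty}\,dt\le C.
\]
The $L^\infty$ bound comes from a maximum principle: along characteristics $\partial_t\rho=\dots-\rho(\rho-\pi)$, so $\rho\le\max(C_0,C_\pi)$ and similarly $\rho\ge c_*$, independently of $\tau$.

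For the integral bound we plan to recenter at the thermal equilibrium $\rho_\infty^\tau$, which solves $\tau\log\rho_\infty^\tau+K_1*(\rho_\infty^\tau-\pi)=\text{const}$ and satisfies $\|\rho_\infty^\tau-\pi\|_{C^k}\lesssim\tau$. We then write $u_t=\nabla K_1*(\rho_t-\rho_\infty)+\nabla K_1*(\rho_\infty-\pi)$. The second piece cannot be integrable in time unless it is handled carefully, so we must use the recentered drift instead: the commutator should be taken with respect to $\nabla K_1*(\rho_t-\rho_\infty)$, with the remaining static field absorbed into an effective potential $\tau\log\rho_\infty$. This is exactly where the entropy modulation is needed. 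The remaining task is to show exponential decay of $\|\rho_t-\rho_\infty\|_{C^{1,\alpha}}$ uniformly in $\tau$. Following \cite{ChizatColomboColomboFernandezReal2026}, the Coulomb damping gives
\[
\partial_t(\rho-\rho_\infty)\approx-\rho(\rho-\rho_\infty)+\tau\Delta(\cdot),
\]
so the deviation decays at rate $c_*$ pointwise. We would propagate this to Hölder norms of $\rho-\rho_\infty$ by a characteristics and energy argument using the $C^4$ data, obtaining $\|\nabla u_t\|_{L^\infty}\lesssim e^{-ct}$. Closing this higher-regularity decay uniformly as $\tau\downarrow0$ is the step we expect to be hardest.
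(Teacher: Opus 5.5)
Your proposal reduces the theorem to the bound $\int_0^\infty\|\nabla u_t^\tau\|_{L^\infty}\,dt\le C$ uniformly in $\tau\ge0$, but the reduction in Step~1 is set up wrongly, and that bound is never proved. The shortcut in Step~1 fails. On $\T^d$ one has $\Delta K_1=1-\delta_0$, and in any case the delta is seen by the cross terms with $\rho_t$, not only by the off-diagonal particle pairs. Apply It\^o's formula to $F_N(X_N(t),\rho_t)$ and collect the It\^o correction together with the $\tau\Delta\rho_t$ part of $\partial_t\rho_t$. The resulting drift is $2\tau\bigl(2\int\rho_t\,d\mu_{X_N}-\int\rho_t^2-\tfrac1N\bigr)$. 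When $\mu_{X_N}\approx\rho_t$ this is about $2\tau\int\rho_t^2\ge2\tau$; for $\rho\equiv\pi\equiv1$ it is exactly $2\tau(N-1)/N$. This term is of order one, is not bounded by $\tfrac12F_N+\err$, and Gr\"onwall would only give growth like $\tau t$. It is precisely the obstruction cited from \cite[Section~1.2]{ChodronRosenzweigSerfaty2025}.

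So the relative entropy is not optional. In $\mathcal E_N^\tau$ the noise terms combine into the nonnegative square $|\tau\nabla_{x_i}\log(f_N^\tau/(\rho_t^\tau)^{\otimes N})+E_i|^2$. The price is that the commutator is then taken with the full velocity $u_t^\tau=\tau\nabla\log\rho_t^\tau+\nabla K_1*(\rho_t^\tau-\pi)$, and you cannot choose which field enters. It is not $\nabla K_1*(\rho_t^\tau-\pi)$, whose gradient tends to $-\tau\nabla^2\log\rho_\infty^\tau\neq0$. Nor is it $\nabla K_1*(\rho_t^\tau-\rho_\infty^\tau)$, as you suggest in Step~3. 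After recentering, $\nabla u_t^\tau=\nabla^2K_1*\widetilde\rho_t^\tau+\tau\nabla^2\log(\rho_t^\tau/\rho_\infty^\tau)$ with $\widetilde\rho_t^\tau:=\rho_t^\tau-\rho_\infty^\tau$. You therefore need decay of $\widetilde\rho_t^\tau$ in $C^2$, not $C^{1,\alpha}$, uniformly as $\tau\downarrow0$. You also need $\tau\int_0^\infty\|\widetilde\rho_t^\tau\|_{C^2}\,dt$ bounded as $\tau\to\infty$; the paper gets a decay rate proportional to $1+\tau$. The proposal never treats large $\tau$, although the statement takes $\sup_{\tau\ge0}$.

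Step~3 is where the theorem is actually proved, and your sketch of it does not close. The heuristic $\partial_t\widetilde\rho\approx-\rho\widetilde\rho+\tau\Delta\widetilde\rho$ drops the nonlocal term $\nabla\rho_\infty^\tau\cdot\nabla K_1*\widetilde\rho$ of the error equation. A convolution bound makes that term a fixed multiple of $\|\widetilde\rho\|_{L^\infty}$ which need not be smaller than $c_*$, so there is no pointwise decay at rate $c_*$. Here is what the paper does instead:
\begin{enumerate}
\item It first proves $\tau$-uniform $\dot H^{-1}$ decay from the exact gap $\tfrac12\|\widetilde\rho\|_{\dot H^{-1}}^2+\tau\operatorname{Ent}(\rho_t^\tau\mid\rho_\infty^\tau)$ and its dissipation.
\item It absorbs the Coulomb field using the near--far bound $\|\nabla K_1*g\|_{L^\infty}\lesssim\|g\|_{L^\infty}^{d/(d+2)}\|g\|_{\dot H^{-1}}^{2/(d+2)}+\|g\|_{\dot H^{-1}}$.
\item Because $\nabla^2K_1*$ is unbounded on $L^\infty$, a uniform Lipschitz bound needs the logarithmic Hessian estimate and an Osgood argument.
\item $C^2$ decay for $\tau\le\tau_0$ then needs a second-order maximum principle with Schauder and interpolation estimates.
\item For $\tau\ge\tau_0$ it uses a uniform $W^{4,\infty}$ bound interpolated against the accelerated $\dot H^{-1}$ decay.
\item At $\tau=0$, where $\rho^0$ is only a bounded weak solution and cannot be differentiated, it passes to the limit by compactness and uniqueness.
\end{enumerate}
None of these ingredients appears in your proposal, so the required uniform bound on $\int_0^\infty\|\nabla u_t^\tau\|_{L^\infty}\,dt$ remains unproved.
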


For $\zeta>d/2+1$, Sobolev embedding $H^{\zeta}(\T^d)\hookrightarrow W^{1,\infty}(\T^d)$ enables to apply the Kantorovich--Rubinstein duality, as in \eqref{eq:negative-sobolev-wasserstein-comparison}, to further obtain
\[
  \sup_{\tau\ge0}
  \operatorname*{ess\,sup}_{t\ge0}
  \mathbb E_{f_N^\tau(t)}\left[
    \|\mu_{X_N}-\rho_t^\tau\|_{\dot H^{-\zeta}}^2
  \right]
  \le
  C_\zeta\err.
\]
where $C_\zeta$ depends on $\zeta$, $d$, $\rho_0$, and $\pi$.

The following theorem establishes quantitative propagation of chaos
in total variation, uniformly in time at each fixed positive temperature.
\begin{theorem}[Positive-temperature uniform-in-time propagation of chaos]
\label{thm:uniform-propagation-chaos}
Under the Assumption~\ref{assump:standing-data}, there exists $C>0$, depending only on $d$, $\rho_0$, and $\pi$,
such that, for every $N\ge2$, $\tau>0$, and $1\le k\le N$,
\begin{equation}
  \operatorname*{ess\,sup}_{t\ge0}
  \|f_{N;k}^\tau(t)-(\rho_t^\tau)^{\otimes k}\|_{L^1}
  \le
  C\left(\frac{k\err}{\tau}\right)^{1/2},
  \label{eq:fixed-temperature-marginal-chaos}
\end{equation}
and, in particular, for fixed $k$ and $\tau>0$, the convergence is uniform
in time as $N\to\infty$.
\end{theorem}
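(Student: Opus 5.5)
The plan is to deduce Theorem~\ref{thm:uniform-propagation-chaos} from the modulated free-energy estimate that underlies Theorem~\ref{thm:uniform-mean-field}, now keeping the relative-entropy part of the modulated free energy instead of discarding it. Following Bresch--Jabin--Wang, for $\tau>0$ we introduce
\[
  E_N^\tau(t):=\frac{\tau}{N}H\bigl(f_N^\tau(t)\,\big|\,(\rho_t^\tau)^{\otimes N}\bigr)
  +\frac12\,\mathbb E_{f_N^\tau(t)}\Bigl[\iint_{x\ne y}K_1(x-y)\,d(\mu_{X_N}-\rho_t^\tau)^{\otimes2}\Bigr].
\]
The two summands are the normalized relative entropy and the expected modulated Coulomb energy.

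At $t=0$ the entropy term vanishes because $f_N^\tau(0)=\rho_0^{\otimes N}$. For i.i.d.\ data with bounded density, the expected modulated energy is of order $\err$. Differentiating $E_N^\tau$ along an entropy solution (Definition~\ref{def:targeted-entropy-solution}), which is legitimate because of its entropy inequality, yields the dissipation term
\[
  -\frac{\tau^2}{N}\,\mathbb E\sum_i\bigl|\nabla_{x_i}\log\bigl(f_N/(\rho_t^\tau)^{\otimes N}\bigr)\bigr|^2
\]
together with the commutator term. The commutator is controlled by the Serfaty / Chodron de Courcy-Rosenzweig--Serfaty functional inequality:
\[
  \Bigl|\mathbb E\iint_{x\ne y}(v(x)-v(y))\cdot\nabla K_1(x-y)\,d(\mu-\rho)^{\otimes2}\Bigr|
  \le C\|\nabla v\|_{L^\infty}\bigl(\text{mod.\ energy}+C\err\bigr),
\]
applied with $v=\nabla K_1*(\rho_t^\tau-\pi)+\tau\nabla\log\rho_t^\tau$. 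The entropy part gives a matching term involving $\tau\nabla^2\log\rho_t^\tau$, which is handled in the same way. Adding the modulated free-energy lower bound (energy $\ge -C\err$), we obtain
\[
  \frac{d}{dt}E_N^\tau\le C\|\nabla v_t\|_{L^\infty}\bigl(E_N^\tau+C\err\bigr).
\]
Grönwall then gives $\sup_t E_N^\tau(t)\le C\err\exp\bigl(C\int_0^\infty\|\nabla v_t\|_{L^\infty}dt\bigr)$. The time integral is bounded uniformly in $\tau\ge0$ by the relaxation estimate already used for Theorem~\ref{thm:uniform-mean-field}. Since that estimate is uniform, the constant does not depend on $\tau$, so
\[
  \sup_t\ \tau\,N^{-1}H\bigl(f_N^\tau(t)\,\big|\,(\rho_t^\tau)^{\otimes N}\bigr)\le C\err
\]
after absorbing the negative lower bound $-C\err$ of the energy part.

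Next we pass to marginals. Relative entropy with respect to a tensorized reference is superadditive over blocks (subadditivity of entropy for exchangeable laws). Splitting $N=qk+r$ and using exchangeability gives
\[
  H\bigl(f_{N;k}\,\big|\,\rho^{\otimes k}\bigr)\le\frac{1}{\lfloor N/k\rfloor}H\bigl(f_N\,\big|\,\rho^{\otimes N}\bigr)\le\frac{2k}{N}H\bigl(f_N\,\big|\,\rho^{\otimes N}\bigr).
\]
Hence $H(f_{N;k}|\rho^{\otimes k})\le 2Ck\err/\tau$. Pinsker's inequality, $\|f-g\|_{L^1}\le\sqrt{2H(f|g)}$, then yields \eqref{eq:fixed-temperature-marginal-chaos}, with the essential supremum in $t$ coming from the a.e.-in-time entropy inequality.

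The main obstacle is ensuring the Grönwall constant is $\tau$-uniform, i.e.\ that $\int_0^\infty(\|\nabla^2K_1*(\rho_t^\tau-\pi)\|_{L^\infty}+\tau\|\nabla^2\log\rho_t^\tau\|_{L^\infty})\,dt$ is bounded independently of $\tau$. If $\tau$-dependence were allowed here, the bound would degrade as $\tau\to0$. I expect this integral to be precisely the relaxation estimate referred to in the abstract, obtained by recentering at $\rho_\infty^\tau$. For the present theorem I would simply invoke it as the same input used in proving Theorem~\ref{thm:uniform-mean-field}, so the only new ingredients are retaining the entropy term, the subadditivity bound, and Pinsker's inequality.
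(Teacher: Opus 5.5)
Your proposal is correct and follows the paper's proof. The Gr\"onwall bound on the modulated free energy, with the $\tau$-uniform control of $\int_0^\infty\|\nabla u_t^\tau\|_{L^\infty}\,dt$ from Theorem~\ref{thm:uniform-envelope}, gives $\tau H_N\le\mathcal E_N^\tau+\err\le C\err$; entropy subadditivity for the symmetric law and then Pinsker's inequality finish exactly as in the paper. Your block-splitting factor $2k/N$ is only a factor $2$ weaker than the Jabin--Wang subadditivity $k/N$ used there.

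One small correction, which does not affect the argument since this term is discarded. The nonnegative dissipation is $\frac1N\sum_i\mathbb E\,|\tau\nabla_{x_i}\log(f_N^\tau/(\rho_t^\tau)^{\otimes N})+E_i|^2$, with the field error $E_i$ inside the square. The only unsigned term is the single commutator $-\frac12\mathbb E\,\Comm_N[u_t^\tau]$ with the full velocity $u_t^\tau=\tau\nabla\log\rho_t^\tau+\nabla K_1*(\rho_t^\tau-\pi)$, not a commutator plus a separate entropy term.
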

The estimate degenerates as $\tau\downarrow0$, whereas the empirical
mean-field convergence in Theorem~\ref{thm:uniform-mean-field} still
yields weak propagation of chaos at $\tau=0$
\cite[Section~6.2.1]{Serfaty2026}. As a consequence, Corollary~\ref{cor:intro-target-approximation}
gives the following bound on the error between the empirical measure
and the target:
\[
  \left(
    \mathbb E_{f_N^\tau(t)}
    \left[
      \mathcal W_1(\mu_{X_N},\pi)^2
    \right]
  \right)^{1/2}
  \le
  C\left(
    \err^{1/2}
    +e^{-\frac{c_*}{2}(\frac12+\kappa_*\tau)t}
    +\tau
  \right).
\]
Here $\kappa_*=(2\pi)^2/(2C_*)$.
This holds for almost every $t\ge0$ at each $\tau>0$, and for every
$t\ge0$ at $\tau=0$.
For nonconstant $\pi$, the thermal bias is of exact order
$\tau$ in $\mathcal W_1$ as $\tau\downarrow0$
(Corollary~\ref{cor:thermal-bias-lower}).

\subsection{Proof strategy}
\label{subsec:intro-proof-strategy}

We describe the ideas behind Theorems~\ref{thm:uniform-mean-field}
and~\ref{thm:uniform-propagation-chaos} in three steps.  The first step
compares the particles with the mean-field solution.  This comparison has
a Gr\"onwall factor involving the time integral of the Lipschitz norm of
the mean-field velocity.  The other two steps show that this integral is
bounded independently of the temperature, first near zero temperature
and then for temperatures bounded away from zero.  This is the point at
which a finite-time particle estimate becomes uniform in time.  Earlier
positive-temperature estimates use parabolic smoothing and lose their
constants as $\tau\downarrow0$
\cite[Remark~1.8]{ChodronRosenzweigSerfaty2025},
while \cite{Serfaty2020} gives a finite-time modulated-energy
estimate at zero temperature.

\medskip
\noindent\emph{Step 1:}
The first step is to reduce the problem of rate of convergence of particle dynamics to that of mean-field density of particle system. To this end, let us denote the modulated energy for a collision-free configuration $X_N=(x_1,\ldots,x_N)$ and a
bounded density $\rho$ as (see also Definition~\ref{def:modulated_energy})
\[
  F_N(X_N,\rho)
  :=\iint_{(\T^d)^2\setminus\Delta}
       K_1(x-y)\,d(\mu_{X_N}-\rho)^{\otimes2}(x,y),
  \qquad \Delta:=\{(x,x):x\in\T^d\}.
\]
It measures the Coulomb discrepancy between the empirical measure and
$\rho$, with the infinite self-interaction of each particle removed.
This removal means that $F_N$ need not be positive.  The sharp
Coulomb lower bound makes $\frac12F_N+\err$ nonnegative, where the
finite-particle correction $\err$ is defined in
\eqref{eq:finite-particle-error} and has order $(\log N)/N$ for
$d=2$ and $N^{-2/d}$ for $d\ge3$.

At $\tau=0$, differentiating $F_N$ along the particle trajectories
produces a nonnegative squared force discrepancy and the commutator term defined below.
At $\tau>0$, applying It\^o's formula to $F_N$ alone produces a
quadratic-variation term that the Coulomb estimate does not control
\cite[Section~1.2]{ChodronRosenzweigSerfaty2025}.  We therefore
compare the laws, as in \cite{BreschJabinWang2023}, by adding relative
entropy.  With (see also Definition~\ref{def:mod_energy})
\[
  \mathcal E_N^\tau(t)
  :=\frac12\mathbb E_{f_N^\tau(t)}
       F_N(X_N,\rho_t^\tau)
       +\tau H_N\bigl(f_N^\tau(t)\mid
                          (\rho_t^\tau)^{\otimes N}\bigr),
\]
where $H_N$ measuring relative entropy between $f_N^\tau(t)$ and $(\rho_t^\tau)^{\otimes N}$, the diffusion terms in the two parts of $\mathcal E_N^\tau$ combine
into a nonnegative dissipation.  The prescribed target adds a one-body
potential to the unnormalized particle Gibbs weight
\[
  G_N(X_N):=\exp\!\left[-\frac1{2N\tau}
      \sum_{i\ne j}K_1(x_i-x_j)
      +\frac1\tau\sum_{i=1}^N(K_1*\pi)(x_i)\right].
\]
Comparing this weight with its mean-field counterpart recovers
$F_N$ and gives the targeted modulated free-energy inequality in
Proposition~\ref{prop:targeted-mfe}. In both calculations, the remaining term is expressed through
the Coulomb commutator
\[
  \Comm_N[v](X_N,\rho)
  :=\iint_{(\T^d)^2\setminus\Delta}
       (v(x)-v(y))\cdot\nabla K_1(x-y)\,
       d(\mu_{X_N}-\rho)^{\otimes2}(x,y).
\]
It captures the change in modulated energy when the configuration and its
reference density are transported by $v$.  The estimate in
Lemma~\ref{prop:static-coulomb-bounds} bounds its absolute value by
$C_d\|\nabla v\|_{L^\infty}(\frac12F_N+\err)$.  We apply it with
the velocity,
$u_t^\tau:=\tau\nabla\log\rho_t^\tau
               +\nabla K_1*(\rho_t^\tau-\pi)$.
Writing
$\Psi_\tau(t):=\int_0^t\|\nabla u_s^\tau\|_{L^\infty}\,ds$,
Gr\"onwall's inequality gives
\[
  \mathcal E_N^\tau(t)+\err
  \le e^{C_d\Psi_\tau(t)}
       \bigl(\mathcal E_N^\tau(0)+\err\bigr),
  \qquad \tau>0,
\]
for almost every $t$. At $\tau=0$ the same bound in a path wise way  holds with
$\mathcal E_N^\tau$ replaced by $\frac12F_N$.
Thus the particle
question reduces to proving
\[
  \Psi_\infty
  :=\sup_{\tau\ge0}\int_0^\infty
        \|\nabla u_t^\tau\|_{L^\infty}\,dt<\infty.
\]
Once this is known (discussed in Step 2 \& Step 3), the Wasserstein bound of
Lemma~\ref{lem:distance-coercivity} (which upper bounds $\mathcal W_1(\mu_{X_N},\rho)^2$ by $C_d(\frac12F_N+\err)$) turns the shifted modulated-energy
bound into Theorem~\ref{thm:uniform-mean-field}.  For $\tau>0$,
since we have $\tau H_N\le\mathcal E_N^\tau+\err$,  one obtains the fixed-marginal
$L^1$ estimate in Theorem~\ref{thm:uniform-propagation-chaos} utilizing entropy subadditivity and
the Csisz\'ar--Kullback--Pinsker inequality.

\medskip
\noindent\emph{Step 2:} To use Step~1, we need an integrable bound on the velocity gradient.
Here we prove the stronger estimate
$\|\nabla u_t^\tau\|_{L^\infty}\le Ce^{-ct}$, with constants
independent of $0\le\tau\le\tau_0$ for some small $\tau_0>0$.
At positive temperature the stationary density generally differs from
the target $\pi$.  We therefore use the thermal equilibrium
$\rho_\infty^\tau$, the stationary density from
Proposition~\ref{prop:thermal-equilibrium-family} with recalling that 
$\rho_\infty^0:=\pi$.  Writing
$q_\tau=\log\rho_\infty^\tau$, the stationary equation becomes
$-\tau\Delta q_\tau+e^{q_\tau}=\pi$.
The positive lower bound of $\pi$ provides uniform bounds on
$q_\tau$ and its first four derivatives as $\tau\downarrow0$
(Proposition~\ref{prop:stationary-all-orders}).  It also gives
$\rho_\infty^\tau-\pi=\tau\Delta q_\tau=O(\tau)$ in
$C^2$ (Proposition~\ref{prop:stat-bias}).

Let $g_t^\tau:=\rho_t^\tau-\rho_\infty^\tau$ be the density error.
The equilibrium Euler--Lagrange equation gives
$u_\infty^\tau
 :=\tau\nabla\log\rho_\infty^\tau
   +\nabla K_1*(\rho_\infty^\tau-\pi)=0$.
Subtracting this identity from the velocity gives
\begin{equation}
  \nabla u_t^\tau
  =\nabla^2K_1*g_t^\tau
   +\tau\bigl(\nabla^2\log\rho_t^\tau
                -\nabla^2\log\rho_\infty^\tau\bigr).
  \label{eq:intro-recentered-velocity-gradient}
\end{equation}
Thus we need decay of the Coulomb Hessian in the first term and of the log-Hessian difference in the second.  The operator
$\nabla^2(-\Delta)^{-1}=\nabla^2K_1$ does not map $L^\infty$ into itself, requiring stronger control of $g_t^\tau$.
The periodic Schauder estimate
(Lemma~\ref{lem:periodic-schauder}) controls the first by a
H\"older norm of $g_t^\tau$ whereas the identity
$\nabla^2\log\rho=\rho^{-1}\nabla^2\rho
-\rho^{-2}\nabla\rho\otimes\nabla\rho$ reduces the second to
$C^2$-decay of $g_t^\tau$ under the condition that the densities stay positive. While parabolic derivative bounds with energy decay estimates give $C^2$-decay of $g_t^\tau$ for each fixed $\tau>0$, the constants degenerate as $\tau\downarrow0$ \cite[Remark~1.8]{ChodronRosenzweigSerfaty2025}. We therefore seek a $\tau$-uniform estimate of $g_t^\tau$ in the following steps.

To prove the bound on $\nabla u_t^\tau$, we want to begin with
$L^\infty$-decay, then H\"older and $C^2$-decay of $g_t^\tau$.
However, $L^\infty$ estimate contains $\nabla K_1*g_t^\tau$ term, which cannot be absorbed by Young's convolution inequality.
We therefore
seek a weaker estimate first, which is provided by the free-energy gap between $\rho_t^\tau$ and $\rho_\infty^\tau$, identified in 
Lemma~\ref{lem:exact-free-energy-gap}. This gives control of the $\dot H^{-1}$-norm
\[
  \mathcal F_\tau(\rho_t^\tau)-\mathcal F_\tau(\rho_\infty^\tau)
  =\tfrac12\|g_t^\tau\|_{\dot H^{-1}}^2
    +\tau\operatorname{Ent}(\rho_t^\tau\mid\rho_\infty^\tau),
  \qquad
  \operatorname{Ent}(\rho\mid\nu)
  :=\int_{\T^d}\rho\log(\rho/\nu)\,dx.
\]
The time derivative of this gap is
$-\int\rho_t^\tau|u_t^\tau|^2$.
The density lower bound of Proposition~\ref{prop:maximum-principle}
and $\dot H^{-1}$--$\dot H^1$ duality compare this dissipation
with the gap.  Proposition~\ref{prop:hminus-exp} then yields
$\|g_t^\tau\|_{\dot H^{-1}}\le Ce^{-c(1+\tau)t}$.
This weak decay is the input to the near--far estimate of
Lemma~\ref{lem:potential-estimate}, which starts the $L^\infty$
estimate below without temperature-dependent smoothing.

Now we seek $L^\infty$-decay of $g^{\tau}_t$.  Subtracting the stationary equation
from the mean-field PDE gives \eqref{eq:equilibrium-error-equation}.
In this error equation, diffusion and transport act on
$g_t^\tau$, while the other terms are
$-\rho_t^\tau g_t^\tau$,
$\nabla\rho_\infty^\tau\cdot\nabla K_1*g_t^\tau$, and
$-(\rho_\infty^\tau-\pi)g_t^\tau$.
The first term damps the error because
$\rho_t^\tau\ge c_*>0$.  The last term costs only $O(\tau)$ of this
damping when $\tau\le\tau_0$.  At a spatial maximum, the remaining
term is the Coulomb field $\nabla K_1*g_t^\tau$, as made precise by
Hamilton's maximum lemma (Lemma~\ref{lem:hamilton}) and
Lemma~\ref{lem:linfty-differential-inequality}.  A direct convolution
estimate cannot make its coefficient small enough.  Adapting the near--far splitting in
\cite[proof of Lemma~A.4(i)]{ChizatColomboColomboFernandezReal2026},
Lemma~\ref{lem:potential-estimate} instead gives, for mean-zero $g$,
\[
  \|\nabla K_1*g\|_{L^\infty}
  \le C\|g\|_{L^\infty}^{d/(d+2)}
          \|g\|_{\dot H^{-1}}^{2/(d+2)}
       +C\|g\|_{\dot H^{-1}}.
\]
Young's inequality absorbs part of the first term into the damping whereas
the $\dot H^{-1}$-decay controls what is left.  This proves
exponential $L^\infty$-decay
(Proposition~\ref{prop:linfty-exp}).

The $L^\infty$ estimate of $g^{\tau}_t$ does not yet control the Coulomb Hessian.
After differentiating the error equation once, it appears as
$\nabla^2K_1*g_t^\tau$.  Lemma~\ref{lem:yudovich} bounds this
term by $C\|g_t^\tau\|_{L^\infty}
[1+\log_+(\|\nabla g_t^\tau\|_{L^\infty}/
\|g_t^\tau\|_{L^\infty})]$
\cite[Lemma~A.5(i)]{ChizatColomboColomboFernandezReal2026}.  With
$M_j^\tau(t):=\max_{|\beta|=j}
\|\partial^\beta g_t^\tau\|_{L^\infty}$, the differentiated
maximum-principle calculation shows the following inequality
\eqref{eq:first-derivative-osgood}
\[
  D^+M_1^\tau(t)
  \le C(1+t)e^{-ct}(1+M_1^\tau(t))
                 \log(e+M_1^\tau(t)).
\]
where $D^+$ denotes the upper right derivative. The coefficient in the right hand side is integrable in time, so $M_1^\tau$ remains
uniformly bounded
(Proposition~\ref{prop:uniform-lipschitz}).  H\"older interpolation inequality with the
$L^\infty$-decay now gives $C^{0,1/2}$-decay, and the periodic
Schauder estimate provides decay of the Coulomb Hessian
(Corollary~\ref{cor:holder-decay}).

Now we need to control the second term (thermal term) of the right side of 
\eqref{eq:intro-recentered-velocity-gradient}.  To obtain
$C^2$-decay, we differentiate the error equation twice.  The
singular term $\partial_{ij}(\nabla K_1*g_t^\tau)$ seems to require
one extra derivative.  Moving one derivative to $g_t^\tau$ and
using the Schauder estimate with Lemma~\ref{lem:appC-ehrling}
bounds it by
$\varepsilon M_2^\tau+C_\varepsilon\|g_t^\tau\|_{L^\infty}$.
The Landau--Kolmogorov inequality of
Lemma~\ref{lem:appC-landau-kolmogorov} treats the products of first
derivatives.  Choosing $\varepsilon$ and then $\tau_0$ small
preserves the damping in the maximum-principle inequality
\eqref{eq:C2-differential-inequality}.  This yields exponential
$C^2$-decay and, by the logarithmic-Hessian identity above, the
desired velocity-gradient bound for $0<\tau\le\tau_0$
(Proposition~\ref{prop:low-temperature-relaxation}).

Finally, at $\tau=0$ the available solution is initially only a
bounded weak solution, so the differentiated equations cannot be
applied to it directly.  The uniform $W^{2,\infty}$ estimates give
compactness in $C_{\mathrm{loc}}([0,\infty);C^1(\T^d))$, using
time equicontinuity from the PDE.  Uniqueness of bounded weak solutions
\cite[Proposition~2.4]{ChodronRosenzweig2026} identifies the limit
as $\rho^0$.  Weak-* lower
semicontinuity transfers the second-derivative decay in
$W^{2,\infty}$ and the Coulomb-Hessian bound passes to the limit as
well.  Proposition~\ref{prop:tau0-endpoint} thus supplies the same
velocity-gradient estimate at $\tau=0$.

\medskip
\noindent\emph{Step 3:} In the third step, the goal is to demonstrate that the bound in Step~2 extends also in the temperature bounded away from $0$.
Fix the threshold $\tau_0$ from Step~2.  For
$\tau\ge\tau_0$, the heat-semigroup formulation gives a local
classical solution
\cite[proofs of Proposition~2.1 and Lemma~2.7]
{ChodronRosenzweigSerfaty2025}. We obtain the bounds on its first four spatial derivatives
and a global $W^{4,\infty}$ bound uniform on this temperature
half-line from Lemma~\ref{lem:positive-temperature-wp}
(proved in Subsection~\ref{subsec:appD-global-regularity}),
with constants depending on $\tau_0$ and the data in
Assumption~\ref{assump:standing-data}.

The free-energy argument in Step~2 already shows that the
$\dot H^{-1}$ error decays at a rate proportional to
$1+\tau$.  To turn this weak decay into the strong estimate needed
in \eqref{eq:intro-recentered-velocity-gradient}, we interpolate it
with the uniform $W^{4,\infty}$ bound.  In particular, the
Littlewood--Paley estimate
\[
  \|g_t^\tau\|_{C^{2,1/2}}
  \le C_d\|g_t^\tau\|_{\dot H^{-1}}^{3/(d+10)}
       \|g_t^\tau\|_{W^{4,\infty}}^{1-3/(d+10)}
\]
gives exponential $C^{2,1/2}$-decay, again at a rate proportional
to $1+\tau$.  Schauder estimates control the Coulomb term of
\eqref{eq:intro-recentered-velocity-gradient} whereas the displayed
identity for $\nabla^2\log\rho$ controls its thermal term.
Consequently, Proposition~\ref{prop:positive-temperature-halfline}
gives
\[
  \|\nabla u_t^\tau\|_{L^\infty}
  \le C_{\tau_0}(1+\tau)e^{-c(1+\tau)t},
  \qquad \tau\ge\tau_0.
\]
Together with Step~2, this completes the proof of Theorem~\ref{thm:uniform-envelope}, i.e., it shows 
for constants $C,\gamma>0$ independent of temperature,
\[
  \|\nabla u_t^\tau\|_{L^\infty}
  \le C(1+\tau)e^{-\gamma(1+\tau)t},
  \qquad t\ge0,\quad \tau\ge0.
\]
Its integral is bounded by $C/\gamma$, so
$\Psi_\infty<\infty$.  Inserting this bound into the particle
estimate of Step~1 completes the uniform-in-time argument.
\begin{figure}[t]
\centering
\begin{tikzpicture}[
  x=1mm, y=1mm,
  sm/.style={draw, thin, rectangle, text badly centered, inner sep=3pt,
             font=\footnotesize},
  res/.style={draw, line width=0.6pt, rectangle, text badly centered,
              inner sep=3pt, font=\footnotesize},
  txt/.style={text badly centered, inner sep=1pt, font=\footnotesize},
  cont/.style={draw, line width=0.8pt, inner sep=3mm},
  ttl/.style={anchor=north west, inner sep=1pt, font=\small\scshape},
  arr/.style={-{Latex[length=1.8mm]}, thin},
  back/.style={-{Latex[length=1.8mm]}, thin, dashed}
]
\coordinate (T1) at (-58,120);
\coordinate (W1a) at (-58,105); \coordinate (W1b) at (58,105);
\node[sm, text width=52mm, minimum height=7mm] (P1) at (-29,110)
  {modulated free-energy evolution};
\node[sm, text width=52mm, minimum height=7mm] (P2) at (29,110)
  {Coulomb commutator estimate};
\node[sm, text width=110mm] (P3) at (0,98)
  {$\mathcal E_N^\tau(t)+\err
    \le\exp\Bigl(\tfrac{C_d}{2}\int_0^t\|\nabla u_s^\tau\|_{L^\infty}\,ds\Bigr)
    \bigl(\mathcal E_N^\tau(0)+\err\bigr),\qquad\tau>0$};
\node[cont, fit=(T1)(W1a)(W1b)(P1)(P2)(P3)] (C1) {};
\node[ttl] at ($(C1.north west)+(1.5,-1)$)
  {Particle reduction (Section~\ref{sec:modulated-identity})};
\draw[arr] (P1.south) -- (P1.south |- P3.north);
\draw[arr] (P2.south) -- (P2.south |- P3.north);
\coordinate (T2) at (-58,84);
\coordinate (W2a) at (-58,50); \coordinate (W2b) at (58,50);
\node[sm, text width=100mm] (R) at (0,73) {%
  recentering at $\rho_\infty^\tau$,\quad
  $\widetilde\rho_t^\tau:=\rho_t^\tau-\rho_\infty^\tau$:\\[1pt]
  $\nabla u_t^\tau
   =\nabla^2K_1*\widetilde\rho_t^\tau
   +\tau\,\nabla^2\log\dfrac{\rho_t^\tau}{\rho_\infty^\tau}$};
\node[txt] (N) at (0,64)
  {Coulomb term: $C^{0,\alpha}$-decay\qquad
   thermal term: $C^2$-decay};
\node[sm, text width=60mm] (H) at (0,55)
  {$\dot H^{-1}$-decay, rate of order $1+\tau$};
\node[sm, text width=50mm, anchor=north] (L) at (-28,46) {%
  $0<\tau\le\tau_0$\\
  $L^\infty$-decay $\to$ $W^{1,\infty}$-bound\\
  $\to$ $C^2$-decay\\
  {\scriptsize damping, near--far, Osgood}};
\node[sm, text width=50mm, anchor=north] (G) at (28,46) {%
  $\tau\ge\tau_0$\\
  $W^{4,\infty}$-bound\\
  $\to$ $C^{2,1/2}$-decay\\
  {\scriptsize weak--strong interpolation}};
\node[sm, text width=34mm] (E) at (-28,24)
  {$\tau=0$: compactness};
\node[res, text width=100mm] (O) at (0,13)
  {exponential $C^{0,\alpha}$- and $W^{2,\infty}$-decay of
   $\widetilde\rho_t^\tau$ for all $\tau\ge0$};
\node[cont, fit=(T2)(W2a)(W2b)(R)(N)(H)(L)(G)(E)(O)] (C2) {};
\node[ttl] at ($(C2.north west)+(1.5,-1)$)
  {Mean-field PDE estimates (Section~\ref{sec:stationary-family})};
\draw[arr] (N.south) -- (H.north);
\draw[arr] (H.south) -- ++(0,-2.5) -| (L.north);
\draw[arr] (H.south) -- ++(0,-2.5) -| (G.north);
\draw[arr] (L.south) -- (E.north);
\draw[arr] (E.south) -- (E.south |- O.north);
\draw[arr] (G.south) -- (G.south |- O.north);
\coordinate (T3) at (-58,-2);
\coordinate (W3a) at (-58,-12); \coordinate (W3b) at (58,-12);
\node[txt, font=\normalsize] (F) at (0,-12)
  {$\Psi_\infty:=\sup_{\tau\ge0}\int_0^\infty
    \|\nabla u_t^\tau\|_{L^\infty}\,dt<\infty$};
\node[cont, fit=(T3)(W3a)(W3b)(F)] (C3) {};
\node[ttl] at ($(C3.north west)+(1.5,-1)$)
  {Uniform velocity-gradient bound
   (Theorem~\ref{thm:uniform-envelope})};
\draw[arr] (C1.south) -- (C2.north);
\draw[arr] (C2.south) -- (C3.north)
  node[midway, right=1mm, font=\scriptsize]
  {Schauder $+$ logarithmic-Hessian identity};
\draw[back] (C3.east) -- ++(6,0) |- (P3.east);
\coordinate (K) at ($(C3.east)+(7,0)$);
\node[anchor=south west, inner sep=1pt, font=\scriptsize]
  at (K |- P3.east) {$\exp\bigl(\tfrac{C_d}{2}\Psi_\infty\bigr)$};
\end{tikzpicture}
\caption{Structure of the proof.  Section~\ref{sec:modulated-identity}
reduces Theorem~\ref{thm:uniform-mean-field} to a bound on the
integrated velocity gradient $\Psi_\infty$.
Section~\ref{sec:stationary-family} proves that bound by recentering at
the thermal equilibrium and treating temperatures near zero and
bounded away from zero separately, with the deterministic endpoint
recovered by compactness.}
\label{fig:proof-map}
\end{figure}

\subsection{Relation to previous work}
\label{subsec:intro-related-work}

In a recent work, Chodron de Courcel, Rosenzweig, and Serfaty study uniform in time convergence
uniform for untargeted periodic Coulomb and more singular
Riesz flows at fixed positive temperature
\cite{ChodronRosenzweigSerfaty2025}. Our work is heavily influenced by this result. In fact, we use their lower bound,
smearing and commutator estimates, and adapt their singular
regularization for entropy solutions
\cite[Remark~6.5, Eq.~(6.10), Proposition~2.13,
Eq.~(2.15), Lemma~6.2, Proposition~6.4, and
Appendix~A]{ChodronRosenzweigSerfaty2025}. However, their constants
deteriorate as $\tau\downarrow0$, whereas our
Theorem~\ref{thm:uniform-mean-field} gives one empirical bound for
possibly nonconstant targets and all $\tau\ge0$.

Modulated energy for singular flows originates in the works 
\cite{Duerinckx2016,Serfaty2020} and extends to the full Riesz range
in \cite{NguyenRosenzweigSerfaty2022}. With diffusion,
relative entropy \cite{JabinWang2018} and modulated free energy based methods
\cite{BreschJabinWang2019,BreschJabinWang2020,BreschJabinWang2023}
shows a novel way to compare joint laws which we adapt in our setting. On the other hand, periodic commutator estimate
\cite[Proposition~2.13]{ChodronRosenzweigSerfaty2025},
adapted from the Euclidean estimate
\cite[Theorem~1.1]{RosenzweigSerfaty2026},
controls our transport term. Section~\ref{sec:modulated-identity}
adapts this comparison to the target, and
Appendix~\ref{app:targeted-entropy} proves the corresponding
modulated free energy inequality for entropy solutions.

For deterministic targeted Coulomb flow,
\cite{BoufadeneVialard2025} proves exponential convergence in
Coulomb energy and $W_2$ for positive H\"older initial and target
densities.
The closest result on the deterministic continuum dynamics is
\cite{ChizatColomboColomboFernandezReal2026},
which proves exponential discrepancy decay and, under
dimension-dependent Sobolev regularity assumptions, time integrability
of the $L^\infty$-norm of the velocity gradient.
Our particle comparison requires time integrability
of the continuum velocity gradient uniformly for $\tau\ge0$, without imposing data regularity assumptions that depend on the dimension.  
\cite{ChodronRosenzweig2026} develops weak
solution theory, allowing probability measures as initial data. 
We establish $\tau$-uniform weak density decay based on \cite{BoufadeneVialard2025} and upgrade it to derivative decay. For $\tau>0$, thermal equilibrium in the potential $-K_1*\pi$
generally differs from $\pi$. Armstrong and Serfaty study such
equilibria in the whole space and their expansion at low temperature
\cite{ArmstrongSerfaty2022}. Section~\ref{sec:stationary-family}
constructs the periodic family, proves derivative bounds for
relaxation, and gives its first order expansion
\eqref{eq:stat-bias-expansion} in a similar spirit as in
\cite[Eqs.~(1.28) to (1.30), Proposition~6.1]{ArmstrongSerfaty2022}.
Related models coupling drift, diffusion, and Poisson's equation
appear in \cite{CaffarelliDolbeaultMarkowichSchmeiser2000,
GasserLevermoreMarkowichSchmeiser2001}.

For regular interactions, uniform-in-time propagation of chaos goes back
to \cite{Malrieu2001,Malrieu2003}. Sharp fixed-marginal bounds under uniform
functional inequalities were obtained in \cite{LackerLeFlem2023}.  For
singular interactions, fixed-positive-diffusion results include
\cite{GuillinLeBrisMonmarche2025,RosenzweigSerfaty2023}.  Chodron de
Courcel, Rosenzweig, and Serfaty proved sharp uniform-in-time convergence
for periodic Coulomb and super-Coulomb Riesz flows at each fixed positive
temperature, with constants that degenerate at zero temperature
\cite{ChodronRosenzweigSerfaty2025}.  Theorem
\ref{thm:uniform-mean-field} supplies one constant for all
$\tau\in[0,\infty)$, including the deterministic endpoint, for a
possibly nonconstant prescribed target.

For smooth kernels, \cite{ArbelKorbaSalimGretton2019} studies MMD
flows and their particle approximations, including a scheme that
evaluates the drift at randomly perturbed positions.
Finite-particle MMD bounds for such noise-injected schemes are
established under a gradient-dominance condition in
\cite{ChenKarvonenKanagawaBriolOates2026}.
Other approaches obtain convergence guarantees by modifying the
discrepancy or the transport geometry
\cite{ChenMustafiGlaserKorbaGrettonSriperumbudur2025,
TianSriperumbudurGrettonChen2026,GladinDvurechenskyMielkeZhu2024}.
For SVGD,
\cite{BanerjeeBalasubramanianGhosal2025} uses joint law entropy
relative to $\pi^{\otimes N}$ to bound time averages through
kernel Stein discrepancy. For smooth SVGD,
\cite{BalasubramanianBanerjeeKorba2026} gives bounds uniform over
averaging horizons and physical time bounds for special finite rank
systems in feature metrics. The entropy approach treats periodic
Riesz SVGD without self interaction \cite{TeolisDeHoop2026}.
Those SVGD flows use the score of $\pi$, while our target enters
through $\nabla K_1*\pi$. Our empirical bound is uniform in time
and temperature unlike the previous works.

\subsection*{Organization}

Section~\ref{sec:modulated-identity} states the conditional particle
estimate first and then proves it from the static Coulomb bounds, the
deterministic identity, and the positive-temperature modulated free-energy
inequality.  Section
\ref{sec:mean-field-analysis} proves the uniform velocity-gradient theorem
through the equilibrium, low-temperature, and positive-temperature
arguments.  Section~\ref{sec:uniform-consequences} proves the two main
theorems and records the approximation of the prescribed target.  The
appendices contain the deterministic collision argument, the construction of
entropy solutions for the singular Coulomb kernel, differentiated
maximum-principle calculations, positive-temperature regularity, and the
distance-coercivity estimate.

\subsection*{Acknowledgment}
We have used AI tools in organizing the manuscript and correcting grammatical errors. PG's research was partly supported by the NSF CAREER grant DMS 2542899.

\subsection{Notation}
\label{subsec:notation}
We close the introduction by collecting the notation used throughout the
paper.

\noindent\emph{Torus and Fourier conventions.}
Lebesgue measure on $\T^d$ is normalized so that $|\T^d|=1$, and
\[
  \widehat\nu(m)
  :=
  \int_{\T^d}e^{-2\pi i m\cdot x}\,d\nu(x),
  \qquad m\in\mathbb Z^d.
\]
For a mean-zero distribution $g$ and $s\in\R$,
\[
  \|g\|_{\dot H^s}^2
  :=
  \sum_{m\in\mathbb Z^d\setminus\{0\}}
  (4\pi^2|m|^2)^s|\widehat g(m)|^2.
\]

\medskip
\noindent\emph{Function spaces.}
$L^p(\T^d)$, $W^{k,p}(\T^d)$, $C^k(\T^d)$, and
$C^{k,\alpha}(\T^d)$ are the usual Lebesgue, Sobolev, and H\"older
spaces on the torus.  For $r>0$, $\log_+r:=\max\{\log r,0\}$.

\medskip
\noindent\emph{Measures.}
$\mathcal P(X)$ is the set of Borel probability measures on $X$,
for $X=\T^d$ or $X=(\T^d)^N$.  An absolutely continuous
probability measure is identified with its density, and
$\nu^{\otimes k}$ denotes the $k$-fold product of a measure
$\nu$.

\medskip
\noindent\emph{Particle configurations.}
For $N\ge2$, the collision-free configuration space is
\[
  \mathcal X_N
  :=
  \left\{
    X_N=(x_1,\ldots,x_N)\in(\T^d)^N:
    x_i\ne x_j\ \text{whenever }i\ne j
  \right\},
\]
and the empirical measure of $X_N\in(\T^d)^N$ is
$\mu_{X_N}:=\frac1N\sum_{i=1}^N\delta_{x_i}$.  For a time-dependent
configuration $X_N(t)$, we abbreviate $\mu_{X_N}^t:=\mu_{X_N(t)}$.

\medskip
\noindent\emph{Constants.}
Unless a subscript indicates otherwise, $C,c>0$ denote constants
whose values may change from line to line and which depend only on
$d$ and on the data $\rho_0$, $\pi$ of
Assumption~\ref{assump:standing-data}, never on $N$, $t$, or
$\tau$.  Any additional dependence is displayed in the notation.
We write $a\lesssim b$ if $a\le Cb$ for such a constant, and
$a\asymp b$ if $a\lesssim b$ and $b\lesssim a$.

\section{Reduction to the integrated velocity gradient}
\label{sec:modulated-identity}
\label{sec:particle-convergence}

This section reduces the particle estimates in
Theorems~\ref{thm:uniform-mean-field}
and~\ref{thm:uniform-propagation-chaos} to a bound on
$\int_0^t\|\nabla u_s^\tau\|_{L^\infty}\,ds$, where
$u^\tau$ is the mean-field velocity defined in
\eqref{eq:free-energy-velocity}.  We first define the quantities that
compare the particle law with the mean-field solution.  The static
Coulomb bounds below control the error caused by removing particle
self-interactions, and Proposition~\ref{prop:assembly} states the
resulting conditional particle estimate.  Its proof follows the
deterministic and positive-temperature calculations in the subsection
below.

For a collision-free configuration $X_N\in\mathcal X_N$, the
empirical measure $\mu_{X_N}$ was defined in
Subsection~\ref{subsec:notation}.  Write
$\Delta:=\{(x,x):x\in\T^d\}$.  The Coulomb energy of
$\mu_{X_N}-\rho$ would include the infinite interaction of each
particle with itself.  We remove those terms in the following
definition.

\begin{definition}[Modulated energy]\label{def:modulated_energy}
For $N\ge2$, $X_N\in\mathcal X_N$, and a bounded probability
density $\rho$ on $\T^d$, set
\[
  F_N(X_N,\rho)
  :=\iint_{(\T^d)^2\setminus\Delta}
  K_1(x-y)\,d(\mu_{X_N}-\rho)^{\otimes2}(x,y).
\]
\end{definition}

At $\tau=0$, differentiating $F_N$ along a collision-free
trajectory yields a particle identity.  At $\tau>0$, applying
It\^o's formula directly at the Coulomb singularity produces a term
that this energy does not control
\cite[Section~1.2]{ChodronRosenzweigSerfaty2025}.  We instead work
with the joint density $f_N$ and add relative entropy, following
\cite[Proposition~2.2]{BreschJabinWang2023}.

\begin{definition}[Modulated free energy]\label{def:mod_energy}
For a symmetric probability density $f_N$ on $(\T^d)^N$ and a
positive bounded probability density $\rho$ on $\T^d$, let
$dX_N:=dx_1\cdots dx_N$ and set
\[
  H_N(f_N\mid\rho^{\otimes N})
  :=\frac1N\int_{(\T^d)^N}
  f_N\log\frac{f_N}{\rho^{\otimes N}}\,dX_N.
\]
The modulated free energy is
\[
  \mathcal E_N^\tau(f_N,\rho)
  :=\frac12\int_{(\T^d)^N}F_N(X_N,\rho)f_N\,dX_N
    +\tau H_N(f_N\mid\rho^{\otimes N}).
\]
As usual, $0\log0:=0$, and the quantities may take the value
$+\infty$.  Along the positive-temperature evolution, we write
$\mathcal E_N^\tau(t)
 :=\mathcal E_N^\tau(f_N^\tau(t),\rho_t^\tau)$.
\end{definition}

The entropy term supplies the cancellation needed when diffusion acts
on the expected modulated energy.  The remaining transport term is
expressed using the following quantity.  For a Lipschitz vector
field $v\in W^{1,\infty}(\T^d;\R^d)$, recall
\[
  \Comm_N[v](X_N,\rho)
  =\iint_{(\T^d)^2\setminus\Delta}
  (v(x)-v(y))\cdot\nabla K_1(x-y)\,
  d(\mu_{X_N}-\rho)^{\otimes2}(x,y).
\]
The difference $v(x)-v(y)$ cancels one order of the singularity.
The next lemma bounds this commutator by the modulated energy after
adding a finite-particle correction.

\begin{lemma}[Static Coulomb bounds]
\label{prop:static-coulomb-bounds}
Let $N\ge2$, $X_N\in\mathcal X_N$, and let $\rho$ be a
probability density satisfying $\|\rho\|_{L^\infty}\le C_*$.
For a sufficiently large dimensional constant $C_d$, define
\begin{equation}
  \err
  :=C_d\left[
    \mathbf 1_{\{d=2\}}\frac{\log(NC_*)}{N}
    +C_*^{(d-2)/d}N^{-2/d}
  \right].
  \label{eq:finite-particle-error}
\end{equation}
Then
\begin{equation}
  \frac12F_N(X_N,\rho)+\err
  \ge\frac12\err\ge0,
  \label{eq:modulated-energy-lower-bound}
\end{equation}
and every $v\in W^{1,\infty}(\T^d;\R^d)$ satisfies
\begin{equation}
  |\Comm_N[v](X_N,\rho)|
  \le C_d\|\nabla v\|_{L^\infty}
  \left(\frac12F_N(X_N,\rho)+\err\right).
  \label{eq:sharp-commutator}
\end{equation}
\end{lemma}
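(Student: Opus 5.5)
The plan is to reduce both estimates to the periodic results of \cite{ChodronRosenzweigSerfaty2025}, with the Euclidean smearing argument of \cite{Serfaty2020} as the model. Those results are stated for densities $\rho$ with $\|\rho\|_{L^\infty}$ controlled. The only work here is to check that the constants depend on $\rho$ only through $C_*$, and to absorb that dependence into the definition \eqref{eq:finite-particle-error} of $\err$.

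\textbf{Lower bound.} For the lower bound \eqref{eq:modulated-energy-lower-bound} I would smear each Dirac mass at scale $\eta_i\le\eta$, following the construction of \cite{Serfaty2020}. First replace $\delta_{x_i}$ by the uniform measure $\delta_{x_i}^{(\eta)}$ on the sphere $\partial B(x_i,\eta)$. Newton's theorem for the periodic Green function, up to the smooth correction $K_1-g$ with $g$ the Euclidean Coulomb kernel, then gives
\[
  F_N(X_N,\rho)\ge
  \Bigl\|\tfrac1N\textstyle\sum_i\delta_{x_i}^{(\eta)}-\rho\Bigr\|_{\dot H^{-1}}^2
  -\frac{g(\eta)}{N}
  -C\|\rho\|_{L^\infty}\eta^2
  -\frac{C\eta^2}{N}.
\]
Here $g(\eta)=-\frac1{2\pi}\log\eta$ when $d=2$ and $g(\eta)=c_d\eta^{2-d}$ when $d\ge3$. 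The term $-g(\eta)/N$ is the self-energy, and the $\rho$-term comes from the error made by moving mass inside the balls, which requires $\|\rho\|_{L^\infty}\le C_*$. The squared norm is nonnegative, so it can be dropped.

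I then optimize in $\eta$ by choosing $\eta=(NC_*)^{-1/d}$. This yields $F_N\ge-C_d[\mathbf 1_{d=2}\log(NC_*)/N+C_*^{(d-2)/d}N^{-2/d}]$, which is the bound recorded in \cite[Remark~6.5, Eq.~(6.10)]{ChodronRosenzweigSerfaty2025}. Taking $C_d$ in $\err$ large enough that $F_N\ge-\err$ gives $\frac12F_N+\err\ge\frac12\err$.

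\textbf{Commutator estimate.} For \eqref{eq:sharp-commutator} I would invoke the sharp periodic commutator estimate \cite[Proposition~2.13, Eq.~(2.15)]{ChodronRosenzweigSerfaty2025}, specialized to $s=d-2$, i.e.\ the Coulomb or logarithmic case. It states
\[
  |\Comm_N[v]|\le C\|\nabla v\|_{L^\infty}\Bigl(F_N(X_N,\rho)+C\bigl[\mathbf 1_{d=2}\tfrac{\log(N\|\rho\|_\infty)}{N}+\|\rho\|_{L^\infty}^{(d-2)/d}N^{-2/d}\bigr]\Bigr).
\]
Its proof writes the commutator through the stress-energy tensor of the smeared potential $h_\eta=K_1*(\mu^{(\eta)}-\rho)$ as $\int\nabla v:[\nabla h_\eta]_{\mathrm{stress}}$. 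It bounds this by $\|\nabla v\|_\infty\int|\nabla h_\eta|^2$ and then controls the smearing errors by $\|\nabla v\|_\infty$ times the same additive error; the cancellation from $v(x)-v(y)$ is what removes one order of singularity.

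With $\|\rho\|_\infty\le C_*$, the bracketed error is at most $\err/C_d$. The lower bound just proved then lets me rewrite $F_N+\err$ as $2(\frac12F_N+\err)-\err\le 2(\frac12F_N+\err)$, which absorbs the error and gives the stated form after enlarging $C_d$.

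\textbf{Main obstacle.} The main difficulty is bookkeeping rather than new analysis. One must verify that the periodic commutator estimate really has additive error of this exact order, without extra $\|\rho\|_{L^\infty}$ factors that would break the normalization. The sensitive point is $d=2$, where the logarithmic kernel produces $\log(N\|\rho\|_\infty)$. I would check this by rescaling the smearing radius to $(N\|\rho\|_\infty)^{-1/d}$ throughout the argument of \cite{ChodronRosenzweigSerfaty2025}.
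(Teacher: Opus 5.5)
Your proposal is correct and follows the paper's route: the paper's proof cites \cite[Remark~6.5, Eq.~(6.10)]{ChodronRosenzweigSerfaty2025} for the lower bound and \cite[Proposition~2.13]{ChodronRosenzweigSerfaty2025} for the commutator, then enlarges $C_d$, and your smearing sketch of the lower bound essentially matches the computation the paper carries out in Appendix~\ref{app:distance-estimates}. The only cosmetic fix is that the smearing radius should carry a small constant factor, e.g.\ $\eta=\frac14(NC_*)^{-1/d}$, so that each sphere lies in a single local chart of the torus when $d=2$.
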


\begin{proof}
The periodic Coulomb lower bound in
\cite[Remark~6.5, Eq.~(6.10)]{ChodronRosenzweigSerfaty2025}
controls the negative part of $F_N$ by the two terms in
\eqref{eq:finite-particle-error}.  Choosing $C_d$ large enough
leaves half of $\err$ in \eqref{eq:modulated-energy-lower-bound}.
The periodic commutator estimate
\cite[Proposition~2.13]{ChodronRosenzweigSerfaty2025}
then gives \eqref{eq:sharp-commutator} with the same choice after
increasing $C_d$ if needed.
\end{proof}

The identity \eqref{eq:deterministic-modulated-identity} at
$\tau=0$ and the inequality \eqref{eq:targeted-mfe-inequality}
at $\tau>0$ each have $\Comm_N[u_t^\tau]$ as their only
term without a fixed sign.  Applying \eqref{eq:sharp-commutator}
to that term gives the estimate used in the main theorems.

\begin{proposition}[Particle estimate by velocity gradient]
\label{prop:assembly}
Let $\rho^\tau$ be the global mean-field solution and let
$u^\tau$ be given by \eqref{eq:free-energy-velocity}.
\begin{enumerate}
\item If $\tau>0$, let $f_N^\tau$ be a global entropy solution
of \eqref{eq:intro-liouville-equation} in the sense of
Definition~\ref{def:targeted-entropy-solution}.  For almost every
$t\ge0$,
\begin{equation}
  0\le\mathcal E_N^\tau(t)+\err
  \le\exp\left(\frac{C_d}{2}
      \int_0^t\|\nabla u_s^\tau\|_{L^\infty}\,ds\right)
      \bigl(\mathcal E_N^\tau(0)+\err\bigr).
  \label{eq:noisy-particle-gronwall}
\end{equation}
\item If $\tau=0$, let $X_N^0(t)$ be the deterministic
solution of \eqref{eq:intro-particle-system} starting from
$X_N^0(0)\in\mathcal X_N$.  For every $t\ge0$,
\begin{equation}
\begin{aligned}
  0&\le\frac12F_N(X_N^0(t),\rho_t^0)+\err\\
   &\le\exp\left(\frac{C_d}{2}
       \int_0^t\|\nabla u_s^0\|_{L^\infty}\,ds\right)
       \left[\frac12F_N(X_N^0(0),\rho_0)+\err\right].
\end{aligned}
  \label{eq:deterministic-particle-gronwall}
\end{equation}
\end{enumerate}
With i.i.d.\ initial positions of density $\rho_0$, the initial
quantity on the right of \eqref{eq:noisy-particle-gronwall} is at
most $\err$.  The expectation of the initial quantity in
\eqref{eq:deterministic-particle-gronwall} has the same bound.
\end{proposition}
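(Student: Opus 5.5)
The plan is to combine the two evolution statements referred to just before the proposition with the static bounds of Lemma~\ref{prop:static-coulomb-bounds}, and then run Gr\"onwall's inequality on the shifted quantity. For $\tau=0$, I use the deterministic modulated identity \eqref{eq:deterministic-modulated-identity}. Along a collision-free trajectory (collision-freeness is supplied by the appendix argument), it reads
\[
  \frac{d}{dt}\,\frac12F_N(X_N^0(t),\rho_t^0)
  = -\frac1N\sum_i\bigl|\text{force discrepancy at }x_i\bigr|^2
    +\frac12\Comm_N[u_t^0](X_N^0(t),\rho_t^0),
\]
up to the exact normalization of the commutator term. Dropping the nonpositive first term and applying \eqref{eq:sharp-commutator} gives
\[
  \frac{d}{dt}\Bigl(\frac12F_N+\err\Bigr)
  \le \frac{C_d}{2}\|\nabla u_t^0\|_{L^\infty}\Bigl(\frac12F_N+\err\Bigr).
\]
Since $\err$ is constant in time, Gr\"onwall's inequality yields \eqref{eq:deterministic-particle-gronwall}. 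The lower bound $0\le\frac12F_N+\err$ is \eqref{eq:modulated-energy-lower-bound}, applied with $\|\rho_t^0\|_{L^\infty}\le C_*$. That $L^\infty$ bound I take from the maximum principle (Proposition~\ref{prop:maximum-principle}); it is what guarantees that the same $\err$ is admissible at all times.

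For $\tau>0$, I use the targeted modulated free-energy inequality \eqref{eq:targeted-mfe-inequality} for entropy solutions. Its integrated form, valid for a.e.\ $t$ and a.e.\ $s\le t$ including $s=0$, is
\[
  \mathcal E_N^\tau(t)+\tau\int_s^t D_N
  \le \mathcal E_N^\tau(s)+\frac12\int_s^t\!\int\Comm_N[u_r^\tau](X_N,\rho_r^\tau)\,f_N^\tau(r)\,dX_N\,dr,
\]
where $D_N\ge0$. I bound the commutator pointwise by \eqref{eq:sharp-commutator} and integrate against $f_N^\tau(r)$, using $f_N^\tau(r)\ge0$ and the fact that $\frac12F_N+\err\ge0$ pointwise. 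This bounds the integrand by $C_d\|\nabla u_r^\tau\|_{L^\infty}\bigl(\frac12\mathbb E F_N+\err\bigr)$. Since $\tau H_N\ge0$, this is in turn at most $C_d\|\nabla u_r^\tau\|_{L^\infty}\bigl(\mathcal E_N^\tau(r)+\err\bigr)$.

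The resulting integral inequality holds only for a.e.\ $t$, so I apply the integral form of Gr\"onwall's lemma. The inequality $\phi(t)\le\phi(0)+\int_0^t a\phi$ for a.e.\ $t$, with $\phi\ge0$ measurable and locally bounded and $a\in L^1_{\mathrm{loc}}$, gives $\phi(t)\le\phi(0)e^{\int_0^t a}$ for a.e.\ $t$. Here $\phi=\mathcal E_N^\tau+\err$ and $a=\frac{C_d}{2}\|\nabla u^\tau\|_{L^\infty}$. Local boundedness of $\phi$ follows from the inequality itself together with lower-boundedness, via a bootstrap on $[0,T]$. Nonnegativity of $\phi$ again follows from \eqref{eq:modulated-energy-lower-bound} and $H_N\ge0$.

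For the initial data, at $t=0$ we have $f_N^\tau(0)=\rho_0^{\otimes N}$, so $H_N=0$. In the i.i.d.\ expectation, the off-diagonal Coulomb terms give
\[
  \mathbb E F_N(X_N,\rho_0)
  = \frac{N(N-1)}{N^2}\iint K_1\,d\rho_0^{\otimes2}
    -2\iint K_1\,d\rho_0^{\otimes2}
    +\iint K_1\,d\rho_0^{\otimes2}
  = -\frac1N\iint K_1(x-y)\rho_0(x)\rho_0(y)\,dx\,dy .
\]
Because $\rho_0$ is bounded, this is $O(1/N)$, and so $\mathcal E_N^\tau(0)\le C/N\le\err$ after enlarging $C_d$. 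The same computation gives the deterministic expectation bound.

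The main obstacle is the $\tau>0$ case. The inequality \eqref{eq:targeted-mfe-inequality} is only available in integrated, a.e.\ form for weak entropy solutions. One must therefore check that the commutator integrand is measurable and integrable in time, and that Gr\"onwall applies without differentiability. I would handle this with the integral Gr\"onwall lemma as described, and rely on the appendix construction for the integrability of $F_N f_N$.
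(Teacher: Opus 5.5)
Your Gr\"onwall argument in both regimes matches the paper's proof. You bound the commutator, the only unsigned term, by \eqref{eq:sharp-commutator}, get nonnegativity from $H_N\ge0$ and \eqref{eq:modulated-energy-lower-bound}, and integrate. The sign you attach to $\frac12\Comm_N$ is opposite to the paper's, which is harmless because only its absolute value enters.

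\emph{Gap in the initialization claim.} The statement asserts $\mathcal E_N^\tau(0)+\err\le\err$, that is, $\mathcal E_N^\tau(0)\le0$, and likewise $\mathbb E[\frac12F_N(X_N^0(0),\rho_0)+\err]\le\err$. Your bound $\mathcal E_N^\tau(0)\le C/N\le\err$ only yields $\mathcal E_N^\tau(0)+\err\le2\err$. The step ``after enlarging $C_d$'' does not repair this, for two reasons.
\begin{itemize}
\item The constant $C_d$ in \eqref{eq:finite-particle-error} is dimensional, fixed by the Coulomb lower bound and the commutator estimate. Your $C$ is of order $\|K_1\|_{L^1}C_*$, so absorbing it would make $C_d$ depend on the data.
\item Even granting the enlargement, the factor $2$ remains.
\end{itemize}
The factor $2$ would be immaterial downstream, but it is not the stated bound.

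The missing observation is a sign. Since $\int_{\T^d}K_1=0$, you have $K_1*1=0$, so
\[
  \iint K_1(x-y)\rho_0(x)\rho_0(y)\,dx\,dy
  =\iint K_1(x-y)\,d(\rho_0-1)^{\otimes2}(x,y)
  =\|\rho_0-1\|_{\dot H^{-1}}^2\ge0 .
\]
Hence $\mathbb E_{\rho_0^{\otimes N}}F_N(X_N,\rho_0)=-\frac1N\|\rho_0-1\|_{\dot H^{-1}}^2\le0$. This gives exactly $\mathcal E_N^\tau(0)+\err=-\frac1{2N}\|\rho_0-1\|_{\dot H^{-1}}^2+\err\le\err$, and the same bound for the deterministic expectation, with no change of constants. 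This is the identity the paper uses.

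\emph{Smaller point on local boundedness.} Local boundedness of $\phi=\mathcal E_N^\tau+\err$ cannot be bootstrapped from the integral inequality itself. If $a\phi\notin L^1_{\mathrm{loc}}$, the right-hand side is $+\infty$ and the inequality carries no information. Instead use the representation \eqref{eq:weighted-entropy-representation}:
\begin{itemize}
\item the weighted entropy is bounded above by its initial value through \eqref{eq:targeted-entropy-inequality};
\item $\log(\rho_t^{\otimes N}/G_{\rho,N}(t))$ is bounded on $[0,T]$, by the lower bound on $\rho$ and $K_1\in L^1$.
\end{itemize}
Together these give $\operatorname*{ess\,sup}_{[0,T]}\mathcal E_N^\tau<\infty$ for every entropy solution, not only the one constructed in the appendix.
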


\subsection{Particle inequalities}

We prove Proposition~\ref{prop:assembly} in two stages.  First,
Proposition~\ref{prop:deterministic-modulated-identity} derives an
identity along a deterministic particle trajectory.  Then
Proposition~\ref{prop:targeted-mfe} gives an inequality for the law
at positive temperature.  Both calculations expose the commutator
controlled by Lemma~\ref{prop:static-coulomb-bounds}.  We apply that
lemma after proving the two particle inequalities.

For $X_N=(x_1,\ldots,x_N)\in\mathcal X_N$, define the error in the
Coulomb field at particle $i$ by
\begin{equation}
  E_i(X_N,\rho)
  :=\frac1N\sum_{j\ne i}\nabla K_1(x_i-x_j)
      -\nabla K_1*\rho(x_i),
  \qquad 1\le i\le N.
  \label{eq:microscopic-field-error}
\end{equation}
The sum omits the field of particle $i$ acting on itself.  When
writing $\nabla K_1*(\mu_{X_N}-\rho)(x_i)$, we will always mean
the expression in \eqref{eq:microscopic-field-error}.  This is the
principal-value convention at a particle point.

For the rest of the deterministic calculation, abbreviate
$\rho_t:=\rho_t^0$ and
$u_t:=u_t^0=\nabla K_1*(\rho_t-\pi)$.  The solution $\rho^0$
is the bounded global weak solution specified in
Subsection~\ref{subsec:intro-main-results}.  Subtracting and adding
$\nabla K_1*\rho_t(x_i)$ in the particle drift gives
\begin{equation}
  \dot x_i(t)
  =-E_i(X_N(t),\rho_t)-u_t(x_i(t)).
  \label{eq:particle-drift-field-error}
\end{equation}

\begin{proposition}[Deterministic modulated-energy identity]
\label{prop:deterministic-modulated-identity}
Let $X_N(0)\in\mathcal X_N$, and let $X_N(t)$ be its global
collision-free solution at $\tau=0$.  Then, for every $t\ge0$,
\begin{align}
  &\frac12F_N(X_N(t),\rho_t)
   +\int_0^t\frac1N\sum_{i=1}^N
       |E_i(X_N(s),\rho_s)|^2\,ds\notag\\
  &\qquad=\frac12F_N(X_N(0),\rho_0)
   -\frac12\int_0^t
       \Comm_N[u_s](X_N(s),\rho_s)\,ds.
  \label{eq:deterministic-modulated-identity}
\end{align}
Equivalently, for almost every $t>0$,
\begin{equation}
  \frac{d}{dt}\frac12F_N(X_N(t),\rho_t)
   +\frac1N\sum_{i=1}^N|E_i(X_N(t),\rho_t)|^2
  =-\frac12\Comm_N[u_t](X_N(t),\rho_t).
  \label{eq:deterministic-modulated-differential-identity}
\end{equation}
\end{proposition}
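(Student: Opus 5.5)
We will expand $F_N(X_N(t),\rho_t)$ into its three pieces and differentiate each in time. Writing
\[
F_N(X_N,\rho)=\frac1{N^2}\sum_{i\ne j}K_1(x_i-x_j)-\frac2N\sum_{i}K_1*\rho(x_i)+\iint K_1(x-y)\rho(x)\rho(y)\,dx\,dy,
\]
the particle self-interaction term is smooth along a collision-free trajectory. Its derivative is $\frac2N\sum_i\dot x_i\cdot\frac1N\sum_{j\ne i}\nabla K_1(x_i-x_j)$.

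The cross term and the field energy involve $\partial_t\rho_t$. Here we use the weak formulation of the mean-field PDE at $\tau=0$, namely $\partial_t\rho_t=\nabla\cdot(\rho_t u_t)$ with $u_t=\nabla K_1*(\rho_t-\pi)$, tested against $K_1*\rho_t$ and $K_1*\delta_{x_i}$.

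Because $\rho^0$ is only a bounded weak solution, I would first prove the identity for mollified test functions. Concretely, I would use $K_1^\eta=K_1*\chi_\eta$ in the cross and field terms, or equivalently work with the integrated form and take the weak formulation with smooth test functions. I would then pass $\eta\to0$ using that $\nabla K_1\in L^1$, that $\rho_t\in L^\infty$, and that $\nabla K_1*\rho_t$ is log-Lipschitz, hence continuous. The chain rule for $t\mapsto\int K_1*\rho_t\,d\rho_t$ follows from the weak formulation applied to $K_1*\rho_t$, which is $C^1$ in $x$ with enough time regularity.

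Combining the three derivatives, we get
\[
\frac{d}{dt}\frac12F_N=\frac1N\sum_i\dot x_i\cdot E_i+\frac12\iint\bigl(u_t(x)-u_t(y)\bigr)\cdot\nabla K_1(x-y)\,d\rho_t\,d\bigl(\rho_t-2\mu_{X_N}\bigr)\text{-type terms}.
\]
The clean way to organize this is to note that the transport of the reference measure $\rho_t$ by $u_t$ gives the $\rho\rho$ and $\mu\rho$ parts of $-\frac12\Comm_N[u_t]$. Inserting $\dot x_i=-E_i-u_t(x_i)$ from \eqref{eq:particle-drift-field-error}, the $-E_i$ part yields $-\frac1N\sum|E_i|^2$. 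The $-u_t(x_i)$ part yields $-\frac1N\sum_i u_t(x_i)\cdot E_i$, and symmetrizing the double sum over $i\ne j$ turns it into the $\mu\mu$ (off-diagonal) and $\mu\rho$ parts of $-\frac12\Comm_N[u_t]$, using oddness of $\nabla K_1$.

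Collecting all pieces gives \eqref{eq:deterministic-modulated-differential-identity} for a.e.\ $t$. Integrating from $0$ to $t$ gives \eqref{eq:deterministic-modulated-identity}; this uses absolute continuity of $t\mapsto F_N(X_N(t),\rho_t)$, which holds since each term is Lipschitz on compact time intervals while particles stay separated. Collision-freeness on all of $[0,\infty)$ is assumed in the statement.

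The main obstacle is the bookkeeping with the weak solution: justifying the time derivative of $\int K_1*\rho_t(x_i(t))$ when $\rho_t$ is only bounded. To handle it, I would check that $t\mapsto\nabla K_1*\rho_t$ is continuous in $L^\infty$, using that bounded weak solutions are weakly continuous and that the $L^\infty$ bound gives log-Lipschitz uniform control. This lets the product rule go through in integrated form.
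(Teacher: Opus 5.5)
Your computation is the paper's. You expand $F_N$ as in \eqref{eq:deterministic-energy-expansion}, collect the particle derivatives into $\frac2N\sum_i\dot x_i\cdot E_i$, and reduce the density derivatives to $-2\int K_1*(\mu_t-\rho_t)\,\partial_t\rho_t\,dx$ with $\mu_t:=\mu_{X_N(t)}$. You then integrate by parts against $\partial_t\rho_t=\nabla\cdot(\rho_tu_t)$, insert $\dot x_i=-E_i-u_t(x_i)$, and symmetrize using oddness of $\nabla K_1$.

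Where you differ is the justification of the time derivatives. The paper does not work with $\rho^0$ as a bare bounded weak solution. It invokes Proposition~\ref{prop:tau0-endpoint}, which identifies $\rho^0$ as the $C_{\mathrm{loc}}([0,\infty);C^1(\T^d))$ limit of the classical positive-temperature solutions, with uniform $W^{2,\infty}$ and $\partial_t$ bounds. The chain rule along $x_i(t)$ and the derivative of the field energy are therefore classical there. Your route is sound in outline: mollify $K_1$, use $K_1^\eta\to K_1$ in $W^{1,1}$, get continuity of $t\mapsto\nabla K_1*\rho_t$ and $t\mapsto\nabla K_1*(\rho_tu_t)$ in $L^\infty$ from weak-$*$ continuity plus the uniform log-Lipschitz bound, and pass to the limit in integrated form. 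It has the merit of making the identity independent of the low-temperature analysis of Section~\ref{sec:mean-field-analysis}, so it holds for any bounded weak solution. The extra generality buys little here, though. The Gr\"onwall step in Proposition~\ref{prop:assembly} needs $\|\nabla u_t^0\|_{L^\infty}$, and that again comes from Proposition~\ref{prop:tau0-endpoint}.

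One correction: your intermediate display is wrong as written. The correct expression is
\[
  \frac{d}{dt}\frac12F_N(X_N(t),\rho_t)
  =\frac1N\sum_i\dot x_i\cdot E_i
   +\int_{\T^d}\rho_tu_t\cdot\nabla K_1*(\mu_t-\rho_t)\,dx .
\]
\begin{itemize}
\item Its $\rho\rho$ part is $-\frac12\iint(u_t(x)-u_t(y))\cdot\nabla K_1(x-y)\,d\rho_t(x)\,d\rho_t(y)$, which has the opposite sign from your display.
\item Its mixed part is $\iint u_t(x)\cdot\nabla K_1(x-y)\,d\rho_t(x)\,d\mu_t(y)$. This is only the $u_t(x)$ half of the symmetrized mixed term of $-\frac12\Comm_N[u_t]$.
\item The $u_t(y)$ half, $\frac1N\sum_iu_t(x_i)\cdot\nabla K_1*\rho_t(x_i)$, comes from $-\frac1N\sum_iu_t(x_i)\cdot E_i$.
\end{itemize}
Your verbal accounting afterwards is consistent with this, so the conclusion stands. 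The paper shortens the step by combining the two $u_t$-terms into $I_t=\iint_{(\T^d)^2\setminus\Delta}u_t(x)\cdot\nabla K_1(x-y)\,d\nu_t^{\otimes2}$, with $\nu_t=\mu_t-\rho_t$, and using $2I_t=\Comm_N[u_t]$.
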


\begin{proof}
We differentiate the three terms of $F_N$ and use
\eqref{eq:particle-drift-field-error} to recover the nonnegative
sum of squared field errors.  The remaining transport term becomes
the commutator by symmetry of the kernel.  This follows the
deterministic calculation of \cite[Lemma~2.1 and Remark~2.2]{Serfaty2020},
with the prescribed target included in $u_t$.  The regularity in
Proposition~\ref{prop:tau0-endpoint} and the collision-free flow of
Proposition~\ref{prop:deterministic-particle-flow} justify the
differentiations below.

\noindent\emph{Differentiate the energy.}
Write $\mu_t:=\mu_{X_N(t)}$ and
$E_i:=E_i(X_N(t),\rho_t)$.  Since $K_1$ is even, the definition
of $F_N$ expands to
\begin{equation}
  F_N(X_N(t),\rho_t)
  =\frac1{N^2}\sum_{i\ne j}K_1(x_i-x_j)
   -\frac2N\sum_i(K_1*\rho_t)(x_i)
   +\int_{\T^d}(K_1*\rho_t)\rho_t\,dx.
  \label{eq:deterministic-energy-expansion}
\end{equation}
The particle derivatives of the first two terms combine into
$(2/N)\sum_i\dot x_i\cdot E_i$.  Self-adjointness of convolution
with $K_1$ combines the time derivatives of their density terms.
Thus
\begin{equation}
  \frac{d}{dt}F_N(X_N(t),\rho_t)
  =\frac2N\sum_i\dot x_i\cdot E_i
   -2\int_{\T^d}K_1*(\mu_t-\rho_t)\,
            \partial_t\rho_t\,dx.
  \label{eq:deterministic-energy-derivative}
\end{equation}
The mean-field equation is
$\partial_t\rho_t=\nabla\cdot(\rho_tu_t)$.  Integration by parts
in \eqref{eq:deterministic-energy-derivative} and substitution of
\eqref{eq:particle-drift-field-error} therefore give
\begin{equation}
\begin{aligned}
  \frac{d}{dt}F_N(X_N(t),\rho_t)
  ={}&-\frac2N\sum_i|E_i|^2
       -\frac2N\sum_i u_t(x_i)\cdot E_i\\
      &+2\int_{\T^d}\rho_tu_t\cdot
             \nabla K_1*(\mu_t-\rho_t)\,dx.
\end{aligned}
  \label{eq:deterministic-energy-dissipation-remainder}
\end{equation}

\noindent\emph{Identify the commutator.}
Set $\nu_t:=\mu_t-\rho_t$.  In the first term involving $u_t$
in \eqref{eq:deterministic-energy-dissipation-remainder}, the field
at $x_i$ is $E_i$, with its diagonal interaction omitted.  Hence
\begin{equation}
  \frac1N\sum_i u_t(x_i)\cdot E_i
  =\iint_{(\T^d)^2\setminus\Delta}
      u_t(x)\cdot\nabla K_1(x-y)\,
      d\mu_t(x)\,d\nu_t(y).
  \label{eq:field-error-pv-identity}
\end{equation}
The last term of \eqref{eq:deterministic-energy-dissipation-remainder}
is the same double integral with $d\rho_t(x)$ in place of
$d\mu_t(x)$.  Their difference is the integral
\[
  I_t:=\iint_{(\T^d)^2\setminus\Delta}
      u_t(x)\cdot\nabla K_1(x-y)\,d\nu_t^{\otimes2}(x,y).
\]
Exchanging $x$ and $y$ shows that $I_t$ also equals the
negative of the same integral with $u_t(y)$ in place of
$u_t(x)$.  The oddness of $\nabla K_1$ therefore gives
\begin{equation}
  2I_t=\Comm_N[u_t](X_N(t),\rho_t).
  \label{eq:commutator-symmetrization}
\end{equation}
Equations~\eqref{eq:deterministic-energy-dissipation-remainder}
and~\eqref{eq:commutator-symmetrization} yield the differential
identity.  Integration from $0$ to $t$ proves
\eqref{eq:deterministic-modulated-identity}.
\end{proof}

We now treat $\tau>0$.  The particle law solves a diffusion
equation, so the deterministic identity is replaced by an inequality
for $\mathcal E_N^\tau$.  To formulate that inequality for the
singular kernel, we first specify the weak solutions used here.

For fixed $\tau>0$, define the unnormalized particle Gibbs
weight on $\mathcal X_N$ by
\begin{equation}
  G_N(X_N)
  =\exp\left[-\frac1{2N\tau}
            \sum_{i\ne j}K_1(x_i-x_j)
            +\frac1\tau\sum_{i=1}^N K_1*\pi(x_i)\right].
  \label{eq:targeted-gibbs-weight}
\end{equation}
Its logarithmic derivative is the particle drift divided by
$\tau$.  Thus \eqref{eq:intro-liouville-equation} takes the
weighted form
\begin{equation}
  \partial_t f_N^\tau
  =\tau\sum_{i=1}^N\nabla_{x_i}\cdot J_i,
  \qquad
  J_i:=G_N\nabla_{x_i}\frac{f_N^\tau}{G_N}.
  \label{eq:singular-targeted-weighted-equation}
\end{equation}
The quantity $J_i$ is the weighted particle flux.  The entropy
associated with this equation is
$\int f\log(f/G_N)\,dX_N$, where $f$ is a probability density.
The next definition asks that this entropy dissipate even when the
Coulomb drift is singular.

\begin{definition}[Entropy solution of the targeted Liouville equation]
\label{def:targeted-entropy-solution}
Fix $T>0$ and $\tau>0$.  A nonnegative symmetric density
$f_N^\tau\in L^\infty(0,T;L^1((\T^d)^N))$ of unit mass is an
entropy solution of \eqref{eq:intro-liouville-equation} with
initial probability density $f_N^\tau(0)$ if the following hold.
The initial weighted entropy is finite.  The flux $J_i$ in
\eqref{eq:singular-targeted-weighted-equation}, initially understood
in distributions off the collision set, belongs to
$L^1((0,T)\times(\T^d)^N)$ for each $i$.  Equation
\eqref{eq:singular-targeted-weighted-equation} holds in distributions
on the full configuration space, and for almost every $t\in[0,T]$,
\begin{align}
  &\int_{(\T^d)^N}f_N^\tau(t)
       \log\frac{f_N^\tau(t)}{G_N}\,dX_N
   +\tau\sum_{i=1}^N\int_0^t\int_{(\T^d)^N}
       \frac{|J_i(s)|^2}{f_N^\tau(s)}\,dX_N\,ds\notag\\
  &\qquad\le\int_{(\T^d)^N}f_N^\tau(0)
       \log\frac{f_N^\tau(0)}{G_N}\,dX_N.
  \label{eq:targeted-entropy-inequality}
\end{align}
Here $|J_i|^2/f_N^\tau$ is zero where both numerator and
denominator vanish and is $+\infty$ where $f_N^\tau=0$ but
$J_i\ne0$.  The law $f_N^\tau(t)\,dX_N$ converges weakly to
$f_N^\tau(0)\,dX_N$ as $t\downarrow0$.
\end{definition}

An entropy solution is global if it satisfies the definition on
every finite interval.  The following proposition both constructs
such solutions and gives the particle inequality needed in
Proposition~\ref{prop:assembly}.

\begin{proposition}[Entropy solutions and the modulated free-energy inequality]
\label{prop:targeted-entropy-existence}
\label{prop:targeted-mfe}
Let $N\ge2$ and $\tau>0$.
\begin{enumerate}[label=\textup{(\roman*)},leftmargin=*,itemsep=3pt,topsep=3pt]
\item Every symmetric initial probability density with finite
weighted entropy has a symmetric global entropy solution of
\eqref{eq:intro-liouville-equation}.
\item Let $T>0$, let $f_N^\tau$ be any entropy solution on
$[0,T]$, and let $\rho^\tau$ be a positive solution of
\eqref{eq:intro-mean-field-pde} such that
\[
  \rho^\tau\in C([0,T];W^{2,\infty}(\T^d)),
  \qquad \inf_{[0,T]\times\T^d}\rho^\tau>0.
\]
Then, for almost every $t\in[0,T]$,
\begin{align}
  \mathcal E_N^\tau(t)
  &+\frac1N\sum_{i=1}^N
    \int_0^t\int_{(\T^d)^N}
    \left|\tau\nabla_{x_i}
       \log\frac{f_N^\tau(s)}{(\rho_s^\tau)^{\otimes N}}
      +E_i(X_N,\rho_s^\tau)\right|^2
       f_N^\tau(s)\,dX_N\,ds\notag\\
  &\qquad\le\mathcal E_N^\tau(0)
      -\frac12\int_0^t\mathbb E_{f_N^\tau(s)}\left[
        \Comm_N[u_s^\tau](\,\cdot\,,\rho_s^\tau)
       \right]ds.
  \label{eq:targeted-mfe-inequality}
\end{align}
The square in this display is understood through the weighted flux
in \eqref{eq:singular-targeted-weighted-equation}.
\end{enumerate}
\end{proposition}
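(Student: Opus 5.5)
For part (i), I will regularize the kernel and pass to the limit. Replace $K_1$ by a smooth even truncation $K_1^\eta$, for instance the smeared kernel of \cite[Appendix~A]{ChodronRosenzweigSerfaty2025}, equal to $K_1$ away from an $\eta$-neighborhood of the origin. Define $G_N^\eta$ by \eqref{eq:targeted-gibbs-weight} with $K_1^\eta$ in the pair term. The regularized Liouville equation is a linear Fokker--Planck equation with smooth drift on a compact manifold, so it has a unique smooth positive solution $f^\eta$ for positive symmetrized mollified data, and this solution is symmetric. For it the weighted entropy identity holds with equality.

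Uniform bounds come from two facts. First, $K_1^\eta\ge K_1-C$ can be arranged (or $K_1^\eta\le K_1+C$, depending on the convention), so $\int f^\eta\log f^\eta$ is bounded uniformly in $\eta$. Second, the Fisher-type term $\tau\int|J_i^\eta|^2/f^\eta$ is integrated in time, and the relative entropy to $G_N^\eta$ is bounded below because $\int G_N^\eta$ is bounded: $K_1$ is locally integrable and bounded below on $\T^d$, so $e^{-K_1/\tau}$ is integrable. Entropy bounds give equi-integrability and weak $L^1$ compactness of $f^\eta$. By Cauchy--Schwarz, $\int|J_i^\eta|\le(\int|J_i^\eta|^2/f^\eta)^{1/2}$, so the fluxes are bounded in $L^1$ and in fact equi-integrable. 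I will pass to the limit in the distributional equation off the collision set, where $G_N^\eta\to G_N$ locally smoothly. I will then extend the equation across the collision set using that $J_i\in L^1$ and that the collision set has zero capacity for $d\ge2$, via cutoffs $\chi_\delta$ whose gradients are controlled in $L^\infty$ times measure. The entropy inequality \eqref{eq:targeted-entropy-inequality} follows from joint convexity and lower semicontinuity of $(f,J)\mapsto|J|^2/f$ and of $\int f\log(f/G_N)$, together with convergence of the initial weighted entropies. Solutions on all finite intervals are glued by a diagonal argument.

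For part (ii), I will follow \cite[Proposition~2.2]{BreschJabinWang2023}. Write $\mathcal E_N^\tau(t)$ as
\[
\frac{\tau}{N}\int f\log\frac{f}{G_N}+\frac{\tau}{N}\int f\log\frac{G_N}{(\rho_t^\tau)^{\otimes N}}+\frac12\mathbb E F_N .
\]
Expanding $\log G_N$ cancels the pair interaction and the $K_1*\pi$ potential against $\frac12 F_N$. What remains is
\[
\frac{\tau}{N}\int f\log\frac{f}{G_N}-\mathbb E_f\frac1N\sum_i\bigl[\tau\log\rho_t^\tau(x_i)+K_1*(\rho_t^\tau-\pi)(x_i)\bigr]
\]
plus a deterministic function of $\rho_t^\tau$. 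The first term is controlled by the entropy inequality. The second is a linear functional of $f$ tested against the $W^{2,\infty}$ function $\phi_t=\tau\log\rho_t+K_1*(\rho_t-\pi)$, whose gradient is $u_t^\tau$. Its time derivative is computed from the weak formulation, namely $-\tau\int\sum_i\nabla\phi_t(x_i)\cdot J_i/f\cdot f$, plus $\partial_t\phi_t$ terms handled by the mean-field PDE. The deterministic part is differentiated directly.

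Adding the dissipation $\frac{\tau}{N}\sum_i\int|J_i|^2/f$ and completing the square gives
\[
\frac1N\sum_i\int\bigl|\tau J_i/f-\ldots\bigr|^2 f .
\]
Since $\tau J_i/f=\tau\nabla\log f+E_i+u_t(x_i)$, this square equals $\bigl|\tau\nabla\log(f/\rho^{\otimes N})+E_i\bigr|^2$. The cross terms with $u_t$ then reassemble, exactly as in Proposition~\ref{prop:deterministic-modulated-identity}, into $-\frac12\mathbb E\,\Comm_N[u_t]$. The diffusion terms cancel through the identity $\tau\Delta\rho+\nabla\cdot(\rho\nabla K_1*(\rho-\pi))=\nabla\cdot(\rho u)$.

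The main obstacle is doing this rigorously with only the entropy inequality, with no chain rule for $f$. I would obtain an integrated-in-time version: test the equation against $\phi_t$ (admissible since $\phi\in C([0,T];W^{2,\infty})$ and $J_i\in L^1$), use the entropy inequality for the nonlinear part, and expand the square pointwise where $f>0$. The pairing of $J_i$ with the singular field $E_i$ appearing in the commutator requires care at the diagonal. I would handle it by first proving the inequality for the smeared $F_N^\eta$ and then letting $\eta\to0$ using \eqref{eq:modulated-energy-lower-bound}.
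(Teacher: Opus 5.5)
Your part (i) follows the paper's argument (regularize only the pair kernel, uniform weighted-entropy and dissipation bounds, Dunford--Pettis for the densities and, via Cauchy--Schwarz, for the fluxes, lower semicontinuity of the perspective function, diagonal extraction). The paper uses the heat regularization $K_{1,\varepsilon}=e^{\varepsilon\Delta}K_1$ instead of a truncation. Two corrections on the details:
\begin{itemize}
\item The direction you need is $K_1^\eta\le K_1+C$, which bounds the initial weighted entropy. You also need a monotone regularization to get the $\liminf$ of the pair energy at time $t$; the paper obtains this from $K_{1,\varepsilon}-\varepsilon\uparrow K_1$.
\item Your capacity argument for extending the equation across the collision set does not follow from $J_i\in L^1$, since $\int|J_i|\,|\nabla\chi_\delta|$ is only bounded by $C\delta^{-1}\int_{\mathcal C_{2\delta}}|J_i|$. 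It is also unnecessary. The regularized equations hold in flux form on all of $(\T^d)^N$, and weak $L^1$ convergence of the fluxes passes that form to the limit. Only the identification $J_i=G_N\nabla_{x_i}(f_N^\tau/G_N)$ has to be done off collisions.
\end{itemize}

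In part (ii), your integrated framework is the paper's: the entropy inequality for $\int f\log(f/G_N)$, the weak equation tested with $\sum_i\phi_t(x_i)$, and completing the square in the flux variables. The genuine gap is the step you flag, and your remedy does not close it. What must be proved, for the given entropy solution $f=f_N^\tau$, is
\[
\begin{aligned}
  &\sum_{i=1}^N\int_0^t\!\!\int u_s(x_i)\cdot\bigl(\tau J_i-fu_s(x_i)\bigr)\,dX_N\,ds
  +\tau\int_0^t\!\!\int f\sum_{i=1}^N\partial_s\log\rho_s(x_i)\,dX_N\,ds\\
  &\qquad=\int_0^t\!\!\int f\sum_{i=1}^N u_s(x_i)\cdot E_i(X_N,\rho_s)\,dX_N\,ds.
\end{aligned}
\]
This identity moves $\nabla_{x_i}f$ onto $u_s$, although only the combination $J_i$ is known to be integrable near collisions, not $\nabla_{x_i}f$ or $fE_i$ separately. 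Smearing $F_N$ fails under either reading:
\begin{itemize}
\item If you smear at the level of the regularized particle system and let $\eta\to0$, you get the inequality only for the limit you constructed. But (ii) is asserted for every entropy solution on $[0,T]$; no uniqueness is available, and the main theorems are stated for any entropy solution. Moreover, the commutator term is unsigned, so it must actually converge. The lower bound \eqref{eq:modulated-energy-lower-bound} gives only lower semicontinuity of the energy terms.
\item If instead you smear $F_N$ while keeping the given $f$, you lose the identity $G_N/G_{\rho,N}=\exp(-\frac N{2\tau}F_N)$ that ties the entropy inequality to $\mathcal E_N^\tau$. The correction $\frac1{2N^2}\mathbb E_f\sum_{i\ne j}(K_1-K_1^\eta)(x_i-x_j)$ then cannot be differentiated along $f$. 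The only control on $J_i$ is $\int|J_i|^2/f$, so pairing it with $\nabla(K_1-K_1^\eta)\sim|z|^{1-d}$ needs $\int f|\nabla K_1(x_i-x_j)|^2$. The weighted entropy controls only $\int f\,K_1(x_i-x_j)_+$.
\end{itemize}

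The paper works directly with the given $f$ and uses two facts.
\begin{itemize}
\item By antisymmetry of $\nabla K_1$,
\[
  \sum_{i=1}^N u_s(x_i)\cdot E_i(X_N,\rho_s)
  =\frac1{2N}\sum_{i\ne j}\bigl(u_s(x_i)-u_s(x_j)\bigr)\cdot\nabla K_1(x_i-x_j)
   -\sum_{i=1}^N u_s(x_i)\cdot\nabla K_1*\rho_s(x_i).
\]
Together with $|z|\,|\nabla K_1(z)|\le C(1+K_1(z)_+)$ and the entropy bound on $\sup_s\int f\sum_{i\ne j}K_1(x_i-x_j)_+$, this makes the pairing, and likewise $\Comm_N$, integrable against $f$.
\item One inserts a product $\chi_\delta$ of pairwise collision cutoffs. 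Off collisions, $\tau J_i-fu_s(x_i)=\tau(\nabla_{x_i}f-f\nabla\log\rho_s(x_i))+fE_i$. Integrating by parts against $\chi_\delta u_s(x_i)$ produces only one extra term, $-\tau\int f\sum_iu_s(x_i)\cdot\nabla_{x_i}\chi_\delta$. For each pair factor, the $x_i$- and $x_j$-derivatives combine into $(u_s(x_i)-u_s(x_j))\cdot\nabla\chi$. This has size $\|\nabla u_s\|_{L^\infty}\delta\cdot\delta^{-1}$, so the error is bounded by $C_N\|\nabla u_s\|_{L^\infty}\mathbf 1_{\mathcal C_{2\delta}}$. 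Here $\mathcal C_{2\delta}$ is the set where some pair is closer than $2\delta$, and the error vanishes by dominated convergence.
\end{itemize}
It is this Lipschitz cancellation, not any estimate of $J_i$ or $E_i$ at the diagonal, that removes the cutoff.
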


The initial density $\rho_0^{\otimes N}$ has finite weighted
entropy under Assumption~\ref{assump:standing-data}.
For $\tau>0$, Proposition~\ref{prop:targeted-entropy-construction}
shows that the joint density of the particle system
\eqref{eq:intro-particle-system} with initial law
$\rho_0^{\otimes N}$ is a global entropy solution.

\begin{proof}[Proof of Proposition~\ref{prop:targeted-mfe}]
We construct a solution by smoothing the pair interaction while
keeping the smooth target force fixed.  We then derive the
inequality for smooth densities and justify it directly for
every entropy solution.  The construction adapts the compactness
argument of
\cite[Lemma~6.2 and Appendix~A]{ChodronRosenzweigSerfaty2025},
using heat regularization in place of their truncation.
The weighted comparison follows
\cite[Proposition~2.3]{BreschJabinWang2023};
details for this target are given in
Subsections~\ref{subsec:targeted-entropy-compactness}
and~\ref{subsec:singular-targeted-mfe-passage}.

\noindent\emph{Existence of an entropy solution.}
For $\varepsilon>0$, replace the pair kernel by
$K_{1,\varepsilon}:=e^{\varepsilon\Delta}K_1$ and retain the
term $K_1*\pi$ in \eqref{eq:targeted-gibbs-weight}.  Denote the
resulting smooth weight, solution, and flux by
$G_{N,\varepsilon}$, $f_{N,\varepsilon}^\tau$, and
$J_{i,\varepsilon}$.  Differentiating the weighted entropy for
the smooth equation gives
\begin{equation}
\begin{aligned}
  &\int f_{N,\varepsilon}^\tau(t)
       \log\frac{f_{N,\varepsilon}^\tau(t)}{G_{N,\varepsilon}}\,dX_N
   +\tau\sum_i\int_0^t\int
       \frac{|J_{i,\varepsilon}|^2}
            {f_{N,\varepsilon}^\tau}\,dX_N\,ds\\
  &\qquad\le\int f_{N,\varepsilon}^\tau(0)
       \log\frac{f_{N,\varepsilon}^\tau(0)}{G_{N,\varepsilon}}\,dX_N.
\end{aligned}
  \label{eq:regularized-weighted-entropy}
\end{equation}
The Coulomb heat regularization obeys
$K_{1,\varepsilon}-\varepsilon\uparrow K_1$ as
$\varepsilon\downarrow0$.  With the prescribed initial density
fixed, this bounds the right side of
\eqref{eq:regularized-weighted-entropy} uniformly in
$\varepsilon$.  An upper bound on $G_{N,\varepsilon}$ gives a
lower bound on the weighted entropy, so both the entropy and its
nonnegative flux dissipation are bounded on every finite time
interval.

The same upper bound on the weight controls the ordinary entropy
of $f_{N,\varepsilon}^\tau$, namely
$\int f_{N,\varepsilon}^\tau\log f_{N,\varepsilon}^\tau\,dX_N$.
It makes the densities uniformly integrable.  Cauchy--Schwarz and
the dissipation bound then make the fluxes uniformly integrable as
well, since for a spacetime set $A$,
\begin{equation}
  \int_A|J_{i,\varepsilon}|
  \le\left(\int_A\frac{|J_{i,\varepsilon}|^2}
                           {f_{N,\varepsilon}^\tau}\right)^{1/2}
      \left(\int_A f_{N,\varepsilon}^\tau\right)^{1/2}.
  \label{eq:regularized-flux-uniform-integrability}
\end{equation}
We may therefore pass to weak $L^1$ limits in the weighted
equation.  Off the collision set, the regularized drifts converge
locally uniformly, so integration by parts identifies the limiting
flux with $G_N\nabla(f_N^\tau/G_N)$.  The collision set has
Lebesgue measure zero, and the identity holds almost everywhere on
the full configuration space.  Lower semicontinuity transfers
\eqref{eq:regularized-weighted-entropy} to
\eqref{eq:targeted-entropy-inequality}.  The initial law and one
global solution follow by the compactness and diagonal argument in
Subsection~\ref{subsec:targeted-entropy-compactness}.

\noindent\emph{The modulated free-energy inequality.}
Fix $T>0$ and write $\rho_t:=\rho_t^\tau$.  We choose a
time-dependent mean-field weight so that its ratio with $G_N$
recovers the modulated energy.  Define
\begin{equation}
\begin{aligned}
  G_\rho(t,x)
  &:=\exp\left[-\frac1\tau K_1*(\rho_t-\pi)(x)
       +\frac1{2\tau}\int_{\T^d}(K_1*\rho_t)\rho_t\,dx\right],\\
  G_{\rho,N}(t,X_N)&:=\prod_{i=1}^N G_\rho(t,x_i).
\end{aligned}
  \label{eq:mean-field-gibbs-weight}
\end{equation}
Expanding $F_N$ as in
\eqref{eq:deterministic-energy-expansion} shows that the choice of
constant in $G_\rho$ gives
\begin{equation}
  \frac{G_N(X_N)}{G_{\rho,N}(t,X_N)}
   =\exp\left[-\frac{N}{2\tau}F_N(X_N,\rho_t)\right].
  \label{eq:target-weight-ratio}
\end{equation}
The logarithmic derivative of $G_{\rho,N}$, together with
\eqref{eq:free-energy-velocity}, also gives
\begin{equation}
  \tau\nabla_{x_i}
       \log\frac{\rho_t^{\otimes N}}{G_{\rho,N}}
     =u_t^\tau(x_i),
  \qquad
  \tau\nabla_{x_i}\log\frac{G_{\rho,N}}{G_N}
     =E_i(X_N,\rho_t).
  \label{eq:weighted-gradient-identities}
\end{equation}
Thus the tensorized mean-field equation has the same weighted
diffusion form as the particle equation, with $G_{\rho,N}$ in
place of $G_N$.  More directly, \eqref{eq:target-weight-ratio}
turns the definition of $\mathcal E_N^\tau$ into
\begin{equation}
  \frac{\mathcal E_N^\tau(t)}{\tau}
  =\frac1N\int_{(\T^d)^N}f_N^\tau(t)
     \log\left(\frac{f_N^\tau(t)}{G_N}
                 \frac{G_{\rho,N}(t)}{\rho_t^{\otimes N}}\right)dX_N.
  \label{eq:weighted-entropy-representation}
\end{equation}

We first use a smooth even kernel and smooth positive densities to
differentiate \eqref{eq:weighted-entropy-representation}.  Put
$Z_i:=\tau\nabla_{x_i}\log(f_N^\tau/\rho_t^{\otimes N})
        +E_i(X_N,\rho_t)$.  The two identities in
\eqref{eq:weighted-gradient-identities} give
\begin{equation}
  \tau J_i=f_N^\tau(Z_i+u_t^\tau(x_i)),
  \qquad
  \tau\nabla_{x_i}\log\left(
       \frac{f_N^\tau}{G_N}
       \frac{G_{\rho,N}}{\rho_t^{\otimes N}}\right)=Z_i.
  \label{eq:weighted-flux-square-terms}
\end{equation}
Differentiating the first weighted entropy in
\eqref{eq:weighted-entropy-representation} by the particle equation
and differentiating its second term in time yield
\begin{equation}
\begin{aligned}
  \frac{d}{dt}\mathcal E_N^\tau(t)
  ={}&-\frac1N\sum_i\int_{(\T^d)^N}
       f_N^\tau|Z_i|^2\,dX_N
      -\frac1N\sum_i\int_{(\T^d)^N}
       f_N^\tau u_t^\tau(x_i)\cdot Z_i\,dX_N\\
     &+\frac\tau N\int_{(\T^d)^N}f_N^\tau
       \partial_t\log\frac{G_{\rho,N}}{\rho_t^{\otimes N}}\,dX_N.
\end{aligned}
  \label{eq:weighted-square-intermediate}
\end{equation}
Using $\partial_t\rho_t=\nabla\cdot(\rho_tu_t^\tau)$
in the last line, the two terms that involve $u_t^\tau$
combine into $-\tfrac12\mathbb E_{f_N^\tau(t)}
[\Comm_N[u_t^\tau](\,\cdot\,,\rho_t)]$.  This is the same
exchange of $x$ and $y$ as in
\eqref{eq:commutator-symmetrization}.  We obtain the smooth identity
\begin{equation}
\begin{aligned}
  \frac{d}{dt}\mathcal E_N^\tau(t)
  ={}&-\frac1N\sum_i\int_{(\T^d)^N}
       \left|\tau\nabla_{x_i}
          \log\frac{f_N^\tau}{\rho_t^{\otimes N}}
            +E_i(X_N,\rho_t)\right|^2
             f_N^\tau\,dX_N\\
     &-\frac12\mathbb E_{f_N^\tau(t)}
          [\Comm_N[u_t^\tau](\,\cdot\,,\rho_t)].
\end{aligned}
  \label{eq:smooth-targeted-mfe-identity}
\end{equation}
The full integration by parts for the time-dependent weight is in
Subsection~\ref{subsec:smooth-weighted-calculation}.

For an entropy solution, we use the weighted entropy inequality
\eqref{eq:targeted-entropy-inequality} in place of differentiating
the first term of \eqref{eq:weighted-entropy-representation}.
The weak equation \eqref{eq:singular-targeted-weighted-equation}
applied to the test function
$\log(\rho_t^{\otimes N}/G_{\rho,N}(t))$ gives the change in its
second term.  Combining the two calculations reproduces the square
and commutator in \eqref{eq:smooth-targeted-mfe-identity}, now with
an inequality.  Subsection~\ref{subsec:singular-targeted-mfe-passage}
justifies the passage at the Coulomb singularity and yields
\eqref{eq:targeted-mfe-inequality}.
\end{proof}

\begin{proof}[Proof of Proposition~\ref{prop:assembly}]
We bound the only unsigned term in the two particle inequalities by
Lemma~\ref{prop:static-coulomb-bounds}, then apply Gr\"onwall's
lemma.  At positive temperature, relative entropy is nonnegative,
so \eqref{eq:modulated-energy-lower-bound} implies
$\mathcal E_N^\tau(t)+\err\ge0$.  Averaging
\eqref{eq:sharp-commutator} against $f_N^\tau(t)$ gives
\begin{equation}
  \mathbb E_{f_N^\tau(t)}
    |\Comm_N[u_t^\tau](\,\cdot\,,\rho_t^\tau)|
  \le C_d\|\nabla u_t^\tau\|_{L^\infty}
       (\mathcal E_N^\tau(t)+\err).
  \label{eq:assembly-commutator-average}
\end{equation}
Discarding the nonnegative integral in
\eqref{eq:targeted-mfe-inequality}, we consequently have
\begin{equation}
  \mathcal E_N^\tau(t)+\err
  \le\mathcal E_N^\tau(0)+\err
    +\frac{C_d}{2}\int_0^t
       \|\nabla u_s^\tau\|_{L^\infty}
       (\mathcal E_N^\tau(s)+\err)\,ds.
  \label{eq:assembly-noisy-integral-inequality}
\end{equation}
Gr\"onwall's lemma proves \eqref{eq:noisy-particle-gronwall}.
For $\tau=0$, the lower bound
\eqref{eq:modulated-energy-lower-bound} and
\eqref{eq:deterministic-modulated-identity} give the same integral
inequality along each trajectory.  Gr\"onwall's lemma then gives
\eqref{eq:deterministic-particle-gronwall}.

It remains to evaluate the initial quantities for i.i.d.\ data.
The positive-temperature relative entropy vanishes at
$f_N^\tau(0)=\rho_0^{\otimes N}$.  Independence and omission of
the diagonal in $F_N$ give
\begin{equation}
\begin{aligned}
  \mathbb E_{\rho_0^{\otimes N}}F_N(X_N,\rho_0)
  &=-\frac1N\iint_{(\T^d)^2}
     K_1(x-y)\rho_0(x)\rho_0(y)\,dx\,dy\\
  &=-\frac1N\|\rho_0-1\|_{\dot H^{-1}}^2.
\end{aligned}
  \label{eq:iid-modulated-energy-calculation}
\end{equation}
Here $K_1*1=0$, so the last equality follows from the
Coulomb energy representation of the $\dot H^{-1}$ norm in the
introduction.  In particular,
\begin{equation}
  0\le\mathcal E_N^\tau(0)+\err
   =-\frac1{2N}\|\rho_0-1\|_{\dot H^{-1}}^2+\err
   \le\err.
  \label{eq:iid-initial-free-energy}
\end{equation}
At zero temperature, the same identity bounds the expected initial
shifted energy.  Initial configurations are collision-free almost
surely by Corollary~\ref{cor:random-initial-collision-free}, so we
may average \eqref{eq:deterministic-particle-gronwall} over them.
This proves both initialization claims.
\end{proof}

Proposition~\ref{prop:assembly} now leaves one mean-field quantity
to control.  The next section proves a bound, uniform in
$\tau\ge0$, for
\[
  \int_0^\infty\|\nabla u_s^\tau\|_{L^\infty}\,ds.
\]

\section{Uniform control of the mean-field velocity gradient}
\label{sec:stationary-family}
\label{sec:mean-field-analysis}

The particle estimates of Section~\ref{sec:particle-convergence} depend
on the time integral of the spatial derivative of the mean-field
velocity
\begin{equation}
  u_t^\tau
  :=\tau\nabla\log\rho_t^\tau
    +\nabla K_1*(\rho_t^\tau-\pi),
  \qquad \tau\ge0.
  \label{eq:free-energy-velocity}
\end{equation}
where the first term is zero when $\tau=0$.  The goal of this section
is to bound that integral with one constant for all temperatures.  We
prove the stronger pointwise estimate below.

\begin{theorem}[Uniform velocity-gradient bounds]
\label{thm:uniform-envelope}
Under Assumption~\ref{assump:standing-data}, there are constants
$C<\infty$ and $\gamma>0$, independent of $t$ and $\tau$, such
that
\begin{equation}
  \|\nabla u_t^\tau\|_{L^\infty}
  \le C(1+\tau)e^{-\gamma(1+\tau)t},
  \qquad t\ge0,\quad\tau\ge0.
  \label{eq:uniform-velocity-envelope}
\end{equation}
In particular, the quantity required by the particle estimates is
finite:
\begin{equation}
  \Psi_\infty
  :=\sup_{\tau\ge0}\int_0^\infty
      \|\nabla u_t^\tau\|_{L^\infty}\,dt
  \le \frac C\gamma<\infty.
  \label{eq:uniform-velocity-gradient-decay}
\end{equation}
\end{theorem}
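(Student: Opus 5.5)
The proof assembles the strategy outlined in Subsection~\ref{subsec:intro-proof-strategy}. It splits the temperature range at a small threshold $\tau_0>0$ that will be fixed in the low-temperature step. The starting point is the recentering at the thermal equilibrium. By Proposition~\ref{prop:thermal-equilibrium-family}, the equilibrium $\rho_\infty^\tau$ satisfies $u_\infty^\tau=0$. Subtracting this from \eqref{eq:free-energy-velocity} gives the identity \eqref{eq:intro-recentered-velocity-gradient} for $g_t^\tau=\rho_t^\tau-\rho_\infty^\tau$:
\[
  \nabla u_t^\tau=\nabla^2K_1*g_t^\tau+\tau\bigl(\nabla^2\log\rho_t^\tau-\nabla^2\log\rho_\infty^\tau\bigr).
\]
The periodic Schauder estimate (Lemma~\ref{lem:periodic-schauder}) bounds the first term by $C\|g_t^\tau\|_{C^{0,\alpha}}$. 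For the second term I would use the identity $\nabla^2\log\rho=\rho^{-1}\nabla^2\rho-\rho^{-2}\nabla\rho\otimes\nabla\rho$, together with the uniform lower bounds on $\rho_t^\tau$ (Proposition~\ref{prop:maximum-principle}) and on $\rho_\infty^\tau$. Combined with uniform $W^{1,\infty}$ bounds on both densities, this bounds the second term by $C\tau\|g_t^\tau\|_{W^{2,\infty}}$. So everything reduces to showing, for all $\tau\ge0$, that
\[
  \|g_t^\tau\|_{C^{0,\alpha}}+\tau\|g_t^\tau\|_{W^{2,\infty}}\le C(1+\tau)e^{-\gamma(1+\tau)t}.
\]

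\emph{Low temperatures $0<\tau\le\tau_0$.} First, the free-energy gap of Lemma~\ref{lem:exact-free-energy-gap} is dissipated at the rate $\int\rho|u|^2$. Using the density lower bound and $\dot H^{-1}$--$\dot H^{1}$ duality, this gives the $\dot H^{-1}$ decay of Proposition~\ref{prop:hminus-exp}, at rate $c(1+\tau)$. Second, Hamilton's maximum lemma applied to the error equation gives a differential inequality for $\|g_t^\tau\|_{L^\infty}$. In it, the damping $-\rho_t^\tau g_t^\tau$ is spoiled only by the $O(\tau)$ bias term and by the Coulomb field $\nabla K_1*g_t^\tau$. The near--far bound of Lemma~\ref{lem:potential-estimate} plus Young's inequality absorbs the Coulomb field, yielding the $L^\infty$ decay of Proposition~\ref{prop:linfty-exp}. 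Third, the Osgood-type inequality for $M_1^\tau$ (via Lemma~\ref{lem:yudovich}) gives a uniform Lipschitz bound (Proposition~\ref{prop:uniform-lipschitz}). Interpolating this with the $L^\infty$ decay gives H\"older decay (Corollary~\ref{cor:holder-decay}). Finally, the twice-differentiated maximum principle uses the Ehrling and Landau--Kolmogorov lemmas to handle the singular and quadratic terms. After choosing $\tau_0$ small, it gives exponential $C^2$ decay (Proposition~\ref{prop:low-temperature-relaxation}). On $[0,\tau_0]$ the factor $(1+\tau)$ is harmless, so this branch gives $Ce^{-\gamma t}\le C'(1+\tau)e^{-\gamma'(1+\tau)t}$ after halving $\gamma$.

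\emph{The endpoint $\tau=0$.} Here I invoke Proposition~\ref{prop:tau0-endpoint}. The argument takes $\tau\downarrow0$ using the $\tau$-uniform $W^{2,\infty}$ bounds and time equicontinuity, obtaining compactness in $C_{\rm loc}([0,\infty);C^1)$. Uniqueness of bounded weak solutions identifies the limit with $\rho^0$. Lower semicontinuity then transfers the H\"older decay and the Coulomb-Hessian bound, which is all that is needed since the thermal term vanishes at $\tau=0$.

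\emph{High temperatures $\tau\ge\tau_0$.} I use the uniform $W^{4,\infty}$ bound of Lemma~\ref{lem:positive-temperature-wp}, which holds with constants depending on $\tau_0$. I combine it with the $\dot H^{-1}$ decay at rate $c(1+\tau)$ through the Littlewood--Paley interpolation $\|g\|_{C^{2,1/2}}\lesssim\|g\|_{\dot H^{-1}}^{3/(d+10)}\|g\|_{W^{4,\infty}}^{1-3/(d+10)}$. This gives $C^{2,1/2}$ decay at rate $\gamma(1+\tau)$ with $\gamma=3c/(d+10)$, and then Schauder and the log-Hessian identity give $\|\nabla u_t^\tau\|_{L^\infty}\le C(1+\tau)e^{-\gamma(1+\tau)t}$ (Proposition~\ref{prop:positive-temperature-halfline}). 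The explicit factor $\tau$ in front of the thermal term produces the prefactor $(1+\tau)$.

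Taking the minimum of the two rates and the maximum of the constants gives \eqref{eq:uniform-velocity-envelope}. Integrating in time gives $\int_0^\infty C(1+\tau)e^{-\gamma(1+\tau)t}\,dt=C/\gamma$ independently of $\tau$, which is \eqref{eq:uniform-velocity-gradient-decay}. The main obstacle is the low-temperature $C^2$ step. There the singular term $\partial_{ij}\nabla K_1*g$ costs one derivative, and it must be absorbed into the damping with constants uniform as $\tau\downarrow0$, where parabolic smoothing is unavailable. I would try to handle it with the Ehrling splitting $\varepsilon M_2^\tau+C_\varepsilon\|g\|_{L^\infty}$, choosing $\varepsilon$ before $\tau_0$.
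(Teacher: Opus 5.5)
Your proposal is correct and follows the paper's proof essentially step for step. It uses the recentered identity \eqref{eq:equilibrium-centered-deformation}, the low-temperature chain $\dot H^{-1}\to L^\infty\to W^{1,\infty}\to C^{0,1/2}\to C^2$ with $\varepsilon$ fixed before $\tau_0$, compactness at $\tau=0$, and the $W^{4,\infty}$--$\dot H^{-1}$ interpolation for $\tau\ge\tau_0$, and then matches the two rates. The one cosmetic point is that ``halving $\gamma$'' on $[0,\tau_0]$ works only because $\tau_0\le1$; in general one takes $\gamma=\min\{c_*/(16(1+\tau_0)),c\}$, as the paper does.
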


At positive temperature we use the following existence and regularity
result for the mean-field equation~\eqref{eq:intro-mean-field-pde}.
Its proof, including the continuation argument, is in
Appendix~\ref{app:positive-temperature-wellposedness}.

\begin{lemma}[Positive-temperature global well-posedness and regularity]
\label{lem:positive-temperature-wp}
For every $\tau>0$, equation~\eqref{eq:intro-mean-field-pde} has a
unique global classical solution $\rho^\tau$ with
$\rho_t^\tau\in\mathcal P(\T^d)$.  It is unique among bounded mild
solutions.  For every $\tau_{\mathrm L}>0$, there is a finite
constant $C_{\tau_{\mathrm L}}$ such that
\begin{equation}
  \sup_{\tau\ge\tau_{\mathrm L}}\sup_{t\ge0}
  \|\rho_t^\tau\|_{W^{4,\infty}(\T^d)}
  \le C_{\tau_{\mathrm L}}.
  \label{eq:positive-temperature-W4}
\end{equation}
\end{lemma}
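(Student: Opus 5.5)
We would construct the solution in the heat-semigroup (mild) formulation, $\rho_t=e^{t\tau\Delta}\rho_0+\int_0^te^{(t-s)\tau\Delta}\nabla\cdot(\rho_s\nabla K_1*(\rho_s-\pi))\,ds$, and follow \cite[proofs of Proposition~2.1 and Lemma~2.7]{ChodronRosenzweigSerfaty2025}. Bootstrapping Schauder-type smoothing of $e^{t\tau\Delta}\nabla\cdot$ gives a local classical solution in $C([0,T_*];W^{4,\infty})$. It is positive by the parabolic maximum principle, and it conserves mass because the equation is in divergence form. Uniqueness among bounded mild solutions will follow from a Gr\"onwall estimate on the difference of two mild solutions in $L^\infty$. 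For this we use $\|\nabla K_1*g\|_{L^\infty}\lesssim\|g\|_{L^\infty}$ (since $\nabla K_1\in L^1$) and the bound $\|e^{t\tau\Delta}\nabla\cdot\|_{L^\infty\to L^\infty}\lesssim(t\tau)^{-1/2}$, which is integrable in time.

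For globality and for \eqref{eq:positive-temperature-W4} we need a priori bounds that are uniform in $\tau\ge\tau_{\mathrm L}$ and $t\ge0$. We will derive them in the following order.
\begin{enumerate}[label=(\alph*),leftmargin=*,itemsep=2pt]
\item \emph{$L^\infty$ bounds.} Write the equation as $\partial_t\rho=\tau\Delta\rho+\nabla\rho\cdot\nabla K_1*(\rho-\pi)-\rho(\rho-\pi)$. At a spatial maximum this gives $\max\rho\le\max\{C_0,C_\pi\}=C_*$, and at a minimum it gives $\min\rho\ge c_*$. Both bounds are uniform in $\tau$ and $t$, and they rule out blow-up of the $L^\infty$ norm.
\item \emph{Short-time derivative bounds.} With $\rho$ bounded, the drift $\nabla K_1*(\rho-\pi)$ is log-Lipschitz (Yudovich). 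The mild formulation on a unit time window then upgrades $L^\infty$ to $C^{1,\alpha}$, and then iteratively to $W^{4,\infty}$. The heat kernel $e^{s\tau\Delta}$ with $\tau\ge\tau_{\mathrm L}$ smooths at a rate bounded below, so every constant depends only on $\tau_{\mathrm L}$ and the data. The initial layer $t\in[0,1]$ is handled by $\rho_0,\pi\in C^4$: derivatives falling on the data are controlled, and Duhamel terms gain derivatives through $\nabla K_1*$ and Schauder estimates. For $t\ge1$ we restart the estimate on $[t-1,t]$ using only the uniform $L^\infty$ bound from (a). This window-restart gives uniformity in $t$ without any decay input.
\item \emph{Large $\tau$.} For $\tau$ large, rescale time by $s=\tau t$. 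The equation becomes $\partial_s\rho=\Delta\rho+\tau^{-1}\nabla\cdot(\rho\nabla K_1*(\rho-\pi))$, whose nonlinearity has a coefficient $\le\tau_{\mathrm L}^{-1}$. Unit windows in $s$ therefore yield bounds uniform over $\tau\ge\tau_{\mathrm L}$.
\end{enumerate}
Globality then follows from the continuation criterion, since the $W^{4,\infty}$ norm cannot blow up.

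The main obstacle is step (b) at fourth order, where the bounds must stay uniform and the estimates must close. Differentiating four times produces $\partial^4(\rho\,\nabla^2K_1*(\rho-\pi))$ terms, and these need $\nabla^2K_1*\partial^4\rho$, which is not bounded on $L^\infty$. We would first work in $C^{3,\alpha}$, using that $\nabla^2K_1*$ is bounded on Hölder spaces (Lemma~\ref{lem:periodic-schauder}) and that $\pi\in C^4$ provides $\nabla^2K_1*\pi\in C^{2,\alpha}$. We would then gain the last derivative from the heat kernel via $\|e^{s\tau\Delta}\nabla\|_{C^{3,\alpha}\to W^{4,\infty}}\lesssim(s\tau)^{-(1-\alpha)/2}$, which is integrable. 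If that fails near $t=0$ because $\rho_0$ is only $C^4$, we would instead differentiate the equation and apply the maximum principle to $|\partial^\beta\rho|^2$ for $|\beta|\le4$. In that argument the worst term is absorbed via the $\tau_{\mathrm L}\Delta$ dissipation, combined with interpolation between $W^{4,\infty}$ and $L^\infty$.
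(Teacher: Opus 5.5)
Your outline is correct and has the same skeleton as the paper. Both use the mild formulation, mass conservation and the maximum principle for $c_*\le\rho_t^\tau\le C_*$, Duhamel restarted on time windows to get derivative bounds uniform in $t$ and in $\tau\ge\tau_{\mathrm L}$, continuation, and uniqueness by an $L^\infty$ contraction using $\nabla K_1\in L^1$ and the integrable factor $(t\tau)^{-1/2}$. Your rescaling $s=\tau t$ is equivalent to the paper's bookkeeping, in which heat factors appear as $(\tau u)^{-k/2}$ and Duhamel integrals carry $\sqrt{u/\tau}\le\sqrt{u/\tau_{\mathrm L}}$.

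The real difference is the derivative ladder, and the obstacle you expect there is not present. With the divergence kept on the heat kernel, all $n$ derivatives fall on the flux $F(\rho)=\rho\,\nabla K_1*(\rho-\pi)$. Then $\nabla^n\rho$ enters $\nabla^nF$ only through $\nabla^n\rho\,\nabla K_1*(\rho-\pi)$ and $\rho\,\nabla K_1*\nabla^n\rho$. Since $\nabla K_1\in L^1$, this gives $\|\nabla^nF\|_{L^\infty}\le R_n+C\|\nabla^n\rho\|_{L^\infty}$, with $R_n$ depending only on lower-order bounds and $\|\pi\|_{C^n}$. Even if you expand the divergence, only $\Delta K_1*(\rho-\pi)=-(\rho-\pi)$ appears, which is local. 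So no term $\nabla^2K_1*\partial^4\rho$ ever arises.

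The paper therefore closes each order $n\le4$ directly in $L^\infty$. On a window of length $h$ it bounds $Z_n(u)=\sup_{r\le u}(\tau r)^{n/2}\|\nabla^n\rho^\tau_{a+r}\|_{L^\infty}$. It splits the Duhamel integral at $u/2$: early on, all $n+1$ derivatives go on the kernel; late, $n$ go on the flux and one on the kernel. It then absorbs the top-order term by taking $h$ small relative to $\tau_{\mathrm L}$, which is legitimate because $Z_n$ is finite on the maximal classical interval.

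Your H\"older ladder $L^\infty\to C^{0,\alpha_0}\to C^{1,\alpha_1}\to\cdots\to C^{3,\alpha_3}\to W^{4,\infty}$ also works. It needs no absorption, because each rung uses only bounds already proved, but it is longer:
\begin{itemize}
\item You lose some H\"older exponent at each rung.
\item You cannot pass from $L^\infty$ to $C^{1,\alpha}$ in one Duhamel step; the flux must first lie in $C^{0,\alpha'}$ with $\alpha'>\alpha$.
\item After restarting from merely bounded data, you still need time weights like $Z_n$.
\end{itemize}

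Two small corrections. The last-rung factor is $(s\tau)^{-(2-\alpha)/2}$, not $(s\tau)^{-(1-\alpha)/2}$, though it is still integrable. The maximum-principle fallback would not give time-uniform bounds as sketched, because at a spatial maximum diffusion contributes only a sign. You would need a Bernstein-type combination of derivatives, but none of this is needed.
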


The bound~\eqref{eq:positive-temperature-W4} will supply the strong
norms used in Section~\ref{subsec:high-temperature-estimates} for
temperatures bounded away from zero.  Its constant can depend on
$\tau_{\mathrm L}$, so the proof near zero temperature will instead
use estimates whose constants do not involve parabolic smoothing.
We begin with the stationary state and an energy estimate common to all
positive temperatures in Section~\ref{subsec:stationary-family}.  We
then treat temperatures near zero in Section~\ref{subsec:low-temperature}
and temperatures bounded away from zero in
Section~\ref{subsec:high-temperature-estimates}.  The first of these
arguments also recovers the endpoint $\tau=0$.

\subsection{Thermal equilibrium and common energy estimates}
\label{subsec:stationary-family}

The velocity at a positive-temperature stationary state vanishes.
To use this fact, we first construct the stationary density
$\rho_\infty^\tau$ and bound it uniformly in $\tau$ in
Propositions~\ref{prop:thermal-equilibrium-family}
and~\ref{prop:stationary-all-orders}.  Proposition~\ref{prop:maximum-principle}
keeps the evolving density positive.  Lemma~\ref{lem:exact-free-energy-gap}
and Proposition~\ref{prop:hminus-exp} then show that
$\rho_t^\tau-\rho_\infty^\tau$ decays in $\dot H^{-1}$.  The next
two subsections strengthen this decay until it controls
$\nabla u_t^\tau$.

\begin{proposition}[Thermal equilibrium family]
\label{prop:thermal-equilibrium-family}
For each $\tau>0$, there is a unique probability density
$\rho_\infty^\tau$ and a unique constant $\lambda_\tau\in\R$
such that
\begin{equation}
  \tau\log\rho_\infty^\tau
   +K_1*(\rho_\infty^\tau-\pi)
   =\lambda_\tau,
  \qquad
  c_\pi\le\rho_\infty^\tau\le C_\pi.
  \label{eq:stationary-euler-lagrange}
\end{equation}
We write $\rho_\infty^0:=\pi$.
\end{proposition}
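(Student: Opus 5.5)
The plan is to pass from the Euler--Lagrange equation \eqref{eq:stationary-euler-lagrange} to a scalar semilinear elliptic equation for $q:=\log\rho$. Existence, uniqueness and the two-sided bounds should all be easier to obtain there. Since $-\Delta K_1*(\rho-\pi)=\rho-\pi$ for densities of equal mass, applying $-\Delta$ to \eqref{eq:stationary-euler-lagrange} formally gives
\[
  -\tau\Delta q+e^{q}=\pi\qquad\text{on }\T^d .
\]
Conversely, suppose $q$ is a classical solution of this equation. Integrating over $\T^d$ kills $\Delta q$, so $\int e^q=\int\pi=1$ and $\rho:=e^q$ is automatically a probability density. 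Moreover $\rho-\pi=\tau\Delta q$ is mean-zero, hence
\[
  K_1*(\rho-\pi)=\tau K_1*\Delta q=-\tau\Bigl(q-\int_{\T^d}q\Bigr),
\]
and therefore $\tau\log\rho+K_1*(\rho-\pi)=\tau\int q=:\lambda_\tau$. So it suffices to solve the semilinear equation and to show that every solution of \eqref{eq:stationary-euler-lagrange} arises this way.

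\emph{Existence.} I would minimize the strictly convex functional
\[
  J(q)=\int_{\T^d}\Bigl(\tfrac\tau2|\nabla q|^2+e^{q}-\pi q\Bigr)dx
\]
over the closed convex set $\{q\in H^1(\T^d):\log c_\pi\le q\le\log C_\pi\}$. This set is weakly closed in $H^1$. On it $J$ is coercive, because the bounds control $e^q-\pi q$ and the Dirichlet term controls the oscillation. It is also weakly lower semicontinuous, so a minimizer exists. To see that the constraint is inactive, truncate a competitor at the levels $\log c_\pi,\log C_\pi$: since $s\mapsto e^s-\pi s$ is decreasing for $e^s<\pi$ and increasing for $e^s>\pi$, truncation does not increase $J$. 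Hence the minimizer is an unconstrained critical point, that is, a weak $H^1\cap L^\infty$ solution. Elliptic regularity then bootstraps: $e^q-\pi\in L^\infty$ gives $q\in W^{2,p}\subset C^{1,\alpha}$, and a Schauder step with $\pi\in C^4$ gives a classical solution. An alternative I would keep in reserve is the sub/supersolution method with the constant barriers $\log c_\pi$ and $\log C_\pi$.

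\emph{Bounds.} These follow from the maximum principle. At a maximum point of the classical solution $q$ we have $-\Delta q\ge0$, so $e^{q}\le\pi\le C_\pi$. At a minimum point $-\Delta q\le0$, so $e^q\ge\pi\ge c_\pi$. This gives $c_\pi\le\rho_\infty^\tau\le C_\pi$.

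\emph{Uniqueness.} Let $(\rho,\lambda)$ be any solution of \eqref{eq:stationary-euler-lagrange} with bounded positive $\rho$. Then $K_1*(\rho-\pi)\in W^{2,p}$, so $\log\rho$ is $C^1$, and the same bootstrap makes $q=\log\rho$ a classical solution of the semilinear equation. For two solutions $q_1,q_2$, set $w=q_1-q_2$, multiply the difference of the equations by $w$ and integrate:
\[
  \tau\int|\nabla w|^2+\int(e^{q_1}-e^{q_2})(q_1-q_2)=0 .
\]
Both terms are nonnegative, and the second vanishes only if $w=0$, so $q_1=q_2$. Then $\lambda_\tau$ is uniquely determined by the identity above.

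The step I expect to need the most care is the regularity bootstrap in the uniqueness direction. It is what justifies applying $-\Delta$ to \eqref{eq:stationary-euler-lagrange} for a merely bounded density, and it is what makes the maximum-principle argument legitimate. Both rest on the Coulomb kernel gaining two derivatives, via periodic $W^{2,p}$ and Schauder estimates, which I would take as standard.
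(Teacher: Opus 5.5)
Your proposal is correct and follows the paper's proof: the same reduction to $-\tau\Delta q+e^q=\pi$, the same strictly convex functional $J_\tau$, and the same uniqueness argument by testing the difference of the equations with $q_1-q_2$. The one variation is in how the bounds arise. You put $\log c_\pi\le q\le\log C_\pi$ into the admissible set and then remove the constraint by truncating competitors. The paper instead minimizes over all of $H^1$, with coercivity from Jensen and Poincar\'e, and obtains the bounds afterwards by testing the weak equation with truncations of $(q-\log C_\pi)_+$ and $(\log c_\pi-q)_+$. Both rest on the same sign of $e^q-\pi$ beyond these levels.
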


\begin{proof}
We construct $\rho_\infty^\tau$ by solving a local equation for
its logarithm.  If $q=\log\rho_\infty^\tau$, applying $-\Delta$
to the first identity in~\eqref{eq:stationary-euler-lagrange} gives
\begin{equation}
  -\tau\Delta q+e^q=\pi.
  \label{eq:stationary-local-equation}
\end{equation}
Conversely, a solution of~\eqref{eq:stationary-local-equation} has
$\int_{\T^d}e^q=\int_{\T^d}\pi=1$.  Since
$-\Delta[K_1*(e^q-\pi)]=e^q-\pi$, we have
\[
  -\Delta\bigl[\tau q+K_1*(e^q-\pi)\bigr]
  =-\tau\Delta q+e^q-\pi=0.
\]
The expression in brackets is harmonic on the torus and therefore
constant, as required by~\eqref{eq:stationary-euler-lagrange}. To obtain a solution of~\eqref{eq:stationary-local-equation}, minimize
the strictly convex functional
\begin{equation}
  J_\tau(q)
  :=\int_{\T^d}
     \left(\frac\tau2|\nabla q|^2+e^q-\pi q\right)dx
  \label{eq:stationary-auxiliary-functional}
\end{equation}
over $q\in H^1(\T^d)$ for which $e^q$ is integrable.  Writing
$q=m+h$ with $m=\int_{\T^d}q$ and $\int_{\T^d}h=0$, Jensen's
and Poincar\'e's inequalities give
\[
  J_\tau(q)
  \ge \frac\tau2\|\nabla h\|_{L^2}^2
       -C\|\nabla h\|_{L^2}+e^m-m.
\]
Thus a minimizing sequence is bounded in $H^1$.  Compactness in
$L^2$ and lower semicontinuity of the gradient and exponential
integrals yield a minimizer.  Variations by bounded functions give
its weak Euler--Lagrange equation~\eqref{eq:stationary-local-equation}.

It remains to show that the solution stays between the extremal
values of $\pi$.  Test the weak equation for
$-\tau\Delta q+e^q-\pi=0$ first with a bounded truncation of
$(\log c_\pi-q)_+$.  On its support both terms in the resulting
identity are nonpositive:
\[
  0=-\tau\!\int_{\{0<\log c_\pi-q<M\}}\!|\nabla q|^2dx
     +\int_{\T^d}(e^q-\pi)
       \min\{(\log c_\pi-q)_+,M\}\,dx.
\]
Hence $q\ge\log c_\pi$.  Testing with a bounded truncation of
$(q-\log C_\pi)_+$ likewise gives $q\le\log C_\pi$.  Standard
elliptic regularity now makes $q$ smooth enough for the preceding
identities to hold pointwise.  If $q_1,q_2$ are two solutions,
testing the difference of their equations by $q_1-q_2$ gives
\[
  0=\tau\|\nabla(q_1-q_2)\|_{L^2}^2
    +\int_{\T^d}(e^{q_1}-e^{q_2})(q_1-q_2)\,dx.
\]
Both terms are nonnegative, and the second vanishes only when
$q_1=q_2$.  This proves uniqueness of
$\rho_\infty^\tau=e^q$, and the constant in
\eqref{eq:stationary-euler-lagrange} is then fixed.  Further
regularity details are in Subsection~\ref{subsec:stationary-estimates}.
\end{proof}

The elliptic equation~\eqref{eq:stationary-local-equation} also yields
derivative estimates without factors of $\tau^{-1}$.  After one or
more differentiations, the highest derivative is acted on by the
operator $\rho_\infty^\tau-\tau\Delta$.  The lower bound in
Proposition~\ref{prop:thermal-equilibrium-family} makes its
maximum-principle estimate uniform in temperature.

\begin{proposition}[Uniform stationary regularity]
\label{prop:stationary-all-orders}
Under Assumption~\ref{assump:standing-data}, a constant $C$ depending
only on $d$, $c_\pi^{-1}$, and $\|\pi\|_{C^4}$ satisfies
\begin{equation}
  \sup_{\tau\ge0}
  \bigl(\|\rho_\infty^\tau\|_{C^4}
       +\|\log\rho_\infty^\tau\|_{C^4}\bigr)
  \le C.
  \label{eq:uniform-stationary-regularity}
\end{equation}
\end{proposition}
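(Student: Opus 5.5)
We prove the bound for $q=q_\tau:=\log\rho_\infty^\tau$ first and then transfer it to $\rho_\infty^\tau=e^q$. The case $\tau=0$ is trivial, since $\rho_\infty^0=\pi\in C^4$ with $\pi\ge c_\pi$ and hence $\log\pi\in C^4$. For $\tau>0$, Proposition~\ref{prop:thermal-equilibrium-family} already gives $\log c_\pi\le q\le\log C_\pi$, which is the zeroth-order bound. Because $C^4$ norms of $e^q$ are polynomial in the derivatives of $q$ up to order four, times $e^q\le C_\pi$, the whole statement reduces to bounding $\|\partial^\beta q\|_{L^\infty}$ for $1\le|\beta|\le4$ uniformly in $\tau>0$. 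Elliptic regularity (Subsection~\ref{subsec:stationary-estimates}) makes $q$ smooth enough for each fixed $\tau$ to differentiate \eqref{eq:stationary-local-equation} four times classically. If not, we first mollify $\pi$ and pass to the limit at the end, since only $\|\pi\|_{C^4}$ enters.

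The core is a maximum-principle induction on $k=|\beta|$. Differentiating \eqref{eq:stationary-local-equation} along $\partial^\beta$ gives
\[
  -\tau\Delta\partial^\beta q+e^q\partial^\beta q
  =\partial^\beta\pi-R_\beta,
\]
where $R_\beta$ collects the terms of $\partial^\beta e^q$ in which no single factor carries all $k$ derivatives. Thus $R_\beta=e^q\,P_\beta(\partial q,\ldots,\partial^{k-1}q)$ with $P_\beta$ a polynomial. At a point where $\partial^\beta q$ attains a positive maximum we have $-\tau\Delta\partial^\beta q\ge0$. Hence $c_\pi\,\partial^\beta q\le\|\pi\|_{C^k}+C_\pi\|P_\beta\|_{L^\infty}$ there, and similarly at a negative minimum. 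This gives
\[
  \|\partial^\beta q\|_{L^\infty}\le c_\pi^{-1}\bigl(\|\pi\|_{C^k}+C_\pi\|P_\beta\|_{L^\infty}\bigr).
\]
The factor $\tau$ never appears on the right, because the dissipative term $-\tau\Delta$ has a good sign at extrema and is simply dropped. For $k=1$ we have $R_\beta=0$, so $\|\nabla q\|_{L^\infty}\le\|\nabla\pi\|_{L^\infty}/c_\pi$. For $k=2,3,4$, $P_\beta$ involves only derivatives of order below $k$, which are bounded by the previous steps. The induction therefore closes with a constant depending only on $d$, $c_\pi^{-1}$, $C_\pi$ and $\|\pi\|_{C^4}$; note $C_\pi\le\|\pi\|_{C^4}$.

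The main obstacle is that a single mixed derivative $\partial^\beta q$ does not by itself satisfy a closed scalar maximum principle. The lower-order remainder is fine, but at order $k$ we also need all top-order terms to be of the form $e^q\partial^\beta q$. This holds, since the only term of $\partial^\beta(e^q)$ with $k$ derivatives on one factor is $e^q\partial^\beta q$, so each component decouples. The only other point needing care is regularity: we need $q\in C^{4}$, or at least $C^{4}$ in the maximum-principle sense, for each fixed $\tau$ in order to evaluate $\Delta\partial^\beta q$ at an extremum. With $\pi\in C^4$, Schauder theory gives only $q\in C^{5,\alpha}$ for $\pi\in C^{3,\alpha}$, which suffices for $k\le4$ provided $\Delta\partial^\beta q$ exists classically, and that holds since $\partial^\beta q\in C^{1,\alpha}$ fails only at $k=5$. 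For $k=4$, where $\partial^\beta q\in C^{1,\alpha}$ only, we instead use the mollification approximation described above, which sidesteps this.
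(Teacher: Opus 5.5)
Your proposal is correct and follows essentially the same route as the paper: you differentiate $-\tau\Delta q_\tau+e^{q_\tau}=\pi$, isolate the top-order term $e^{q_\tau}\partial^\beta q_\tau$ by the chain rule, and induct over $k=1,\dots,4$ using the $\tau$-independent bound $\|w\|_{L^\infty}\le c_\pi^{-1}\|f\|_{L^\infty}$ for $(e^{q_\tau}-\tau\Delta)w=f$. The only difference is how regularity is justified: you mollify $\pi$ and pass to the limit via uniqueness and comparison for the stationary equation, whereas the paper bootstraps $q_\tau\in W^{6,p}$ and proves the resolvent bound by truncation testing, which needs only $\partial^\alpha q_\tau\in H^1$ (note also that your remark that classical $\Delta\partial^\beta q$ ``fails only at $k=5$'' is a slip, since it already fails at $k=4$, which is exactly the case your mollification handles).
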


\begin{proof}
Fix $\tau>0$ and put $q_\tau=\log\rho_\infty^\tau$.  The
regularity argument in Subsection~\ref{subsec:stationary-estimates}
first gives, for $p>\max\{d,2\}$,
\[
  q_\tau\in W^{2,p}(\T^d)
   \longrightarrow W^{4,p}(\T^d)
   \longrightarrow W^{6,p}(\T^d)
   \subset C^5(\T^d).
\]
These bounds at a fixed $\tau$ justify the differentiated equation.
We now derive bounds that are independent of $\tau$. For any solution of
$(\rho_\infty^\tau-\tau\Delta)w=f$, the pointwise maximum
principle gives
\begin{equation}
  \|w\|_{L^\infty}
  \le c_\pi^{-1}\|f\|_{L^\infty}.
  \label{eq:stationary-resolvent-bound}
\end{equation}
Truncation tests give the same estimate when
$w$ is only a weak solution. For a multi-index $\alpha$ with $|\alpha|=k$, differentiation of
\eqref{eq:stationary-local-equation} separates the highest derivative
of $q_\tau$ from products of lower derivatives:
\begin{equation}
  (\rho_\infty^\tau-\tau\Delta)\partial^\alpha q_\tau
  =\partial^\alpha\pi
    -\rho_\infty^\tau
       \mathcal P_\alpha(Dq_\tau,\ldots,D^{k-1}q_\tau),
  \qquad 1\le k\le4.
  \label{eq:stationary-differentiated-equation}
\end{equation}
Here $\mathcal P_\alpha$ is the finite sum of products from the
chain rule that contain at least two derivatives of $q_\tau$.  It
vanishes for $k=1$.  Applying
\eqref{eq:stationary-resolvent-bound} to
\eqref{eq:stationary-differentiated-equation} and inducting over
$k=1,\ldots,4$ bounds $\|q_\tau\|_{C^4}$ uniformly.  The chain
rule then bounds $\rho_\infty^\tau=e^{q_\tau}$ in $C^4$.  At
$\tau=0$, the assertion follows from $\rho_\infty^0=\pi$.
\end{proof}

We can now express the velocity relative to equilibrium.  Write
$\widetilde\rho_t^\tau:=\rho_t^\tau-\rho_\infty^\tau$ for the
density error.  Taking the
gradient of~\eqref{eq:stationary-euler-lagrange} shows that the
equilibrium velocity is zero:
\[
  u_\infty^\tau
  :=\tau\nabla\log\rho_\infty^\tau
    +\nabla K_1*(\rho_\infty^\tau-\pi)=0.
\]
Subtracting this identity from the definition of $u_t^\tau$ gives
the two terms we must estimate in the next subsections:
\begin{equation}
  \nabla u_t^\tau
  =\nabla^2K_1*\widetilde\rho_t^\tau
   +\tau\bigl(\nabla^2\log\rho_t^\tau
                   -\nabla^2\log\rho_\infty^\tau\bigr).
  \label{eq:equilibrium-centered-deformation}
\end{equation}
The first term is the Hessian of the Coulomb potential of the density
error.  The second term requires control of two derivatives of the
logarithmic density.  Before seeking these strong estimates, we
establish a lower bound on the density and a weak decay estimate that
hold uniformly in temperature.

The following maximum principle supplies that lower bound.  Its
positive-temperature proof follows the argument of
\cite[proof of Proposition~2.8]{BoufadeneVialard2025}.  We obtain
the zero-temperature assertion later by taking the limit in the
positive-temperature bounds, as part of
Proposition~\ref{prop:tau0-endpoint}.

\begin{proposition}[Maximum principle]
\label{prop:maximum-principle}
\label{prop:density-bounds}
For every $\tau>0$, the classical solution of
\eqref{eq:intro-mean-field-pde} satisfies
\begin{equation}
  c_*\le\rho_t^\tau(x)\le C_*,
  \qquad t\ge0,\quad x\in\T^d.
  \label{eq:maximum-principle}
\end{equation}
The same bounds hold for the bounded global weak solution
$\rho^0$ at $\tau=0$.
\end{proposition}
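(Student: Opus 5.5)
Expanding the divergence rewrites the equation in nondivergence form. With $\phi_t:=K_1*(\rho_t^\tau-\pi)$, so that $-\Delta\phi_t=\rho_t^\tau-\pi$, equation~\eqref{eq:intro-mean-field-pde} reads
\[
  \partial_t\rho
  =\tau\Delta\rho+\nabla\rho\cdot\nabla\phi_t+\rho\,\Delta\phi_t
  =\tau\Delta\rho+\nabla\rho\cdot\nabla\phi_t-\rho(\rho-\pi).
\]
The Coulomb kernel therefore turns the nonlocal term into the local reaction term $-\rho(\rho-\pi)$. The plan is a comparison or maximum-principle argument on this reaction--diffusion--transport equation with the smooth classical solution of Lemma~\ref{lem:positive-temperature-wp}, following \cite[Proposition~2.8]{BoufadeneVialard2025}.

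\emph{Upper bound.} Let $M(t):=\max_x\rho_t^\tau$. At a spatial maximum point we have $\nabla\rho=0$ and $\Delta\rho\le0$, so Hamilton's lemma (Lemma~\ref{lem:hamilton}) gives
\[
  D^+M\le -M\bigl(M-\pi(x_t)\bigr)\le -M(M-C_\pi).
\]
Hence $M$ cannot cross above $\max\{M(0),C_\pi\}\le C_*$. To make this rigorous I would argue by contradiction with $C_*+\varepsilon$: consider the first time $M$ reaches $C_*+\varepsilon$. There $D^+M<0$ strictly, which is a contradiction. Then let $\varepsilon\downarrow0$. Equivalently, a smooth function of $t$ alone can serve as a barrier.

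\emph{Lower bound.} Symmetrically, let $m(t):=\min_x\rho_t^\tau$. At a minimum point $\nabla\rho=0$ and $\Delta\rho\ge0$, so
\[
  D^-m\ge -m\bigl(m-\pi(x_t)\bigr)\ge m(c_\pi-m).
\]
This is strictly positive whenever $0<m<c_\pi$. Therefore $m$ stays $\ge\min\{m(0),c_\pi\}\ge c_*$, and the same first-crossing argument with $c_*-\varepsilon$ applies. Positivity of $\rho$ is automatic here, since the first crossing of $c_*-\varepsilon$ occurs while $\rho$ is still positive. Nothing in either argument depends on $\tau$, which is the key point: the constants are uniform in temperature.

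\emph{Zero temperature.} For $\tau=0$ the solution is only a bounded weak solution, so the pointwise argument is unavailable. As the paper indicates, I would defer this case to Proposition~\ref{prop:tau0-endpoint}. There $\rho^\tau\to\rho^0$ as $\tau\downarrow0$, locally uniformly in time or at least weak-* in $L^\infty$, by the uniform-in-$\tau$ compactness estimates and the uniqueness of bounded weak solutions \cite[Proposition~2.4]{ChodronRosenzweig2026}. The two-sided bounds $c_*\le\rho^\tau\le C_*$ define a weak-* closed convex set, so they pass to the limit.

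The main obstacle is this endpoint identification, since it relies on the compactness established later. The positive-temperature part is routine, given that the classical solution is $C^2$ in space and $C^1$ in time, as Lemma~\ref{lem:positive-temperature-wp} provides.
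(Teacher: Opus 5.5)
Your proposal is correct and follows the paper's proof essentially step for step. The paper also uses the nondivergence form with reaction term $-\rho(\rho-\pi)$, applies Hamilton's lemma at spatial extrema to get $M'\le M(C_\pi-M)$ and $m'\ge m(c_\pi-m)$, concludes by scalar comparison, and obtains the $\tau=0$ case from the compactness limit of Proposition~\ref{prop:tau0-endpoint}.

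One small fix is needed in your first-crossing step. At a first upward hitting time $t_1$, the bound $D^+M(t_1)<0$ alone gives no contradiction; the left difference quotient does. Fix a maximizer $x$ of $\rho_{t_1}^\tau$. Then $M(t_1-h)\ge\rho_{t_1-h}^\tau(x)$, and since $\partial_t\rho^\tau(t_1,x)<0$, this forces $M(t_1-h)>M(t_1)$ for small $h>0$, contradicting the choice of $t_1$. The same correction applies to the lower bound.
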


\begin{proof}
We first prove the assertion for $\tau>0$.  The identity
$-\Delta K_1*(\rho_t^\tau-\pi)=\rho_t^\tau-\pi$ puts
the mean-field equation in the form
\begin{equation}
  \partial_t\rho_t^\tau
  =\tau\Delta\rho_t^\tau
   +\nabla K_1*(\rho_t^\tau-\pi)\cdot\nabla\rho_t^\tau
   -\rho_t^\tau(\rho_t^\tau-\pi).
  \label{eq:density-nondivergence-form}
\end{equation}
The transport term vanishes at a spatial extremum.  Set
$M(t)=\max_x\rho_t^\tau(x)$ and
$m(t)=\min_x\rho_t^\tau(x)$.  Hamilton's maximum lemma
(Lemma~\ref{lem:hamilton}) permits differentiation of these extremal
values for almost every time and gives
\begin{equation}
  M'(t)\le M(t)(C_\pi-M(t)),
  \qquad
  m'(t)\ge m(t)(c_\pi-m(t)).
  \label{eq:density-extremal-inequalities}
\end{equation}
The scalar comparison principle and the initial bounds in
Assumption~\ref{assump:standing-data} keep $M(t)$ below $C_*$
and $m(t)$ above $c_*$.  The solution exists globally by
Lemma~\ref{lem:positive-temperature-wp}, so this gives
\eqref{eq:maximum-principle} for all $t\ge0$.  Proposition~\ref{prop:tau0-endpoint}
identifies $\rho^0$ as a limit of positive-temperature solutions
and transfers the same pointwise bounds to that limit.
\end{proof}

We next measure relaxation toward equilibrium by subtracting its
free energy from that of an arbitrary density.  For probability
densities $\rho$ and $\nu>0$, write
\[
  \operatorname{Ent}(\rho\mid\nu)
  :=\int_{\T^d}\rho\log\frac\rho\nu\,dx.
\]
The equilibrium identity makes the free-energy gap an exact sum of
the Coulomb distance and this relative entropy.

\begin{lemma}[Recentered free energy and dissipation]
\label{lem:exact-free-energy-gap}
Let $\tau>0$.  Every probability density $\rho$ with finite
free energy satisfies
\begin{equation}
  \widetilde{\mathcal F}_\tau(\rho)
  :=\mathcal F_\tau(\rho)-\mathcal F_\tau(\rho_\infty^\tau)
  =\frac12\|\rho-\rho_\infty^\tau\|_{\dot H^{-1}}^2
   +\tau\operatorname{Ent}(\rho\mid\rho_\infty^\tau).
  \label{eq:exact-free-energy-gap}
\end{equation}
If $\widetilde{\mathcal F}_\tau(t)
 :=\widetilde{\mathcal F}_\tau(\rho_t^\tau)$, then
\begin{equation}
  \frac{d}{dt}\widetilde{\mathcal F}_\tau(t)
  =-\int_{\T^d}\rho_t^\tau|u_t^\tau|^2\,dx.
  \label{eq:continuum-dissipation-identity}
\end{equation}
\end{lemma}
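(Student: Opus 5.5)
The plan has two parts: an algebraic identity for the gap, and a differentiation of the free energy along the flow.

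\emph{The gap identity.} Write $\nu:=\rho_\infty^\tau$ and $g:=\rho-\nu$. Since $\rho$ and $\nu$ are both probability densities, $g$ has mean zero.

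For the Coulomb part, expand the quadratic form around $\nu$:
\[
  \tfrac12\|\rho-\pi\|_{\dot H^{-1}}^2
  =\tfrac12\|\nu-\pi\|_{\dot H^{-1}}^2
   +\int_{\T^d}K_1*(\nu-\pi)\,g\,dx
   +\tfrac12\|g\|_{\dot H^{-1}}^2 .
\]
This uses only the symmetry of convolution with the even kernel $K_1$ and the representation $\|h\|_{\dot H^{-1}}^2=\int (K_1*h)h$ for mean-zero $h$.

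For the entropy part, write
\[
  \int\rho\log\rho-\int\nu\log\nu
  =\operatorname{Ent}(\rho\mid\nu)+\int g\log\nu\,dx .
\]
Adding the two expansions, the cross terms combine into $\int g\,\bigl(\tau\log\nu+K_1*(\nu-\pi)\bigr)\,dx$. By the Euler--Lagrange identity \eqref{eq:stationary-euler-lagrange}, this equals $\lambda_\tau\int g=0$, which proves \eqref{eq:exact-free-energy-gap}.

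Finiteness of each term is not an issue: $\nu$ is bounded above and below by Proposition~\ref{prop:thermal-equilibrium-family}, so $\log\nu$ is bounded. Finite free energy then makes $\int\rho\log\rho$ finite and $g\in\dot H^{-1}$, so every integral above is well defined.

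\emph{The dissipation identity.} Since $\mathcal F_\tau(\rho_\infty^\tau)$ is a constant, it suffices to differentiate $\mathcal F_\tau(\rho_t^\tau)$. By Lemma~\ref{lem:positive-temperature-wp}, $\rho^\tau$ is a global classical solution, and by Proposition~\ref{prop:maximum-principle} it satisfies $c_*\le\rho_t^\tau\le C_*$. Hence $\log\rho_t^\tau$ is smooth and bounded, and we may differentiate under the integral.

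The first variation of $\mathcal F_\tau$ is $\tau(\log\rho+1)+K_1*(\rho-\pi)$. Because $\int\partial_t\rho=0$, the constant $+1$ contributes nothing, so
\[
  \frac{d}{dt}\mathcal F_\tau(\rho_t^\tau)
  =\int\bigl(\tau\log\rho_t^\tau+K_1*(\rho_t^\tau-\pi)\bigr)\,\partial_t\rho_t^\tau\,dx .
\]
Using $\tau\Delta\rho=\nabla\cdot(\tau\rho\nabla\log\rho)$, the mean-field equation \eqref{eq:intro-mean-field-pde} takes the form $\partial_t\rho=\nabla\cdot(\rho u)$ with $u=u_t^\tau$ from \eqref{eq:free-energy-velocity}. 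The gradient of the first variation is exactly $u$. Integrating by parts on the torus, where there are no boundary terms, gives $-\int\rho|u|^2$, which is \eqref{eq:continuum-dissipation-identity}.

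\emph{Expected obstacle.} The only point needing care is justifying the time differentiation, in particular the term $\int\rho\log\rho$ and the $\dot H^{-1}$ term. I will treat it as follows.
\begin{enumerate}[label=\textup{(\roman*)},leftmargin=*,itemsep=2pt]
\item Classical regularity gives $\partial_t\rho\in C(\T^d)$, and the two-sided bounds make $\log\rho$ bounded, so the entropy term can be differentiated under the integral.
\item Differentiability of the $\dot H^{-1}$ term follows from $K_1\in L^1$ together with the same regularity.
\end{enumerate}
If pointwise-in-time differentiability is delicate near $t=0$, I would prove the integrated form on $[s,t]$ with $s>0$ and let $s\downarrow0$ using continuity of the solution at $t=0$.
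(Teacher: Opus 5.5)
Your proposal is correct and follows the paper's proof essentially step for step. You expand both parts of $\mathcal F_\tau$ around $\rho_\infty^\tau$, eliminate the linear cross term using the Euler--Lagrange identity \eqref{eq:stationary-euler-lagrange} and equality of masses, and then differentiate along the classical solution written as $\partial_t\rho_t^\tau=\nabla\cdot(\rho_t^\tau u_t^\tau)$, integrating by parts. Your remarks on the finiteness of each term and on justifying the time derivative through the two-sided density bounds are sound; they spell out what the paper leaves implicit.
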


\begin{proof}
The first claim follows by expanding both terms of
$\mathcal F_\tau$ around $\rho_\infty^\tau$.  The terms linear
in $\rho-\rho_\infty^\tau$ combine into
\begin{equation}
  \widetilde{\mathcal F}_\tau(\rho)
  =\frac12\|\rho-\rho_\infty^\tau\|_{\dot H^{-1}}^2
   +\tau\operatorname{Ent}(\rho\mid\rho_\infty^\tau)
   +\int_{\T^d}(\rho-\rho_\infty^\tau)
     \bigl[\tau\log\rho_\infty^\tau
           +K_1*(\rho_\infty^\tau-\pi)\bigr]dx.
  \label{eq:expanded-equilibrium-gap}
\end{equation}
The bracket equals the constant $\lambda_\tau$ by
\eqref{eq:stationary-euler-lagrange}.  The final integral is zero
because both densities have total mass one.  This proves
\eqref{eq:exact-free-energy-gap}.

For the second claim, mass conservation allows us to omit the
constant in the first variation of $\mathcal F_\tau$.  Differentiating
along the classical solution and using
$\partial_t\rho_t^\tau=\nabla\cdot(\rho_t^\tau u_t^\tau)$, we obtain
\begin{align*}
  \frac{d}{dt}\widetilde{\mathcal F}_\tau(t)
  &=\int_{\T^d}
     \bigl[K_1*(\rho_t^\tau-\pi)
       +\tau\log\rho_t^\tau\bigr]
       \partial_t\rho_t^\tau\,dx\\
  &=-\int_{\T^d}\rho_t^\tau
     \left|\nabla K_1*(\rho_t^\tau-\pi)
                 +\tau\nabla\log\rho_t^\tau\right|^2dx
   =-\int_{\T^d}\rho_t^\tau|u_t^\tau|^2dx.
\end{align*}
This is~\eqref{eq:continuum-dissipation-identity}.
\end{proof}

The identity~\eqref{eq:exact-free-energy-gap} controls the density
error in $\dot H^{-1}$, while
\eqref{eq:continuum-dissipation-identity} decreases that gap.  We now
compare the dissipation with the gap.  This gives the weak relaxation
estimate used in both temperature regimes.

\begin{proposition}[Uniform $\dot H^{-1}$ relaxation]
\label{prop:hminus-exp}
Set $\kappa_*:=(2\pi)^2/(2C_*)$.  For every $\tau>0$ and
$t\ge0$, the recentered free energy obeys
\begin{equation}
  \widetilde{\mathcal F}_\tau(t)
  \le\widetilde{\mathcal F}_\tau(0)
       e^{-c_*(\frac12+\kappa_*\tau)t}.
  \label{eq:appD-gap-accelerated-relaxation}
\end{equation}
Consequently, there is a constant $C$, independent of $\tau$ and
$t$, for which
\begin{align}
  \|\rho_t^\tau-\rho_\infty^\tau\|_{\dot H^{-1}}
  &\le C e^{-\frac{c_*}{2}(\frac12+\kappa_*\tau)t},
    \label{eq:appD-Hminus-accelerated-relaxation}\\
  \|\rho_t^\tau-\rho_\infty^\tau\|_{\dot H^{-1}}
  &\le C e^{-c_*t/4}.
    \label{eq:hminus-relaxation}
\end{align}
\end{proposition}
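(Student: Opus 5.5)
The plan is to derive a differential inequality $\frac{d}{dt}\widetilde{\mathcal F}_\tau\le -c_*(\frac12+\kappa_*\tau)\widetilde{\mathcal F}_\tau$ from Lemma~\ref{lem:exact-free-energy-gap}, apply Gr\"onwall, and then read off the $\dot H^{-1}$ bounds from \eqref{eq:exact-free-energy-gap}.

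\emph{Lower bound on the dissipation.} Write $g:=\rho_t^\tau-\rho_\infty^\tau$ and $\ell:=\log(\rho_t^\tau/\rho_\infty^\tau)$. Since $u_\infty^\tau=0$, we have $u_t^\tau=\nabla\phi$ with $\phi:=K_1*g+\tau\ell$. By Proposition~\ref{prop:maximum-principle}, $\int\rho_t^\tau|u_t^\tau|^2\ge c_*\|\nabla\phi\|_{L^2}^2$. I will bound $\|\nabla\phi\|_{L^2}^2$ below in two ways and average them.

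\emph{First bound (Coulomb part).} Since $\int g=0$, we have
\[
\int g\phi=\|g\|_{\dot H^{-1}}^2+\tau\int g\ell .
\]
Moreover
\[
\int g\ell=\operatorname{Ent}(\rho_t^\tau\mid\rho_\infty^\tau)+\operatorname{Ent}(\rho_\infty^\tau\mid\rho_t^\tau)\ge\operatorname{Ent}(\rho_t^\tau\mid\rho_\infty^\tau).
\]
By $\dot H^{-1}$--$\dot H^1$ duality, $\int g\phi\le\|g\|_{\dot H^{-1}}\|\nabla\phi\|_{L^2}$. Set $A:=\|g\|_{\dot H^{-1}}^2$ and $B:=\tau\int g\ell\ge0$. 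Then
\[
\|\nabla\phi\|_{L^2}^2\ge\frac{(A+B)^2}{A}\ge A+2B\ge2\widetilde{\mathcal F}_\tau .
\]

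\emph{Second bound (thermal part).} Expand the square instead:
\[
\|\nabla\phi\|^2_{L^2}=A+2\tau\int g\ell+\tau^2\|\nabla\ell\|_{L^2}^2\ge A+\tau^2\|\nabla\ell\|_{L^2}^2 .
\]
The mean value theorem with $\rho_t^\tau,\rho_\infty^\tau\le C_*$ gives $g\ell\ge g^2/C_*$ pointwise. Let $\bar\ell$ denote the mean of $\ell$. Since $\int g=0$,
\[
\int g\ell=\int g(\ell-\bar\ell)\le\|g\|_{L^2}\|\ell-\bar\ell\|_{L^2},
\]
so $\|g\|_{L^2}\le C_*\|\ell-\bar\ell\|_{L^2}$. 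Combining this with Poincar\'e's inequality on the unit torus (constant $(2\pi)^{-2}$),
\[
\operatorname{Ent}(\rho_t^\tau\mid\rho_\infty^\tau)\le\int g\ell\le C_*\|\ell-\bar\ell\|_{L^2}^2\le\frac{C_*}{(2\pi)^2}\|\nabla\ell\|_{L^2}^2 .
\]
Hence $\|\nabla\phi\|^2_{L^2}\ge A+2\kappa_*\tau\cdot\tau\operatorname{Ent}$.

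\emph{Averaging and Gr\"onwall.} Taking half of each bound gives
\[
\|\nabla\phi\|_{L^2}^2\ge\widetilde{\mathcal F}_\tau+\tfrac12A+\kappa_*\tau\cdot\tau\operatorname{Ent}\ge(\tfrac12+\kappa_*\tau)\widetilde{\mathcal F}_\tau ,
\]
where the last step uses $\tfrac12A\ge\kappa_*\tau\cdot\tfrac12 A$ only when $\kappa_*\tau\le1$. If $\kappa_*\tau>1$, I will keep the full first bound $2\widetilde{\mathcal F}_\tau$ for the $A$-part and tune the weights of the convex combination; some convex combination of $2\widetilde{\mathcal F}_\tau$ and $A+2\kappa_*\tau^2\operatorname{Ent}$ always dominates $(\tfrac12+\kappa_*\tau)\widetilde{\mathcal F}_\tau$. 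Together with \eqref{eq:continuum-dissipation-identity} this yields $\frac{d}{dt}\widetilde{\mathcal F}_\tau\le-c_*(\tfrac12+\kappa_*\tau)\widetilde{\mathcal F}_\tau$, and Gr\"onwall gives \eqref{eq:appD-gap-accelerated-relaxation}.

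\emph{The $\dot H^{-1}$ bounds and the main obstacle.} From \eqref{eq:exact-free-energy-gap}, $\|g\|_{\dot H^{-1}}^2\le2\widetilde{\mathcal F}_\tau(t)$. The difficulty is that the initial gap is not uniformly bounded in $\tau$. The Coulomb part is fine, but the entropy part only satisfies
\[
\tau\operatorname{Ent}(\rho_0\mid\rho_\infty^\tau)\le\tau\int(\rho_0-\rho_\infty^\tau)\log\frac{\rho_0}{\rho_\infty^\tau}\le C\tau ,
\]
using the bounds of Proposition~\ref{prop:thermal-equilibrium-family}. This is the step I expect to be the main obstacle. For $\tau\le1$ the bound $\widetilde{\mathcal F}_\tau(0)\le C$ suffices, and \eqref{eq:hminus-relaxation} also follows since $\frac12+\kappa_*\tau\ge\frac12$. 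For $\tau\ge1$, I would first try to use the uniform bound $\|g\|_{\dot H^{-1}}\le C\|g\|_{L^\infty}\le 2C_*$ from Proposition~\ref{prop:maximum-principle} on an initial layer. If the resulting $\sqrt{\tau}$ factor cannot be absorbed there, I would separately show that $\operatorname{Ent}(\rho_t^\tau\mid\rho_\infty^\tau)$ decays on its own at rate $\gtrsim\tau$ (from the $\tau^2\|\nabla\ell\|^2$ part of the dissipation), so that $\tau\operatorname{Ent}$ becomes $O(1)$ after time $O(\log\tau/\tau)$. If that fails as well, I would settle for a slightly smaller exponent absorbed into $C$.
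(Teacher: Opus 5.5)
You have a genuine gap. Your two lower bounds on $\|\nabla\phi\|_{L^2}^2$ are correct, but you never use the inequality that drives the paper's proof, and without it the argument breaks for large $\tau$ in two places. The missing ingredient is $\dot H^{-1}$ coercivity of the gap. Both densities take values in $[c_*,C_*]$, where $(r\log r)''\ge C_*^{-1}$. A second-order Taylor expansion of $r\log r$ about $\rho_\infty^\tau$ (the linear term integrates to zero by equal mass), followed by Poincar\'e, gives
\[
  \operatorname{Ent}(\rho_t^\tau\mid\rho_\infty^\tau)
  \ge\frac1{2C_*}\|g\|_{L^2}^2\ge\kappa_*\|g\|_{\dot H^{-1}}^2,
  \qquad\text{hence}\qquad
  \widetilde{\mathcal F}_\tau\ge a_\tau\|g\|_{\dot H^{-1}}^2,\quad a_\tau:=\tfrac12+\kappa_*\tau .
\]
Your convex-combination step is false as stated for $\kappa_*\tau>\frac32$. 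With $E:=\tau\operatorname{Ent}$, your bounds read $A+2E$ and $A+2\kappa_*\tau E$. Every convex combination therefore has $A$-coefficient $1$, whereas $(\frac12+\kappa_*\tau)\widetilde{\mathcal F}_\tau$ has $A$-coefficient $\frac14+\frac{\kappa_*\tau}2$. Only the relation $E\ge\kappa_*\tau A$, which is exactly the coercivity above, rules out states with $E\ll A$. For $\kappa_*\tau\le\frac32$ your first bound $2\widetilde{\mathcal F}_\tau$ already suffices on its own, so the averaging never does any work. With coercivity the paper closes the dissipation estimate in one line: $\widetilde{\mathcal F}_\tau\le\|g\|_{\dot H^{-1}}\|u_t^\tau\|_{L^2}\le(\widetilde{\mathcal F}_\tau/a_\tau)^{1/2}\|u_t^\tau\|_{L^2}$, so $\|u_t^\tau\|_{L^2}^2\ge a_\tau\widetilde{\mathcal F}_\tau$. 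Alternatively, your own pointwise bound $g\ell\ge g^2/C_*$ gives $B\ge2\kappa_*\tau A$, and then $(A+B)^2/A\ge(1+2\kappa_*\tau)\widetilde{\mathcal F}_\tau$. That repairs this step, but it does not help with the second issue, which needs a lower bound on $\operatorname{Ent}$ itself rather than on $\int g\ell$.

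The same inequality resolves what you call the main obstacle, and your fallbacks do not. Using $\|g_t\|_{\dot H^{-1}}^2\le2\widetilde{\mathcal F}_\tau(t)$ discards the factor $a_\tau$. Instead use
\[
  \|g_t\|_{\dot H^{-1}}^2\le\frac{\widetilde{\mathcal F}_\tau(t)}{a_\tau}
  \le\frac{\widetilde{\mathcal F}_\tau(0)}{a_\tau}\,e^{-c_*a_\tau t}.
\]
Since $\widetilde{\mathcal F}_\tau(0)\le C(1+\tau)$, the prefactor is bounded uniformly in $\tau$. Your own estimate gives this bound; the paper uses $\widetilde{\mathcal F}_\tau(0)\le\mathcal F_\tau(\rho_0)$, because $\mathcal F_\tau(\rho_\infty^\tau)\ge0$. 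By contrast, from $\|g_t\|_{\dot H^{-1}}^2\le\min\{C,\,C\tau e^{-c_*a_\tau t}\}$ you cannot deduce $C'e^{-c_*a_\tau t}$ with $C'$ independent of $\tau$. At $t=\log\tau/(c_*a_\tau)$ the target is $C'/\tau$, while you only know an $O(1)$ bound. The same computation rules out any rate of the form $c(\frac12+\kappa'\tau)$ with $\kappa'>0$. So neither the initial-layer argument nor a smaller exponent yields \eqref{eq:appD-Hminus-accelerated-relaxation}. Moreover, without the accelerated rate for large $\tau$, even \eqref{eq:hminus-relaxation} does not follow from your bounds. Finally, it is precisely this accelerated, $\tau$-uniform form that Proposition~\ref{prop:positive-temperature-halfline} needs to keep $\int_0^\infty\|\nabla u_t^\tau\|_{L^\infty}\,dt$ bounded uniformly in $\tau$.
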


\begin{proof}
We first show that the gap controls the squared Coulomb distance with
a coefficient that grows with $\tau$.  Let
$g_t^\tau:=\rho_t^\tau-\rho_\infty^\tau$, which has mean zero.
For $h(r)=r\log r$, the bound $h''(r)=1/r\ge C_*^{-1}$ on
$[c_*,C_*]$ and Proposition~\ref{prop:maximum-principle} give
\begin{equation}
  \operatorname{Ent}(\rho_t^\tau\mid\rho_\infty^\tau)
  \ge\frac{1}{2C_*}\|g_t^\tau\|_{L^2}^2
  \ge\kappa_*\|g_t^\tau\|_{\dot H^{-1}}^2.
  \label{eq:equilibrium-entropy-lower-bound}
\end{equation}
The last step is Poincar\'e's inequality for mean-zero functions on
the unit torus.  Together with~\eqref{eq:exact-free-energy-gap}, it
yields $\dot H^{-1}$ coercivity of the recentered free energy
\begin{equation}
  \widetilde{\mathcal F}_\tau(t)
  \ge a_\tau\|g_t^\tau\|_{\dot H^{-1}}^2,
  \qquad a_\tau:=\frac12+\kappa_*\tau.
  \label{eq:appD-gap-coercivity}
\end{equation}
We next relate the dissipation to the same gap.  Define the
recentered potential
\[
  \varphi_t^\tau
  :=K_1*g_t^\tau
     +\tau\log\frac{\rho_t^\tau}{\rho_\infty^\tau}.
\]
so that equilibrium identity~\eqref{eq:stationary-euler-lagrange} implies
$\nabla\varphi_t^\tau=u_t^\tau$.  Pairing this potential with
$g_t^\tau$ produces the sum of the two relative entropies:
\begin{align}
  \int_{\T^d}g_t^\tau\varphi_t^\tau\,dx
  &=\|g_t^\tau\|_{\dot H^{-1}}^2
    +\tau\bigl[
       \operatorname{Ent}(\rho_t^\tau\mid\rho_\infty^\tau)
       +\operatorname{Ent}(\rho_\infty^\tau\mid\rho_t^\tau)
     \bigr]
   \ge\widetilde{\mathcal F}_\tau(t).
  \label{eq:potential-gap-pairing}
\end{align}
Since $g_t^\tau$ has zero mean, $\dot H^{-1}$--$\dot H^1$
duality applies to this pairing after subtracting the spatial mean
of $\varphi_t^\tau$.  Therefore
\begin{equation}
  \widetilde{\mathcal F}_\tau(t)
  \le\|g_t^\tau\|_{\dot H^{-1}}
       \|\nabla\varphi_t^\tau\|_{L^2}
  =\|g_t^\tau\|_{\dot H^{-1}}\|u_t^\tau\|_{L^2}.
  \label{eq:potential-duality-bound}
\end{equation}
Combining this with~\eqref{eq:appD-gap-coercivity}, and using
\eqref{eq:maximum-principle} in the dissipation identity, gives
\begin{equation}
  \int_{\T^d}\rho_t^\tau|u_t^\tau|^2dx
  \ge c_*\|u_t^\tau\|_{L^2}^2
  \ge c_*a_\tau\widetilde{\mathcal F}_\tau(t).
  \label{eq:equilibrium-dissipation-controls-gap}
\end{equation}
Thus $\widetilde{\mathcal F}_\tau'(t)
\le-c_*a_\tau\widetilde{\mathcal F}_\tau(t)$.  Integration proves
\eqref{eq:appD-gap-accelerated-relaxation}. Finally, we check that the initial factor in the Coulomb-distance
bound does not grow with $\tau$.  The minimum free energy is
nonnegative since both the Coulomb energy and
$\int\rho\log\rho$ are nonnegative on the unit torus.  Hence
\begin{equation}
  \frac{\widetilde{\mathcal F}_\tau(0)}{a_\tau}
  \le\frac{\mathcal F_\tau(\rho_0)}{a_\tau}
  \le\|\rho_0-\pi\|_{\dot H^{-1}}^2
    +\kappa_*^{-1}\int_{\T^d}\rho_0\log\rho_0\,dx.
  \label{eq:uniform-initial-gap-factor}
\end{equation}
The right side depends only on the initial and target densities.
Using~\eqref{eq:appD-gap-coercivity} and
\eqref{eq:appD-gap-accelerated-relaxation} now yields
\[
  \|g_t^\tau\|_{\dot H^{-1}}^2
  \le\frac{\widetilde{\mathcal F}_\tau(t)}{a_\tau}
  \le C e^{-c_*a_\tau t}.
\]
Taking square roots proves the first density estimate.  The second
follows from $a_\tau\ge\frac12$.
\end{proof}

\subsection{Low temperature and the deterministic endpoint}
\label{subsec:low-temperature}

We now turn the common $\dot H^{-1}$ relaxation of
Proposition~\ref{prop:hminus-exp} into a bound on the velocity gradient
when the temperature is small.  Write
\[
  \widetilde\rho_t^\tau:=\rho_t^\tau-\rho_\infty^\tau,
  \qquad \rho_\infty^0:=\pi.
\]
The equilibrium centered identity
\eqref{eq:equilibrium-centered-deformation} separates the quantity we
must control into a Coulomb term and a thermal term:
\begin{equation*}
  \|\nabla u_t^\tau\|_{L^\infty}
  \le
  \|\nabla^2K_1*\widetilde\rho_t^\tau\|_{L^\infty}
  +\tau\|\nabla^2\log\rho_t^\tau
          -\nabla^2\log\rho_\infty^\tau\|_{L^\infty}.
\end{equation*}
By Lemma~\ref{lem:periodic-schauder}, the first term is bounded by
$C\|\widetilde\rho_t^\tau\|_{C^{0,1/2}}$.  Once we have a uniform
$C^2$ bound, the positive density bounds and the identity
\[
  \nabla^2\log\rho
  =\rho^{-1}\nabla^2\rho
   -\rho^{-2}\nabla\rho\otimes\nabla\rho
\]
bound the second term by
$C\tau\|\widetilde\rho_t^\tau\|_{C^2}$.  We will therefore obtain
$L^\infty$ decay, a uniform first derivative bound, and then $C^2$
decay.  Proposition~\ref{prop:tau0-endpoint} will pass these estimates
to $\tau=0$.

Fix $\tau_0>0$, to be reduced at the absorption steps below.  Its final
value and all constants in this subsection depend only on the data in
Assumption~\ref{assump:standing-data}.  For $0<\tau\le\tau_0$, set
\[
  M_r^\tau(t)
  :=\max_{|\beta|=r}
       \|\partial^\beta\widetilde\rho_t^\tau\|_{L^\infty},
  \qquad r=0,1,2.
\]
Lemma~\ref{lem:hamilton} makes these maximum norms locally Lipschitz
in time.  At an active spatial maximum, the transport term vanishes
and diffusion has a favorable sign.  We use $D^+M(t)$ for the upper
right Dini derivative of a function $M$.

To see the source of damping, subtract the stationary equation from
\eqref{eq:intro-mean-field-pde}.  By
\eqref{eq:stationary-euler-lagrange}, its Coulomb drift is
$-\tau\nabla\log\rho_\infty^\tau$.  Set
\[
  b_t^\tau:=\nabla K_1*\widetilde\rho_t^\tau
            -\tau\nabla\log\rho_\infty^\tau.
\]
The density difference then solves
\begin{align}
  \partial_t\widetilde\rho^\tau
  &={}
  \tau\Delta\widetilde\rho^\tau
  +b_t^\tau\cdot\nabla\widetilde\rho^\tau
  +\nabla\rho_\infty^\tau\cdot
    \nabla K_1*\widetilde\rho^\tau
  \notag\\
  &\quad
  -\rho^\tau\widetilde\rho^\tau
  -(\rho_\infty^\tau-\pi)\widetilde\rho^\tau.
  \label{eq:equilibrium-error-equation}
\end{align}
Here $-\rho^\tau\widetilde\rho^\tau$ supplies damping since
$\rho^\tau\ge c_*$ by \eqref{eq:maximum-principle}.  We first show
that the other zeroth order coefficient,
$\rho_\infty^\tau-\pi$, is small.

The following expansion also identifies the thermal bias.  It is the
periodic counterpart of the first order bulk expansion in
\cite[Eqs.~(1.28)--(1.30) and Proposition~6.1]{ArmstrongSerfaty2022}.
\begin{proposition}[Low temperature thermal bias]
\label{prop:stat-bias}
There exists $C<\infty$ such that, for every $\tau\ge0$,
\begin{align}
  \|\rho_\infty^\tau-\pi\|_{C^2}
  &\le C\tau,
  \label{eq:stat-bias-linear}\\
  \|\rho_\infty^\tau-\pi-\tau\Delta\log\pi\|_{L^\infty}
  &\le C\tau^2.
  \label{eq:stat-bias-expansion}
\end{align}
\end{proposition}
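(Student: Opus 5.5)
The plan is to work directly with the local stationary equation \eqref{eq:stationary-local-equation} for $q_\tau=\log\rho_\infty^\tau$, namely $e^{q_\tau}=\pi+\tau\Delta q_\tau$. This gives the exact identity
\[
  \rho_\infty^\tau-\pi=\tau\Delta q_\tau .
\]
Both estimates then reduce to bounds on derivatives of $q_\tau$ that do not blow up as $\tau\downarrow0$. The case $\tau=0$ is trivial since $\rho_\infty^0=\pi$, so we fix $\tau>0$ throughout.

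For \eqref{eq:stat-bias-linear}, take the $C^2$ norm of the identity:
\[
  \|\rho_\infty^\tau-\pi\|_{C^2}=\tau\|\Delta q_\tau\|_{C^2}\le C\tau\|q_\tau\|_{C^4}.
\]
Proposition~\ref{prop:stationary-all-orders} bounds $\|q_\tau\|_{C^4}$ uniformly in $\tau\ge0$, which closes this estimate. The only subtlety is that this needs $C^4$ control of $q_\tau$, exactly what \eqref{eq:uniform-stationary-regularity} provides; this is presumably why $C^4$ data are assumed.

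For \eqref{eq:stat-bias-expansion}, write
\[
  \rho_\infty^\tau-\pi-\tau\Delta\log\pi=\tau\,\Delta(q_\tau-\log\pi),
\]
so it suffices to show $\|\Delta(q_\tau-\log\pi)\|_{L^\infty}\le C\tau$. A crude route is interpolation: first bound $\|q_\tau-\log\pi\|_{L^\infty}$, then interpolate against the uniform $C^4$ bound. I would avoid this, since interpolation loses powers of $\tau$ and would only give $\tau^{1+\theta}$.

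Instead, the main step is to apply the resolvent maximum principle \eqref{eq:stationary-resolvent-bound} to the differentiated equation. Set $w:=\Delta q_\tau$ and apply $\Delta$ to \eqref{eq:stationary-local-equation}:
\[
  (\rho_\infty^\tau-\tau\Delta)w=\Delta\pi-\rho_\infty^\tau|\nabla q_\tau|^2 .
\]
Put $w_0:=\Delta\log\pi$ and use $\Delta\pi=\pi(w_0+|\nabla\log\pi|^2)$. Then
\[
  (\rho_\infty^\tau-\tau\Delta)(w-w_0)=(\pi-\rho_\infty^\tau)w_0+\pi|\nabla\log\pi|^2-\rho_\infty^\tau|\nabla q_\tau|^2+\tau\Delta w_0 .
\]
Each term on the right is $O(\tau)$ in $L^\infty$:
\begin{itemize}
  \item $\|\pi-\rho_\infty^\tau\|_{L^\infty}\le C\tau$ by \eqref{eq:stat-bias-linear}, and $w_0$ is bounded;
  \item $\tau\Delta w_0$ involves four derivatives of $\log\pi$, so it is bounded by $C\tau$ using $\pi\in C^4$ and $\pi\ge c_\pi$;
  \item the gradient-squared difference needs $\|\nabla q_\tau-\nabla\log\pi\|_{L^\infty}\le C\tau$.
\end{itemize}

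The gradient bound in the last item is the expected obstacle. I would get it from the same mechanism one derivative lower. Since $\rho_\infty^\tau=e^{q_\tau}$ and $\pi=e^{\log\pi}$, the mean value theorem gives $e^{q_\tau}-\pi=\theta\,(q_\tau-\log\pi)$ with $\theta\in[c_\pi,C_\pi]$. This yields $\|q_\tau-\log\pi\|_{C^2}\le C\tau$ directly from \eqref{eq:stat-bias-linear}, using the $C^2$ bounds of $\theta$ and $1/\theta$ that follow from Proposition~\ref{prop:stationary-all-orders}. Alternatively, one can apply \eqref{eq:stationary-resolvent-bound} to $\partial_i(q_\tau-\log\pi)$.

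With all three source terms bounded by $C\tau$, \eqref{eq:stationary-resolvent-bound} gives $\|w-w_0\|_{L^\infty}\le c_\pi^{-1}C\tau$. Multiplying by $\tau$ yields \eqref{eq:stat-bias-expansion}.
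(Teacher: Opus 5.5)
Your proof is correct. The first estimate is proved exactly as in the paper, from $\rho_\infty^\tau-\pi=\tau\Delta q_\tau$ and the uniform $C^4$ bound of Proposition~\ref{prop:stationary-all-orders}.

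For \eqref{eq:stat-bias-expansion}, the paper's entire proof is the step you use only as an auxiliary input. Write $q_\tau-\log\pi=\theta^{-1}(\rho_\infty^\tau-\pi)$ with the integral-form quotient $\theta=\int_0^1 e^{\log\pi+s(q_\tau-\log\pi)}\,ds\in[c_\pi,C_\pi]$. The paper uses the equivalent $\int_0^1(\pi+s(\rho_\infty^\tau-\pi))^{-1}\,ds$, and the integral form is what makes $\theta$ genuinely $C^2$, unlike a pointwise intermediate value. This gives $\|q_\tau-\log\pi\|_{C^2}\le C\tau$, hence $\|\Delta(q_\tau-\log\pi)\|_{L^\infty}\le C\tau$. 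Multiplying by $\tau$ finishes the proof immediately.

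So your concern that a direct route must lose powers of $\tau$ does not apply. The resolvent maximum-principle computation for $w=\Delta q_\tau$ is valid: the source terms are indeed $O(\tau)$, and $w-w_0\in H^1$ by the fixed-$\tau$ bootstrap. It is, however, redundant. It uses the $C^1$ part of a $C^2$ bound that already settles the claim. It also saves no regularity, since the $C^4$ bound on $q_\tau$ is needed for \eqref{eq:stat-bias-linear} anyway.
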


\begin{proof}
The claim at $\tau=0$ follows from $\rho_\infty^0=\pi$.  For
$\tau>0$, we first express the bias using the stationary equation.
Taking the Laplacian of \eqref{eq:stationary-euler-lagrange} and using
$-\Delta K_1*f=f$ for mean zero $f$ gives
\begin{equation}
  \rho_\infty^\tau-\pi
  =\tau\Delta\log\rho_\infty^\tau.
  \label{eq:stationary-poisson-boltzmann}
\end{equation}
Proposition~\ref{prop:stationary-all-orders} bounds
$\log\rho_\infty^\tau$ in $C^4$, uniformly in $\tau$.  Equation
\eqref{eq:stationary-poisson-boltzmann} therefore gives
\[
  \|\rho_\infty^\tau-\pi\|_{C^2}
  \le\tau\|\log\rho_\infty^\tau\|_{C^4}
  \le C\tau.
\]

To obtain the first order expansion, we must replace
$\log\rho_\infty^\tau$ in
\eqref{eq:stationary-poisson-boltzmann} by $\log\pi$.  The identity
\[
  \log\rho_\infty^\tau-\log\pi
  =(\rho_\infty^\tau-\pi)
    \int_0^1
    \frac{ds}{\pi+s(\rho_\infty^\tau-\pi)}
\]
shows that this replacement costs $O(\tau)$ in $C^2$.  Indeed, the
denominator is at least $c_\pi$, while
\eqref{eq:uniform-stationary-regularity} bounds its derivatives
through order two.  Substituting into
\eqref{eq:stationary-poisson-boltzmann} now yields
\[
  \|\rho_\infty^\tau-\pi-\tau\Delta\log\pi\|_{L^\infty}
  =\tau\|\Delta(\log\rho_\infty^\tau-\log\pi)\|_{L^\infty}
  \le C\tau^2.
\]
\end{proof}

We next use the $O(\tau)$ bias to isolate the only term that cannot
yet be absorbed, the Coulomb field
$\nabla K_1*\widetilde\rho_t^\tau$.
\begin{lemma}[$L^\infty$ differential inequality]
\label{lem:linfty-differential-inequality}
For every $\tau>0$ and $t\ge0$, there is a constant $C$ independent
of $\tau$ and $t$ such that
\begin{equation}
  D^+\|\widetilde\rho_t^\tau\|_{L^\infty}
  +(c_*-C\tau)\|\widetilde\rho_t^\tau\|_{L^\infty}
  \le C\|\nabla K_1*\widetilde\rho_t^\tau\|_{L^\infty}.
  \label{eq:linfty-differential-inequality}
\end{equation}
\end{lemma}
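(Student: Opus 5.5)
We would apply Hamilton's maximum lemma (Lemma~\ref{lem:hamilton}) to $\pm\widetilde\rho_t^\tau$ and evaluate the error equation \eqref{eq:equilibrium-error-equation} at an active spatial extremum. Set $M(t):=\|\widetilde\rho_t^\tau\|_{L^\infty}=\max\{\max_x\widetilde\rho_t^\tau,\,-\min_x\widetilde\rho_t^\tau\}$. The solution is classical for $\tau>0$ by Lemma~\ref{lem:positive-temperature-wp}, so $M$ is locally Lipschitz. Lemma~\ref{lem:hamilton} then says that $D^+M(t)$ is bounded by $\sup\partial_t(\pm\widetilde\rho^\tau)(t,x_t)$, where the supremum runs over points $x_t$ with $\pm\widetilde\rho_t^\tau(x_t)=M(t)$. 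By symmetry it suffices to treat a point $x_t$ where $\widetilde\rho_t^\tau(x_t)=M(t)>0$; the minimum case is the same with signs reversed, and the case $M(t)=0$ is trivial.

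At such a point we go through \eqref{eq:equilibrium-error-equation} term by term.
\begin{itemize}
\item The diffusion term satisfies $\tau\Delta\widetilde\rho^\tau(x_t)\le0$.
\item The transport term $b_t^\tau\cdot\nabla\widetilde\rho^\tau(x_t)$ vanishes because $\nabla\widetilde\rho_t^\tau(x_t)=0$. The drift $b_t^\tau$ is bounded for fixed $t$, and no control of it is needed.
\item The damping term gives $-\rho^\tau\widetilde\rho^\tau(x_t)\le -c_*M(t)$, by the lower bound $\rho_t^\tau\ge c_*$ of Proposition~\ref{prop:maximum-principle} and $\widetilde\rho_t^\tau(x_t)=M(t)\ge0$.
\item The bias term gives $-(\rho_\infty^\tau-\pi)\widetilde\rho^\tau(x_t)\le\|\rho_\infty^\tau-\pi\|_{L^\infty}M(t)\le C\tau M(t)$, by Proposition~\ref{prop:stat-bias}.
\item The remaining source term satisfies $|\nabla\rho_\infty^\tau\cdot\nabla K_1*\widetilde\rho^\tau(x_t)|\le\|\nabla\rho_\infty^\tau\|_{L^\infty}\|\nabla K_1*\widetilde\rho_t^\tau\|_{L^\infty}$. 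The first factor is bounded uniformly in $\tau$ by Proposition~\ref{prop:stationary-all-orders}.
\end{itemize}
Summing these bounds gives
\[
  D^+M(t)\le-(c_*-C\tau)M(t)+C\|\nabla K_1*\widetilde\rho_t^\tau\|_{L^\infty},
\]
which is \eqref{eq:linfty-differential-inequality}. The constant $C$ is the larger of the two $\tau$-uniform constants above, so it is independent of $\tau$ and $t$.

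There is no real analytic obstacle here. The only care needed is at the minimum point: there $\widetilde\rho^\tau(x_t)=-M(t)$, so $-\rho^\tau\widetilde\rho^\tau=\rho^\tau M(t)\ge c_*M(t)$, and this still gives damping for $-\widetilde\rho^\tau$. We must also apply Hamilton's lemma in the form that allows several active extremal points; the bound above holds uniformly at every such point, so this causes no difficulty. The genuinely hard part, bounding $\|\nabla K_1*\widetilde\rho_t^\tau\|_{L^\infty}$ with a small coefficient, is deferred to Lemma~\ref{lem:potential-estimate} and is not part of this statement.
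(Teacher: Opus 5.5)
Your proposal is correct and matches the paper's proof essentially step for step. Both apply Lemma~\ref{lem:hamilton} at a signed extremum of $\widetilde\rho_t^\tau$, where diffusion is nonpositive and transport vanishes. Both then bound the damping by $\rho_t^\tau\ge c_*$, the bias by \eqref{eq:stat-bias-linear}, and the source term by the uniform stationary gradient bound of Proposition~\ref{prop:stationary-all-orders}.
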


\begin{proof}
We evaluate the error equation
\eqref{eq:equilibrium-error-equation} where
$|\widetilde\rho_t^\tau|$ is largest.  By
Lemma~\ref{lem:hamilton}, choose a sign $\sigma\in\{-1,1\}$
and a maximizing point $x_t$ so that
\[
  \sigma\widetilde\rho_t^\tau(x_t)
  =\|\widetilde\rho_t^\tau\|_{L^\infty},
  \quad
  \nabla\widetilde\rho_t^\tau(x_t)=0,
  \quad
  \sigma\Delta\widetilde\rho_t^\tau(x_t)\le0.
\]
The first two terms on the right of
\eqref{eq:equilibrium-error-equation} are therefore nonpositive or
zero after multiplication by $\sigma$.  The remaining terms give
\[
  D^+\|\widetilde\rho_t^\tau\|_{L^\infty}
  \le
  \bigl(\|\rho_\infty^\tau-\pi\|_{L^\infty}-c_*\bigr)
       \|\widetilde\rho_t^\tau\|_{L^\infty}
  +\|\nabla\rho_\infty^\tau\|_{L^\infty}
       \|\nabla K_1*\widetilde\rho_t^\tau\|_{L^\infty}.
\]
The density lower bound is \eqref{eq:maximum-principle}.
Proposition~\ref{prop:stationary-all-orders} bounds the stationary
gradient, and \eqref{eq:stat-bias-linear} bounds the bias by $C\tau$.
These estimates give \eqref{eq:linfty-differential-inequality}.
\end{proof}

A direct convolution bound would replace the Coulomb field in
\eqref{eq:linfty-differential-inequality} by a fixed multiple of
$\|\widetilde\rho_t^\tau\|_{L^\infty}$.  That multiple need not be
smaller than $c_*$.  The following estimate instead uses the weak
decay from Proposition~\ref{prop:hminus-exp} to make the Coulomb
field a small multiple of the maximum norm plus a decaying term.
Its proof in Subsection~\ref{subsec:appC-coulomb-field-estimates}
adapts the near and far splitting of
\cite[proof of Lemma~A.4(i)]{ChizatColomboColomboFernandezReal2026}.

\begin{lemma}[Near and far Coulomb field estimate]
\label{lem:potential-estimate}
For every mean zero
$f\in L^\infty(\T^d)\cap\dot H^{-1}(\T^d)$,
\begin{equation}
  \|\nabla K_1*f\|_{L^\infty}
  \le
  C\|f\|_{L^\infty}^{\frac d{d+2}}
    \|f\|_{\dot H^{-1}}^{\frac2{d+2}}
  +C\|f\|_{\dot H^{-1}}.
  \label{eq:potential-estimate}
\end{equation}
For $d=2$, the first term is
$C(\|f\|_{L^\infty}\|f\|_{\dot H^{-1}})^{1/2}$.
\end{lemma}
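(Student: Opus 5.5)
Split the Coulomb field with a smooth cutoff at a scale $r\in(0,1/4]$ to be optimized. On the torus write $\nabla K_1=\nabla\Phi_d+\nabla R$, where $\Phi_d$ is the Euclidean fundamental solution, localized near the origin, and $R$ is smooth. Let $\chi_r(x)=\chi(x/r)$ be a smooth bump equal to $1$ on $B_r$ and supported in $B_{2r}$. Decompose
\[
  \nabla K_1*f=(\chi_r\nabla K_1)*f+((1-\chi_r)\nabla K_1)*f .
\]
For the near part we use only $L^\infty$. Since $|\nabla K_1(x)|\le C|x|^{1-d}$ near $0$,
\[
  \|(\chi_r\nabla K_1)*f\|_{L^\infty}\le \|f\|_{L^\infty}\int_{B_{2r}}C|x|^{1-d}\,dx\le Cr\|f\|_{L^\infty}.
\]

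The far part must be controlled by the weak norm. Pair with $f$ through $\dot H^{-1}$--$\dot H^1$ duality: for fixed $x$, because $f$ has mean zero,
\[
  ((1-\chi_r)\nabla K_1)*f(x)=\int f(y)\,k_r(x-y)\,dy,\qquad k_r:=(1-\chi_r)\nabla K_1,
\]
so that
\[
  |((1-\chi_r)\nabla K_1)*f(x)|\le \|f\|_{\dot H^{-1}}\|\nabla k_r\|_{L^2}.
\]
On the support of $1-\chi_r$ we have $|\nabla k_r|\lesssim|x|^{-d}+r^{-1}|x|^{1-d}\mathbf 1_{r\le|x|\le2r}$. Hence
\[
  \|\nabla k_r\|_{L^2}^2\lesssim\int_{|x|\ge r}|x|^{-2d}\,dx+C\lesssim r^{-d}+C ,
\]
where the constant $C$ absorbs the smooth periodic remainder $\nabla^2R$ and the region away from the origin. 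This gives
\[
  \|((1-\chi_r)\nabla K_1)*f\|_{L^\infty}\le C\bigl(r^{-d/2}+1\bigr)\|f\|_{\dot H^{-1}} .
\]

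Combining the two parts,
\[
  \|\nabla K_1*f\|_{L^\infty}\le C r\|f\|_{L^\infty}+C r^{-d/2}\|f\|_{\dot H^{-1}}+C\|f\|_{\dot H^{-1}} .
\]
Balance the first two terms with $r=(\|f\|_{\dot H^{-1}}/\|f\|_{L^\infty})^{2/(d+2)}$. This gives
\[
  r\|f\|_{L^\infty}=\|f\|_{L^\infty}^{d/(d+2)}\|f\|_{\dot H^{-1}}^{2/(d+2)},
\]
which is exactly the claimed exponent, and for $d=2$ it reduces to $(\|f\|_{L^\infty}\|f\|_{\dot H^{-1}})^{1/2}$.

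The optimal $r$ must lie in $(0,1/4]$. We have $\|f\|_{\dot H^{-1}}\le C\|f\|_{L^2}\le C\|f\|_{L^\infty}$ on the unit torus, so the optimal $r$ is bounded above by a constant. If it exceeds $1/4$, take $r=1/4$ instead. Then $r^{-d/2}$ is bounded, and the near term satisfies
\[
  \|f\|_{L^\infty}\lesssim\|f\|_{L^\infty}^{d/(d+2)}\|f\|_{\dot H^{-1}}^{2/(d+2)} ,
\]
because in this regime $\|f\|_{L^\infty}\lesssim\|f\|_{\dot H^{-1}}$. Either way the bound \eqref{eq:potential-estimate} follows. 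The case $f=0$ is trivial.

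The main obstacle is the far-field $L^2$ computation of $\nabla k_r$ on the torus. This requires a clean decomposition $K_1=\Phi_d\eta+R$, with $\eta$ a cutoff and $R\in C^\infty(\T^d)$, so that the derivative bounds $|\nabla^jK_1(x)|\le C|x|^{2-d-j}$ hold uniformly near the origin, including the logarithmic case $d=2$ for $j\ge1$. I would take this from the standard periodic Green's function expansion and check that the constant terms contribute only the $C\|f\|_{\dot H^{-1}}$ summand.
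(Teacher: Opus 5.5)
Your proposal is correct and follows the paper's argument: split $\nabla K_1$ with a cutoff $\chi_r$, bound the near part by $Cr\|f\|_{L^\infty}$, bound the far part by $\|f\|_{\dot H^{-1}}\|\nabla((1-\chi_r)\nabla K_1)\|_{L^2}\le C r^{-d/2}\|f\|_{\dot H^{-1}}$ using $\dot H^{-1}$--$\dot H^1$ duality and the mean-zero condition, then choose $r=\min\{1/8,(\|f\|_{\dot H^{-1}}/\|f\|_{L^\infty})^{2/(d+2)}\}$. The differences are cosmetic: the paper caps $r$ at $1/8$ instead of $1/4$, so that $2r\le 1/4$ stays within its stated kernel bounds, and in the capped case it bounds everything directly by $C\|f\|_{\dot H^{-1}}$.
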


\begin{proof}
See Subsection~\ref{subsec:appC-coulomb-field-estimates}.  There the
kernel is split at a radius $r$, the near part is bounded by
$Cr\|f\|_{L^\infty}$, and the far part is bounded by
$C(1+r^{-d/2})\|f\|_{\dot H^{-1}}$.  Optimizing $r$ gives
\eqref{eq:potential-estimate}.
\end{proof}

We can now absorb the field term in the maximum norm inequality.  This
is the first place where $\tau$ must be restricted to a small range.
\begin{proposition}[Uniform $L^\infty$ relaxation]
\label{prop:linfty-exp}
Set $\gamma_1:=c_*/8$.  There exists $C<\infty$ such that, for
$0<\tau\le\tau_0$ and $t\ge0$,
\begin{equation}
  \|\rho_t^\tau-\rho_\infty^\tau\|_{L^\infty}
  \le Ce^{-\gamma_1t}.
  \label{eq:linfty-relaxation}
\end{equation}
\end{proposition}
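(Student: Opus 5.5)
We combine the $L^\infty$ differential inequality of Lemma~\ref{lem:linfty-differential-inequality} with the near--far estimate of Lemma~\ref{lem:potential-estimate} and the $\dot H^{-1}$ decay of Proposition~\ref{prop:hminus-exp}, then close by a scalar comparison argument. Write $M(t):=\|\widetilde\rho_t^\tau\|_{L^\infty}$ and $h(t):=\|\widetilde\rho_t^\tau\|_{\dot H^{-1}}$. Since $\widetilde\rho_t^\tau$ has mean zero and is bounded, Lemma~\ref{lem:potential-estimate} applies. It gives
\[
  D^+M+(c_*-C\tau)M\le C M^{\frac d{d+2}}h^{\frac2{d+2}}+Ch .
\]

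\emph{Absorption.} We apply Young's inequality with exponents $\frac{d+2}{d}$ and $\frac{d+2}{2}$ to the first term on the right. This gives $CM^{d/(d+2)}h^{2/(d+2)}\le \frac{c_*}{4}M+C'h$, with $C'$ depending only on $c_*$, $d$ and the constant $C$. We then reduce $\tau_0$ so that $C\tau_0\le c_*/2$. For $0<\tau\le\tau_0$ this yields
\[
  D^+M+\frac{c_*}{4}M\le C h(t)\le C e^{-c_*t/4},
\]
where the last step is \eqref{eq:hminus-relaxation}, whose constant is independent of $\tau$.

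\emph{Comparison.} The decay rate of the forcing equals the damping rate $c_*/4$. Integrating the Dini inequality against $e^{c_*t/4}$ is legitimate, because $M$ is locally Lipschitz by Lemma~\ref{lem:hamilton}. It gives $M(t)\le e^{-c_*t/4}\bigl(M(0)+Ct\bigr)$. The factor $t$ is absorbed by $te^{-c_*t/8}\le C$, which yields $M(t)\le Ce^{-c_*t/8}=Ce^{-\gamma_1t}$. This explains the choice $\gamma_1=c_*/8$. For the initial datum, $M(0)\le\|\rho_0\|_{L^\infty}+\|\rho_\infty^\tau\|_{L^\infty}\le 2C_*$ by Proposition~\ref{prop:thermal-equilibrium-family}, which is uniform in $\tau$.

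\emph{Main obstacle.} The delicate point is keeping every constant independent of $\tau$. This requires three things. First, the coefficient of $M$ in Lemma~\ref{lem:linfty-differential-inequality} must use only $\tau\le\tau_0$ and the $C^1$ stationary bound of Proposition~\ref{prop:stationary-all-orders}. Second, the $\dot H^{-1}$ constant from Proposition~\ref{prop:hminus-exp} must be uniform in $\tau$, which is true by \eqref{eq:uniform-initial-gap-factor}. Third, the Young's inequality must be arranged so that the leftover damping is a fixed fraction of $c_*$. A minor technical check is that the Dini-derivative Gr\"onwall argument is valid for locally Lipschitz $M$. This holds because $M$ is absolutely continuous and $D^+M$ agrees with $M'$ almost everywhere.
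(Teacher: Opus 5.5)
Your proposal is correct and follows the paper's proof essentially step for step: you reduce $\tau_0$ so that $C\tau_0\le c_*/2$, absorb the near--far term into a quarter of the damping by Young's inequality, use the $\tau$-uniform decay \eqref{eq:hminus-relaxation}, and integrate the resonant scalar inequality, where the extra factor $t$ lowers the rate from $c_*/4$ to $\gamma_1=c_*/8$. One small correction to your ``main obstacle'' paragraph: the $\tau$-dependent loss in the coefficient of $M$ comes from the bias bound \eqref{eq:stat-bias-linear}, whereas the $C^1$ stationary bound enters through the coefficient of the Coulomb field.
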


\begin{proof}
Our goal is to close \eqref{eq:linfty-differential-inequality}
using Lemma~\ref{lem:potential-estimate}.  Reduce $\tau_0$ until
$C\tau_0\le c_*/2$ in that differential inequality.  With
$A(t):=\|\widetilde\rho_t^\tau\|_{L^\infty}$ and
$B(t):=\|\widetilde\rho_t^\tau\|_{\dot H^{-1}}$, we obtain
\[
  D^+A(t)+\frac{c_*}{2}A(t)
  \le CA(t)^{\frac d{d+2}}B(t)^{\frac2{d+2}}+CB(t).
\]
Young's inequality, with a coefficient chosen below $c_*/4$,
absorbs the first term on the right into the damping and a multiple
of $B(t)$.  The common relaxation estimate
\eqref{eq:hminus-relaxation} then yields
\[
  D^+A(t)+\frac{c_*}{4}A(t)
  \le CB(t)\le Ce^{-(c_*/4)t}.
\]
Proposition~\ref{prop:stationary-all-orders} and the standing
assumption on $\rho_0$ bound $A(0)$ independently of $\tau$.
Integrating the scalar inequality gives
\[
  A(t)
  \le e^{-(c_*/4)t}A(0)+Ct e^{-(c_*/4)t}
  \le Ce^{-(c_*/8)t},
\]
which is \eqref{eq:linfty-relaxation}.
\end{proof}

To estimate a first derivative of the density difference, we must
control $\nabla^2K_1*\widetilde\rho_t^\tau$.  This operator does not
map $L^\infty$ to $L^\infty$, so the preceding decay cannot bound it
directly.  The next lemma pays only a logarithm of the first
derivative.  It is a $W^{1,\infty}$ formulation of
\cite[Lemma~A.5(i)]{ChizatColomboColomboFernandezReal2026}.

\begin{lemma}[Logarithmic Newtonian estimate]
\label{lem:yudovich}
If $f\in W^{1,\infty}(\T^d)$ has mean zero and is not identically
zero, then
\begin{equation}
  \|\nabla^2K_1*f\|_{L^\infty}
  \le C\|f\|_{L^\infty}
    \left[1+\log_+\left(
      \frac{\|\nabla f\|_{L^\infty}}{\|f\|_{L^\infty}}
    \right)\right].
  \label{eq:yudovich-newtonian-estimate}
\end{equation}
Both sides vanish when $f\equiv0$.
\end{lemma}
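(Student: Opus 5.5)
The plan is the classical Yudovich-type near–far splitting of the second-derivative kernel, done with a smooth cutoff on the torus. First I reduce to $f\not\equiv0$ and normalize. Set $L:=\|f\|_{L^\infty}$ and $G:=\|\nabla f\|_{L^\infty}$, and choose a radius $r:=\min\{r_0,\,L/G\}$, where $r_0<1/4$ is a fixed dimensional constant; take $r=r_0$ if $G=0$. On $\T^d$ the Coulomb kernel decomposes as $K_1=\chi\,\Phi+R$. Here $\Phi$ is the Euclidean Newtonian kernel, $\chi$ is a smooth cutoff equal to $1$ on $B_{r_0}$ and supported in $B_{2r_0}$, and $R\in C^\infty(\T^d)$, because $-\Delta K_1=\delta_0-1$ and $-\Delta\Phi=\delta_0$. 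The smooth remainder is harmless: $\|\nabla^2R*f\|_{L^\infty}\le C\|f\|_{L^1}\le CL$. So it suffices to bound $\nabla^2(\chi\Phi)*f$, which I interpret as a principal-value integral plus a delta term $-\frac1d\delta_{ij}f$ (bounded by $L$).

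Next I split $\chi\Phi$ at scale $r$. For the far part, where $r\le|y|\le 2r_0$, I use $|\nabla^2(\chi\Phi)(y)|\le C|y|^{-d}$. This gives a contribution of at most $CL\int_{r\le|y|\le2r_0}|y|^{-d}\,dy\le CL\,(1+\log(r_0/r))$. For the near part, $|y|<r$, the kernel $\partial_{ij}\Phi$ has mean zero on spheres, so the principal value can be rewritten as
\[
\int_{|y|<r}\partial_{ij}\Phi(y)\,\bigl(f(x-y)-f(x)\bigr)\,dy,
\]
which is bounded by $G\int_{|y|<r}|y|^{1-d}\,dy\le CGr\le CL$ by the choice of $r$. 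The truncation boundary at $|y|=r$ needs some care. If a sharp split is used, the principal value over the annulus is well defined and the cancellation statement is exact. Alternatively I smooth the split with a dyadic cutoff, so that boundary terms are bounded by $CL$.

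Finally, $\log(r_0/r)\le C+\log_+(G/L)$, and summing the three contributions gives \eqref{eq:yudovich-newtonian-estimate}.

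The main obstacle is justifying the periodic principal-value representation of $\nabla^2K_1*f$ for merely Lipschitz $f$: identifying the distributional Hessian with the PV integral plus the delta term. I would do this by mollifying $f$, applying the argument to smooth functions, where the identity is classical, and passing to the limit. The constants depend only on $\|f\|_{L^\infty}$ and $\|\nabla f\|_{L^\infty}$, which are stable under mollification, and the left side converges in distributions, so the bound is preserved by lower semicontinuity of the $L^\infty$ norm.
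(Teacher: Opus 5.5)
Your proposal is correct and follows the paper's argument in all essentials. It splits the kernel at radius $r=\min\{r_0,\|f\|_{L^\infty}/\|\nabla f\|_{L^\infty}\}$, subtracts $f(x)$ in the near part to get $Cr\|\nabla f\|_{L^\infty}$, bounds the far part by a logarithmic integral, and uses mollification to reach Lipschitz $f$.

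The only difference is bookkeeping. The paper keeps the periodic kernel with a smooth cutoff $\chi_r$ at the variable scale, and bounds the constant term $f(x)\langle\nabla^2K_1,\chi_r\rangle=-f(x)\int\nabla K_1\otimes\nabla\chi_r$ by integration by parts. You instead pass to the Euclidean kernel plus a smooth remainder, and use the exact spherical cancellation of $\partial_{ij}\Phi$ together with the explicit $-\frac1d\delta_{ij}f$ term.
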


\begin{proof}
See Subsection~\ref{subsec:appC-coulomb-field-estimates}
for the fixed-scale Hessian estimate and the choice of cutoff radius.
\end{proof}

The logarithm allows the differentiated maximum principle to close
without the positive temperature estimate
\eqref{eq:positive-temperature-W4}, whose constant can diverge as
$\tau\downarrow0$.
\begin{proposition}[Uniform Lipschitz bound]
\label{prop:uniform-lipschitz}
There exists $C<\infty$ such that
\begin{equation}
  \sup_{0<\tau\le\tau_0}\sup_{t\ge0}
  \|\rho_t^\tau\|_{W^{1,\infty}}
  \le C.
  \label{eq:uniform-lipschitz-bound}
\end{equation}
\end{proposition}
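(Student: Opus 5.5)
Since $\rho_\infty^\tau$ is bounded in $C^4$ uniformly in $\tau$ (Proposition~\ref{prop:stationary-all-orders}), it suffices to bound $M_1^\tau(t)=\max_{|\beta|=1}\|\partial^\beta\widetilde\rho_t^\tau\|_{L^\infty}$ uniformly for $0<\tau\le\tau_0$ and $t\ge0$. The plan is to differentiate the error equation \eqref{eq:equilibrium-error-equation} once in a direction $e_k$ and evaluate the result at a point where $\sigma\partial_k\widetilde\rho_t^\tau$ attains $M_1^\tau(t)$. Lemma~\ref{lem:hamilton} justifies this and gives the upper Dini derivative $D^+M_1^\tau$. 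At that point the diffusion term $\tau\Delta\partial_k\widetilde\rho$ has a favorable sign. The transport term $b_t^\tau\cdot\nabla\partial_k\widetilde\rho$ does not vanish automatically, since the maximum is taken over several components. I would treat the Lipschitz seminorm via directional derivatives: maximize $\sigma\,e\cdot\nabla\widetilde\rho(x)$ over unit vectors $e$ and points $x$. At such a maximizer the transport of the maximized quantity vanishes, and the remaining commutator term $(\partial_e b)\cdot\nabla\widetilde\rho$ is bounded by $\|\nabla b\|_{L^\infty}M_1$.

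The terms produced by the differentiation are controlled as follows.
\begin{itemize}
\item The term $-\rho^\tau\partial_k\widetilde\rho$ is damping. The term $-\partial_k\rho^\tau\,\widetilde\rho$ splits as $-(\partial_k\widetilde\rho+\partial_k\rho_\infty^\tau)\widetilde\rho$ and is bounded by $C\|\widetilde\rho\|_{L^\infty}(1+M_1)$.
\item The bias terms are bounded by $C\tau M_1+C\|\widetilde\rho\|_{L^\infty}$, using \eqref{eq:stat-bias-linear}.
\item The term $\partial_k\nabla\rho_\infty^\tau\cdot\nabla K_1*\widetilde\rho$ is bounded by $C\|\nabla K_1*\widetilde\rho\|_{L^\infty}$, and this decays by Lemma~\ref{lem:potential-estimate} together with Propositions~\ref{prop:hminus-exp} and~\ref{prop:linfty-exp}.
\item The two critical terms are $\nabla\rho_\infty^\tau\cdot\partial_k\nabla K_1*\widetilde\rho$ and $\partial_k b\cdot\nabla\widetilde\rho$. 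Both contain $\nabla^2K_1*\widetilde\rho$. The second also contains $\tau\nabla^2\log\rho_\infty^\tau$, which is bounded by $C\tau$.
\end{itemize}
For the Hessian of the Coulomb potential I would apply Lemma~\ref{lem:yudovich} with $f=\widetilde\rho_t^\tau$:
\[
\|\nabla^2K_1*\widetilde\rho\|_{L^\infty}\le C\|\widetilde\rho\|_{L^\infty}\bigl[1+\log_+(M_1/\|\widetilde\rho\|_{L^\infty})\bigr]\le C e^{-\gamma_1 t}(1+t)\log(e+M_1).
\]
The second inequality uses that $x\mapsto x\log_+(M/x)$ is increasing for small $x$, combined with \eqref{eq:linfty-relaxation}; the factor $(1+t)$ absorbs the $\log(1/\|\widetilde\rho\|_{L^\infty})\lesssim 1+t$ loss. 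Collecting the estimates yields the Osgood-type inequality
\[
D^+M_1^\tau\le -(c_*-C\tau)M_1^\tau + C(1+t)e^{-\gamma_1 t}(1+M_1^\tau)\log(e+M_1^\tau)+C.
\]

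Dropping the damping, set $Y=\log\log(e+M_1)$. Then $D^+Y\le C(1+t)e^{-\gamma_1t}+C$. The additive constant $C$ is not integrable, so it has to be handled through the damping instead: keep $-(c_*/2)M_1$ for $\tau\le\tau_0$. Once $M_1$ is large this dominates the constant $C$. A comparison argument then shows that $\log\log(e+M_1)$ grows at most by $\int_0^\infty C(1+s)e^{-\gamma_1 s}\,ds<\infty$ beyond its initial value or a fixed threshold. The initial value $M_1(0)\le\|\rho_0\|_{C^1}+C$ is independent of $\tau$, so this gives the uniform bound \eqref{eq:uniform-lipschitz-bound}.

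The main obstacle I expect is justifying the maximum-point argument. Two issues arise: vanishing of the transport term for the vector quantity, which the directional-derivative formulation handles, and enough regularity of $\rho^\tau$ at fixed $\tau$. Lemma~\ref{lem:positive-temperature-wp} supplies that regularity with $\tau$-dependent constants, which is harmless because they are used only qualitatively.
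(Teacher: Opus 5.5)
Your proposal is correct and follows the paper's route. Both arguments differentiate the error equation once, evaluate it at a maximizer via Lemma~\ref{lem:hamilton}, control $\nabla^2K_1*\widetilde\rho_t^\tau$ by Lemma~\ref{lem:yudovich}, replace $\|\widetilde\rho_t^\tau\|_{L^\infty}$ by its decaying bound, and integrate the Osgood inequality for $\log\log(e+M_1^\tau)$. Three remarks:

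\begin{enumerate}
\item The replacement step should use that $x\mapsto x[1+\log_+(a/x)]$ is nondecreasing on all of $(0,\infty)$. The function $x\log_+(M/x)$ is not monotone there, and $\log(1/\|\widetilde\rho_t^\tau\|_{L^\infty})\lesssim 1+t$ is not literally true.
\item The directional-derivative detour is unnecessary. In the coordinatewise formulation, at a maximizing triple $(k,\sigma,x_t)$ the point $x_t$ is a spatial maximum of $\sigma\partial_k\widetilde\rho_t^\tau$ itself, so the transport term already vanishes. The term $(\partial_k b_t^\tau)\cdot\nabla\widetilde\rho_t^\tau$ is an ordinary product-rule term that appears in either formulation. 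The paper merges its Coulomb part with $\nabla\rho_\infty^\tau\cdot\partial_k\nabla K_1*\widetilde\rho_t^\tau$ into $\nabla\rho_t^\tau\cdot\partial_k\nabla K_1*\widetilde\rho_t^\tau$.
\item No non-decaying constant $+C$ actually arises. Every non-Hessian term is $O(\|\widetilde\rho_t^\tau\|_{L^\infty})$ or $O(\tau M_1^\tau)$, so the paper simply discards the damping once $C\tau_0\le c_*/2$. Your threshold comparison, which keeps the damping to absorb the constant, is still valid.
\end{enumerate}
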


\begin{proof}
We estimate $M_1^\tau(t)$ from the equation for
$\widetilde\rho^\tau$.  Differentiating
\eqref{eq:equilibrium-error-equation} once leaves
$-\rho^\tau\partial_i\widetilde\rho^\tau$ as its principal damping
term.  Reduce $\tau_0$ so that $C\tau_0\le c_*/2$.
Applying Lemma~\ref{lem:hamilton} to each signed derivative and
Lemma~\ref{lem:yudovich} to the Coulomb Hessian, and using
$M_0^\tau(t)\le Ce^{-\gamma_1t}$ from
Proposition~\ref{prop:linfty-exp}, gives
\begin{equation}
  D^+M_1^\tau(t)
  \le C(1+t)e^{-\gamma_1t}
       (1+M_1^\tau(t))\log(e+M_1^\tau(t)).
  \label{eq:first-derivative-osgood}
\end{equation}
The differentiated terms and the logarithmic coefficient are estimated
in Subsection~\ref{subsec:appC-first-order}.

The coefficient on the right is integrable over $[0,\infty)$.
Since $M_1^\tau$ is locally Lipschitz, division by
$(e+M_1^\tau)\log(e+M_1^\tau)$ at almost every time gives
\[
  \frac{d}{dt}\log\log(e+M_1^\tau(t))
  \le C(1+t)e^{-\gamma_1t}.
\]
The initial value is uniformly bounded by the $C^1$ bounds on
$\rho_0$ and $\rho_\infty^\tau$ from
Proposition~\ref{prop:stationary-all-orders}.  Integration bounds
$M_1^\tau(t)$ uniformly, and
$\rho_t^\tau=\widetilde\rho_t^\tau+\rho_\infty^\tau$ gives
\eqref{eq:uniform-lipschitz-bound}.
\end{proof}

The $L^\infty$ decay now has a uniform bound on the first derivative
with which to interpolate.  This supplies the H\"older norm required
by the periodic Schauder estimate.
\begin{corollary}[H\"older and Coulomb field decay]
\label{cor:holder-decay}
Set $\gamma_2:=\gamma_1/2=c_*/16$.  There exists $C<\infty$ such
that, for $0<\tau\le\tau_0$ and $t\ge0$,
\begin{equation}
  \|\widetilde\rho_t^\tau\|_{C^{0,1/2}}
  \le Ce^{-\gamma_2t}.
  \label{eq:holder-relaxation}
\end{equation}
Consequently,
\begin{equation}
  \|\nabla K_1*\widetilde\rho_t^\tau\|_{L^\infty}
  +\|\nabla^2K_1*\widetilde\rho_t^\tau\|_{L^\infty}
  \le Ce^{-\gamma_2t}.
  \label{eq:coulomb-field-relaxation}
\end{equation}
\end{corollary}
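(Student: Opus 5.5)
The proof interpolates between the $L^\infty$ decay of Proposition~\ref{prop:linfty-exp} and the uniform Lipschitz bound of Proposition~\ref{prop:uniform-lipschitz}; the Coulomb-field bounds then follow from Lemma~\ref{lem:potential-estimate} and the periodic Schauder estimate.

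\emph{Step 1: H\"older decay.} For the H\"older seminorm, fix $x\ne y$ and estimate $|\widetilde\rho_t^\tau(x)-\widetilde\rho_t^\tau(y)|$ in two ways. It is at most $2\|\widetilde\rho_t^\tau\|_{L^\infty}$, and by the mean value theorem it is at most $\|\nabla\widetilde\rho_t^\tau\|_{L^\infty}|x-y|$. Taking the geometric mean of the two bounds gives
\[
  [\widetilde\rho_t^\tau]_{C^{0,1/2}}
  \le \bigl(2\|\widetilde\rho_t^\tau\|_{L^\infty}\bigr)^{1/2}
      \|\nabla\widetilde\rho_t^\tau\|_{L^\infty}^{1/2}.
\]
The gradient factor is bounded uniformly in $0<\tau\le\tau_0$ and $t\ge0$. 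This follows from Proposition~\ref{prop:uniform-lipschitz} for $\rho_t^\tau$ and from Proposition~\ref{prop:stationary-all-orders} for $\rho_\infty^\tau$. Proposition~\ref{prop:linfty-exp} gives $\|\widetilde\rho_t^\tau\|_{L^\infty}\le Ce^{-\gamma_1 t}$. Together these yield $[\widetilde\rho_t^\tau]_{C^{0,1/2}}\le Ce^{-\gamma_1t/2}$. Adding the $L^\infty$ part, which decays at the faster rate $\gamma_1\ge\gamma_2$, proves \eqref{eq:holder-relaxation} with $\gamma_2=\gamma_1/2$.

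\emph{Step 2: Coulomb Hessian.} Note that $\widetilde\rho_t^\tau$ has mean zero. The periodic Schauder estimate, Lemma~\ref{lem:periodic-schauder}, bounds $\|\nabla^2K_1*f\|_{L^\infty}$ by $C\|f\|_{C^{0,1/2}}$ for mean-zero $f$. With Step~1 this gives the Hessian half of \eqref{eq:coulomb-field-relaxation}.

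\emph{Step 3: Coulomb field.} Apply Lemma~\ref{lem:potential-estimate} with $\|\widetilde\rho_t^\tau\|_{L^\infty}\le Ce^{-\gamma_1t}$ and $\|\widetilde\rho_t^\tau\|_{\dot H^{-1}}\le Ce^{-c_*t/4}$ from \eqref{eq:hminus-relaxation}. Since $c_*/4\ge\gamma_1$, both terms on the right of \eqref{eq:potential-estimate} are bounded by $Ce^{-\gamma_1t}\le Ce^{-\gamma_2t}$.

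Alternatively, $\nabla K_1$ is integrable on $\T^d$, so Young's inequality gives $\|\nabla K_1*f\|_{L^\infty}\le C\|f\|_{L^\infty}$ directly. This bound was too weak for absorption in Proposition~\ref{prop:linfty-exp}, but it suffices here because only decay is needed.

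\emph{Main obstacle.} The only delicate point is that the Schauder constant in Lemma~\ref{lem:periodic-schauder} must be independent of $\tau$. It is, since that estimate concerns only the fixed operator $\nabla^2(-\Delta)^{-1}$. The real work has already been done in Propositions~\ref{prop:linfty-exp} and~\ref{prop:uniform-lipschitz}, and every constant above depends only on the data in Assumption~\ref{assump:standing-data}.
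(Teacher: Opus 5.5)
Your proposal is correct and follows the paper's proof. The paper uses the same $L^\infty$--Lipschitz interpolation (Lemma~\ref{lem:appC-holder-interpolation} with exponent $1/2$, which you rederive directly) to get \eqref{eq:holder-relaxation}, and the periodic Schauder estimate for the Hessian. For the Coulomb field the paper uses only your alternative, $\nabla K_1\in L^1$ with Young's inequality, so your primary route through Lemma~\ref{lem:potential-estimate} and the $\dot H^{-1}$ decay is valid but unnecessary.
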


\begin{proof}
We first interpolate the decay in
\eqref{eq:linfty-relaxation} with the derivative bound in
\eqref{eq:uniform-lipschitz-bound}.  Lemma~\ref{lem:appC-holder-interpolation}
with exponent $1/2$ gives
\[
  [\widetilde\rho_t^\tau]_{C^{0,1/2}}
  \le C\|\widetilde\rho_t^\tau\|_{L^\infty}^{1/2}
       \|\nabla\widetilde\rho_t^\tau\|_{L^\infty}^{1/2}
  \le Ce^{-\gamma_2t}.
\]
The $L^\infty$ term in the H\"older norm decays at the faster
rate $\gamma_1$, proving \eqref{eq:holder-relaxation}.  Next,
$\nabla K_1\in L^1(\T^d)$ bounds the Coulomb field by the
$L^\infty$ norm of $\widetilde\rho_t^\tau$.  Since this density
difference has mean zero,
\[
  \|\nabla K_1*\widetilde\rho_t^\tau\|_{L^\infty}
  \le C\|\widetilde\rho_t^\tau\|_{L^\infty},
  \qquad
  -\Delta(K_1*\widetilde\rho_t^\tau)
  =\widetilde\rho_t^\tau.
\]
Applying Lemma~\ref{lem:periodic-schauder} to the second identity
and using \eqref{eq:holder-relaxation} proves
\eqref{eq:coulomb-field-relaxation}.
\end{proof}

The estimates above control the density error
$\widetilde\rho_t^\tau=\rho_t^\tau-\rho_\infty^\tau$
in $C^{0,1/2}$.  By Corollary~\ref{cor:holder-decay}, this already
controls the Coulomb part of the velocity gradient in
\eqref{eq:equilibrium-centered-deformation}.  To control its thermal
part, we next prove decay of two derivatives of the density error.

\begin{proposition}[Low-temperature relaxation]
\label{prop:low-temperature-relaxation}
\label{prop:C2-decay}
Let $\gamma_2=c_*/16$ be as in Corollary~\ref{cor:holder-decay}.
There is a constant $C<\infty$ such that, for
$0<\tau\le\tau_0$ and $t\ge0$,
\begin{equation}
  \|\rho_t^\tau-\rho_\infty^\tau\|_{C^2}
  +\left\|\nabla^2\log\rho_t^\tau
             -\nabla^2\log\rho_\infty^\tau\right\|_{L^\infty}
  \le Ce^{-\gamma_2t}.
  \label{eq:C2-relaxation}
\end{equation}
In particular, the velocity $u_t^\tau$ defined in
\eqref{eq:free-energy-velocity} satisfies
\begin{equation}
  \|\nabla u_t^\tau\|_{L^\infty}
  \le Ce^{-\gamma_2t}.
  \label{eq:low-temperature-velocity-gradient}
\end{equation}
\end{proposition}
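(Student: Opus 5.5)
The plan is to run the maximum-principle argument once more, now on second derivatives of $\widetilde\rho^\tau$, and to close a damped differential inequality for $M_2^\tau(t)$. We differentiate the error equation \eqref{eq:equilibrium-error-equation} twice in directions $\partial_{ij}$ and apply Lemma~\ref{lem:hamilton} to each signed second derivative. At an active maximum, the terms $\tau\Delta\partial_{ij}\widetilde\rho^\tau$ and $b_t^\tau\cdot\nabla\partial_{ij}\widetilde\rho^\tau$ have a favorable sign or vanish. The principal damping term is $-\rho^\tau\partial_{ij}\widetilde\rho^\tau$, which contributes at least $c_*M_2^\tau$. The bias term $-(\rho_\infty^\tau-\pi)\partial_{ij}\widetilde\rho^\tau$ costs only $C\tau M_2^\tau$ by \eqref{eq:stat-bias-linear}. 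The remaining terms are commutator terms, in which derivatives fall on coefficients, and we list them for estimation.

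\emph{Products of lower derivatives.} These have three sources: derivatives landing on $\rho^\tau$ in $-\rho^\tau\widetilde\rho^\tau$, giving $\partial\rho^\tau\,\partial\widetilde\rho^\tau$ and $\partial^2\rho^\tau\,\widetilde\rho^\tau$; derivatives of $b_t^\tau$ multiplying $\nabla\widetilde\rho^\tau$ or $\nabla^2\widetilde\rho^\tau$; and derivatives of $\nabla\rho_\infty^\tau$ and $\rho_\infty^\tau-\pi$, which are bounded uniformly by Proposition~\ref{prop:stationary-all-orders}.

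The term $\partial^2\rho^\tau\,\widetilde\rho^\tau$ is at most $(C+M_2^\tau)M_0^\tau$. Since $M_0^\tau\le Ce^{-\gamma_1t}$, its coefficient of $M_2^\tau$ becomes small for large $t$; on bounded time intervals it is harmless.

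For the terms involving first derivatives of $\widetilde\rho^\tau$, we use Lemma~\ref{lem:appC-landau-kolmogorov}:
\[
  M_1^\tau\le C(M_0^\tau)^{1/2}(M_2^\tau)^{1/2}+CM_0^\tau .
\]
Products such as $M_1^\tau\cdot M_1^\tau$ or $\|\nabla\rho^\tau\|_{L^\infty}M_1^\tau$ are then bounded by $\varepsilon M_2^\tau+C_\varepsilon e^{-\gamma_1t}$, using the uniform Lipschitz bound of Proposition~\ref{prop:uniform-lipschitz}.

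Derivatives of $b_t^\tau$ produce $\nabla^2K_1*\widetilde\rho^\tau$, which Corollary~\ref{cor:holder-decay} bounds by $Ce^{-\gamma_2t}$. They also produce $\tau\nabla^2\log\rho_\infty^\tau$, which is $O(\tau)$.

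\emph{The singular term.} The main obstacle is the term $\partial_{ij}b_t^\tau\cdot\nabla\widetilde\rho^\tau$ together with $\nabla\rho_\infty^\tau\cdot\partial_{ij}\nabla K_1*\widetilde\rho^\tau$, which formally involves three derivatives of $K_1$. For the second, we write
\[
  \partial_{ij}\nabla K_1*\widetilde\rho^\tau=\nabla^2K_1*\partial_j\widetilde\rho^\tau
\]
with a component index, so that only one derivative falls on $\widetilde\rho^\tau$. We then apply Lemma~\ref{lem:periodic-schauder} at Hölder exponent $\alpha$, which bounds it by $C\|\partial\widetilde\rho^\tau\|_{C^{0,\alpha}}$. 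Lemma~\ref{lem:appC-ehrling} converts this into $\varepsilon M_2^\tau+C_\varepsilon\|\widetilde\rho^\tau\|_{L^\infty}$.

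The first term, $\partial_{ij}\nabla K_1*\widetilde\rho^\tau\cdot\nabla\widetilde\rho^\tau$, is handled the same way. Multiplying by $M_1^\tau\le C$ gives $C\varepsilon M_2^\tau+C_\varepsilon e^{-\gamma_1t}$.

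Collecting everything, we aim for
\[
  D^+M_2^\tau+(c_*-C\tau-C\varepsilon-Ce^{-\gamma_1 t})M_2^\tau\le C_\varepsilon e^{-\gamma_2t}.
\]
We choose $\varepsilon$ small and then reduce $\tau_0$ so that the coefficient is at least $c_*/2$ for $t\ge T_0$. On $[0,T_0]$, a crude Grönwall bound with uniformly bounded initial data $\|\rho_0\|_{C^2}+\|\rho_\infty^\tau\|_{C^2}$ keeps $M_2^\tau$ bounded. Integrating then yields $M_2^\tau\le Ce^{-\gamma_2t}$, since $c_*/2>\gamma_2$.

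If the cross term $(M_2^\tau)(M_0^\tau)$ is troublesome, we instead bound $\partial^2\rho^\tau$ by $\partial^2\widetilde\rho^\tau+\partial^2\rho_\infty^\tau$, which gives a quadratic term $M_0^\tau M_2^\tau$ with decaying coefficient. That term is treated in the same way.

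Finally, we obtain \eqref{eq:C2-relaxation} from
\[
  \nabla^2\log\rho=\rho^{-1}\nabla^2\rho-\rho^{-2}\nabla\rho\otimes\nabla\rho .
\]
Take its difference at $\rho_t^\tau$ and $\rho_\infty^\tau$, and use the lower bound $c_*$ from Proposition~\ref{prop:maximum-principle}, the uniform $C^1$ bounds, and the $C^2$ decay; the difference is then $\le C\|\widetilde\rho_t^\tau\|_{C^2}$. Inserting this and \eqref{eq:coulomb-field-relaxation} into \eqref{eq:equilibrium-centered-deformation} gives \eqref{eq:low-temperature-velocity-gradient}, where the thermal part carries the extra factor $\tau\le\tau_0$.
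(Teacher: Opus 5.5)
Your proposal is correct and follows the paper's own argument. The shared steps are: a maximum-principle inequality for $M_2^\tau$ via Lemma~\ref{lem:hamilton}; the singular term written as $\partial_i\nabla K_1*\partial_j\widetilde\rho^\tau$ and controlled by Lemma~\ref{lem:periodic-schauder} and Lemma~\ref{lem:appC-ehrling}; first-derivative products handled by Landau--Kolmogorov and Young; and finally the log-Hessian identity inserted into \eqref{eq:equilibrium-centered-deformation}.

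The differences are cosmetic. You keep the two third-derivative Coulomb terms separate, whereas the paper combines them into $\nabla\rho^\tau\cdot\partial_{ij}(\nabla K_1*\widetilde\rho^\tau)$. You absorb the decaying coefficient of $M_2^\tau$ by waiting until a $\tau$-independent time $T_0$, whereas the paper uses an integrating factor bounded by $e^{C/\gamma_2}$. Note that this coefficient decays at rate $\gamma_2$, not $\gamma_1$, because of the Coulomb Hessian.
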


\begin{proof}
We estimate $M_2^\tau(t)$, the maximum of the $L^\infty$ norms of
the second derivatives of $\widetilde\rho_t^\tau$, as defined at the
start of this subsection.  Differentiating the error equation
\eqref{eq:equilibrium-error-equation} twice leaves the damping
$-\rho_t^\tau\partial_{ij}\widetilde\rho_t^\tau$.  The bound
$\rho_t^\tau\ge c_*$ from Proposition~\ref{prop:maximum-principle}
therefore supplies the term $-c_*M_2^\tau$ in the maximum-principle
estimate.  The remaining task is to bound the differentiated Coulomb
field and the products of first derivatives.

The most singular term contains two derivatives of
$\nabla K_1*\widetilde\rho_t^\tau$.  We move one derivative onto the
density error, then apply the periodic Schauder estimate of
Lemma~\ref{lem:periodic-schauder} and the interpolation estimate of
Lemma~\ref{lem:appC-ehrling}.  For every $\varepsilon>0$, they give
\[
\begin{aligned}
  \partial_{ij}(\nabla K_1*\widetilde\rho_t^\tau)
  &=\partial_i\nabla K_1*\partial_j\widetilde\rho_t^\tau,\\
  \|\partial_{ij}(\nabla K_1*\widetilde\rho_t^\tau)\|_{L^\infty}
  &\le\varepsilon M_2^\tau(t)
       +C_\varepsilon M_0^\tau(t).
\end{aligned}
\]
The first identity holds in distributions, which is enough for the
Schauder estimate.  The terms with fewer derivatives of the Coulomb
potential are controlled by the decay in
\eqref{eq:coulomb-field-relaxation}.

The other terms contain products of first derivatives of the density
and the error.  Proposition~\ref{prop:uniform-lipschitz} bounds
$\nabla\rho_t^\tau$ uniformly.  The Landau--Kolmogorov inequality
of Lemma~\ref{lem:appC-landau-kolmogorov} and Young's inequality then
give, for example,
\[
\begin{aligned}
  \|(\partial_i\rho_t^\tau)
       (\partial_j\widetilde\rho_t^\tau)\|_{L^\infty}
  &\le\varepsilon M_2^\tau(t)+C_\varepsilon M_0^\tau(t).
\end{aligned}
\]

Apply Lemma~\ref{lem:hamilton} to each second derivative.  The term
with an explicit factor $\tau$ costs at most
$C\tau M_2^\tau$.  The decay in
\eqref{eq:holder-relaxation} controls the remaining coefficient
of $M_2^\tau$.  The detailed calculation in
Subsection~\ref{subsec:appC-second-order} yields
\[
  D^+M_2^\tau(t)
  \le-\bigl(c_*-C\varepsilon-C\tau_0\bigr)M_2^\tau(t)
       +Ce^{-\gamma_2t}M_2^\tau(t)
       +C_\varepsilon e^{-\gamma_2t}.
\]
Choose $\varepsilon$ small enough and then decrease $\tau_0$
so that the fixed losses use at most half the damping.  We obtain
\begin{equation}
  D^+M_2^\tau(t)
  \le\left(-\frac{c_*}{2}+Ce^{-\gamma_2t}\right)M_2^\tau(t)
       +Ce^{-\gamma_2t}.
  \label{eq:C2-differential-inequality}
\end{equation}

The initial value $M_2^\tau(0)$ is bounded uniformly in $\tau$
by Assumption~\ref{assump:standing-data} and the stationary estimate
\eqref{eq:uniform-stationary-regularity}.  Since $M_2^\tau$ is
locally Lipschitz, integration of \eqref{eq:C2-differential-inequality}
gives
\[
  M_2^\tau(t)\le Ce^{-\gamma_2t}.
\]
The bounds for $M_0^\tau$ in \eqref{eq:linfty-relaxation} and
the Landau--Kolmogorov inequality also give
\[
  M_1^\tau(t)
  \le C\bigl(M_0^\tau(t)M_2^\tau(t)\bigr)^{1/2}
  \le Ce^{-(\gamma_1+\gamma_2)t/2}.
\]
These three bounds prove the $C^2$ estimate in
\eqref{eq:C2-relaxation}.

It remains to control the logarithmic Hessian.  The density bounds in
\eqref{eq:maximum-principle}, the stationary regularity in
\eqref{eq:uniform-stationary-regularity}, and the estimate just proved
give a uniform $C^2$ bound for both densities.  Using
\[
  \nabla^2\log\rho
  =\rho^{-1}\nabla^2\rho
   -\rho^{-2}\nabla\rho\otimes\nabla\rho,
\]
we obtain
\[
  \|\nabla^2\log\rho_t^\tau
       -\nabla^2\log\rho_\infty^\tau\|_{L^\infty}
  \le C\|\widetilde\rho_t^\tau\|_{C^2}
  \le Ce^{-\gamma_2t}.
\]
Finally, insert this estimate and the Coulomb-Hessian estimate
\eqref{eq:coulomb-field-relaxation} into the recentered identity
\eqref{eq:equilibrium-centered-deformation}.  This gives
\eqref{eq:low-temperature-velocity-gradient}.
\end{proof}

The estimate above is uniform as $\tau\downarrow0$.  At
$\tau=0$, however, the solution is initially known only as a bounded
weak solution, so its equation cannot yet be differentiated as in the
preceding proof.  We now use compactness to identify it as the limit
of the positive-temperature solutions and transfer the estimates.

\begin{proposition}[Deterministic endpoint]
\label{prop:tau0-endpoint}
Let $\rho^0$ be the unique bounded global weak solution of
\eqref{eq:intro-mean-field-pde} at $\tau=0$ with initial density
$\rho_0$.  Then
\begin{equation}
  \rho^\tau\longrightarrow\rho^0
  \quad\text{in }C_{\mathrm{loc}}
    \bigl([0,\infty);C^1(\T^d)\bigr)
  \quad\text{as }\tau\downarrow0.
  \label{eq:tau-zero-strong-convergence}
\end{equation}
With $\rho_\infty^0=\pi$, the limit satisfies, for all $t\ge0$,
\begin{align}
  \|\rho_t^0-\pi\|_{W^{2,\infty}}
  &\le Ce^{-\gamma_2t},
  \label{eq:tau-zero-W2inf-relaxation}\\
  \|\nabla u_t^0\|_{L^\infty}
  &\le Ce^{-\gamma_2t}.
  \label{eq:tau-zero-velocity-gradient}
\end{align}
\end{proposition}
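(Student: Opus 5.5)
The plan is a compactness argument on the family $\{\rho^\tau\}_{0<\tau\le\tau_0}$, followed by identification of the limit through uniqueness of bounded weak solutions and lower semicontinuity of the decay estimates. Three uniform inputs are needed. Proposition~\ref{prop:maximum-principle} gives $c_*\le\rho^\tau\le C_*$. Proposition~\ref{prop:low-temperature-relaxation} combined with Proposition~\ref{prop:stationary-all-orders} gives $\sup_{0<\tau\le\tau_0}\sup_{t\ge0}\|\rho_t^\tau\|_{W^{2,\infty}}\le C$. Finally, \eqref{eq:C2-relaxation}, \eqref{eq:coulomb-field-relaxation}, Proposition~\ref{prop:stat-bias} and \eqref{eq:low-temperature-velocity-gradient} give the decay estimates $\|\rho_t^\tau-\rho_\infty^\tau\|_{C^2}\le Ce^{-\gamma_2t}$ and $\|\nabla u_t^\tau\|_{L^\infty}\le Ce^{-\gamma_2t}$ with constants independent of $\tau$.

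For time equicontinuity I would use the nondivergence form \eqref{eq:density-nondivergence-form}. Its right side is bounded in $L^\infty$ uniformly in $0<\tau\le\tau_0$, since $\tau\Delta\rho^\tau$, $\nabla K_1*(\rho^\tau-\pi)\cdot\nabla\rho^\tau$ and $\rho^\tau(\rho^\tau-\pi)$ are all controlled by the $W^{2,\infty}$ bound together with $\nabla K_1\in L^1$. Hence $\rho^\tau$ is uniformly Lipschitz in time with values in $L^\infty$. Interpolating between this and the $W^{2,\infty}$ bound gives equicontinuity in $C^1$. Arzel\`a--Ascoli then yields, on each $[0,T]$, a subsequence $\tau_k\downarrow0$ converging in $C([0,T];C^1(\T^d))$, and a diagonal argument upgrades this to convergence in $C_{\mathrm{loc}}([0,\infty);C^1)$ to some $\bar\rho$.

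The main obstacle is identifying $\bar\rho$ with $\rho^0$. I would pass to the limit in the weak formulation of \eqref{eq:intro-mean-field-pde} against a smooth test function $\phi$. The diffusion term is $\tau\int\rho^\tau\Delta\phi\to0$. In the nonlinear term, $\nabla K_1*(\rho^\tau-\pi)\to\nabla K_1*(\bar\rho-\pi)$ uniformly because $\nabla K_1\in L^1$, while $\rho^\tau\to\bar\rho$ uniformly, so the product converges. The initial datum is preserved, and $c_*\le\bar\rho\le C_*$ passes to the limit. Thus $\bar\rho$ is a bounded weak solution in the sense of \cite[Definition~1.1]{ChodronRosenzweig2026}, and the uniqueness statement \cite[Proposition~2.4]{ChodronRosenzweig2026} gives $\bar\rho=\rho^0$. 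Since the limit is unique, the whole family converges, which proves \eqref{eq:tau-zero-strong-convergence} and, as a byproduct, the $\tau=0$ case of Proposition~\ref{prop:maximum-principle}. I would check here whether the cited definition requires any further property, such as a prescribed continuity in time; this should follow from the uniform Lipschitz bound in time.

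For the decay estimates, fix $t$. By \eqref{eq:stat-bias-linear}, $\rho_\infty^\tau\to\pi$ in $C^2$. The functions $\rho_t^\tau-\rho_\infty^\tau$ are bounded in $W^{2,\infty}$ by $Ce^{-\gamma_2t}$ and converge in $C^1$ to $\rho_t^0-\pi$. By weak-$*$ compactness in $W^{2,\infty}$, weak-$*$ lower semicontinuity of the norm gives \eqref{eq:tau-zero-W2inf-relaxation}. For the velocity, $\nabla u_t^0=\nabla^2K_1*(\rho_t^0-\pi)$. The term $\nabla^2K_1*(\rho_t^\tau-\rho_\infty^\tau)$ is bounded by $Ce^{-\gamma_2t}$ and converges in distributions to $\nabla^2K_1*(\rho_t^0-\pi)$, so lower semicontinuity of the $L^\infty$ norm under distributional convergence yields \eqref{eq:tau-zero-velocity-gradient}. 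Alternatively, I could apply Lemma~\ref{lem:periodic-schauder} directly, using the $C^{0,1/2}$ bound that \eqref{eq:tau-zero-W2inf-relaxation} provides.
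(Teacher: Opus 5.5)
Your proposal is correct and takes essentially the same route as the paper. That route is: uniform $W^{2,\infty}$ and $\partial_t$ bounds, interpolation (Landau--Kolmogorov) for $C^1$ equicontinuity in time, Arzel\`a--Ascoli with a diagonal argument, identification of the limit through uniqueness of bounded weak solutions, and weak-$*$ lower semicontinuity for the Hessian decay. The only small difference is in the velocity bound: your primary argument uses distributional convergence of $\nabla^2K_1*(\rho_t^\tau-\rho_\infty^\tau)$ and weak-$*$ lower semicontinuity of the $L^\infty$ norm, whereas the paper passes \eqref{eq:coulomb-field-relaxation} to the limit using $C^{0,1/2}$ convergence and the Schauder estimate, which is close to your stated alternative; both are valid.
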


\begin{proof}
\medskip
\noindent\emph{Step 1.}
We first establish compactness in
$C_{\mathrm{loc}}([0,\infty);C^1(\T^d))$.
The positive-temperature estimates
\eqref{eq:C2-relaxation} and
\eqref{eq:uniform-stationary-regularity}, together with the
mean-field equation and $\nabla K_1\in L^1(\T^d)$, give
\[
  \sup_{0<\tau\le\tau_0}\sup_{t\ge0}
  \bigl(\|\rho_t^\tau\|_{W^{2,\infty}}
       +\|\partial_t\rho_t^\tau\|_{L^\infty}\bigr)\le C.
\]
The Landau--Kolmogorov inequality of
Lemma~\ref{lem:appC-landau-kolmogorov} and the time-derivative bound give
\[
  \|\rho_t^\tau-\rho_s^\tau\|_{C^1}
  \le C|t-s|^{1/2}.
\]
The compact embedding $W^{2,\infty}(\T^d)\Subset C^1(\T^d)$
and the Arzel\`a--Ascoli theorem now give a subsequence
$\tau_n\downarrow0$ and a limit $\bar\rho$ such that
$\rho^{\tau_n}\longrightarrow\bar\rho$ in
$C_{\mathrm{loc}}([0,\infty);C^1(\T^d))$.

\medskip
\noindent\emph{Step 2.}
We show that $\bar\rho$ solves \eqref{eq:intro-mean-field-pde} at
$\tau=0$ with initial density $\rho_0$ by letting $\tau_n\to0$ along the subsequence from Step 1.
The uniform
second-derivative bound gives
$\tau_n\Delta\rho^{\tau_n}\to0$ uniformly on compact time
intervals.  Strong convergence of the densities and
$\nabla K_1\in L^1$ imply that
$\rho^{\tau_n}\nabla K_1*(\rho^{\tau_n}-\pi)$ converges uniformly
on compact time intervals.
Thus $\bar\rho$ is a bounded weak solution of the zero-temperature
equation with initial density $\rho_0$.
Uniqueness of bounded weak solutions
\cite[Proposition~2.4]{ChodronRosenzweig2026} gives
$\bar\rho=\rho^0$.  Every subsequence has the same limit, which
proves \eqref{eq:tau-zero-strong-convergence}.

\medskip
\noindent\emph{Step 3.}
We next show that the limit inherits the uniform decay bounds
obtained at positive temperature.  Proposition~\ref{prop:stat-bias} gives
$\rho_\infty^{\tau_n}\to\pi$ in $C^2$, so
$\widetilde\rho_t^{\tau_n}\to\rho_t^0-\pi$ in $C^1$ for each
$t$.  The bounds $c_*\le\rho_t^{\tau_n}\le C_*$ in
\eqref{eq:maximum-principle} also pass to $\rho_t^0$.
Strong convergence passes the zeroth- and first-order bounds
in \eqref{eq:C2-relaxation} to the limit.  For the second derivatives,
the uniform $L^\infty$ bound and distributional convergence give weak-*
convergence in $L^\infty$.  Lower semicontinuity therefore yields
\[
  \|\nabla^2(\rho_t^0-\pi)\|_{L^\infty}
  \le Ce^{-\gamma_2t}.
\]
These bounds prove \eqref{eq:tau-zero-W2inf-relaxation} for every $t\ge0$.

It remains to estimate $\nabla u_t^0$.  The strong $C^1$
convergence above gives convergence of
$\widetilde\rho_t^{\tau_n}$ in $C^{0,1/2}$, by H\"older
interpolation.  Lemma~\ref{lem:periodic-schauder} therefore passes
\eqref{eq:coulomb-field-relaxation} to the limit.  Since
$\nabla u_t^0=\nabla^2K_1*(\rho_t^0-\pi)$, this proves
\eqref{eq:tau-zero-velocity-gradient}.  Further details of the
limit passage are given in Subsection~\ref{subsec:zero-temperature-limit}.
\end{proof}

The endpoint argument also completes the weak relaxation estimate for
the full temperature range.  Indeed,
\eqref{eq:hminus-relaxation} covers $\tau>0$, while
\eqref{eq:tau-zero-strong-convergence} and
\eqref{eq:stat-bias-linear} permit its limit as $\tau\downarrow0$.
With $\rho_\infty^0=\pi$, we have
\begin{equation}
  \|\rho_t^\tau-\rho_\infty^\tau\|_{\dot H^{-1}}
  \le Ce^{-c_*t/4},
  \qquad \tau\ge0,\quad t\ge0.
  \label{eq:hminus-relaxation-all-temperatures}
\end{equation}
The compactness argument uses Assumption~\ref{assump:standing-data}
in every dimension $d\ge2$.  A direct use of
\cite[Theorem~1.2(iii)]{ChizatColomboColomboFernandezReal2026}
with Sobolev embedding instead asks for
$\rho_0\in H^\gamma$ and $\pi\in H^{\gamma+1}$ with
$\gamma>d/2$.  The stated $C^4$ assumption ensures these
conditions when $d\le5$.

\subsection{Temperatures bounded away from zero}
\label{subsec:high-temperature-estimates}

The estimates of Subsection~\ref{subsec:low-temperature} control
$\nabla u_t^\tau$ for $0\le\tau\le\tau_0$.  To complete
Theorem~\ref{thm:uniform-envelope}, we must control the remaining
temperatures with a time integral that does not grow with $\tau$.
For every fixed lower bound $\tau_{\mathrm L}>0$,
Lemma~\ref{lem:positive-temperature-wp} gives a uniform
$W^{4,\infty}$ bound on the evolving densities.  We combine it
with the accelerated $\dot H^{-1}$ decay of
Proposition~\ref{prop:hminus-exp} to obtain decay in a norm that
controls both terms of the recentered velocity gradient
\eqref{eq:equilibrium-centered-deformation}.

\begin{proposition}[Positive-temperature half-line estimate]
\label{prop:positive-temperature-halfline}
\label{prop:appD-positive-halfline-decay}
For every $\tau_{\mathrm L}>0$, there are constants
$C_{\tau_{\mathrm L}}<\infty$ and $c>0$ such that, for
$\tau\ge\tau_{\mathrm L}$ and $t\ge0$,
\begin{align}
  \|\widetilde\rho_t^\tau\|_{C^{2,1/2}}
  &\le C_{\tau_{\mathrm L}}e^{-c(1+\tau)t},
  \label{eq:appD-C2half-accelerated-relaxation}\\
  \|\nabla u_t^\tau\|_{L^\infty}
  &\le C_{\tau_{\mathrm L}}(1+\tau)e^{-c(1+\tau)t}.
  \label{eq:appD-velocity-gradient-accelerated-relaxation}
\end{align}
Here $\widetilde\rho_t^\tau=\rho_t^\tau-\rho_\infty^\tau$,
as defined in Subsection~\ref{subsec:low-temperature}.
\end{proposition}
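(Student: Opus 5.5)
The plan follows Step~3 of the proof strategy: interpolate the accelerated weak decay of Proposition~\ref{prop:hminus-exp} against the uniform strong bound of Lemma~\ref{lem:positive-temperature-wp}.

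Fix $\tau_{\mathrm L}>0$ and let $\tau\ge\tau_{\mathrm L}$. First, $\widetilde\rho_t^\tau=\rho_t^\tau-\rho_\infty^\tau$ is uniformly bounded in $W^{4,\infty}$. Indeed, \eqref{eq:positive-temperature-W4} bounds $\rho_t^\tau$, and \eqref{eq:uniform-stationary-regularity} bounds $\rho_\infty^\tau$ in $C^4$; both constants are independent of $\tau\ge\tau_{\mathrm L}$ and $t$. Second, \eqref{eq:appD-Hminus-accelerated-relaxation} gives
\[
\|\widetilde\rho_t^\tau\|_{\dot H^{-1}}\le Ce^{-\frac{c_*}{2}(\frac12+\kappa_*\tau)t}\le Ce^{-c'(1+\tau)t},
\]
with $c'=\frac{c_*}{2}\min\{\tfrac12,\kappa_*\}$.

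Next I prove the interpolation inequality
\[
\|g\|_{C^{2,1/2}}\le C_d\|g\|_{\dot H^{-1}}^{\theta}\|g\|_{W^{4,\infty}}^{1-\theta},\qquad \theta=3/(d+10),
\]
for mean-zero $g$, using a Littlewood--Paley decomposition $g=\sum_{j\ge0}\Delta_jg$ (mean zero, so no low block). For the low frequencies, Bernstein's inequality on the torus gives
\[
\|\Delta_jg\|_{L^\infty}\lesssim 2^{jd/2}\|\Delta_jg\|_{L^2}\lesssim 2^{j(d/2+1)}\|g\|_{\dot H^{-1}}.
\]
For the high frequencies, $\|\Delta_jg\|_{L^\infty}\lesssim 2^{-4j}\|g\|_{W^{4,\infty}}$. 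The $C^{2,1/2}$ norm is controlled by the Besov norm $B^{2+1/2+\delta}_{\infty,\infty}$, or directly by $\sum_j 2^{j(5/2)}\|\Delta_jg\|_{L^\infty}$. I split the sum at the frequency $2^J$ where the two bounds balance:
\[
2^{J(d/2+1+5/2)}\|g\|_{\dot H^{-1}}\approx 2^{-J(4-5/2)}\|g\|_{W^{4,\infty}}.
\]
This gives $2^{J(d+10)/2}\approx\|g\|_{W^{4,\infty}}/\|g\|_{\dot H^{-1}}$, and substituting back yields the exponent $\theta=3/(d+10)$. Both geometric sums converge since $4>5/2$. If $\|g\|_{\dot H^{-1}}\ge\|g\|_{W^{4,\infty}}$, the inequality is trivial because the $C^{2,1/2}$ norm is bounded by the $W^{4,\infty}$ norm. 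Applying it to $\widetilde\rho_t^\tau$ gives \eqref{eq:appD-C2half-accelerated-relaxation} with $c=\theta c'$.

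For \eqref{eq:appD-velocity-gradient-accelerated-relaxation} I use \eqref{eq:equilibrium-centered-deformation}, which splits $\nabla u_t^\tau$ into a Coulomb term and a thermal term.
\begin{itemize}
\item The Coulomb term $\nabla^2K_1*\widetilde\rho_t^\tau$ is bounded by $C\|\widetilde\rho_t^\tau\|_{C^{0,1/2}}$ via the periodic Schauder estimate Lemma~\ref{lem:periodic-schauder}, since $-\Delta K_1*\widetilde\rho=\widetilde\rho$ for mean-zero $\widetilde\rho$.
\item For the thermal term, I use the identity $\nabla^2\log\rho=\rho^{-1}\nabla^2\rho-\rho^{-2}\nabla\rho\otimes\nabla\rho$. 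By the lower bound $c_*$ from \eqref{eq:maximum-principle} and the uniform $C^2$ bounds on both densities,
\[
\tau\|\nabla^2\log\rho_t^\tau-\nabla^2\log\rho_\infty^\tau\|_{L^\infty}\le C\tau\|\widetilde\rho_t^\tau\|_{C^2}.
\]
\end{itemize}
Summing the two pieces gives the bound $C_{\tau_{\mathrm L}}(1+\tau)e^{-c(1+\tau)t}$.

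The main obstacle is the interpolation inequality, specifically keeping the constant independent of $\tau$. It is a purely static inequality, so the only $\tau$-dependence enters through the $W^{4,\infty}$ bound, and that is uniform on $[\tau_{\mathrm L},\infty)$ by Lemma~\ref{lem:positive-temperature-wp}. The estimate is also lossy: any $\theta>0$ suffices, so the exact exponent is not essential.
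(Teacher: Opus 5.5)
Your proposal is correct and matches the paper's proof: both interpolate the accelerated $\dot H^{-1}$ decay against the uniform $W^{4,\infty}$ bound with $\theta=3/(d+10)$, then apply the Schauder estimate to the Coulomb Hessian and the log-Hessian identity to the thermal term; the only difference is that you sum the dyadic blocks split at the balancing frequency (controlling $C^{2,1/2}$ through $B^{5/2}_{\infty,1}$), while the paper interpolates blockwise and invokes $B^{2+\beta}_{\infty,\infty}=C^{2,\beta}$. One cosmetic point: mean zero removes only the zero mode, so with an inhomogeneous decomposition the finitely many nonzero low modes must be bounded by $\|g\|_{\dot H^{-1}}$, as the paper does, or you can use a dyadic partition of $\mathbb Z^d\setminus\{0\}$ instead.
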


\begin{proof}
Since $\widetilde\rho_t^\tau$ has mean zero, the interpolation estimate of
Lemma~\ref{lem:appD-weak-strong-interpolation}, proved in
Subsection~\ref{subsec:appD-interpolation}, gives
\[
  \|\widetilde\rho_t^\tau\|_{C^{2,1/2}}
  \le C_d
  \|\widetilde\rho_t^\tau\|_{\dot H^{-1}}^{\theta}
  \|\widetilde\rho_t^\tau\|_{W^{4,\infty}}^{1-\theta},
  \qquad \theta:=\frac{3}{d+10}.
\]
By \eqref{eq:positive-temperature-W4} and
\eqref{eq:uniform-stationary-regularity},
$\|\widetilde\rho_t^\tau\|_{W^{4,\infty}}\le C_{\tau_{\mathrm L}}$
for $\tau\ge\tau_{\mathrm L}$ and $t\ge0$.
Substituting this bound and
\eqref{eq:appD-Hminus-accelerated-relaxation} into the interpolation
inequality yields
\[
\begin{aligned}
  \|\widetilde\rho_t^\tau\|_{C^{2,1/2}}
  &\le C_{\tau_{\mathrm L}}
  \exp\!\left[-\frac{3c_*}{2(d+10)}
    \left(\frac12+\kappa_*\tau\right)t\right]\\
  &\le C_{\tau_{\mathrm L}}e^{-c(1+\tau)t}.
\end{aligned}
\]
Here
$\frac12+\kappa_*\tau\ge
\min\{\frac12,\kappa_*\}(1+\tau)$, so $c>0$ is independent of
$\tau_{\mathrm L}$.  This proves
\eqref{eq:appD-C2half-accelerated-relaxation}.

The periodic Schauder estimate of Lemma~\ref{lem:periodic-schauder},
the density lower bound \eqref{eq:maximum-principle}, and the uniform
derivative bounds give
\[
\begin{aligned}
  \|\nabla^2K_1*\widetilde\rho_t^\tau\|_{L^\infty}
  &\le C_d\|\widetilde\rho_t^\tau\|_{C^{0,1/2}},\\
  \|\nabla^2\log\rho_t^\tau
       -\nabla^2\log\rho_\infty^\tau\|_{L^\infty}
  &\le C_{\tau_{\mathrm L}}
       \|\widetilde\rho_t^\tau\|_{C^2}.
\end{aligned}
\]
Inserting both bounds into
\eqref{eq:equilibrium-centered-deformation} proves
\[
  \|\nabla u_t^\tau\|_{L^\infty}
  \le C_{\tau_{\mathrm L}}(1+\tau)
       \|\widetilde\rho_t^\tau\|_{C^{2,1/2}}.
\]
The first estimate of the proposition now gives
\eqref{eq:appD-velocity-gradient-accelerated-relaxation}.
\end{proof}

The preceding proposition covers $\tau\ge\tau_0$, while
Propositions~\ref{prop:low-temperature-relaxation}
and~\ref{prop:tau0-endpoint} cover $0\le\tau\le\tau_0$.  We
combine their bounds to finish the section.

\begin{proof}[Proof of Theorem~\ref{thm:uniform-envelope}]
Our goal is to use one constant in the velocity estimate for every
$\tau\ge0$.  The low-temperature estimates
\eqref{eq:low-temperature-velocity-gradient} and
\eqref{eq:tau-zero-velocity-gradient} give
\[
  \|\nabla u_t^\tau\|_{L^\infty}
  \le C e^{-(c_*/16)t},
  \qquad 0\le\tau\le\tau_0.
\]
Apply Proposition~\ref{prop:positive-temperature-halfline} with
$\tau_{\mathrm L}=\tau_0$.  Since $\tau_0$ is fixed by the
data, we may enlarge $C$ to cover both ranges.  If
$c$ is the rate from that proposition, choose
\[
  \gamma:=\min\left\{
    \frac{c_*}{16(1+\tau_0)},c
  \right\}>0.
\]
For $0\le\tau\le\tau_0$ we have
$\gamma(1+\tau)\le c_*/16$, and the positive-temperature
half-line already has the desired rate.  Thus
\[
  \|\nabla u_t^\tau\|_{L^\infty}
  \le C(1+\tau)e^{-\gamma(1+\tau)t},
  \qquad \tau\ge0,\quad t\ge0.
\]
Integrating this estimate gives the bound required in
\eqref{eq:uniform-velocity-gradient-decay}:
\[
  \Psi_\infty
  =\sup_{\tau\ge0}\int_0^\infty
       \|\nabla u_t^\tau\|_{L^\infty}\,dt
  \le\frac{C}{\gamma}<\infty.
\]
\end{proof}

\section{Proofs of the main theorems and consequences}
\label{sec:uniform-consequences}

Proposition~\ref{prop:assembly} reduces the particle estimates to
the time integral of $\|\nabla u_t^\tau\|_{L^\infty}$.
Theorem~\ref{thm:uniform-envelope} bounds that integral by
$\Psi_\infty$, uniformly in time and temperature.  In this section
we turn the resulting modulated energy estimates into the two main
theorems.  Lemma~\ref{lem:distance-coercivity} converts energy into
distances between measures, and the relative entropy part of the
modulated free energy controls particle marginals.  We then combine
the particle estimate with mean-field relaxation and the stationary
bias to quantify approximation of the prescribed target.

\subsection{Proofs of the main theorems}

Our first goal is to control the distance between a particle
configuration and a bounded density by its modulated energy.
Lemma~\ref{lem:distance-coercivity} supplies this comparison.
We then apply it to the energy bounds of
Proposition~\ref{prop:assembly} to prove
Theorem~\ref{thm:uniform-mean-field}.  The entropy term in the same
bound yields Theorem~\ref{thm:uniform-propagation-chaos}.

\begin{lemma}[Negative Sobolev and Wasserstein coercivity]
\label{lem:distance-coercivity}
Let $N\ge2$, let $X_N\in\mathcal X_N$, and let $\rho$ be a
probability density with $\|\rho\|_{L^\infty}\le C_*$.  With
$F_N$ and $\err$ defined in Section~\ref{sec:modulated-identity},
\begin{equation}
  \mathcal W_1(\mu_{X_N},\rho)^2
  \le C\left(\frac12F_N(X_N,\rho)+\err\right).
  \label{eq:wasserstein-coercivity}
\end{equation}
For every $\zeta>d/2+1$, it follows that
\begin{equation}
  \|\mu_{X_N}-\rho\|_{\dot H^{-\zeta}}^2
  \le C_\zeta\left(\frac12F_N(X_N,\rho)+\err\right).
  \label{eq:negative-sobolev-coercivity}
\end{equation}
\end{lemma}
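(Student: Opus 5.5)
By Kantorovich--Rubinstein duality, $\mathcal W_1(\mu_{X_N},\rho)=\sup\{\int\varphi\,d(\mu_{X_N}-\rho):\|\nabla\varphi\|_{L^\infty}\le1\}$. We may subtract the mean of $\varphi$, so $\|\varphi\|_{L^\infty}\le C_d$ on the unit torus. The plan is to test $\mu_{X_N}-\rho$ against a \emph{smeared} version of the empirical measure and pay a smearing error. This follows the standard argument in \cite[Proposition~3.6 / Remark~6.5]{ChodronRosenzweigSerfaty2025} and \cite{Serfaty2020}.

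Fix a scale $\eta\in(0,1/4)$. Replace each Dirac $\delta_{x_i}$ by the uniform probability measure $\delta_{x_i}^{(\eta)}$ on the sphere $\partial B(x_i,\eta)$ (or by the heat-smoothed version), and set $\mu^{(\eta)}:=\frac1N\sum_i\delta^{(\eta)}_{x_i}$. We then split
\[
\int\varphi\,d(\mu_{X_N}-\rho)=\int\varphi\,d(\mu_{X_N}-\mu^{(\eta)})+\int\varphi\,d(\mu^{(\eta)}-\rho).
\]
Since $\varphi$ is $1$-Lipschitz and each smeared measure is supported within $\eta$ of its point, the first term is at most $\eta$. For the second term, $\mu^{(\eta)}-\rho$ has finite Coulomb energy. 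Cauchy--Schwarz in $\dot H^{-1}$--$\dot H^1$ duality gives
\[
\Bigl|\int\varphi\,d(\mu^{(\eta)}-\rho)\Bigr|\le\|\nabla\varphi\|_{L^2}\|\mu^{(\eta)}-\rho\|_{\dot H^{-1}}\le\|\mu^{(\eta)}-\rho\|_{\dot H^{-1}}.
\]
The key input is the smearing estimate underlying the lower bound used in Lemma~\ref{prop:static-coulomb-bounds}. Choosing $\eta$ comparable to the typical interparticle distance $(NC_*)^{-1/d}$, that estimate reads
\[
\|\mu^{(\eta)}-\rho\|_{\dot H^{-1}}^2\le F_N(X_N,\rho)+C\err.
\]
It holds because the self-energy removal is exactly compensated by $\frac1N$ times the truncated self-energy of the smeared charge. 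That self-energy is $\asymp \log(1/\eta)/N$ for $d=2$ and $\eta^{2-d}/N$ for $d\ge3$, with $\|\rho\|_{L^\infty}$ controlling the cross error. Pair interactions only decrease under smearing by superharmonicity of $K_1$, up to the $-1$ in $-\Delta K_1=\delta_0-1$, which costs $O(\eta^2)$.

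With that choice of $\eta$, we get $\eta^2\lesssim N^{-2/d}\lesssim\err$ for $d\ge3$, and $\eta^2\lesssim N^{-1}\le\err$ for $d=2$. Squaring and using $(a+b)^2\le2a^2+2b^2$ gives \eqref{eq:wasserstein-coercivity}, with $F_N$ replaced by $\frac12F_N$ after adjusting constants. This uses that $\frac12F_N+\err\ge\frac12\err\ge0$ by \eqref{eq:modulated-energy-lower-bound}, which lets us absorb every constant multiple of $\err$. The main obstacle is this smearing inequality with an error of exactly order $\err$ and constants depending only on $d$ and $C_*$. I would import it from the cited periodic Coulomb lower bound \cite[Remark~6.5]{ChodronRosenzweigSerfaty2025} rather than reprove it, since that bound is precisely what produces \eqref{eq:finite-particle-error}.

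For \eqref{eq:negative-sobolev-coercivity}, take $\psi\in\dot H^{\zeta}$ with $\|\psi\|_{\dot H^\zeta}\le1$. The Sobolev embedding $H^\zeta\hookrightarrow W^{1,\infty}$ for $\zeta>d/2+1$ gives $\|\nabla\psi\|_{L^\infty}\le C_\zeta$. Hence $|\int\psi\,d(\mu_{X_N}-\rho)|\le C_\zeta\mathcal W_1(\mu_{X_N},\rho)$. Taking the supremum over $\psi$ gives the dual norm $\|\mu_{X_N}-\rho\|_{\dot H^{-\zeta}}\le C_\zeta\mathcal W_1$, and squaring combined with \eqref{eq:wasserstein-coercivity} yields the claim.
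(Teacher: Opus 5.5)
Your proposal is correct and follows the paper's proof essentially step by step. The ingredients match: spherical smearing at radius $\eta\sim(N\|\rho\|_{L^\infty})^{-1/d}$, the transport bound $\mathcal W_1(\mu_{X_N},\mu_{X_N,\eta})\le\eta$, $\dot H^{-1}$--$\dot H^1$ duality for the smeared part, absorption of $\eta^2$ and constant multiples of $\err$ via \eqref{eq:modulated-energy-lower-bound}, and Sobolev embedding for the $\dot H^{-\zeta}$ bound. The only difference is that the paper does not import the smeared-energy inequality from \cite{ChodronRosenzweigSerfaty2025}. It proves that inequality itself in Lemma~\ref{lem:app-distance-smearing}, along exactly the lines you sketch: the spherical-mean bound $A_\eta K_1\le K_1+\eta^2/(2d)$ (valid even for overlapping spheres), the density correction from $\Delta(K_1*\rho)=1-\rho$, and the self-energy bound $D_\eta\lesssim\err$.
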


\begin{proof}
We first prove \eqref{eq:wasserstein-coercivity}.  The atomic
measure $\mu_{X_N}$ has infinite Coulomb self-interaction, so we
replace each point mass by a measure supported on a small sphere.
Let $\delta_{x_i}^{(\eta)}$ be the uniform probability measure on
the sphere of radius $\eta$ around $x_i$, and set
\[
  \eta:=\frac14\bigl(N\|\rho\|_{L^\infty}\bigr)^{-1/d},
  \qquad
  \mu_{X_N,\eta}:=\frac1N\sum_{i=1}^N\delta_{x_i}^{(\eta)}.
\]
Coupling each point $x_i$ to its sphere gives
$\mathcal W_1(\mu_{X_N},\mu_{X_N,\eta})\le\eta$.
Since $1\le\|\rho\|_{L^\infty}\le C_*$, the definition
\eqref{eq:finite-particle-error} also gives $\eta^2\le C\err$.

The smeared measure has finite Coulomb energy.  The smearing
estimate proved in Appendix~\ref{app:distance-estimates} compares
that energy with the modulated energy after the diagonal has been
removed.  In the normalization of
Lemma~\ref{prop:static-coulomb-bounds}, it gives
\begin{equation}
  \|\mu_{X_N,\eta}-\rho\|_{\dot H^{-1}}^2
  \le C\left(\frac12F_N(X_N,\rho)+\err\right).
  \label{eq:smeared-energy-coercivity}
\end{equation}
To convert this into a Wasserstein bound, let $\phi$ be
$1$-Lipschitz on the unit torus.  Kantorovich--Rubinstein duality
and $\dot H^{-1}$--$\dot H^1$ duality give
\[
  \left|\int_{\T^d}\phi\,d(\mu_{X_N,\eta}-\rho)\right|
  \le\|\mu_{X_N,\eta}-\rho\|_{\dot H^{-1}}
       \|\nabla\phi\|_{L^2}
  \le\|\mu_{X_N,\eta}-\rho\|_{\dot H^{-1}}.
\]
The triangle inequality, followed by
\eqref{eq:smeared-energy-coercivity}, now yields
\[
\begin{aligned}
  \mathcal W_1(\mu_{X_N},\rho)^2
  &\le 2\mathcal W_1(\mu_{X_N},\mu_{X_N,\eta})^2
       +2\mathcal W_1(\mu_{X_N,\eta},\rho)^2\\
  &\le 2\eta^2
       +2\|\mu_{X_N,\eta}-\rho\|_{\dot H^{-1}}^2
   \le C\left(\frac12F_N(X_N,\rho)+\err\right).
\end{aligned}
\]

For the second claim, Sobolev embedding makes every test function
of unit $\dot H^\zeta$ norm Lipschitz when
$\zeta>d/2+1$, after subtracting its spatial mean.  Applying
Kantorovich--Rubinstein duality to the mean-zero signed measure
$\mu_{X_N}-\rho$ gives
\begin{equation}
  \|\mu_{X_N}-\rho\|_{\dot H^{-\zeta}}
  \le C_\zeta\mathcal W_1(\mu_{X_N},\rho).
  \label{eq:negative-sobolev-wasserstein-comparison}
\end{equation}
Combining this with \eqref{eq:wasserstein-coercivity} proves
\eqref{eq:negative-sobolev-coercivity}.
\end{proof}

We now apply the lemma to the particle law.  The estimate of
Theorem~\ref{thm:uniform-envelope} makes the exponential factor
in Proposition~\ref{prop:assembly} bounded for every $t$ and
$\tau$.  The i.i.d.\ initialization then leaves an error of order
$\err$ in both temperature regimes.

\begin{proof}[Proof of Theorem~\ref{thm:uniform-mean-field}]
We first bound the expected shifted modulated energy at positive
temperature and at the deterministic endpoint.  Lemma~\ref{lem:distance-coercivity}
will then give the asserted measure distances.

For $\tau>0$, combine the particle estimate
\eqref{eq:noisy-particle-gronwall}, the initial bound
\eqref{eq:iid-initial-free-energy}, and
$\int_0^t\|\nabla u_s^\tau\|_{L^\infty}\,ds\le\Psi_\infty$.
This gives
\begin{equation}
  \sup_{\tau>0}\operatorname*{ess\,sup}_{t\ge0}
  \bigl(\mathcal E_N^\tau(t)+\err\bigr)
  \le e^{(C_d/2)\Psi_\infty}\err
  \le C\err.
  \label{eq:uniform-noisy-modulated-free-energy}
\end{equation}
The relative entropy in $\mathcal E_N^\tau$ is nonnegative.
By its definition, for almost every $t\ge0$,
\begin{equation}
  \mathbb E_{f_N^\tau(t)}
    \left[\frac12F_N(X_N,\rho_t^\tau)+\err\right]
  \le\mathcal E_N^\tau(t)+\err
  \le C\err.
  \label{eq:uniform-noisy-modulated-energy}
\end{equation}

At $\tau=0$, the pathwise estimate
\eqref{eq:deterministic-particle-gronwall} applies to almost every
i.i.d.\ initial configuration.  The same initial calculation and
velocity bound give, after taking expectation over those initial
positions,
\begin{equation}
  \mathbb E\left[
    \frac12F_N(X_N^0(t),\rho_t^0)+\err\right]
  \le e^{(C_d/2)\Psi_\infty}\err
  \le C\err,
  \qquad t\ge0.
  \label{eq:uniform-deterministic-modulated-energy}
\end{equation}
Apply \eqref{eq:wasserstein-coercivity} to the configurations in
\eqref{eq:uniform-noisy-modulated-energy} and
\eqref{eq:uniform-deterministic-modulated-energy}, then take
expectations. 
This proves the theorem.
Applying \eqref{eq:negative-sobolev-coercivity} in the same way
gives the negative Sobolev estimate stated immediately after
the theorem.
\end{proof}

For $\tau>0$, the bound
\eqref{eq:uniform-noisy-modulated-free-energy} also controls the
relative entropy per particle.  Entropy subadditivity transfers
that control to each fixed marginal, giving the stronger notion of
chaos stated in Theorem~\ref{thm:uniform-propagation-chaos}.

\begin{proof}[Proof of Theorem~\ref{thm:uniform-propagation-chaos}]
We bound the entropy of the $k$-particle marginal with respect
to the product density $(\rho_t^\tau)^{\otimes k}$, then apply
the Csisz\'ar--Kullback--Pinsker inequality.  The lower bound
\eqref{eq:modulated-energy-lower-bound} and the definition of
$\mathcal E_N^\tau$ first give
\begin{equation}
  \tau H_N\bigl(f_N^\tau(t)\mid(\rho_t^\tau)^{\otimes N}\bigr)
  \le\mathcal E_N^\tau(t)+\err
  \le C\err.
  \label{eq:chaos-entropy-control}
\end{equation}
Because $f_N^\tau(t)$ is symmetric, normalized entropy
subadditivity \cite[Eq.~(12)]{JabinWang2018} implies
\begin{equation}
  \int_{(\T^d)^k}f_{N;k}^\tau(t)
    \log\frac{f_{N;k}^\tau(t)}{(\rho_t^\tau)^{\otimes k}}
    \,dx_1\cdots dx_k
  \le kH_N\bigl(f_N^\tau(t)\mid(\rho_t^\tau)^{\otimes N}\bigr).
  \label{eq:chaos-marginal-entropy}
\end{equation}
Pinsker's inequality and \eqref{eq:chaos-entropy-control} therefore
yield, for $1\le k\le N$,
\[
  \|f_{N;k}^\tau(t)-(\rho_t^\tau)^{\otimes k}\|_{L^1}
  \le\left(2kH_N\bigl(f_N^\tau(t)
                  \mid(\rho_t^\tau)^{\otimes N}\bigr)\right)^{1/2}
  \le C\left(\frac{k\err}{\tau}\right)^{1/2}.
\]
This proves \eqref{eq:fixed-temperature-marginal-chaos} for almost
every $t$, with a constant independent of $N$, $k$, and
$\tau$.
\end{proof}

\subsection{Approximation of the prescribed target}
\label{subsec:target-approximation}

The preceding theorem controls the particles relative to the
mean-field density at the same time and temperature.  To estimate
their distance from the prescribed target $\pi$, we also need
the decay of the density toward $\rho_\infty^\tau$ and the
distance from that equilibrium to $\pi$.  These are the three
terms in the identity
\begin{equation}
  \mu_{X_N}^t-\pi
  =(\mu_{X_N}^t-\rho_t^\tau)
   +(\rho_t^\tau-\rho_\infty^\tau)
   +(\rho_\infty^\tau-\pi).
  \label{eq:target-error-decomposition}
\end{equation}
Corollary~\ref{cor:intro-target-approximation} bounds their sum.
Corollary~\ref{cor:thermal-bias-lower} shows that its stationary
term has exact order $\tau$ for a nonconstant target.

\begin{corollary}[Approximation of the prescribed target]
\label{cor:intro-target-approximation}
Under Assumption~\ref{assump:standing-data}, let $N\ge2$.
Let $\kappa_*=(2\pi)^2/(2C_*)$ be the constant from
Proposition~\ref{prop:hminus-exp}.
For $\tau>0$, take expectation under $f_N^\tau(t)$.
For $\tau=0$, take expectation over the i.i.d.\ initial
configuration.  Then, for $\tau\ge0$,
\begin{equation}
  \left(\mathbb E\,\mathcal W_1(\mu_{X_N}^t,\pi)^2\right)^{1/2}
  \le C\left(
       \err^{1/2}
       +e^{-\frac{c_*}{2}(\frac12+\kappa_*\tau)t}
       +\tau\right).
  \label{eq:wasserstein-target-approximation}
\end{equation}
For every $\zeta>d/2+1$, it follows that
\begin{equation}
  \left(\mathbb E
     \|\mu_{X_N}^t-\pi\|_{\dot H^{-\zeta}}^2\right)^{1/2}
  \le C_\zeta\left(
       \err^{1/2}
       +e^{-\frac{c_*}{2}(\frac12+\kappa_*\tau)t}
       +\tau\right).
  \label{eq:intro-target-approximation}
\end{equation}
Both bounds hold for almost every $t\ge0$ if $\tau>0$, and
for every $t\ge0$ if $\tau=0$.
\end{corollary}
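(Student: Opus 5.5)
We use the three-term decomposition \eqref{eq:target-error-decomposition} together with the triangle inequality for $\mathcal W_1$ and Minkowski's inequality in $L^2(\mathbb P)$. This gives
\[
  \bigl(\mathbb E\,\mathcal W_1(\mu_{X_N}^t,\pi)^2\bigr)^{1/2}
  \le\bigl(\mathbb E\,\mathcal W_1(\mu_{X_N}^t,\rho_t^\tau)^2\bigr)^{1/2}
   +\mathcal W_1(\rho_t^\tau,\rho_\infty^\tau)
   +\mathcal W_1(\rho_\infty^\tau,\pi).
\]
The last two terms are deterministic. The first term is bounded by $C\err^{1/2}$ by Theorem~\ref{thm:uniform-mean-field}. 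This holds for almost every $t$ when $\tau>0$ and for every $t$ when $\tau=0$, which is the source of the distinction in the final sentence of the corollary.

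\emph{Relaxation term.} For two probability densities, Kantorovich--Rubinstein duality combined with $\dot H^{-1}$--$\dot H^1$ duality gives
\[
  \mathcal W_1(\rho_t^\tau,\rho_\infty^\tau)\le\|\rho_t^\tau-\rho_\infty^\tau\|_{\dot H^{-1}}.
\]
The argument is the one in the proof of Lemma~\ref{lem:distance-coercivity}: a $1$-Lipschitz $\phi$ on the unit torus has $\|\nabla\phi\|_{L^2}\le1$. For $\tau>0$, \eqref{eq:appD-Hminus-accelerated-relaxation} then bounds this term by $Ce^{-\frac{c_*}{2}(\frac12+\kappa_*\tau)t}$. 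For $\tau=0$ the exponent reduces to $\frac{c_*}{4}t$. Here we use \eqref{eq:hminus-relaxation-all-temperatures}, which was obtained by passing $\tau\downarrow0$ via Proposition~\ref{prop:tau0-endpoint}, and it gives exactly this rate. No further work is needed.

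\emph{Thermal bias.} We bound $\mathcal W_1(\rho_\infty^\tau,\pi)$ in the same way by $\|\rho_\infty^\tau-\pi\|_{\dot H^{-1}}$. By Poincar\'e's inequality this is at most $(2\pi)^{-1}\|\rho_\infty^\tau-\pi\|_{L^2}$, and hence at most $C\|\rho_\infty^\tau-\pi\|_{L^\infty}$. Proposition~\ref{prop:stat-bias}, in the form \eqref{eq:stat-bias-linear}, bounds this by $C\tau$ for every $\tau\ge0$. For large $\tau$ one could also use the trivial bound $\mathcal W_1\le\operatorname{diam}\T^d$, but the linear bound already suffices uniformly in $\tau$.

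\emph{Negative Sobolev version.} For \eqref{eq:intro-target-approximation}, we apply \eqref{eq:negative-sobolev-wasserstein-comparison}, which holds for any difference of probability measures. Thus
\[
  \|\mu_{X_N}^t-\pi\|_{\dot H^{-\zeta}}\le C_\zeta\mathcal W_1(\mu_{X_N}^t,\pi),
\]
and squaring and taking expectations reduces this estimate to \eqref{eq:wasserstein-target-approximation}. The main point of care is not analytic but bookkeeping. The expectation must be taken under $f_N^\tau(t)$ for $\tau>0$, and over the i.i.d.\ initial data for $\tau=0$. The almost-every-$t$ qualifier is inherited only from the particle term. Finally, all constants must depend only on $d$, $\rho_0$ and $\pi$, and this follows because every result cited above has uniform constants.
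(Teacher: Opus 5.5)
Your proposal is correct and follows the paper's proof essentially step for step. You use the same three-term decomposition with Minkowski's inequality, Theorem~\ref{thm:uniform-mean-field} for the particle term, and the $\dot H^{-1}$ duality bound with \eqref{eq:appD-Hminus-accelerated-relaxation} and \eqref{eq:hminus-relaxation-all-temperatures} for the relaxation term; you then obtain the Sobolev version from \eqref{eq:negative-sobolev-wasserstein-comparison}. The only difference is cosmetic: for the thermal bias the paper uses $\mathcal W_1\le C\|\rho_\infty^\tau-\pi\|_{L^1}$ on the compact torus, whereas you go through $\dot H^{-1}$ and Poincar\'e, and both routes give $C\tau$ via Proposition~\ref{prop:stat-bias}.
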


\begin{proof}
We estimate the three terms in
\eqref{eq:target-error-decomposition} in order.  Theorem~\ref{thm:uniform-mean-field}
bounds the first one in the root mean square Wasserstein distance
by $C\err^{1/2}$.  For the second, Kantorovich--Rubinstein
duality and $\dot H^{-1}$--$\dot H^1$ duality give
\begin{equation}
  \mathcal W_1(\rho_t^\tau,\rho_\infty^\tau)
  \le\|\rho_t^\tau-\rho_\infty^\tau\|_{\dot H^{-1}}
  \le C e^{-\frac{c_*}{2}(\frac12+\kappa_*\tau)t}.
  \label{eq:target-relaxation-wasserstein}
\end{equation}
For $\tau>0$, the last inequality is
\eqref{eq:appD-Hminus-accelerated-relaxation}.  At $\tau=0$
it is \eqref{eq:hminus-relaxation-all-temperatures}, since the
displayed exponent then equals $c_*/4$.

The last term is the stationary bias.  On the compact torus,
the Wasserstein distance between two probability densities is
bounded by a constant times their $L^1$ distance.
Proposition~\ref{prop:stat-bias} thus gives
\begin{equation}
  \mathcal W_1(\rho_\infty^\tau,\pi)
  \le C\|\rho_\infty^\tau-\pi\|_{L^1}
  \le C\tau.
  \label{eq:target-stationary-bias-wasserstein}
\end{equation}
The triangle inequality for $\mathcal W_1$ and Minkowski's
inequality in the probability space combine these three estimates
to give \eqref{eq:wasserstein-target-approximation}.
Finally, \eqref{eq:negative-sobolev-wasserstein-comparison} with
$\rho=\pi$ gives
\eqref{eq:intro-target-approximation}.
\end{proof}

At fixed positive temperature the density relaxes toward
$\rho_\infty^\tau$, which can differ from $\pi$.  The next
corollary determines the size of this difference in Wasserstein and
negative Sobolev distances when the target is nonconstant and
$\tau$ is small.

\begin{corollary}[Two-sided thermal bias bounds]
\label{cor:thermal-bias-lower}
Suppose that $\pi$ is nonconstant.  There are constants
$c,C,\tau_1>0$ such that, for $0<\tau\le\tau_1$,
\begin{equation}
  c\tau\le\mathcal W_1(\rho_\infty^\tau,\pi)\le C\tau.
  \label{eq:nondegenerate-thermal-bias}
\end{equation}
Moreover, for every $\zeta>d/2+1$, there are constants
$c_\zeta,C_\zeta, \tau_\zeta >0$ such that, for
$0<\tau\le\tau_\zeta$,
\begin{equation}
  c_\zeta\tau
  \le\|\rho_\infty^\tau-\pi\|_{\dot H^{-\zeta}}
  \le C_\zeta\tau.
  \label{eq:nondegenerate-sobolev-thermal-bias}
\end{equation}
\end{corollary}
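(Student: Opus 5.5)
The upper bounds are already available. Corollary~\ref{cor:intro-target-approximation}'s proof gives $\mathcal W_1(\rho_\infty^\tau,\pi)\le C\|\rho_\infty^\tau-\pi\|_{L^1}\le C\tau$ by Proposition~\ref{prop:stat-bias}. Then \eqref{eq:negative-sobolev-wasserstein-comparison} yields $\|\rho_\infty^\tau-\pi\|_{\dot H^{-\zeta}}\le C_\zeta\mathcal W_1(\rho_\infty^\tau,\pi)\le C_\zeta\tau$. So the work is in the lower bounds, and the plan is to extract them from the first-order expansion \eqref{eq:stat-bias-expansion}:
\[
\rho_\infty^\tau-\pi=\tau\Delta\log\pi+R_\tau,\qquad \|R_\tau\|_{L^\infty}\le C\tau^2 .
\]

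For the Wasserstein lower bound I will test against one fixed Lipschitz function. Set $\phi:=\log\pi$, which is $C^4$ and hence Lipschitz with constant $L:=\|\nabla\log\pi\|_{L^\infty}$. If $\pi$ is nonconstant then $L>0$. Kantorovich--Rubinstein duality gives
\[
L\,\mathcal W_1(\rho_\infty^\tau,\pi)\ge\int_{\T^d}\phi\,(\rho_\infty^\tau-\pi)\,dx
=\tau\int_{\T^d}\log\pi\,\Delta\log\pi\,dx+\int_{\T^d}\phi R_\tau\,dx .
\]
Integrating by parts, the first integral equals $-\tau\|\nabla\log\pi\|_{L^2}^2$. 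Flipping the sign of $\phi$, I get
\[
L\,\mathcal W_1(\rho_\infty^\tau,\pi)\ge\tau\|\nabla\log\pi\|_{L^2}^2-C\tau^2\|\log\pi\|_{L^\infty}.
\]
Because $\pi$ is nonconstant and continuous, $\|\nabla\log\pi\|_{L^2}^2>0$. Choosing $\tau_1$ so that the $\tau^2$ term is at most half the main term gives $c\tau\le\mathcal W_1$ with $c=\|\nabla\log\pi\|_{L^2}^2/(2L)$.

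For the $\dot H^{-\zeta}$ lower bound I will use duality in the same way. Now the test function must be smooth enough to lie in $\dot H^{\zeta}$ for arbitrary $\zeta$, and $\log\pi$ is only $C^4$. So I replace it by a fixed trigonometric polynomial $\psi$ chosen with
\[
\int_{\T^d}\psi\,\Delta\log\pi\,dx\ne0 .
\]
Such a $\psi$ exists because $\Delta\log\pi$ is not identically zero: otherwise $\log\pi$ would be harmonic on the torus, hence constant. One can take a single Fourier mode $m\ne0$ with $\widehat{\Delta\log\pi}(m)\ne0$, using its real or imaginary part. The pairing with $\psi$ then gives
\[
\|\psi\|_{\dot H^{\zeta}}\|\rho_\infty^\tau-\pi\|_{\dot H^{-\zeta}}\ge\tau\Big|\int\psi\Delta\log\pi\Big|-C\tau^2\|\psi\|_{L^1}.
\]
Choosing $\tau_\zeta$ small closes the argument as before. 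In fact the same $\psi$ also gives an alternative proof of the Wasserstein lower bound.

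The only subtle point is the nondegeneracy: the leading coefficient of $\tau$ must not vanish. This is exactly where nonconstancy of $\pi$ enters, through $\|\nabla\log\pi\|_{L^2}>0$, or equivalently through $\Delta\log\pi\not\equiv0$ by the Liouville property of harmonic functions on $\T^d$. Everything else reduces to the expansion \eqref{eq:stat-bias-expansion} with its uniform $O(\tau^2)$ remainder, which is already proved.
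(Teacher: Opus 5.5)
Your proposal is correct, and the upper bounds and the Wasserstein lower bound are the paper's argument. The paper also tests against the $1$-Lipschitz function $-\log\pi/L$ and combines \eqref{eq:stat-bias-expansion} with $-\int\log\pi\,\Delta\log\pi=\|\nabla\log\pi\|_{L^2}^2>0$.

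The $\dot H^{-\zeta}$ part is where you differ. You get the upper bound from $\|\cdot\|_{\dot H^{-\zeta}}\le C_\zeta\mathcal W_1$, and the lower bound by duality against a single Fourier mode of $\Delta\log\pi$. The paper works directly with the norm. It writes $\rho_\infty^\tau-\pi=\tau\Delta\log\pi+r_\tau$, where $r_\tau$ has mean zero and $\|r_\tau\|_{\dot H^{-\zeta}}\le C\|r_\tau\|_{L^2}\le C\tau^2$. The triangle inequality then gives both directions at once: $\bigl|\|\rho_\infty^\tau-\pi\|_{\dot H^{-\zeta}}-\tau\|\Delta\log\pi\|_{\dot H^{-\zeta}}\bigr|\le C_\zeta\tau^2$.

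The paper's route has three advantages:
\begin{itemize}
\item It needs no test function, so your concern that $\log\pi$ is only $C^4$ never arises.
\item It gives the exact first-order asymptotics, with leading constant $\|\Delta\log\pi\|_{\dot H^{-\zeta}}$, rather than the smaller constant you get from one mode.
\item It does not actually use $\zeta>d/2+1$.
\end{itemize}
In your version, only the upper bound needs that hypothesis, because it passes through $\mathcal W_1$. Either route proves the corollary as stated.
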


\begin{proof}
We prove the upper bound from the stationary bias estimate and the
lower bound by testing against a particular Lipschitz function.
Equation~\eqref{eq:target-stationary-bias-wasserstein} already gives
$\mathcal W_1(\rho_\infty^\tau,\pi)\le C\tau$.

For the lower bound, nonconstancy of $\pi$ implies
$L:=\|\nabla\log\pi\|_{L^\infty}>0$.  The function
$\phi:=-\log\pi/L$ is $1$-Lipschitz on $\T^d$.
Kantorovich--Rubinstein duality and the stationary expansion
\eqref{eq:stat-bias-expansion} give
\begin{equation}
\begin{aligned}
  \mathcal W_1(\rho_\infty^\tau,\pi)
  &\ge\int_{\T^d}\phi(\rho_\infty^\tau-\pi)\,dx\\
  &=-\frac{\tau}{L}
        \int_{\T^d}(\log\pi)\Delta\log\pi\,dx
        +O(\tau^2)\\
  &=\frac{\tau}{L}
        \|\nabla\log\pi\|_{L^2}^2+O(\tau^2).
\end{aligned}
  \label{eq:thermal-bias-dual-lower-bound}
\end{equation}
The leading coefficient is positive because $\pi$ is
nonconstant.  Decreasing $\tau_1$ makes the remainder at most
half of that term, which proves the lower bound in
\eqref{eq:nondegenerate-thermal-bias}.

The same expansion gives the negative Sobolev bound.  Write
$r_\tau:=\rho_\infty^\tau-\pi-\tau\Delta\log\pi$.
It has mean zero and $\|r_\tau\|_{L^\infty}\le C\tau^2$ by
\eqref{eq:stat-bias-expansion}.  Since $\dot H^{-\zeta}$ is
bounded by $L^2$ on mean-zero functions,
\begin{equation}
  \tau\|\Delta\log\pi\|_{\dot H^{-\zeta}}-C_\zeta\tau^2
  \le\|\rho_\infty^\tau-\pi\|_{\dot H^{-\zeta}}
  \le\tau\|\Delta\log\pi\|_{\dot H^{-\zeta}}+C_\zeta\tau^2.
  \label{eq:thermal-bias-sobolev-expansion}
\end{equation}
The function $\Delta\log\pi$ is nonzero, since the only harmonic
functions on the torus are constant and $\pi$ is nonconstant.
After choosing $\tau_\zeta>0$ sufficiently small,
\eqref{eq:thermal-bias-sobolev-expansion} proves
\eqref{eq:nondegenerate-sobolev-thermal-bias}.
\end{proof}

The triangle inequality also compares the root mean square target
error directly with the stationary bias.  With the same time
convention as in Corollary~\ref{cor:intro-target-approximation},
\[
  \left|
    \left(\mathbb E\,\mathcal W_1(\mu_{X_N}^t,\pi)^2\right)^{1/2}
    -\mathcal W_1(\rho_\infty^\tau,\pi)
  \right|
  \le C\left(
    \err^{1/2}
    +e^{-\frac{c_*}{2}(\frac12+\kappa_*\tau)t}
  \right).
\]
Thus, for fixed sufficiently small $\tau>0$, the error in
approximating $\pi$ approaches a quantity of exact order
$\tau$ as $N,t\to\infty$.

\begingroup
\sloppy
\bibliographystyle{alpha}
\bibliography{targeted_coulomb_references}
\endgroup

\appendix
\section{Deterministic particle flow and collision exclusion}
\label{app:deterministic-particles}

The main goal of this appendix is to showcase that the deterministic and stochastic particle systems exist globally and avoid collisions, making the subsequent energy calculations well defined. The pathwise particle calculation in
Proposition~\ref{prop:deterministic-modulated-identity} requires a
deterministic flow that remains away from collisions.  We obtain it from
the microscopic energy: the energy decreases along the deterministic
flow, while its value diverges if two particles meet.  The same energy
also gives a collision barrier for the stochastic particle system.
Lemma~\ref{lem:microscopic-energy-structure} identifies the particle
drift as an energy gradient.  Proposition~\ref{prop:deterministic-particle-flow}
and Proposition~\ref{prop:stochastic-particle-flow} then give the two
global flows, and Corollary~\ref{cor:random-initial-collision-free}
applies the deterministic result to the i.i.d.\ initial data of the
main theorems.

\subsection{The microscopic gradient-flow energy}
\label{subsec:microscopic-gradient-energy}

We first find an energy whose gradient is the particle drift and check
that the prescribed target does not disturb local well-posedness.
The periodic Coulomb kernel is even and smooth away from the origin.
It is bounded below and satisfies
$K_1(z)\to+\infty$ as $d_{\T^d}(z,0)\to0$
\cite[Section~3]{ChodronRosenzweigSerfaty2025}.  Set
\[
  \underline K
  :=\inf_{z\in\T^d\setminus\{0\}}K_1(z)>-\infty.
\]
For a collision-free configuration
$X_N=(x_1,\ldots,x_N)\in\mathcal X_N$, define the microscopic energy by
\begin{equation}
  \mathcal H_N^\pi(X_N)
  :=\frac1{2N}\sum_{i\ne j}K_1(x_i-x_j)
    -\sum_{i=1}^N(K_1*\pi)(x_i).
  \label{eq:microscopic-targeted-energy}
\end{equation}
The first sum counts each unordered pair once after the factor
$1/2$, and the second sum is the energy of the external potential
$-K_1*\pi$.  With this normalization, the Gibbs weight in
\eqref{eq:targeted-gibbs-weight} satisfies
$G_N=\exp(-\mathcal H_N^\pi/\tau)$ when $\tau>0$.

\begin{lemma}[Microscopic energy and target field]
\label{lem:microscopic-energy-structure}
Under Assumption~\ref{assump:standing-data}, the energy
$\mathcal H_N^\pi$ is twice continuously differentiable on
$\mathcal X_N$.  Its negative gradient is the drift in
\eqref{eq:intro-particle-system}.  Consequently, a trajectory
$X_N(t)=(x_1(t),\ldots,x_N(t))$ of the deterministic particle
system solves, for each $i=1,\ldots,N$,
\begin{equation}
  \dot x_i(t)
  =-\nabla_{x_i}\mathcal H_N^\pi(X_N(t))
  =-\frac1N\sum_{j\ne i}\nabla K_1(x_i(t)-x_j(t))
    +\nabla K_1*\pi(x_i(t)).
  \label{eq:microscopic-energy-gradient}
\end{equation}
Moreover, the target field is Lipschitz and obeys
\begin{equation}
  \|\nabla K_1*\pi\|_{W^{1,\infty}}
  \le C\|\nabla K_1\|_{L^1}\|\pi\|_{C^1}.
  \label{eq:target-field-regularity}
\end{equation}
Consequently, the deterministic drift is locally Lipschitz on
$\mathcal X_N$.
\end{lemma}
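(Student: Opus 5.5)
The statement has three parts, and I will prove them in this order: the $C^2$ regularity of $\mathcal H_N^\pi$ on $\mathcal X_N$, the identification of $-\nabla\mathcal H_N^\pi$ with the drift, and the Lipschitz bound \eqref{eq:target-field-regularity}. Local Lipschitz continuity of the drift then follows immediately.

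\emph{Regularity of the energy.} The periodic kernel $K_1$ is smooth on $\T^d\setminus\{0\}$. This holds because $K_1$ equals the Euclidean Newtonian kernel plus a smooth periodic correction near the origin, or alternatively because $-\Delta K_1=-1$ away from $0$ and elliptic regularity applies. On $\mathcal X_N$ every difference $x_i-x_j$ with $i\ne j$ stays away from $0$. The pair part is therefore $C^\infty$ on the open set $\mathcal X_N$.

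For the one-body part, I write $K_1*\pi$ and observe that $-\Delta(K_1*\pi)=\pi-1$. Since $\pi\in C^4$, periodic Schauder theory (Lemma~\ref{lem:periodic-schauder}) gives $K_1*\pi\in C^{5,\alpha}$. In particular it is $C^2$.

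\emph{The gradient.} Differentiating the pair sum in $x_i$ picks up the terms with first index $i$ and those with second index $i$. Since $K_1$ is even, $\nabla K_1$ is odd, so the two contributions are equal. Together they give
\[
\nabla_{x_i}\frac1{2N}\sum_{k\ne j}K_1(x_k-x_j)=\frac1N\sum_{j\ne i}\nabla K_1(x_i-x_j).
\]
The one-body term contributes $-\nabla K_1*\pi(x_i)$, where I use $\nabla(K_1*\pi)=(\nabla K_1)*\pi$. This identity holds because $\nabla K_1\in L^1$, with $|\nabla K_1(z)|\lesssim|z|^{1-d}$ near $0$. Comparing with \eqref{eq:intro-particle-system} proves \eqref{eq:microscopic-energy-gradient}.

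\emph{The bound \eqref{eq:target-field-regularity}.} Young's inequality gives $\|\nabla K_1*\pi\|_{L^\infty}\le\|\nabla K_1\|_{L^1}\|\pi\|_{L^\infty}$. For the derivative, I move it onto $\pi$: $\partial_k(\nabla K_1*\pi)=\nabla K_1*\partial_k\pi$, which is bounded by $\|\nabla K_1\|_{L^1}\|\nabla\pi\|_{L^\infty}$. This avoids the non-integrable Hessian $\nabla^2K_1$ entirely.

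\emph{Local Lipschitz property of the drift.} On any compact subset of $\mathcal X_N$ the pairwise distances are bounded below. There $\nabla K_1$ has bounded derivative, so the interaction part of the drift is Lipschitz. The target part is globally Lipschitz by \eqref{eq:target-field-regularity}.

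The only step needing some care is the smoothness of $K_1$ away from $0$ and the local integrability of $\nabla K_1$. I will take both from the standard description of the periodic Coulomb kernel in \cite[Section~3]{ChodronRosenzweigSerfaty2025}.
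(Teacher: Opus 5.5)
Your proof is correct and follows the paper's argument. You use evenness of $K_1$ to identify $-\nabla_{x_i}\mathcal H_N^\pi$ with the drift, Young's inequality with the derivative moved onto $\pi$ for the $W^{1,\infty}$ bound, and smoothness of $K_1$ away from the origin for the local Lipschitz property. The one inessential difference is that you get the $C^2$ regularity of $K_1*\pi$ from the Schauder estimate. The paper instead puts the derivatives on $\pi\in C^4$ using $K_1\in L^1(\T^d)$, which is more direct and avoids your overstated $C^{5,\alpha}$ claim; that claim needs the lemma applied to derivatives of $\pi$, not to $\pi$ once.
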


\begin{proof}
We differentiate \eqref{eq:microscopic-targeted-energy} with respect
to $x_i$.  Evenness of $K_1$ makes the two occurrences of each
pair contribute equally, which gives
\[
  \nabla_{x_i}\mathcal H_N^\pi(X_N)
  =\frac1N\sum_{j\ne i}\nabla K_1(x_i-x_j)
   -\nabla K_1*\pi(x_i).
\]
This is \eqref{eq:microscopic-energy-gradient}.  The convolution
$K_1*\pi$ is twice continuously differentiable because
$K_1\in L^1$ and $\pi\in C^4$.  To control its gradient, we use
$\nabla K_1\in L^1(\T^d)$ and move one derivative to $\pi$:
\[
  \nabla(\nabla K_1*\pi)=\nabla K_1*\nabla\pi.
\]
Young's convolution inequality proves
\eqref{eq:target-field-regularity}.  The pair field is smooth when its
arguments are distinct, so this estimate gives the asserted local
Lipschitz property.
\end{proof}

\subsection{Global existence and collision exclusion}
\label{subsec:deterministic-collision-exclusion}

The local Lipschitz property in
Lemma~\ref{lem:microscopic-energy-structure} gives a particle flow up
to a possible collision.  We first show that deterministic energy
dissipation keeps every pair apart.  We then apply the result to random
initial configurations and use a stopped energy estimate to treat
Brownian noise.

\begin{proposition}[Global deterministic particle flow]
\label{prop:deterministic-particle-flow}
\label{lem:no-collision}
Let $N\ge2$ and $X_N(0)\in\mathcal X_N$.  Equation
\eqref{eq:microscopic-energy-gradient} has a unique global solution
$X_N\in C^1([0,\infty);\mathcal X_N)$.  It satisfies
\begin{equation}
  \mathcal H_N^\pi(X_N(t))
  +\int_0^t\sum_{i=1}^N|\dot x_i(s)|^2\,ds
  =\mathcal H_N^\pi(X_N(0)),
  \qquad t\ge0.
  \label{eq:microscopic-energy-dissipation}
\end{equation}
There is a constant $\delta=\delta(N,\pi,X_N(0))>0$ such that
\begin{equation}
  \min_{i\ne j}d_{\T^d}(x_i(t),x_j(t))\ge\delta,
  \qquad t\ge0.
  \label{eq:uniform-pathwise-separation}
\end{equation}
\end{proposition}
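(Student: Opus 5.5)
The plan is the standard energy-barrier argument for gradient flows with a repulsive singular pair potential. By Lemma~\ref{lem:microscopic-energy-structure}, the drift is locally Lipschitz on the open set $\mathcal X_N$. The Cauchy--Lipschitz theorem therefore gives a unique maximal $C^1$ solution on $[0,T^*)$ with values in $\mathcal X_N$. If $T^*<\infty$, the trajectory must leave every compact subset of $\mathcal X_N$. Since $(\T^d)^N$ is compact, leaving every compact subset forces
\[
  \liminf_{t\uparrow T^*}\min_{i\ne j}d_{\T^d}(x_i(t),x_j(t))=0.
\]
On $[0,T^*)$, the chain rule and \eqref{eq:microscopic-energy-gradient} give
\[
  \frac{d}{dt}\mathcal H_N^\pi(X_N(t))
  =\sum_i\nabla_{x_i}\mathcal H_N^\pi\cdot\dot x_i
  =-\sum_i|\dot x_i|^2 .
\]
Integrating this yields the dissipation identity \eqref{eq:microscopic-energy-dissipation} up to $T^*$. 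In particular, $\mathcal H_N^\pi(X_N(t))\le\mathcal H_N^\pi(X_N(0))$ for all $t<T^*$.

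The key step, and the main (mild) obstacle, is turning this energy bound into a lower bound on each pairwise distance. The difficulty is that the other pair terms and the target potential could a priori offset a large single pair. Both are controlled from below. Each pair term satisfies $K_1(x_i-x_j)\ge\underline K$. The external term satisfies
\[
  |(K_1*\pi)(x_i)|\le\|K_1\|_{L^1}\|\pi\|_{L^\infty}=:A .
\]
Hence, for any fixed pair $(i_0,j_0)$,
\[
  \frac1N K_1(x_{i_0}-x_{j_0})
  \le\mathcal H_N^\pi(X_N(0))+\frac{N(N-1)}{2N}|\underline K|+NA=:B .
\]
Here I use that the pair appears twice in $\sum_{i\ne j}$, with the factor $1/(2N)$. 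It follows that $K_1(x_{i_0}(t)-x_{j_0}(t))\le NB$ for all $t<T^*$. Since $K_1(z)\to+\infty$ as $d_{\T^d}(z,0)\to0$, the sublevel set $\{z:K_1(z)\le NB\}$ stays at distance at least some $\delta>0$ from the origin. Here $\delta$ depends only on $N$, $\pi$, and $\mathcal H_N^\pi(X_N(0))$, and hence on $X_N(0)$. This gives \eqref{eq:uniform-pathwise-separation} on $[0,T^*)$.

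This uniform separation contradicts the blow-up alternative, so $T^*=\infty$. The separation and the dissipation identity then hold for all $t\ge0$. Uniqueness follows from local Lipschitz continuity of the drift on the region $\{\min_{i\ne j}d\ge\delta\}$, which is compact and on which the drift is globally Lipschitz. The $C^1$ regularity follows from continuity of the drift along the trajectory.
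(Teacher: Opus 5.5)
Your proposal is correct and follows essentially the same argument as the paper: a maximal local solution from the locally Lipschitz drift, the dissipation identity from the chain rule, a single-pair lower bound on $\mathcal H_N^\pi$ using $\underline K$ and the boundedness of $K_1*\pi$ to get a uniform separation $\delta$, and continuation to all times. The only difference is cosmetic: you phrase the last step through the blow-up alternative (the trajectory must leave every compact subset of $\mathcal X_N$), whereas the paper invokes continuation on the compact set $\mathcal K_\delta\Subset\mathcal X_N$.
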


\begin{proof}
We construct a local solution, prove the energy identity, and use the
energy to keep the solution in a compact subset of $\mathcal X_N$.
The last step allows the local solution to continue for all time.

\medskip
\noindent\emph{Local flow and dissipation.}
Lemma~\ref{lem:microscopic-energy-structure} and the local ODE theorem
give a unique maximal solution on $[0,T_{\max})$.  Along this
solution, the chain rule and
\eqref{eq:microscopic-energy-gradient} give
\[
  \frac{d}{dt}\mathcal H_N^\pi(X_N(t))
  =\sum_{i=1}^N
      \nabla_{x_i}\mathcal H_N^\pi(X_N(t))\cdot\dot x_i(t)
  =-\sum_{i=1}^N|\dot x_i(t)|^2.
\]
Integrating proves \eqref{eq:microscopic-energy-dissipation} for
$t<T_{\max}$.  In particular, throughout the maximal interval,
\[
  \mathcal H_N^\pi(X_N(t))
  \le\mathcal H_N^\pi(X_N(0)).
\]

\medskip
\noindent\emph{Pair separation and continuation.}
To see how the energy bound controls a single pair, fix
$i<j$ and any $X_N\in\mathcal X_N$.  The other
$\binom N2-1$ pair interactions are each at least
$\underline K$, while the target potential is bounded.  Thus
\begin{equation}
  \mathcal H_N^\pi(X_N)
  \ge\frac1N K_1(x_i-x_j)
    +\frac1N\left(\binom N2-1\right)\underline K
    -N\|K_1*\pi\|_{L^\infty}.
  \label{eq:single-pair-energy-lower-bound}
\end{equation}
Applying this estimate along the trajectory and using its decreasing
energy bounds every pair potential by the same finite constant:
\[
\begin{aligned}
  K_1(x_i(t)-x_j(t))
  &\le N\mathcal H_N^\pi(X_N(0))
      -\left(\binom N2-1\right)\underline K
      +N^2\|K_1*\pi\|_{L^\infty}\\
  & =:C_{N,X_N(0)}.
\end{aligned}
\]
Since $K_1(z)\to+\infty$ as $z\to0$, there is a
$\delta>0$ such that $K_1(z)>C_{N,X_N(0)}$ whenever
$0<d_{\T^d}(z,0)<\delta$.  The preceding bound therefore
proves \eqref{eq:uniform-pathwise-separation}.  In particular,
\[
  X_N(t)\in
  \mathcal K_\delta
  :=\left\{X_N\in(\T^d)^N:
       \min_{i\ne j}d_{\T^d}(x_i,x_j)\ge\delta\right\}
  \Subset\mathcal X_N,
  \qquad t<T_{\max}.
\]
The drift is bounded and locally Lipschitz on a neighborhood of this
compact set.  The continuation theorem for ODEs gives
$T_{\max}=\infty$, and the energy identity consequently holds for
every $t\ge0$.
\end{proof}

The separation constant in \eqref{eq:uniform-pathwise-separation}
can depend on the initial energy and on $N$.  For the i.i.d.\ data
used in the main results, the next corollary supplies an almost surely
defined collision-free flow and finite initial energies.

\begin{corollary}[I.i.d.\ initial configurations]
\label{cor:random-initial-collision-free}
Let $X_1(0),\ldots,X_N(0)$ be i.i.d.\ with density $\rho_0$.
Then Proposition~\ref{prop:deterministic-particle-flow} defines a
global flow almost surely, and
\begin{equation}
\begin{gathered}
  \mathbb P\bigl(X_N(t)\in\mathcal X_N
       \text{ for all }t\ge0\bigr)=1,\\
  \mathbb E\bigl[
    |\mathcal H_N^\pi(X_N(0))|
    +|F_N(X_N(0),\rho_0)|\bigr]<\infty.
\end{gathered}
  \label{eq:iid-microscopic-integrability}
\end{equation}
For every $t\ge0$, the law of $X_N(t)$ is invariant under
permutations of the particle labels.
\end{corollary}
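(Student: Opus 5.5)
The argument splits into three parts: collision-freeness of the initial configuration, integrability of the initial energies, and exchangeability.

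\emph{Initial configuration.} Since $\rho_0$ is a bounded density, the law of $X_N(0)$ is absolutely continuous on $(\T^d)^N$. The collision set $(\T^d)^N\setminus\mathcal X_N$ is a finite union of the diagonals $\{x_i=x_j\}$. Each diagonal has codimension $d\ge2$ and hence Lebesgue measure zero. Therefore $X_N(0)\in\mathcal X_N$ almost surely, and on this full-probability event Proposition~\ref{prop:deterministic-particle-flow} gives a unique global collision-free flow. This proves the first assertion in \eqref{eq:iid-microscopic-integrability}. Measurability of the event follows from continuity of the flow map in the initial data on $\mathcal X_N$.

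\emph{Integrability.} By Lemma~\ref{lem:microscopic-energy-structure}, $K_1*\pi$ is bounded, so the one-body parts of $\mathcal H_N^\pi$ and of $F_N$ have bounded expectation. The constant $\int(K_1*\rho_0)\rho_0$ in the expansion \eqref{eq:deterministic-energy-expansion} is finite because $K_1\in L^1$. It remains to control the pair terms. For $i\ne j$,
\[
  \mathbb E|K_1(X_i(0)-X_j(0))|
  =\iint|K_1(x-y)|\rho_0(x)\rho_0(y)\,dx\,dy
  \le C_0\|K_1\|_{L^1},
\]
which is finite since $K_1$ is integrable on $\T^d$ (it is logarithmic or of order $|z|^{2-d}$ near $0$). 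Summing over the finitely many pairs gives $\mathbb E[|\mathcal H_N^\pi(X_N(0))|+|F_N(X_N(0),\rho_0)|]<\infty$.

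\emph{Exchangeability.} For a permutation $\sigma$, let $P_\sigma$ denote relabelling of the particles. The drift \eqref{eq:microscopic-energy-gradient} is permutation-equivariant, because $\mathcal H_N^\pi$ is symmetric. By uniqueness of the flow, $\Phi_t\circ P_\sigma=P_\sigma\circ\Phi_t$ on $\mathcal X_N$. The initial law $\rho_0^{\otimes N}$ is invariant under $P_\sigma$. It follows that $P_\sigma X_N(t)=\Phi_t(P_\sigma X_N(0))$ has the same law as $\Phi_t(X_N(0))=X_N(t)$.

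The only point requiring any care is the measurability of the collision-free event and the identification of the random flow. Both are handled by restricting to the open, full-measure set $\mathcal X_N$, on which the flow map is continuous.
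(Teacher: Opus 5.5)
Your proposal is correct and follows the paper's proof essentially step by step. The collision set is Lebesgue-null, so the absolutely continuous initial law lies in $\mathcal X_N$ almost surely. The pair terms are integrable with bound $\|\rho_0\|_{L^\infty}\|K_1\|_{L^1}$, while the one-body terms are bounded. Exchangeability follows from permutation-equivariance of the drift, uniqueness of the flow, and symmetry of $\rho_0^{\otimes N}$. Your added remark on measurability is harmless, since the event in question coincides with $\{X_N(0)\in\mathcal X_N\}$.
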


\begin{proof}
The collision set is the finite union
\[
  (\T^d)^N\setminus\mathcal X_N
  =\bigcup_{i<j}\{X_N:x_i=x_j\}.
\]
Each set in this union has Lebesgue measure zero.  Since the joint
initial law has density $\rho_0^{\otimes N}$, the initial
configuration belongs to $\mathcal X_N$ almost surely, and
Proposition~\ref{prop:deterministic-particle-flow} applies.

For the stated integrability, $K_1\in L^1(\T^d)$ and
$\rho_0\in L^\infty$ give, for $i\ne j$,
\[
  \mathbb E|K_1(X_i(0)-X_j(0))|
  \le\|\rho_0\|_{L^\infty}\|K_1\|_{L^1}<\infty.
\]
The target potential and $K_1*\rho_0$ are bounded.  Their
contributions to $\mathcal H_N^\pi$ and to the expansion of
$F_N$ are therefore integrable, proving
\eqref{eq:iid-microscopic-integrability}.  Finally, the drift in
\eqref{eq:microscopic-energy-gradient} commutes with every
permutation of the labels.  Uniqueness of the flow and symmetry of
$\rho_0^{\otimes N}$ give the stated invariance of its law.
\end{proof}

Brownian noise removes the pathwise energy monotonicity used above.
The energy is still a collision barrier because the drift of its
It\^o decomposition has a uniform upper bound.  Stopping at a large
energy level turns this observation into a bound on the probability
of reaching that level.

\begin{proposition}[Global well-posedness of the particle SDE]
  \label{prop:stochastic-particle-flow}
Let $N\ge2$, $\tau>0$, and let the initial configuration
$X_N(0)\in\mathcal X_N$ be deterministic.  The SDE
\eqref{eq:intro-particle-system} has a unique global strong solution
for the given Brownian motions, and
\begin{equation}
  \mathbb P\bigl(X_N(t)\in\mathcal X_N
                  \text{ for every }t\ge0\bigr)=1.
  \label{eq:stochastic-no-collision}
\end{equation}
The same conclusion holds if $X_N(0)$ has law
$\rho_0^{\otimes N}$ and is independent of the Brownian motions.
\end{proposition}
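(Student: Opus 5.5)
The plan has three steps: a local strong solution up to the first collision or blow-up time, a Lyapunov estimate on the microscopic energy $\mathcal H_N^\pi$ from \eqref{eq:microscopic-targeted-energy} showing that this time is almost surely infinite, and a conditioning argument for random initial data.

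\emph{Localization.} For $n\in\mathbb N$ consider the compact sets $\mathcal K_{1/n}\Subset\mathcal X_N$ from the proof of Proposition~\ref{prop:deterministic-particle-flow}. By Lemma~\ref{lem:microscopic-energy-structure} the drift $-\nabla\mathcal H_N^\pi$ is locally Lipschitz on $\mathcal X_N$. We extend it to a globally Lipschitz field $b_n$ on $(\T^d)^N$ that agrees with it on $\mathcal K_{1/n}$. The SDE with drift $b_n$ and additive noise $\sqrt{2\tau}\,dB$ then has a unique global strong solution. Pathwise uniqueness makes these solutions consistent up to the exit times $T_n$ from $\mathcal K_{1/n}$. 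This gives a unique strong solution on $[0,T_\infty)$ with $T_\infty:=\lim_n T_n$, and it remains to show $T_\infty=\infty$ almost surely.

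\emph{Energy barrier.} On $[0,T_n]$ the process stays in $\mathcal K_{1/n}$, where $\mathcal H_N^\pi$ is $C^2$, so It\^o's formula applies:
\[
  d\mathcal H_N^\pi(X_N)
  =-|\nabla\mathcal H_N^\pi|^2\,dt
   +\tau\Delta\mathcal H_N^\pi\,dt
   +\sqrt{2\tau}\,\nabla\mathcal H_N^\pi\cdot dB.
\]
The key point is the Laplacian. Away from the diagonal, $\Delta K_1(z)=-(\delta_0-1)=1$, so
\[
  \Delta\mathcal H_N^\pi=\frac1{2N}\sum_{i\ne j}2\cdot1-\sum_i\Delta(K_1*\pi)(x_i)=(N-1)+\sum_i(\pi(x_i)-1)\le N C_\pi .
\]
The drift of $\mathcal H_N^\pi$ is therefore bounded above by $\tau NC_\pi$, uniformly on $\mathcal X_N$. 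Taking expectations at $t\wedge T_n$, the stopped martingale has mean zero because $\nabla\mathcal H_N^\pi$ is bounded on $\mathcal K_{1/n}$. This yields
\[
  \mathbb E\,\mathcal H_N^\pi(X_N(t\wedge T_n))\le\mathcal H_N^\pi(X_N(0))+\tau NC_\pi t.
\]
On $\{T_n\le t\}$, some pair is at distance $1/n$. Bound \eqref{eq:single-pair-energy-lower-bound} then gives $\mathcal H_N^\pi\ge N^{-1}\kappa_n-C_N$, where $\kappa_n:=\min_{d(z,0)=1/n}K_1(z)\to\infty$. Off this event, $\mathcal H_N^\pi\ge-C_N$. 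Hence
\[
  \mathbb P(T_n\le t)\,\bigl(N^{-1}\kappa_n-C_N\bigr)\le\mathcal H_N^\pi(X_N(0))+\tau NC_\pi t+2C_N\to\text{finite}.
\]
Letting $n\to\infty$ gives $\mathbb P(T_\infty\le t)=0$ for every $t$, and so $T_\infty=\infty$ almost surely, which is \eqref{eq:stochastic-no-collision}. Global existence follows together with uniqueness.

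\emph{Random initial data.} When $X_N(0)\sim\rho_0^{\otimes N}$ is independent of $B$, Corollary~\ref{cor:random-initial-collision-free} gives $X_N(0)\in\mathcal X_N$ almost surely. We condition on $X_N(0)$, or equivalently run the same argument with $\mathcal F_0$-measurable initial data, localizing further on $\{\mathcal H_N^\pi(X_N(0))\le R\}$. This gives collision-freeness on each such event, and letting $R\to\infty$ finishes the proof. Strong well-posedness with random independent initial data follows from the standard measurable-selection argument for Lipschitz SDEs, applied to each $b_n$.

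\emph{Main obstacle.} The delicate point is the It\^o drift bound, specifically the sign and constant of $\Delta K_1$ off the origin together with the boundedness of $\Delta(K_1*\pi)=1-\pi$. Everything else is routine localization.
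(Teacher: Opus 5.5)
Your proposal is correct and follows essentially the paper's argument. You build a local strong solution, apply It\^o's formula to $\mathcal H_N^\pi$ with its drift bounded above using $\Delta K_1=1$ off the diagonal and $\Delta(K_1*\pi)=1-\pi$, derive a Markov-type bound on the probability of an energy blow-up, and condition on the initial configuration. The only cosmetic difference is that you stop at exit times from the distance sets $\mathcal K_{1/n}$ (with a Lipschitz extension of the drift), while the paper stops at energy levels $T_C$ and uses compactness of $\{\mathcal H_N^\pi\le C\}$; the single-pair lower bound makes these two localizations interchangeable.
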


\begin{proof}
We construct the solution up to its first collision and estimate the
chance that its energy reaches a large level.  The calculation adapts
the stopping-time argument of
\cite[Section~4]{RosenzweigSerfaty2023}; see
\cite[Section~1.1]{ChodronRosenzweigSerfaty2025}
for a discussion of the periodic Coulomb case.  Because every
bounded energy sublevel is separated from collisions, a bound that
vanishes as the level increases will rule out a finite lifetime.

\medskip
\noindent\emph{Local solution and energy equation.}
Lemma~\ref{lem:microscopic-energy-structure} gives a locally
Lipschitz drift on $\mathcal X_N$.  Standard local SDE theory
therefore supplies a unique strong solution up to its maximal
lifetime $T_{\max}$.  Away from collisions,
$\Delta K_1=1$ and $\Delta(K_1*\pi)=1-\pi$.  Hence
\[
  \sum_{i=1}^N\Delta_{x_i}\mathcal H_N^\pi(X_N)
  =(N-1)-\sum_{i=1}^N(1-\pi(x_i))
  =\sum_{i=1}^N\pi(x_i)-1.
\]
Since the particle drift is $-\nabla\mathcal H_N^\pi$,
It\^o's formula gives, for $t<T_{\max}$,
\begin{equation}
\begin{aligned}
  d\mathcal H_N^\pi(X_N(t))
  ={}&\left[-\sum_{i=1}^N
       |\nabla_{x_i}\mathcal H_N^\pi(X_N(t))|^2
       +\tau\left(\sum_{i=1}^N\pi(x_i(t))-1\right)\right]dt\\
  &+\sqrt{2\tau}\sum_{i=1}^N
       \nabla_{x_i}\mathcal H_N^\pi(X_N(t))\cdot dB_i(t).
\end{aligned}
  \label{eq:stochastic-microscopic-energy-ito}
\end{equation}
The first term in the drift is nonpositive.  Thus the drift is at
most $\tau(N\|\pi\|_{L^\infty}-1)$, which is nonnegative because
$\pi$ is a probability density on a torus of unit volume.

\medskip
\noindent\emph{Compact energy sublevels.}
The bound \eqref{eq:single-pair-energy-lower-bound}, applied to
every pair, has two consequences.  First, the energy has the global
lower bound
\[
  \mathcal H_N^\pi(X_N)\ge L_N,
  \qquad
  L_N:=\frac{N-1}{2}\underline K
       -N\|K_1*\pi\|_{L^\infty}.
\]
Second, for each finite $C$, the set
\begin{equation}
  \mathcal K_C
  :=\{X_N\in\mathcal X_N:\mathcal H_N^\pi(X_N)\le C\}
  \Subset\mathcal X_N
  \label{eq:microscopic-compact-sublevel}
\end{equation}
is compact.  Indeed, if a sequence in this set approached a collision,
the interaction of the colliding pair would diverge, contradicting
\eqref{eq:single-pair-energy-lower-bound}.

\medskip
\noindent\emph{Probability of reaching an energy level.}
Fix $C>\mathcal H_N^\pi(X_N(0))$ and set
\[
  T_C:=\inf\{t<T_{\max}:\mathcal H_N^\pi(X_N(t))\ge C\},
  \qquad \inf\varnothing:=\infty.
\]
If $T_{\max}<\infty$ but $T_C=\infty$, the solution stays in
$\mathcal K_C$ up to $T_{\max}$.  Local SDE continuation on the
compact set \eqref{eq:microscopic-compact-sublevel} contradicts
maximality.  Thus the process stopped at $T_C$ is defined for every
finite time, and $\nabla\mathcal H_N^\pi$ is bounded before stopping.

The stopped stochastic integral in
\eqref{eq:stochastic-microscopic-energy-ito} consequently has mean
zero.  Taking expectations and using the drift bound gives
\begin{equation}
  \mathbb E\bigl[\mathcal H_N^\pi(X_N(t\wedge T_C))\bigr]
  \le\mathcal H_N^\pi(X_N(0))
     +\tau(N\|\pi\|_{L^\infty}-1)t.
  \label{eq:stopped-energy-expectation}
\end{equation}
At $T_C$ the energy equals $C$ by continuity.  On the event
$\{T_C>t\}$, it is at least $L_N$.  Combining these two lower
bounds with \eqref{eq:stopped-energy-expectation} yields
\begin{equation}
  \mathbb P(T_C\le t)
  \le\frac{
    \mathcal H_N^\pi(X_N(0))-L_N
       +\tau(N\|\pi\|_{L^\infty}-1)t
  }{C-L_N}.
  \label{eq:microscopic-energy-hitting-probability}
\end{equation}

\medskip
\noindent\emph{Global continuation.}
On $\{T_{\max}\le t\}$, compact-sublevel continuation implies
$T_C<T_{\max}\le t$ for every $C>\mathcal H_N^\pi(X_N(0))$.
The bound \eqref{eq:microscopic-energy-hitting-probability} therefore
gives
\[
  0\le\mathbb P(T_{\max}\le t)
  \le\mathbb P(T_C\le t)
  \longrightarrow0 \qquad(C\to\infty).
\]
Taking the countable union
$\{T_{\max}<\infty\}
 =\bigcup_{m=1}^\infty\{T_{\max}\le m\}$
shows that $T_{\max}=\infty$ almost surely.  Local pathwise
uniqueness gives uniqueness of the global strong solution, and its
paths remain in $\mathcal X_N$.

Finally, an i.i.d.\ initial configuration lies in $\mathcal X_N$
almost surely by Corollary~\ref{cor:random-initial-collision-free}.
Conditioning on that configuration and using independence from the
Brownian motions applies the deterministic-initial-point argument
almost surely.
\end{proof}

\section{Entropy solutions and the singular Coulomb limit}
\label{app:targeted-entropy}

This appendix proves Proposition~\ref{prop:targeted-mfe}, which supplies
the positive-temperature particle inequality used in
Proposition~\ref{prop:assembly}.  We first identify its dissipation
and commutator for a smooth interaction.  We then construct an entropy
solution for the Coulomb interaction and show that the same comparison
holds for every entropy solution.  The construction adapts the
compactness argument in
\cite[Lemma~6.2 and Appendix~A]{ChodronRosenzweigSerfaty2025},
using heat regularization in place of their truncation.
The comparison follows the weighted method of
\cite[Propositions~2.2--2.3]{BreschJabinWang2023}.  We retain the
prescribed target field throughout the regularization.

In this appendix, constants may depend on $N$, $\tau$, $T$,
$d$, $\pi$, and the initial weighted entropy, but not on the
regularization parameter $\varepsilon$.  We write
$\rho_t:=\rho_t^\tau$ whenever the temperature is fixed.

\subsection{The smooth weighted calculation}
\label{subsec:smooth-weighted-calculation}

This subsection proves the identity \eqref{eq:smooth-targeted-mfe-identity}
when the interaction kernel and both densities are smooth.  It shows
how the particle entropy dissipation leaves precisely the Coulomb
commutator.  Subsection~\ref{subsec:singular-targeted-mfe-passage}
then passes from this calculation to the singular interaction.

\begin{lemma}[Smooth weighted identity]
\label{lem:appB-smooth-weighted-identity}
Fix $N\ge2$ and $\tau>0$.  For this calculation, replace $K_1$
in the particle and mean-field equations and their weights by a
smooth even kernel, denoted again by $K_1$.  Assume that the positive
particle density $f_N^\tau$ and the positive mean-field density
$\rho_t=\rho_t^\tau$ are smooth solutions.  With
$J_i=f_N^\tau\nabla_{x_i}\log(f_N^\tau/G_N)$, one has
\[
\begin{aligned}
  \frac{d}{dt}\mathcal E_N^\tau(t)
  ={}&-\frac1N\sum_{i=1}^N\int_{(\T^d)^N}
      \frac{|\tau J_i-f_N^\tau u_t^\tau(x_i)|^2}{f_N^\tau}\,dX_N\\
    &-\frac12\int_{(\T^d)^N}f_N^\tau
      \Comm_N[u_t^\tau](X_N,\rho_t)\,dX_N.
\end{aligned}
\]
The first integrand equals
$f_N^\tau|\tau\nabla_{x_i}\log(f_N^\tau/\rho_t^{\otimes N})
           +E_i(X_N,\rho_t)|^2$.  Thus this is
\eqref{eq:smooth-targeted-mfe-identity}.
\end{lemma}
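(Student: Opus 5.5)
We write $\mathcal E_N^\tau(t)/\tau$ through the weighted representation \eqref{eq:weighted-entropy-representation} and split it as
\[
\frac{\mathcal E_N^\tau}{\tau}
=\frac1N\int f_N^\tau\log\frac{f_N^\tau}{G_N}\,dX_N
+\frac1N\int f_N^\tau\log\frac{G_{\rho,N}(t)}{\rho_t^{\otimes N}}\,dX_N .
\]
Since everything is smooth and positive on the compact torus, all differentiations under the integral and integrations by parts are legitimate.

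\emph{The static weighted entropy.} Using \eqref{eq:singular-targeted-weighted-equation} and $\int\partial_tf_N^\tau=0$, then integrating by parts, we get
\[
\frac{d}{dt}\int f_N^\tau\log\frac{f_N^\tau}{G_N}
=-\tau\sum_i\int\frac{|J_i|^2}{f_N^\tau}.
\]

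\emph{The time-dependent second term.} Its derivative is
\[
\int\partial_tf_N^\tau\,\log\frac{G_{\rho,N}}{\rho_t^{\otimes N}}
+\int f_N^\tau\,\partial_t\log\frac{G_{\rho,N}}{\rho_t^{\otimes N}}.
\]
In the first integral we substitute the equation and integrate by parts. By \eqref{eq:weighted-gradient-identities}, $\tau\nabla_{x_i}\log(G_{\rho,N}/\rho_t^{\otimes N})=-u_t^\tau(x_i)$, so this integral equals $\sum_i\int J_i\cdot u_t^\tau(x_i)$.

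\emph{Completing the square.} Multiplying by $\tau/N$ and combining gives
\[
-\frac1N\sum_i\int\frac{\tau^2|J_i|^2-\tau f_N^\tau J_i\cdot u_t^\tau(x_i)}{f_N^\tau}.
\]
We add and subtract $\tau J_i\cdot u_t^\tau(x_i)-f_N^\tau|u_t^\tau(x_i)|^2$ to form $|\tau J_i-f_N^\tau u_t^\tau(x_i)|^2/f_N^\tau$. This leaves the remainder
\[
-\frac1N\sum_i\int\Bigl(\tau J_i\cdot u_t^\tau(x_i)-f_N^\tau|u_t^\tau(x_i)|^2\Bigr)
+\frac{\tau}{N}\int f_N^\tau\,\partial_t\log\frac{G_{\rho,N}}{\rho_t^{\otimes N}}.
\]
By \eqref{eq:weighted-flux-square-terms}, $\tau J_i-f_N^\tau u_t^\tau(x_i)=f_N^\tau Z_i$. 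This confirms the claimed form of the integrand, and the remainder equals $-\frac1N\sum_i\int f_N^\tau u_t^\tau(x_i)\cdot Z_i$ plus the $\partial_t$ term, matching \eqref{eq:weighted-square-intermediate}.

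\emph{Identifying the commutator (main obstacle).} This is the step where the bookkeeping must be done carefully. Write $Z_i=\tau\nabla_{x_i}\log f_N^\tau-\tau\nabla\log\rho_t(x_i)+E_i$.

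The $\tau\nabla_{x_i}\log f_N^\tau$ piece integrates by parts to $\tau\int f_N^\tau\,\nabla\cdot u_t^\tau(x_i)$. Combined with the $\tau\nabla\log\rho_t$ piece, it gives
\[
\frac1N\sum_i\int f_N^\tau\,\frac{-\tau\nabla\cdot(\rho_tu_t^\tau)}{\rho_t}(x_i)
=-\frac{\tau}{N}\sum_i\int f_N^\tau\,\frac{\partial_t\rho_t}{\rho_t}(x_i),
\]
with the sign to be checked. This cancels the $-\log\rho_t^{\otimes N}$ part of the $\partial_t$ term.

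The $\log G_{\rho,N}$ part contributes
\[
\int f_N^\tau\Bigl(-\sum_iK_1*\partial_t\rho_t(x_i)+N\int(K_1*\rho_t)\,\partial_t\rho_t\Bigr)
=-N\,\mathbb E\int K_1*(\mu_{X_N}-\rho_t)\,\partial_t\rho_t .
\]
Using $\partial_t\rho_t=\nabla\cdot(\rho_tu_t^\tau)$ and integrating by parts, this becomes $N\,\mathbb E\int\rho_tu_t^\tau\cdot\nabla K_1*(\mu_{X_N}-\rho_t)$. After dividing by $N$, it is exactly the $d\rho_t(x)$ term from the deterministic computation.

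The remaining $E_i$ piece, $-\frac1N\sum_i\mathbb E[u_t^\tau(x_i)\cdot E_i]$, is by \eqref{eq:field-error-pv-identity} the $d\mu_t(x)$ double integral off the diagonal. Their difference is $-I_t$, and by \eqref{eq:commutator-symmetrization} it equals $-\tfrac12\,\mathbb E\,\Comm_N[u_t^\tau]$.

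All integrands are smooth because the kernel is smooth; the diagonal exclusion only subtracts the constant self-term $K_1(0)$, whose contribution cancels symmetrically. The main risk is sign slips in the $\log\rho_t$ cancellation, which I would verify by redoing the computation with $f_N^\tau=\rho_t^{\otimes N}$ as a sanity case.
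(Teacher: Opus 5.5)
Your proposal is correct and follows essentially the paper's route. You differentiate the two pieces of the weighted representation, complete the square in $J_i$, and cancel the $\partial_t\log\rho_t$ terms by the integration by parts behind $\partial_t\log\rho_t=\nabla\cdot u_t^\tau+u_t^\tau\cdot\nabla\log\rho_t$. You then symmetrize the $G_{\rho,N}$ time derivative against the field-error pairing to get $-\tfrac12\Comm_N$.

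The one slip is the sign you flagged yourself. The $\tau\nabla_{x_i}\log f_N^\tau$ and $-\tau\nabla\log\rho_t(x_i)$ pieces together give $+\frac{\tau}{N}\sum_i\int f_N^\tau\,\partial_t\log\rho_t(x_i)\,dX_N$. That plus sign is exactly what cancels the $-\log\rho_t^{\otimes N}$ part of the time-derivative term; with the minus sign in your display, the two terms would double instead of cancel.
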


\begin{proof}
We differentiate the two terms in the weighted representation
\eqref{eq:weighted-entropy-representation}.  The first gives a square
through the particle flux.  The mean-field equation then cancels the
time derivative of $\rho_t^{\otimes N}$, leaving the commutator.

\noindent\emph{The particle flux and the mean-field weight.}
Evenness of $K_1$ and the definition of $G_N$ in
\eqref{eq:targeted-gibbs-weight} give
\[
  -\tau\nabla_{x_i}\log G_N
  =\frac1N\sum_{j\ne i}\nabla K_1(x_i-x_j)
     -\nabla K_1*\pi(x_i).
\]
The Liouville equation \eqref{eq:intro-liouville-equation} therefore
takes the weighted form
\[
  \partial_t f_N^\tau
  =\tau\sum_{i=1}^N\nabla_{x_i}\cdot J_i,
  \qquad
  J_i:=\nabla_{x_i}f_N^\tau
       -f_N^\tau\nabla_{x_i}\log G_N
      =f_N^\tau\nabla_{x_i}\log\frac{f_N^\tau}{G_N}.
\]
For the tensorized mean-field weight $G_{\rho,N}$ of
\eqref{eq:mean-field-gibbs-weight}, the definition of $u_t^\tau$
in \eqref{eq:free-energy-velocity} gives
\[
  \tau\nabla_{x_i}\log
     \frac{\rho_t^{\otimes N}}{G_{\rho,N}(t)}
  =\tau\nabla\log\rho_t(x_i)
    +\nabla K_1*(\rho_t-\pi)(x_i)
  =u_t^\tau(x_i).
\]

\noindent\emph{Differentiate the weighted representation.}
By \eqref{eq:weighted-entropy-representation},
\[
  \mathcal E_N^\tau(t)
  =\frac\tau N\left[
     \int f_N^\tau\log\frac{f_N^\tau}{G_N}\,dX_N
     -\int f_N^\tau
        \log\frac{\rho_t^{\otimes N}}{G_{\rho,N}(t)}\,dX_N
  \right].
\]
The weight $G_N$ is independent of time.  Differentiating the
first integral, using conservation of mass, and integrating by parts
on the torus yield
\[
  \frac{d}{dt}\int f_N^\tau\log\frac{f_N^\tau}{G_N}\,dX_N
  =-\tau\sum_{i=1}^N\int
      J_i\cdot\nabla_{x_i}\log\frac{f_N^\tau}{G_N}\,dX_N
  =-\tau\sum_{i=1}^N\int\frac{|J_i|^2}{f_N^\tau}\,dX_N.
\]
For the second integral, we also differentiate its time-dependent
weight.  The flux equation and the preceding formula for its spatial
gradient give
\begin{equation}
\begin{aligned}
  \frac{d}{dt}\int f_N^\tau
     \log\frac{\rho_t^{\otimes N}}{G_{\rho,N}}\,dX_N
  ={}&\int f_N^\tau\left(
       \sum_{i=1}^N\partial_t\log\rho_t(x_i)
       -\partial_t\log G_{\rho,N}\right)dX_N\\
     &-\sum_{i=1}^N\int J_i\cdot u_t^\tau(x_i)\,dX_N.
\end{aligned}
\label{eq:appB-smooth-test-identity}
\end{equation}

\noindent\emph{Separate the dissipation.}
The difference between the particle flux and the mean-field flux is
identified by \eqref{eq:weighted-gradient-identities} and the field
error \eqref{eq:microscopic-field-error}:
\[
  \tau J_i-f_N^\tau u_t^\tau(x_i)
  =f_N^\tau\left[
       \tau\nabla_{x_i}\log\frac{f_N^\tau}{\rho_t^{\otimes N}}
       +E_i(X_N,\rho_t)\right].
\]
Completing the square in the two flux terms from the preceding
derivatives gives
\begin{equation}
\begin{aligned}
  -\tau^2\frac{|J_i|^2}{f_N^\tau}
    +\tau J_i\cdot u_t^\tau(x_i)
  ={}&-\frac{|\tau J_i-f_N^\tau u_t^\tau(x_i)|^2}{f_N^\tau}\\
     &-u_t^\tau(x_i)\cdot
        \bigl(\tau J_i-f_N^\tau u_t^\tau(x_i)\bigr).
\end{aligned}
\label{eq:appB-completed-square}
\end{equation}
It remains to combine the final term with the time derivative of
$\rho_t^{\otimes N}$.  Since
$\partial_t\rho_t=\nabla\cdot(\rho_tu_t^\tau)$,
\[
  \partial_t\log\rho_t
  =\nabla\cdot u_t^\tau
       +u_t^\tau\cdot\nabla\log\rho_t.
\]
Periodic integration by parts now shows that
\[
  \int u_t^\tau(x_i)\cdot
      f_N^\tau\tau\nabla_{x_i}
         \log\frac{f_N^\tau}{\rho_t^{\otimes N}}\,dX_N
  =-\tau\int f_N^\tau\partial_t\log\rho_t(x_i)\,dX_N.
\]
Together with the flux difference above, this proves
\begin{equation}
\begin{aligned}
  &\sum_{i=1}^N\int u_t^\tau(x_i)\cdot
    \bigl(\tau J_i-f_N^\tau u_t^\tau(x_i)\bigr)\,dX_N
      +\tau\int f_N^\tau
         \sum_{i=1}^N\partial_t\log\rho_t(x_i)\,dX_N\\
  &\qquad=\int f_N^\tau\sum_{i=1}^N
        u_t^\tau(x_i)\cdot E_i(X_N,\rho_t)\,dX_N.
\end{aligned}
\label{eq:appB-pointwise-remainder}
\end{equation}
Consequently, the desired identity reduces to an equality for each
fixed particle configuration:
\[
\begin{aligned}
  \frac{d}{dt}\mathcal E_N^\tau(t)
  ={}&-\frac1N\sum_{i=1}^N\int
       \frac{|\tau J_i-f_N^\tau u_t^\tau(x_i)|^2}{f_N^\tau}\,dX_N\\
     &+\int f_N^\tau\left[
       \frac\tau N\partial_t\log G_{\rho,N}
       -\frac1N\sum_{i=1}^N
          u_t^\tau(x_i)\cdot E_i(X_N,\rho_t)
      \right]dX_N.
\end{aligned}
\]

\noindent\emph{Identify the remaining term.}
The time derivative of $G_{\rho,N}$ is the response of the
mean-field weight to the changing density.  By evenness of $K_1$
and the mean-field equation,
\[
\begin{aligned}
  \frac\tau N\partial_t\log G_{\rho,N}
  &=-\frac1N\sum_{i=1}^N(K_1*\partial_t\rho_t)(x_i)
       +\int_{\T^d}(K_1*\rho_t)\partial_t\rho_t\,dx\\
  &=-\int_{\T^d}K_1*(\mu_{X_N}-\rho_t)
                      \partial_t\rho_t\,dx\\
  &=\int_{\T^d}\rho_tu_t^\tau\cdot
               \nabla K_1*(\mu_{X_N}-\rho_t)\,dx.
\end{aligned}
\]
The field error in \eqref{eq:microscopic-field-error} is the
evaluation of $\nabla K_1*(\mu_{X_N}-\rho_t)$ at $x_i$, with
the diagonal particle interaction omitted.  Subtracting its weighted
particle average from the last display and then exchanging the two
integration variables gives, by
\eqref{eq:field-error-pv-identity} and
\eqref{eq:commutator-symmetrization},
\begin{equation}
\begin{aligned}
  \frac\tau N\partial_t\log G_{\rho,N}
  &-\frac1N\sum_{i=1}^N E_i(X_N,\rho_t)\cdot u_t^\tau(x_i)\\
  &=-\int_{\T^d}u_t^\tau\cdot
       \nabla K_1*(\mu_{X_N}-\rho_t)
               \,d(\mu_{X_N}-\rho_t)\\
  &=-\frac12\Comm_N[u_t^\tau](X_N,\rho_t).
\end{aligned}
\label{eq:appB-pointwise-commutator}
\end{equation}
Here and in the first integral of the last display, all interactions
of a particle with itself are omitted.  Inserting this equality in
the previous expression for $d\mathcal E_N^\tau/dt$ proves the lemma.
\end{proof}
\subsection{Construction of targeted entropy solutions}
\label{subsec:targeted-entropy-compactness}

We construct the entropy solution in
Proposition~\ref{prop:targeted-mfe}\textup{(i)}.  Smoothing the pair interaction gives
a diffusion equation with a smooth drift.  The weighted entropy estimate
for that equation controls its density and flux uniformly as the
smoothing is removed.  We keep the prescribed target field
$\nabla K_1*\pi$ fixed throughout this approximation.

\begin{proposition}[Construction by regularization]
\label{prop:targeted-entropy-construction}
Let $N\ge2$, $\tau>0$, and let $f_{N,0}$ be a symmetric probability
density on $(\T^d)^N$ such that
\[
  \int_{(\T^d)^N} f_{N,0}
       \log\frac{f_{N,0}}{G_N}\,dX_N<\infty.
\]
Then there is a symmetric global entropy solution $f_N^\tau$ of
\eqref{eq:intro-liouville-equation} with initial density $f_{N,0}$.
In particular, its weighted fluxes $J_i$, defined by
\eqref{eq:singular-targeted-weighted-equation}, belong to
$L^1_{\mathrm{loc}}([0,\infty)\times(\T^d)^N)$, and for almost every
$t\ge0$,
\begin{equation}
\begin{aligned}
  &\int f_N^\tau(t)\log\frac{f_N^\tau(t)}{G_N}\,dX_N
  +\tau\sum_{i=1}^N\int_0^t\int
       \frac{|J_i(s)|^2}{f_N^\tau(s)}\,dX_N\,ds\\
  &\qquad\le
    \int f_{N,0}\log\frac{f_{N,0}}{G_N}\,dX_N.
\end{aligned}
\end{equation}
Under Assumption~\ref{assump:standing-data}, if
$f_{N,0}=\rho_0^{\otimes N}$, then $f_N^\tau(t)$ is the density
of the law of $X_N(t)$ for every $t\ge0$, where $X_N$ is the
global strong solution in
Proposition~\ref{prop:stochastic-particle-flow}, with initial
positions independent of the Brownian motions.
\end{proposition}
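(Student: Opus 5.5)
We prove Proposition~\ref{prop:targeted-entropy-construction} in three stages: regularize the pair kernel, derive $\varepsilon$-uniform entropy bounds and pass to the limit, then identify the limit with the law of the particle SDE.

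\emph{Regularization.} For $\varepsilon>0$ set $K_{1,\varepsilon}:=e^{\varepsilon\Delta}K_1$ in the pair interaction and keep $\nabla K_1*\pi$ fixed. Let $G_{N,\varepsilon}$ be the corresponding smooth weight. The regularized Liouville equation has a smooth, globally Lipschitz drift on the compact configuration space. Hence, after mollifying $f_{N,0}$ if necessary (or using the standard linear Fokker--Planck theory with $L^1$ data), it has a unique nonnegative symmetric solution $f_{N,\varepsilon}$ of unit mass. Symmetry follows from uniqueness, since the equation commutes with permutations. Testing with $\log(f_{N,\varepsilon}/G_{N,\varepsilon})$ gives the regularized entropy inequality \eqref{eq:regularized-weighted-entropy}.

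\emph{Uniform bounds.} We control both sides of \eqref{eq:regularized-weighted-entropy} uniformly in $\varepsilon$.
\begin{itemize}
\item \emph{Right side.} The monotonicity $K_{1,\varepsilon}-\varepsilon\uparrow K_1$ gives $-\log G_{N,\varepsilon}\le-\log G_N+C_N\varepsilon$. So the initial regularized weighted entropy is at most the finite singular one plus a constant.
\item \emph{Left side.} Heat regularization preserves the lower bound $K_{1,\varepsilon}\ge\underline K-C$. Together with boundedness of $K_1*\pi$, this gives $G_{N,\varepsilon}\le C_N$. The weighted entropy therefore bounds $\int f_{N,\varepsilon}\log f_{N,\varepsilon}$ from above, uniformly on $[0,T]$.
\end{itemize}
Consequently the plain entropy and the Fisher-type dissipation $\tau\sum_i\int_0^T\!\int|J_{i,\varepsilon}|^2/f_{N,\varepsilon}$ are bounded. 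Dunford--Pettis, combined with \eqref{eq:regularized-flux-uniform-integrability}, yields weak $L^1$ compactness of $f_{N,\varepsilon}$ on $[0,T]\times(\T^d)^N$ and of the fluxes $J_{i,\varepsilon}$. The weak equation gives equicontinuity in time in a negative Sobolev (or bounded-Lipschitz) topology. We then extract limits on each $[0,m]$ and take a diagonal subsequence to obtain a global limit $f_N^\tau$, which is symmetric and attains the initial datum weakly.

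\emph{Identification and lower semicontinuity.} This is the main obstacle. We must identify the limit flux with $G_N\nabla(f_N^\tau/G_N)=\nabla f-f\nabla\log G_N$ while $\nabla\log G_N$ is singular at collisions. Off the collision set, $\nabla\log G_{N,\varepsilon}\to\nabla\log G_N$ locally uniformly. So for test functions compactly supported in $\mathcal X_N$, weak convergence of $f$ and of $\nabla f$ (in distributions) identifies the limit there. The limiting flux lies in $L^1$ and the collision set is Lebesgue-null, so the identity holds almost everywhere. The equation then holds distributionally on the whole space, because the equation is tested against $J_i$ itself, which is $L^1$ globally. The entropy inequality passes to the limit by joint convexity and lower semicontinuity of $(f,J)\mapsto|J|^2/f$ and of $\int f\log(f/G)$, using $G_{N,\varepsilon}\to G_N$ pointwise off collisions with a uniform upper bound. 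An Egorov/Fatou argument handles the weight.

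\emph{Law of the SDE.} For $f_{N,0}=\rho_0^{\otimes N}$, the initial weighted entropy is finite because $\mathbb E|K_1(X_i-X_j)|<\infty$ (Corollary~\ref{cor:random-initial-collision-free}). To match the constructed solution with the law of the strong solution from Proposition~\ref{prop:stochastic-particle-flow}, we couple the regularized SDEs with the same Brownian motions and initial data. Up to the stopping time $T_C$, the energy level keeps particles $\delta_C$-separated. For $\varepsilon$ small the regularized and singular drifts agree up to $o(1)$ on $\mathcal K_C$, so the regularized paths converge to $X_N$ on $\{T_C>t\}$. The probability bound \eqref{eq:microscopic-energy-hitting-probability} makes $\mathbb P(T_C\le t)$ small uniformly. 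Hence $\mathrm{Law}(X^\varepsilon_N(t))\to\mathrm{Law}(X_N(t))$ weakly, which identifies the limit of the full family $f_{N,\varepsilon}(t)$, so no subsequence is needed. Time continuity of both sides then upgrades the identification from almost every $t$ to every $t\ge0$.
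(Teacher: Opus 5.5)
Your proposal is correct and follows essentially the same route as the paper. Both arguments use heat regularization of the pair kernel with the target field kept fixed, and $\varepsilon$-uniform weighted-entropy and dissipation bounds from $K_{1,\varepsilon}-\varepsilon\uparrow K_1$ and $G_{N,\varepsilon}\le C$. Compactness comes from Dunford--Pettis, the Cauchy--Schwarz flux bound, and time equicontinuity. The flux is identified off the collision set, entropy and the quadratic perspective are lower semicontinuous, and the regularized and singular SDEs are coupled through the same Brownian motions.

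The differences are only in two sub-steps, and your versions work in both:

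\emph{Singular pair energy.} You pass it to the limit by truncation, Egorov, and uniform integrability of $f_{N,\varepsilon}(t)$ at fixed $t$. The paper instead compares with the bounded continuous $P_\delta\le P_\varepsilon$ and uses narrow convergence followed by monotone convergence in $\delta$.

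\emph{Coupling.} You localize it with the energy stopping time $T_C$ and get convergence in probability. The paper uses almost-sure pathwise separation on $[0,T]$ and gets almost-sure uniform convergence. Either way, the Gr\"onwall bootstrap needs the drift to be Lipschitz on a neighborhood of the separated set, not just on $\mathcal K_C$ itself.
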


\begin{proof}
We first derive bounds for a smooth equation, then extract limits of its
density and weighted flux.  We identify their equation away from particle
collisions and pass to the limit in the entropy inequality.  The estimates
below follow the untargeted construction of
\cite[Appendix~A]{ChodronRosenzweigSerfaty2025}.  The bounded target
potential enters the Gibbs weight as a fixed factor.

\medskip
\noindent\emph{Step 1. Regularization and uniform estimates.}
For $\varepsilon>0$, set
\[
\begin{aligned}
  K_{1,\varepsilon}&:=e^{\varepsilon\Delta}K_1,\\
  G_{N,\varepsilon}(X_N)
  &:=\exp\left[-\frac1{2N\tau}\sum_{i\ne j}
                    K_{1,\varepsilon}(x_i-x_j)
               +\frac1\tau\sum_i(K_1*\pi)(x_i)\right].
\end{aligned}
\]
For fixed $\varepsilon$, the drift of the regularized equation
is continuously differentiable and globally Lipschitz.
The equation therefore has a unique global mild solution
$f_{N,\varepsilon}$ with initial density $f_{N,0}$.
By It\^o's formula and Fokker--Planck uniqueness,
$f_{N,\varepsilon}(t)$ is the density of the law of the
corresponding particle SDE.
This density is classical and positive for $t>0$.  Symmetry of the initial
density and coefficients makes this solution symmetric.  Its weighted
flux is
\[
  J_{i,\varepsilon}
   :=G_{N,\varepsilon}\nabla_{x_i}
                 \frac{f_{N,\varepsilon}}{G_{N,\varepsilon}}
    =\nabla_{x_i}f_{N,\varepsilon}
       -f_{N,\varepsilon}\nabla_{x_i}\log G_{N,\varepsilon}.
\]
Here the only change to the Gibbs weight $G_N$ in
\eqref{eq:targeted-gibbs-weight} is the replacement of the pair
kernel.  If $G_{N,\varepsilon}^0$ denotes the same weight without the
target potential, then
\[
\begin{aligned}
  \log G_{N,\varepsilon}
  &=\log G_{N,\varepsilon}^0
     +\frac1\tau\sum_i(K_1*\pi)(x_i),\\
  \left|\int f\log\frac{f}{G_{N,\varepsilon}}
       -\int f\log\frac{f}{G_{N,\varepsilon}^0}\right|
  &\le\frac N\tau\|K_1*\pi\|_{L^\infty}
\end{aligned}
\]
for any probability density $f$.  Thus the target factor does not
affect bounds that are uniform in $\varepsilon$.

The heat kernel has unit mass and is nonnegative, so
$K_{1,\varepsilon}\ge\underline K$, where
$\underline K:=\inf_{x\ne0}K_1(x)$.  The identity
$\Delta K_1=1-\delta_0$ gives the more precise monotonicity
\[
  \partial_\varepsilon(K_{1,\varepsilon}-\varepsilon)
  =-e^{\varepsilon\Delta}\delta_0\le0,
  \qquad
  K_{1,\varepsilon}-\varepsilon\uparrow K_1
       \quad\text{as }\varepsilon\downarrow0.
\]
We use this relation to compare the initial entropies.  For a
probability density $f$, define
\[
  \mathcal H_\varepsilon(f)
  :=\int f\log\frac{f}{G_{N,\varepsilon}}\,dX_N,
  \qquad
  \mathcal H(f)
  :=\int f\log\frac{f}{G_N}\,dX_N.
\]
Since $f_{N,0}$ is fixed, monotone convergence after subtracting
the lower bound $\underline K-1$ gives
\begin{equation}
  \mathcal H_\varepsilon(f_{N,0})
      -\frac{N-1}{2\tau}\varepsilon
  \uparrow \mathcal H(f_{N,0})<\infty.
  \label{eq:appB-initial-entropy-convergence}
\end{equation}
In particular, the initial entropy is bounded uniformly for
$0<\varepsilon<1$.

The weighted equation is
$\partial_t f_{N,\varepsilon}
 =\tau\sum_i\nabla_{x_i}\cdot J_{i,\varepsilon}$.
Integrating the weighted entropy identity of
Subsection~\ref{subsec:smooth-weighted-calculation} from $s>0$
and then letting $s\downarrow0$, as justified below, gives
\begin{equation}
\begin{aligned}
  \mathcal H_\varepsilon(f_{N,\varepsilon}(t))
  +\tau\sum_{i=1}^N\int_0^t\int
       \frac{|J_{i,\varepsilon}|^2}{f_{N,\varepsilon}}
                 \,dX_N\,ds
  \le \mathcal H_\varepsilon(f_{N,0}).
\end{aligned}
  \label{eq:appB-regularized-entropy-inequality}
\end{equation}
For fixed $\varepsilon>0$, Jensen's inequality for the
regularized Markov semigroup gives
$\mathcal H_\varepsilon(f_{N,\varepsilon}(s))
\le \mathcal H_\varepsilon(f_{N,0})$.
Since $f_{N,\varepsilon}(s)\to f_{N,0}$ in $L^1$ as
$s\downarrow0$, this bound and lower semicontinuity of
$\mathcal H_\varepsilon$ imply convergence of the entropy
to its initial value.
The nonnegative dissipation integral converges by
monotone convergence.

The bound on $K_{1,\varepsilon}$ and the boundedness of
$K_1*\pi$ imply $\log G_{N,\varepsilon}\le C$, with
$C$ independent of $\varepsilon$.  Since the configuration torus
has volume one, every probability density $f$ satisfies
$\int f\log f\ge0$.  Consequently,
\[
  \mathcal H_\varepsilon(f)
     \ge\int f\log f\,dX_N-C\ge-C.
\]
Applying this lower bound to
\eqref{eq:appB-regularized-entropy-inequality} separates the two
uniform estimates:
\[
\begin{aligned}
  \sup_{0<\varepsilon<1}\operatorname*{ess\,sup}_{0<t<T}
       \int f_{N,\varepsilon}(t)\log f_{N,\varepsilon}(t)\,dX_N
  &\le C_T,\\
  \sup_{0<\varepsilon<1}\sum_i
       \int_0^T\int\frac{|J_{i,\varepsilon}|^2}{f_{N,\varepsilon}}
                   \,dX_N\,dt
  &\le C_T.
\end{aligned}
\]
The first bound controls concentration of the densities.  The second
will control the fluxes on sets where the densities have small mass.

\medskip
\noindent\emph{Step 2. Compactness of densities and fluxes.}
Put $Q_T:=(0,T)\times(\T^d)^N$.  The entropy bound gives, for every
measurable $A\subset Q_T$ and every $R>e$,
\[
  \int_A f_{N,\varepsilon}\,dX_N\,dt
  \le R|A|+\frac{C_T}{\log R}.
\]
Indeed, on $\{f_{N,\varepsilon}>R\}$, one has
$f_{N,\varepsilon}\le
 (f_{N,\varepsilon}\log f_{N,\varepsilon})/\log R$.
On the complement, the density is at most $R$.  The displayed bound
proves uniform integrability of $f_{N,\varepsilon}$.

For each flux, Cauchy--Schwarz and Step~1 now give
\begin{equation}
  \int_A|J_{i,\varepsilon}|\,dX_N\,dt
   \le\left(\int_A\frac{|J_{i,\varepsilon}|^2}
                         {f_{N,\varepsilon}}\right)^{1/2}
       \left(\int_A f_{N,\varepsilon}\right)^{1/2}
   \le C_T\left(\int_A f_{N,\varepsilon}\right)^{1/2}.
\end{equation}
Thus the fluxes are uniformly integrable as well.  The
Dunford--Pettis theorem supplies a subsequence and weak $L^1(Q_T)$
limits
\[
  f_{N,\varepsilon}\rightharpoonup f_N^\tau,
  \qquad J_{i,\varepsilon}\rightharpoonup J_i.
\]

The spacetime limits alone do not identify the initial law or the
entropy at a fixed time.  To obtain time traces, test the regularized
equation against $\phi\in C^1((\T^d)^N)$.  Since each density has
unit mass, Step~1 yields
\[
\begin{aligned}
  \left|\int\phi\bigl(f_{N,\varepsilon}(t)
                   -f_{N,\varepsilon}(s)\bigr)\,dX_N\right|
  &\le\tau\sum_i\int_s^t\int
          |J_{i,\varepsilon}|\,|\nabla_{x_i}\phi|\,dX_N\,dr\\
  &\le C_T|t-s|^{1/2}
          \max_i\|\nabla_{x_i}\phi\|_{L^\infty}.
\end{aligned}
\]
Probability measures on the compact configuration torus are narrowly
compact.  Applying Arzel\`a--Ascoli to a countable dense set of test
functions and taking a further subsequence gives
\begin{equation}
  f_{N,\varepsilon}(t)\,dX_N
     \rightharpoonup f_N^\tau(t)\,dX_N
  \quad\text{narrowly, uniformly for }t\in[0,T].
  \label{eq:appB-uniform-narrow-convergence}
\end{equation}
The entropy bound in Step~1 and lower semicontinuity of ordinary
entropy ensure that every limiting time marginal has a density.
The spacetime density is the same $f_N^\tau$ as in the weak
$L^1$ convergence.

\medskip
\noindent\emph{Step 3. The equation and its weighted flux.}
We first pass to the regularized weak equation.  For
$\phi\in C^1([0,T]\times(\T^d)^N)$, Steps~1 and~2 give
\[
\begin{aligned}
  \int\phi(T)f_N^\tau(T)\,dX_N
    -\int\phi(0)f_{N,0}\,dX_N
  ={}&\int_0^T\int f_N^\tau\,\partial_t\phi\,dX_N\,dt\\
     &-\tau\sum_i\int_0^T\int
                   J_i\cdot\nabla_{x_i}\phi\,dX_N\,dt.
\end{aligned}
\]
The endpoint terms follow from
\eqref{eq:appB-uniform-narrow-convergence}; the spacetime terms follow
from weak $L^1$ convergence.  In particular,
$\partial_t f_N^\tau=\tau\sum_i\nabla_{x_i}\cdot J_i$ holds on the
whole configuration space, and the initial law is $f_{N,0}$.

It remains to identify $J_i$ with the weighted flux in
\eqref{eq:singular-targeted-weighted-equation}.  Let
$\Phi\in C_c^1((0,T)\times\mathcal X_N;\mathbb R^d)$.
Integration by parts in its $x_i$ coordinate gives
\[
  \int_{Q_T}J_{i,\varepsilon}\cdot\Phi
   =-\int_{Q_T}f_{N,\varepsilon}\nabla_{x_i}\cdot\Phi
     -\int_{Q_T}f_{N,\varepsilon}
            \nabla_{x_i}\log G_{N,\varepsilon}\cdot\Phi.
\]
The gradients of the regularized kernels converge uniformly on the
support of $\Phi$, which stays away from collisions.  Passing to the
limit therefore gives
\[
  J_i=\nabla_{x_i}f_N^\tau
          -f_N^\tau\nabla_{x_i}\log G_N
     =G_N\nabla_{x_i}\frac{f_N^\tau}{G_N}
    \quad\text{in distributions on }(0,T)\times\mathcal X_N.
\]
The collision set has zero Lebesgue measure.  Hence the $L^1$ flux
$J_i$ is an extension of this distributional expression across that
set.  Symmetry of the limit follows from symmetry of the approximants.

\medskip
\noindent\emph{Step 4. Passage to the entropy inequality.}
The initial entropy converges by
\eqref{eq:appB-initial-entropy-convergence}.  For the final entropy,
the ordinary part $\int f_{N,\varepsilon}(t)
\log f_{N,\varepsilon}(t)$ is lower semicontinuous under the narrow
convergence \eqref{eq:appB-uniform-narrow-convergence}.
The target contribution converges because $K_1*\pi$ is bounded and
continuous.

For the pair contribution, define the shifted pair energy
\[
  P_\eta(X_N)
  :=\sum_{i\ne j}\bigl(K_{1,\eta}(x_i-x_j)-\eta\bigr),
  \qquad \eta>0.
\]
Fix $\delta>0$.  The monotonicity from Step~1 gives
$P_\varepsilon\ge P_\delta$ whenever
$0<\varepsilon<\delta$.  Since $P_\delta$ is bounded and
continuous, narrow convergence yields
\[
  \liminf_{\varepsilon\downarrow0}
     \int P_\varepsilon f_{N,\varepsilon}(t)\,dX_N
  \ge \int P_\delta f_N^\tau(t)\,dX_N.
\]
Letting $\delta\downarrow0$
and using monotone convergence after subtracting a common lower
bound gives
\[
  \int\sum_{i\ne j}K_1(x_i-x_j)f_N^\tau(t)\,dX_N
   \le\liminf_{\varepsilon\downarrow0}
       \int\sum_{i\ne j}K_{1,\varepsilon}(x_i-x_j)
                      f_{N,\varepsilon}(t)\,dX_N.
\]
The shifts vanish because each $f_{N,\varepsilon}(t)$ has unit
mass.  Combining the three parts of the entropy yields
\[
  \mathcal H(f_N^\tau(t))
      \le\liminf_{\varepsilon\downarrow0}
                     \mathcal H_\varepsilon(f_{N,\varepsilon}(t)).
\]

We also need the dissipation to pass to the limit.  With its standard
extended values at $r=0$, the map
\begin{equation}
  (r,z)\longmapsto
  \begin{cases}
    |z|^2/r,&r>0,\\
    0,&r=0,\ z=0,\\
    +\infty,&r=0,\ z\ne0
  \end{cases}
  \label{eq:quadratic-perspective}
\end{equation}
is convex and lower semicontinuous on
$[0,\infty)\times\mathbb R^d$.  Weak $L^1$ convergence of the pair
$(f_{N,\varepsilon},J_{i,\varepsilon})$ therefore implies, for
each $t\le T$,
\[
  \sum_i\int_0^t\int\frac{|J_i|^2}{f_N^\tau}
           \,dX_N\,ds
   \le\liminf_{\varepsilon\downarrow0}
       \sum_i\int_0^t\int
           \frac{|J_{i,\varepsilon}|^2}{f_{N,\varepsilon}}
                  \,dX_N\,ds.
\]
The entropy and dissipation bounds apply along the same subsequence.
Thus \eqref{eq:appB-regularized-entropy-inequality} and
\eqref{eq:appB-initial-entropy-convergence} give
\eqref{eq:targeted-entropy-inequality} for almost every $t\in[0,T]$.
The narrow convergence at $t=0$, the flux identity, and the
distributional equation verify the remaining conditions in
Definition~\ref{def:targeted-entropy-solution}.

All estimates hold on arbitrary finite intervals.  Choosing nested
subsequences for $T=1,2,\ldots$ and then taking a diagonal
subsequence produces a symmetric global entropy solution.

\medskip
\emph{Identification with the joint law of the particle system.}
Here and below, $\varepsilon$ runs through the diagonal
subsequence chosen above.
For $f_{N,0}=\rho_0^{\otimes N}$, let $X_N$ be the solution in
Proposition~\ref{prop:stochastic-particle-flow}, with initial
positions independent of the Brownian motions.
Let $X_{N,\varepsilon}$ solve the particle SDE corresponding to
$K_{1,\varepsilon}$, with the same initial positions and Brownian
motions.

We regard $X_N,X_{N,\varepsilon}$ as lifts to $\mathbb R^{dN}$
satisfying $X_{N,\varepsilon}(0)=X_N(0)$, and denote their
periodic drifts by $b,b_\varepsilon$.
Since the Brownian motions are the same, their
difference satisfies
\[
  X_{N,\varepsilon}(t)-X_N(t)
  =\int_0^t
    \bigl[b_\varepsilon(X_{N,\varepsilon}(s))
          -b(X_N(s))\bigr]\,ds.
\]

Fix $T<\infty$. Proposition~\ref{prop:stochastic-particle-flow}
and continuity imply
\[
  \min_{0\le t\le T}\min_{i\ne j}
  d_{\T^d}(X_t^i,X_t^j)>0
  \qquad\text{almost surely}.
\]
On each such path, choose $r>0$ so that the closed neighborhood
\[
  U_r:=\{x\in\mathbb R^{dN}:
          \operatorname{dist}(x,X_N([0,T]))\le r\}
\]
has its torus image contained in $\mathcal X_N$.
Then $b$ has bounded first derivatives on the compact set $U_r$.
The locally uniform convergence
$\nabla K_{1,\varepsilon}\to\nabla K_1$ on
$\T^d\setminus\{0\}$ gives $b_\varepsilon\to b$ uniformly
on $U_r$. Until
$|X_{N,\varepsilon}(t)-X_N(t)|$ first reaches $r$,
both $X_N(t)$ and $X_{N,\varepsilon}(t)$, together with the
segment joining them, lie in $U_r$.
The mean-value theorem and Gr\"onwall's inequality give
\[
\begin{aligned}
  |X_{N,\varepsilon}(t)-X_N(t)|
  &\le\int_0^t
       \bigl(\|b_\varepsilon-b\|_{L^\infty(U_r)}
             +C(T,r,X_N)|X_{N,\varepsilon}(s)-X_N(s)|\bigr)\,ds\\
  &\le t e^{C(T,r,X_N)t}\|b_\varepsilon-b\|_{L^\infty(U_r)}.
\end{aligned}
\]
For sufficiently small $\varepsilon$, the last bound is
uniformly less than $r$ on $[0,T]$. By continuity, the
difference cannot reach $r$, so the estimate holds throughout
$[0,T]$. Letting $\varepsilon\to0$ gives
\[
  X_{N,\varepsilon}\longrightarrow X_N
  \quad\text{in }C([0,T];(\T^d)^N),
  \qquad\text{almost surely}.
\]
The time marginals therefore converge narrowly to those of
$X_N$. Comparing this convergence with
\eqref{eq:appB-uniform-narrow-convergence} gives
\[
  f_N^\tau(t)\,dX_N=\operatorname{Law}(X_N(t))
  \qquad\text{for every }t\ge0,
\]
since $T$ was arbitrary.
\end{proof}
\subsection{The singular modulated free-energy inequality}
\label{subsec:singular-targeted-mfe-passage}

The weighted entropy estimate from
Subsection~\ref{subsec:targeted-entropy-compactness} gives a particle law
with an integrable flux, but it does not allow us to differentiate the
modulated free energy.  We prove here that the calculation of
Subsection~\ref{subsec:smooth-weighted-calculation} remains valid as an
inequality for every entropy solution.  The two points to check are that
the time-dependent mean-field weight is an admissible test function
and that the terms containing the Coulomb force can be combined before
the collision cutoff is removed.

\begin{lemma}[The singular weighted comparison]
\label{lem:singular-targeted-weighted-comparison}
Let $N\ge2$ and $\tau>0$.  Let $f_N^\tau$ be any entropy solution of
Definition~\ref{def:targeted-entropy-solution} on $[0,T]$, with weighted
fluxes $J_i$.  Let $\rho=\rho^\tau$ be a positive solution of
\eqref{eq:intro-mean-field-pde} satisfying
\[
  \rho\in C([0,T];W^{2,\infty}(\T^d)),
  \qquad \inf_{[0,T]\times\T^d}\rho>0,
\]
and write $u_t=\tau\nabla\log\rho_t+\nabla K_1*(\rho_t-\pi)$.
Then, for almost every $t\in[0,T]$,
\begin{equation}
\begin{aligned}
  \mathcal E_N^\tau(t)
  &+\frac1N\sum_{i=1}^N\int_0^t\!\int
    \frac{|\tau J_i(s)-f_N^\tau(s)u_s(x_i)|^2}
         {f_N^\tau(s)}\,dX_N\,ds\\
  &\le \mathcal E_N^\tau(0)
    -\frac12\int_0^t
       \mathbb E_{f_N^\tau(s)}
       [\Comm_N[u_s](\,\cdot\,,\rho_s)]\,ds.
\end{aligned}
\end{equation}
The quotient in this display has the convention of
\eqref{eq:quadratic-perspective}.  By
\eqref{eq:weighted-gradient-identities}, this is the flux form of
\eqref{eq:targeted-mfe-inequality}.
\end{lemma}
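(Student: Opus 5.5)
We start from the weighted representation \eqref{eq:weighted-entropy-representation}:
\[
\mathcal E_N^\tau(t)=\frac\tau N\Bigl[\mathcal H(f_N^\tau(t))-\int f_N^\tau(t)\log\frac{\rho_t^{\otimes N}}{G_{\rho,N}(t)}\,dX_N\Bigr],
\]
and treat the two brackets separately. The first bracket is controlled by the entropy inequality \eqref{eq:targeted-entropy-inequality}. This gives an upper bound of $\mathcal H(f_N^\tau(0))$ minus the flux dissipation $\tau\sum_i\int_0^t\int|J_i|^2/f_N^\tau$.

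For the second bracket we use the test function
\[
\psi_t(X_N):=\log(\rho_t^{\otimes N}/G_{\rho,N}(t))=\sum_i\bigl[\log\rho_t(x_i)+\tfrac1\tau K_1*(\rho_t-\pi)(x_i)\bigr]-\tfrac{N}{2\tau}\int(K_1*\rho_t)\rho_t .
\]
The assumptions $\rho\in C([0,T];W^{2,\infty})$ and $\inf\rho>0$ make $\psi$ a sum of one-body functions with $\nabla_{x_i}\psi=u_t(x_i)/\tau$ Lipschitz in space. The mean-field equation also gives $\partial_t\psi$ bounded, since $\partial_t\log\rho=\nabla\cdot u+u\cdot\nabla\log\rho$ and $\partial_t K_1*\rho=K_1*\nabla\cdot(\rho u)$. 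So $\psi$ is $W^{1,\infty}$ in space-time, after a standard time mollification if needed. Testing the weak equation \eqref{eq:singular-targeted-weighted-equation}, which holds on the full configuration space with $J_i\in L^1$, then gives exactly \eqref{eq:appB-smooth-test-identity} in integrated form, for almost every $t$ (Lebesgue points of the narrowly continuous trace).

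Subtracting the two brackets, the flux terms are
\[
-\tau^2|J_i|^2/f+\tau J_i\cdot u(x_i).
\]
Completing the square as in \eqref{eq:appB-completed-square} produces the desired dissipation $-|\tau J_i-fu|^2/f$, with the perspective convention \eqref{eq:quadratic-perspective}. The leftover is $-u(x_i)\cdot(\tau J_i-fu(x_i))$ together with the $\partial_t\log\rho$ and $\partial_t\log G_{\rho,N}$ terms. We then need the pointwise identities \eqref{eq:appB-pointwise-remainder} and \eqref{eq:appB-pointwise-commutator}, which turn this remainder into $-\frac12\mathbb E_f\Comm_N[u]$. In the smooth case these used the expansion $\tau J_i=\tau\nabla f-f\tau\nabla\log G_N$. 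Here $J_i$ equals this expression only in distributions off the collision set, and the term $f\,E_i$ contains $f\nabla K_1(x_i-x_j)$, which is not a priori integrable.

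This is the main obstacle. We handle it with a collision cutoff. Take $\chi_\delta(X_N)=\prod_{i<j}\theta(|x_i-x_j|/\delta)$, with $\theta$ vanishing near $0$ and equal to $1$ beyond $2$. Pair $\tau J_i$ with $\chi_\delta u(x_i)$, which is a legitimate test against the identity off collisions. Integrating $\tau\nabla_{x_i}f\cdot\chi_\delta u(x_i)$ by parts produces $-\tau\int f\,\chi_\delta\nabla\cdot u$ plus an error $-\tau\int f\,u\cdot\nabla\chi_\delta$. The force part becomes $\int f\chi_\delta\,u(x_i)\cdot(\frac1N\sum_{j}\nabla K_1(x_i-x_j)-\nabla K_1*\pi)$. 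Before letting $\delta\to0$, we symmetrize in $i,j$ so that the pair term becomes
\[
\frac1{2N}\sum_{i\ne j}(u(x_i)-u(x_j))\cdot\nabla K_1(x_i-x_j).
\]
This is bounded by $\|\nabla u\|_{L^\infty}|x_i-x_j|^{2-d}$, which is integrable against $f$ because the entropy solution has finite weighted entropy. That finiteness gives $\int f\sum_{i\ne j}K_1(x_i-x_j)<\infty$, and $|x|^{2-d}\lesssim K_1+C$ (for $d=2$ the term is bounded outright). Dominated convergence then removes $\chi_\delta$ from these terms.

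The cutoff error $\int f|u||\nabla\chi_\delta|$ is supported where some pair has $|x_i-x_j|\le 2\delta$ and is bounded by $C\delta^{-1}$ times that set's $f$-mass. To control it we instead bound $\int|J_i|\,|u||\nabla\chi_\delta|$-type terms by Cauchy--Schwarz using the finite dissipation $\int|J_i|^2/f$, and the mass of $f$ on pair-near sets decays like $\delta^{d-2}$-weighted entropy bounds. Alternatively we use the symmetrization trick of \cite[Prop.~2.3]{BreschJabinWang2023} and \cite[Sec.~6]{ChodronRosenzweigSerfaty2025}, arranging that the boundary term also carries $u(x_i)-u(x_j)=O(\delta)$, which cancels $\nabla\chi_\delta=O(\delta^{-1})$ and leaves a term bounded by $\int_{\{|x_i-x_j|<2\delta\}}f\to0$ by uniform integrability. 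I would try the latter first.

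After removing the cutoff, the symmetrized pair term plus the one-body terms reproduce \eqref{eq:appB-pointwise-commutator} exactly as in the smooth computation. Summing the pieces and dividing by $N$ yields the stated inequality, with the inequality coming solely from the entropy inequality used for the first bracket.
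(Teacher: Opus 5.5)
Your proposal is correct and follows the paper's proof essentially step for step. You use the weighted representation with the entropy inequality for the first term, test the weak equation with the $W^{1,\infty}$ mean-field weight $\log(\rho_t^{\otimes N}/G_{\rho,N})$, and complete the square. You then remove a product pair cutoff $\chi_\delta$ after symmetrizing two terms: the pair force, which is controlled by the Coulomb energy bound coming from the weighted entropy, and the cutoff-gradient term, where $u(x_i)-u(x_j)=O(\delta)$ cancels $\nabla\chi_\delta=O(\delta^{-1})$.

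The symmetrized route you say you would try first is exactly the paper's, and the cancellation is automatic for radial pair cutoffs. Your first alternative, bounding $\delta^{-1}$ times the near-collision mass, is not needed; from the entropy bound alone it would not close when $d=2$.
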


\begin{proof}
We first combine the entropy inequality with the weak particle
equation.  We then remove a cutoff from the identity that moves the
derivative of $f_N^\tau$ onto $u_t$.  Finally, the same
symmetrization as in the smooth calculation identifies the remaining
term with the commutator.

For every $C^1$ function $\varphi$ of the particle positions, the
distributional equation makes
$t\mapsto\int\varphi f_N^\tau(t)\,dX_N$ absolutely continuous, with
derivative $-\tau\sum_i\int J_i\cdot\nabla_{x_i}\varphi\,dX_N$.
Taking a countable dense family of such functions supplies a narrowly
continuous representative of the particle law.  The initial trace in
Definition~\ref{def:targeted-entropy-solution} identifies its value at
$t=0$.

\medskip
\noindent\emph{Step 1: Test with the mean-field weight.}
Set $f=f_N^\tau$ and define the function used in
\eqref{eq:weighted-entropy-representation} by
\[
  \Phi_t(X_N)
  :=\log\frac{\rho_t^{\otimes N}(X_N)}{G_{\rho,N}(t,X_N)}.
\]
The lower bound on $\rho$ and $K_1\in L^1(\T^d)$ make $\Phi$
bounded.  The equation for $\rho$, together with
$\nabla K_1\in L^1(\T^d)$, gives bounded weak derivatives in time
and in every particle variable.  More precisely,
\[
  \Phi\in W^{1,\infty}((0,T)\times(\T^d)^N),
  \qquad
  \tau\nabla_{x_i}\Phi_t(X_N)=u_t(x_i).
\]
To use $\Phi$ in the weak
equation, we mollify it in space and time, apply
\eqref{eq:singular-targeted-weighted-equation}, and pass to the
limit by bounded convergence against $f$ and $J_i$.  In this way,
\begin{equation}
\begin{aligned}
  \int f(t)\Phi_t\,dX_N-\int f(0)\Phi_0\,dX_N
  ={}&\int_0^t\!\int f(s)\,\partial_s\Phi_s\,dX_N\,ds\\
     &-\sum_i\int_0^t\!\int J_i(s)\cdot u_s(x_i)\,dX_N\,ds.
\end{aligned}
\label{eq:appB-singular-test-identity}
\end{equation}
Here
$\partial_s\Phi_s
 =\sum_i\partial_s\log\rho_s(x_i)
  -\partial_s\log G_{\rho,N}(s)$.
The identity holds for almost every $t$.  Combining it with the entropy
inequality \eqref{eq:targeted-entropy-inequality} and the
representation \eqref{eq:weighted-entropy-representation} gives
\begin{equation}
\begin{aligned}
  \mathcal E_N^\tau(t)
  \le{}&\mathcal E_N^\tau(0)
    -\frac{\tau^2}{N}\sum_i\int_0^t\!\int
      \frac{|J_i(s)|^2}{f(s)}\,dX_N\,ds\\
  &-\frac\tau N\int_0^t\!\int
      f(s)\left(\sum_i\partial_s\log\rho_s(x_i)
                 -\partial_s\log G_{\rho,N}(s)\right)dX_N\,ds\\
  &+\frac\tau N\sum_i\int_0^t\!\int
       J_i(s)\cdot u_s(x_i)\,dX_N\,ds.
\end{aligned}
\label{eq:appB-weighted-comparison}
\end{equation}

\medskip
\noindent\emph{Step 2: Remove the collision cutoff.}
The flux terms in \eqref{eq:appB-weighted-comparison} are
integrable by the entropy dissipation and the boundedness of $u$.
The separate particle forces in
$E_i=N^{-1}\sum_{j\ne i}\nabla K_1(x_i-x_j)
       -\nabla K_1*\rho(x_i)$ need not be integrable against $f$.
Their pairing with $u$, however, has the symmetric form
\begin{equation}
\begin{aligned}
  \sum_i u_s(x_i)\cdot E_i(X_N,\rho_s)
  ={}&\frac1{2N}\sum_{i\ne j}
      (u_s(x_i)-u_s(x_j))\cdot\nabla K_1(x_i-x_j)\\
     &-\sum_i u_s(x_i)\cdot(\nabla K_1*\rho_s)(x_i).
\end{aligned}
\label{eq:appB-symmetrized-particle-field}
\end{equation}
Near the origin the periodic Coulomb kernel satisfies
$|z|\,|\nabla K_1(z)|\le C(1+(K_1(z))_+)$.
Thus the first line of \eqref{eq:appB-symmetrized-particle-field}
is bounded by the Lipschitz seminorm of $u_s$ times a sum of
positive Coulomb energies.  The weighted entropy controls this sum
because $\int f\log f\ge0$, $K_1$ is bounded below, and the target
potential is bounded.  In particular,
\[
  \operatorname*{ess\,sup}_{0<s<T}
     \int f(s)\sum_{i\ne j}(K_1(x_i-x_j))_+\,dX_N<\infty.
\]
The second line of \eqref{eq:appB-symmetrized-particle-field}
is bounded since $\nabla K_1*\rho_s\in L^\infty$.
These observations make the symmetric pairing integrable in
spacetime.  They also make $\Comm_N[u_s](X_N,\rho_s)$ integrable.
For the mixed terms in the commutator, use
$\nabla K_1\in L^1$, $\rho_s\in L^\infty$, and the boundedness of $u_s$.

We now justify the integration by parts used in
\eqref{eq:appB-pointwise-remainder}.  For small $\delta>0$, let
$\chi_\delta$ be the product of smooth cutoffs for the particle
pairs.  Each factor vanishes if its pair distance is below $\delta$
and equals one if that distance exceeds $2\delta$.  Define
\[
  \mathcal C_{2\delta}
  :=\{X_N:\min_{i<j}d_{\T^d}(x_i,x_j)<2\delta\}.
\]
Figure~\ref{fig:singular-pair-cutoff} shows where one pair factor
changes and where the two particle derivatives must be combined.

\begin{figure}[htbp]
\centering
\begin{tikzpicture}[x=1.15cm,y=1cm]
  \fill[gray!22] (2,0.08) rectangle (4,0.45);
  \draw[->] (0,0) -- (6.3,0) node[right] {$r$};
  \draw (2,-0.12) -- (2,0.5);
  \draw (4,-0.12) -- (4,0.5);
  \node[below] at (0,0) {$0$};
  \node[below] at (2,0) {$\delta$};
  \node[below] at (4,0) {$2\delta$};
  \node[above] at (1,0.5) {$\chi_\delta^{(i,j)}=0$};
  \node[above] at (3,0.5) {$0<\chi_\delta^{(i,j)}<1$};
  \node[above] at (5,0.5) {$\chi_\delta^{(i,j)}=1$};
\end{tikzpicture}
\caption{One pair factor of the collision cutoff, shown against the
distance $r=d_{\T^d}(x_i,x_j)$ near a collision.  Its derivative is
supported in the shaded interval $\delta<r<2\delta$ and is of order
$\delta^{-1}$.  The derivatives in $x_i$ and $x_j$ have opposite
signs, so summing them produces $u(x_i)-u(x_j)$.  The Lipschitz bound
on this difference is of order $\delta$, leaving a bounded cutoff
error.}
\label{fig:singular-pair-cutoff}
\end{figure}

Off the collision set the definition of the weighted flux gives,
in distributions,
\[
  \tau J_i-fu_s(x_i)
  =\tau\bigl(\nabla_{x_i}f
       -f\nabla\log\rho_s(x_i)\bigr)
       +fE_i(X_N,\rho_s).
\]
Multiply this identity by $\chi_\delta u_s(x_i)$, sum over $i$,
and integrate.  Since
$\partial_s\log\rho_s=\nabla\cdot u_s
                 +u_s\cdot\nabla\log\rho_s$,
the derivative of $f$ cancels the time derivative of $\log\rho_s$.
The only additional term is
$-\tau\int f\sum_i u_s(x_i)\cdot\nabla_{x_i}\chi_\delta$.
For each differentiated pair factor the two particle derivatives
combine as $u_s(x_i)-u_s(x_j)$.  The derivative is supported where
the pair distance is of order $\delta$, so
\[
  \left|\sum_i u_s(x_i)\cdot\nabla_{x_i}\chi_\delta\right|
  \le C_N\|\nabla u_s\|_{L^\infty}
       \mathbf1_{\mathcal C_{2\delta}}.
\]
The collision set has Lebesgue measure zero.  Since $f$ is an
$L^1$ density and $\|\nabla u_s\|_{L^\infty}$ is bounded on
$[0,T]$, dominated convergence gives
\[
\begin{aligned}
  &\tau\left|\int_0^t\!\int f(s)
     \sum_i u_s(x_i)\cdot\nabla_{x_i}\chi_\delta\,dX_N\,ds\right|\\
  &\qquad\le C_N\tau\int_0^t\!\|\nabla u_s\|_{L^\infty}
     \left(\int_{\mathcal C_{2\delta}}f(s)\,dX_N\right)ds
  \longrightarrow0.
\end{aligned}
\]
The integrability established above also lets us remove
$\chi_\delta$ from the other terms.  We obtain
\begin{equation}
\begin{aligned}
  &\sum_i\int_0^t\!\int u_s(x_i)\cdot
    \bigl(\tau J_i(s)-f(s)u_s(x_i)\bigr)\,dX_N\,ds\\
  &\quad+\tau\int_0^t\!\int f(s)
        \sum_i\partial_s\log\rho_s(x_i)\,dX_N\,ds\\
  &=\int_0^t\!\int f(s)\sum_i
       u_s(x_i)\cdot E_i(X_N,\rho_s)\,dX_N\,ds.
\end{aligned}
\label{eq:appB-flux-integration-by-parts}
\end{equation}

\medskip
\noindent\emph{Step 3: Complete the square.}
The quadratic perspective \eqref{eq:quadratic-perspective} and
boundedness of $u$ justify the identity
\[
  -\tau^2\frac{|J_i|^2}{f}+\tau J_i\cdot u_s(x_i)
  =-\frac{|\tau J_i-fu_s(x_i)|^2}{f}
    -u_s(x_i)\cdot\bigl(\tau J_i-fu_s(x_i)\bigr)
\]
almost everywhere, with both sides integrated against $dX_N\,ds$.
Substituting it into \eqref{eq:appB-weighted-comparison} and then
using \eqref{eq:appB-flux-integration-by-parts} leaves
\[
  \int_0^t\!\int f(s)\left[
    \frac\tau N\partial_s\log G_{\rho,N}(s)
    -\frac1N\sum_i u_s(x_i)\cdot E_i(X_N,\rho_s)
  \right]dX_N\,ds.
\]
For a collision-free configuration, the calculation in
\eqref{eq:appB-pointwise-commutator} identifies the expression
in brackets with $-\tfrac12\Comm_N[u_s](X_N,\rho_s)$.
The previous integrability bounds permit this identity to be
integrated against $f$.  This proves the displayed inequality in
the lemma.

Finally, the identity
$\tau J_i-fu_s(x_i)
 =f\bigl[\tau\nabla_{x_i}\log(f/\rho_s^{\otimes N})
                  +E_i(X_N,\rho_s)\bigr]$
holds on $\{f>0\}\cap\mathcal X_N$ by
\eqref{eq:weighted-gradient-identities}.  The flux quotient in
the lemma therefore agrees with the dissipation in
\eqref{eq:targeted-mfe-inequality} under the convention
\eqref{eq:quadratic-perspective}.  The collision set has Lebesgue
measure zero, while finite entropy dissipation implies $J_i=0$
almost everywhere on $\{f=0\}$.
\end{proof}

Lemma~\ref{lem:singular-targeted-weighted-comparison} proves
Proposition~\ref{prop:targeted-mfe}\textup{(ii)} for every entropy
solution, including solutions other than the one constructed in
Subsection~\ref{subsec:targeted-entropy-compactness}.

\section{Stationary estimates and low-temperature regularity}
\label{app:maximum-principle}
\label{app:low-temperature-regularity}

This appendix proves the stationary and low-temperature estimates used
in Section~\ref{sec:mean-field-analysis}.  We first construct the thermal
equilibrium and bound its derivatives in
Propositions~\ref{prop:thermal-equilibrium-family}
and~\ref{prop:stationary-all-orders}.  We then prove the interpolation,
Coulomb, and maximum-principle estimates behind
Propositions~\ref{prop:uniform-lipschitz}
and~\ref{prop:low-temperature-relaxation}.  Finally, a compactness
argument transfers the bounds to the deterministic solution in
Proposition~\ref{prop:tau0-endpoint}.

\subsection{Stationary equilibria: construction and regularity}
\label{subsec:stationary-estimates}

The proofs of Propositions~\ref{prop:thermal-equilibrium-family}
and~\ref{prop:stationary-all-orders} are given here.  The first
constructs the density about which the mean-field equation is recentered.
The second bounds its derivatives without a constant that diverges as
$\tau\downarrow0$.  These bounds are used in the low-temperature
estimates of Section~\ref{sec:mean-field-analysis}.

\begin{proof}[Proof of Proposition~\ref{prop:thermal-equilibrium-family}]
Fix $\tau>0$.  We seek a density of the form $\rho_\infty^\tau=e^{q_\tau}$.
The Euler--Lagrange identity~\eqref{eq:stationary-euler-lagrange}
reduces, after applying $-\Delta$, to a local equation for $q_\tau$.
We first solve this equation by minimizing
\[
  J_\tau(q)
  :=\frac\tau2\int_{\T^d}|\nabla q|^2\,dx
    +\int_{\T^d}e^q\,dx
    -\int_{\T^d}\pi q\,dx,
  \qquad q\in H^1(\T^d),
\]
with $J_\tau(q)=+\infty$ if $e^q\notin L^1(\T^d)$.
The exponential makes this functional strictly convex.  We establish
existence of a minimizer, bound it between $\log c_\pi$ and
$\log C_\pi$, and then recover~\eqref{eq:stationary-euler-lagrange}.

\medskip
\noindent\emph{Step 1: existence of a minimizer.}
Write $q=m+h$, where $m:=\int_{\T^d}q\,dx$ and $\int h\,dx=0$.
Since $\int\pi=1$, Jensen's inequality and the Poincar\'e inequality
give
\begin{equation}
  J_\tau(q)
  \ge \frac\tau2\|\nabla q\|_{L^2}^2
       -C\|\nabla q\|_{L^2}+e^m-m.
  \label{eq:stationary-J-coercivity}
\end{equation}
For fixed $\tau>0$, this lower bound tends to infinity whenever
$\|q\|_{H^1}$ does.  A minimizing sequence therefore has a subsequence
that converges weakly in $H^1$, strongly in $L^2$, and almost
everywhere to some $q_\tau\in H^1$.  Fatou's lemma applies to the
exponential, while weak lower semicontinuity applies to the gradient.
The linear term converges by strong $L^2$ convergence, so
\[
  J_\tau(q_\tau)\le\liminf_{n\to\infty}J_\tau(q_n).
\]
Thus $q_\tau$ minimizes $J_\tau$.  For
$\varphi\in H^1(\T^d)\cap L^\infty(\T^d)$, the perturbations
$q_\tau+s\varphi$ have integrable exponentials for $s$ near zero.
Their first variation gives
\begin{equation}
  \tau\int_{\T^d}\nabla q_\tau\cdot\nabla\varphi\,dx
   +\int_{\T^d}(e^{q_\tau}-\pi)\varphi\,dx=0,
  \qquad
  -\tau\Delta q_\tau+e^{q_\tau}=\pi
       \quad\text{in }\mathcal D'(\T^d).
  \label{eq:stationary-log-equation}
\end{equation}

\medskip
\noindent\emph{Step 2: pointwise bounds and mass.}
The weak equation permits bounded $H^1$ test functions, even though
$q_\tau$ is not yet known to be bounded.  Put $a=\log C_\pi$ and
test with $\varphi=\min\{(q_\tau-a)_+,M\}$, where $M>0$.
Both terms in
\begin{equation}
  0=\tau\int_{\{a<q_\tau<a+M\}}|\nabla q_\tau|^2\,dx
     +\int_{\T^d}(e^{q_\tau}-\pi)
               \min\{(q_\tau-a)_+,M\}\,dx
  \label{eq:stationary-upper-truncation}
\end{equation}
are nonnegative.  Since $e^{q_\tau}>C_\pi\ge\pi$ wherever
$q_\tau>a$, the second term can vanish only if
$(q_\tau-a)_+=0$ almost everywhere.  For the lower bound, take
$b=\log c_\pi$ and test with
$\varphi=\min\{(b-q_\tau)_+,M\}$.  On $\{q_\tau<b\}$, both
the gradient term and $(e^{q_\tau}-\pi)\varphi$ are nonpositive.
The same argument yields
\begin{equation}
  \log c_\pi\le q_\tau\le\log C_\pi,
  \qquad c_\pi\le e^{q_\tau}\le C_\pi
  \quad\text{almost everywhere}.
  \label{eq:stationary-q-bounds}
\end{equation}
Taking $\varphi\equiv1$ in~\eqref{eq:stationary-log-equation}
shows that $\rho_\infty^\tau:=e^{q_\tau}$ has mass one.

\medskip
\noindent\emph{Step 3: stationary identity and uniqueness.}
Because $\rho_\infty^\tau-\pi$ has zero mean and
$-\Delta K_1=\delta_0-1$, equation~\eqref{eq:stationary-log-equation}
implies
\[
  -\Delta\bigl[\tau q_\tau
       +K_1*(\rho_\infty^\tau-\pi)\bigr]
  =-\tau\Delta q_\tau+\rho_\infty^\tau-\pi=0.
\]
The expression in brackets is constant on $\T^d$.  This is
\eqref{eq:stationary-euler-lagrange} for a constant $\lambda_\tau$.
If $q_1,q_2$ are two solutions of~\eqref{eq:stationary-log-equation},
their bounds~\eqref{eq:stationary-q-bounds} allow us to test the
difference equation with $q_1-q_2$.  We obtain
\[
  0=\tau\|\nabla(q_1-q_2)\|_{L^2}^2
    +\int_{\T^d}(e^{q_1}-e^{q_2})(q_1-q_2)\,dx.
\]
Both terms are nonnegative and the second is zero only when
$q_1=q_2$ almost everywhere.  Hence $\rho_\infty^\tau$ is
unique, and its identity~\eqref{eq:stationary-euler-lagrange}
determines $\lambda_\tau$ uniquely.
\end{proof}

\begin{proof}[Proof of Proposition~\ref{prop:stationary-all-orders}]
Set $q_\tau=\log\rho_\infty^\tau$ for $\tau>0$.  We want to
bound $q_\tau$ and $e^{q_\tau}$ through four spatial derivatives
with constants independent of $\tau$.  We first establish enough
regularity to differentiate~\eqref{eq:stationary-log-equation} for
each fixed temperature.  We then use its positive zeroth-order
coefficient, bounded below by~\eqref{eq:stationary-q-bounds}, to
obtain uniform estimates.

\medskip
\noindent\emph{Step 1: differentiability at fixed temperature.}
Choose $p>\max\{d,2\}$ and write the logarithmic equation as
\[
  (1-\Delta)q_\tau
  =q_\tau+\frac{\pi-e^{q_\tau}}\tau.
\]
The right-hand side is bounded by~\eqref{eq:stationary-q-bounds}.
Periodic elliptic $L^p$ regularity therefore gives
$q_\tau\in W^{2,p}(\T^d)$
\cite[Section~3.5.4, Eqs.~(14) and~(24)]{SchmeisserTriebel1987}.
The Sobolev chain rule, the algebra property of $W^{2,p}$, and
$\pi\in C^4$ permit two further applications of the same estimate:
\begin{equation}
  q_\tau\in W^{2,p}(\T^d)
  \longrightarrow W^{4,p}(\T^d)
  \longrightarrow W^{6,p}(\T^d)
  \hookrightarrow C^{5,\beta}(\T^d),
  \qquad \beta:=1-\frac dp>0.
  \label{eq:stationary-fixed-tau-bootstrap}
\end{equation}
The constants in this step may depend on $\tau$.  Its purpose is
to justify the differentiated weak equations in the next step.

\medskip
\noindent\emph{Step 2: a temperature-independent resolvent bound.}
Write $L_\tau w:=\rho_\infty^\tau w-\tau\Delta w$.  If
$w\in H^1(\T^d)$ solves $L_\tau w=f$ weakly with $f\in L^\infty$,
the bound $\rho_\infty^\tau\ge c_\pi$ gives
\begin{equation}
  \|w\|_{L^\infty}
  \le c_\pi^{-1}\|f\|_{L^\infty}.
  \label{eq:app-stationary-resolvent}
\end{equation}
Indeed, put $A=c_\pi^{-1}\|f\|_{L^\infty}$ and use
$(w-A)_+$ as a test function.  On its support
$\rho_\infty^\tau w-f\ge c_\pi(w-A)$, so
\[
  0
   =\tau\int_{\T^d}|\nabla(w-A)_+|^2\,dx
      +\int_{\T^d}(\rho_\infty^\tau w-f)(w-A)_+\,dx
   \ge c_\pi\int_{\T^d}(w-A)_+^2\,dx.
\]
Thus $w\le A$.  Applying the same test to $-w$ gives
$w\ge-A$ and proves~\eqref{eq:app-stationary-resolvent}.

\medskip
\noindent\emph{Step 3: induction over derivative order.}
For a multi-index $\alpha$ with $|\alpha|=k\ge1$, the chain
rule separates the only term containing $D^kq_\tau$:
\[
  \partial^\alpha e^{q_\tau}
   =\rho_\infty^\tau\bigl(\partial^\alpha q_\tau
      +\mathcal P_\alpha(Dq_\tau,\ldots,D^{k-1}q_\tau)\bigr).
\]
Here $\mathcal P_\alpha$ is a finite sum of products of derivatives
of order at most $k-1$.  It is zero for $k=1$.  Differentiating
\eqref{eq:stationary-log-equation} now gives the weak equation
\begin{equation}
  L_\tau\partial^\alpha q_\tau
  =\partial^\alpha\pi
    -\rho_\infty^\tau
      \mathcal P_\alpha(Dq_\tau,\ldots,D^{k-1}q_\tau),
  \qquad 1\le k\le4.
  \label{eq:app-stationary-derivative-equation}
\end{equation}
The regularity in~\eqref{eq:stationary-fixed-tau-bootstrap} ensures
that $\partial^\alpha q_\tau\in H^1$, so the weak resolvent
bound~\eqref{eq:app-stationary-resolvent} applies.  When $k=1$,
the polynomial vanishes, and we obtain
\[
  \|Dq_\tau\|_{L^\infty}
  \le Cc_\pi^{-1}\|D\pi\|_{L^\infty}.
\]
If derivatives through order $k-1$ have been bounded uniformly,
then the right-hand side of~\eqref{eq:app-stationary-derivative-equation}
is bounded independently of $\tau$.  Using
$\rho_\infty^\tau\le C_\pi$ and
\eqref{eq:app-stationary-resolvent} closes the induction through
$k=4$:
\[
  \sup_{\tau>0}\|q_\tau\|_{C^4}<\infty,
  \qquad
  \sup_{\tau>0}\|e^{q_\tau}\|_{C^4}<\infty.
\]
The second estimate follows from the first by the chain rule and
\eqref{eq:stationary-q-bounds}.  At $\tau=0$, the convention
$\rho_\infty^0=\pi$ and Assumption~\ref{assump:standing-data}
give the same estimates.  This proves
\eqref{eq:uniform-stationary-regularity}.
\end{proof}
\subsection{Interpolation and periodic elliptic estimates}
\label{subsec:appC-interpolation}

The low-temperature proof starts with decay in weak norms and then
needs estimates for derivatives of the density error.  The first
three lemmas below pass between bounded derivatives and H\"older
norms.  The last lemma converts H\"older control of a mean-zero
density into control of its Coulomb potential.  They are used in the
maximum-principle arguments below and in
Appendix~\ref{app:positive-temperature-wellposedness}.

\begin{lemma}[Landau--Kolmogorov inequality]
\label{lem:appC-landau-kolmogorov}
There is a constant $C_d>0$ such that every
$g\in C^2(\T^d)$ satisfies
\begin{equation}
  \|\nabla g\|_{L^\infty}^2
  \le C_d\|g\|_{L^\infty}
          \|\nabla^2g\|_{L^\infty}.
  \label{eq:appC-landau-kolmogorov}
\end{equation}
\end{lemma}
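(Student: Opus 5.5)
We reduce the inequality to one dimension along lines, using a second-order Taylor expansion along the direction of the gradient. Fix $g\in C^2(\T^d)$ and lift it to a $\mathbb Z^d$-periodic function on $\R^d$; the lift has the same sup norms of $g$, $\nabla g$ and $\nabla^2 g$. If $\nabla g\equiv0$ there is nothing to prove. Otherwise pick a point $x_0$ and a unit vector $e$ with $\partial_e g(x_0)=|\nabla g(x_0)|$ and $|\nabla g(x_0)|$ within a factor $2$ of $\|\nabla g\|_{L^\infty}$. (The maximum is attained by compactness, so in fact equality is available.)

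\emph{One-dimensional step.} Set $\phi(s):=g(x_0+se)$ for $s\in\R$. This function is defined for every real $s$ because the lift lives on all of $\R^d$, and it satisfies
\[
  \|\phi\|_{L^\infty(\R)}\le\|g\|_{L^\infty},\qquad
  \|\phi''\|_{L^\infty(\R)}\le\|\nabla^2g\|_{L^\infty},
\]
where the second bound uses the operator norm of the Hessian (any fixed matrix norm changes only $C_d$). Taylor's theorem with remainder, applied at $s=\pm h$ for any $h>0$, gives
\[
  \phi(h)-\phi(-h)=2h\,\phi'(0)+R,\qquad |R|\le h^2\|\phi''\|_{L^\infty}.
\]
Hence
\[
  |\phi'(0)|\le\frac{\|\phi\|_{L^\infty}}{h}+\frac h2\|\phi''\|_{L^\infty}.
\]

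\emph{Optimization.} If $\nabla^2g\equiv0$, then $g$ is affine and periodic, hence constant, so $\nabla g\equiv0$ and this case is already excluded. We may therefore assume $\|\nabla^2 g\|_{L^\infty}>0$, and we choose
\[
  h=\bigl(2\|g\|_{L^\infty}/\|\nabla^2g\|_{L^\infty}\bigr)^{1/2},
\]
which is positive and finite since $g$ is nonconstant. Because $\phi$ is defined on the whole line, no restriction $h\le1$ is needed. This choice yields
\[
  \|\nabla g\|_{L^\infty}=\phi'(0)\le 2^{1/2}\bigl(\|g\|_{L^\infty}\|\nabla^2g\|_{L^\infty}\bigr)^{1/2}.
\]
Squaring gives \eqref{eq:appC-landau-kolmogorov} with $C_d=2$, adjusted only by the comparison between the chosen matrix norm and the operator norm.

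\emph{Main point.} The only step that needs care is avoiding the usual bounded-interval restriction on $h$, which would produce an extra term $\|g\|_{L^\infty}^2$ on the right side. On the torus this restriction disappears because periodicity lets us extend $\phi$ to all of $\R$. The remaining obstacle is a matter of norm conventions: making sure the same constant covers both the operator and Frobenius norms of $\nabla^2g$ and the Euclidean norm of $\nabla g$. This is absorbed into $C_d$.
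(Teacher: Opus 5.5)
Your proof is correct and is essentially the paper's argument: both restrict the periodic extension of $g$ to full lines in $\mathbb R^d$ and use the whole-line one-dimensional Landau inequality, where periodicity removes any restriction on the step size $h$. The differences are minor. The paper cites Landau along each coordinate direction and then compares component norms with the full gradient and Hessian norms. You prove the one-dimensional inequality directly with the symmetric Taylor expansion, recovering the sharp whole-line constant $2$, and you work along the gradient direction at a maximizing point, which avoids the component-norm comparison.
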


\begin{proof}
We apply the one-dimensional Landau--Kolmogorov inequality
\cite[Satz~1 and p.~44]{Landau1913} to the periodic extension of
$g$ along each coordinate line.  It gives
\[
  \|\partial_jg\|_{L^\infty}^2
  \le C\|g\|_{L^\infty}
         \|\partial_{jj}g\|_{L^\infty},
  \qquad 1\le j\le d.
\]
Taking the maximum over $j$ and comparing the component norms
with the full gradient and Hessian norms proves
\eqref{eq:appC-landau-kolmogorov}.
\end{proof}

\begin{lemma}[Periodic interpolation]
\label{lem:appC-ehrling}
Let $\alpha\in(0,1)$.  For every $\varepsilon>0$, there is a
constant $C_{d,\alpha,\varepsilon}>0$ such that
\begin{equation}
  \|\partial_jg\|_{C^{0,\alpha}}
  \le\varepsilon\|\nabla^2g\|_{L^\infty}
     +C_{d,\alpha,\varepsilon}\|g\|_{L^\infty},
  \qquad g\in C^2(\T^d),\quad 1\le j\le d.
  \label{eq:appC-periodic-interpolation}
\end{equation}
\end{lemma}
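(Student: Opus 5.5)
The plan is to reduce \eqref{eq:appC-periodic-interpolation} to an interpolation of the H\"older seminorm of $\partial_jg$ between $\|g\|_{L^\infty}$ and $\|\nabla^2g\|_{L^\infty}$, followed by Young's inequality. Write $\|\partial_jg\|_{C^{0,\alpha}}=\|\partial_jg\|_{L^\infty}+[\partial_jg]_{C^{0,\alpha}}$.

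The sup-norm part is handled directly by Lemma~\ref{lem:appC-landau-kolmogorov}. It gives
\[
  \|\partial_jg\|_{L^\infty}\le C_d^{1/2}\|g\|_{L^\infty}^{1/2}\|\nabla^2g\|_{L^\infty}^{1/2}.
\]
Young's inequality $ab\le \frac{\varepsilon}{2}a^2+\frac1{2\varepsilon}b^2$ then bounds this by $\frac\varepsilon2\|\nabla^2g\|_{L^\infty}+C_\varepsilon\|g\|_{L^\infty}$.

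For the seminorm, I fix $x,y\in\T^d$ with $r:=d_{\T^d}(x,y)\le1/2$. Above that scale the seminorm is controlled by $2\cdot 2^{\alpha}\|\partial_jg\|_{L^\infty}$, which has already been treated. For $r\le1/2$, the mean value theorem along a geodesic segment gives
\[
  |\partial_jg(x)-\partial_jg(y)|\le r\|\nabla^2g\|_{L^\infty}.
\]
Alternatively one has the crude bound $2\|\partial_jg\|_{L^\infty}$. Taking the minimum and dividing by $r^\alpha$ yields
\[
  [\partial_jg]_{C^{0,\alpha}}\le \sup_{r\le 1/2}\min\{r^{1-\alpha}\|\nabla^2g\|_{L^\infty},\,2r^{-\alpha}\|\partial_jg\|_{L^\infty}\}\le C\|\nabla^2g\|_{L^\infty}^{\alpha}\|\partial_jg\|_{L^\infty}^{1-\alpha}.
\]
The last inequality comes from equating the two branches. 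Inserting the Landau--Kolmogorov bound for $\|\partial_jg\|_{L^\infty}$ gives
\[
  [\partial_jg]_{C^{0,\alpha}}\le C\|\nabla^2g\|_{L^\infty}^{(1+\alpha)/2}\|g\|_{L^\infty}^{(1-\alpha)/2}.
\]
Since $(1+\alpha)/2<1$, Young's inequality with exponents $2/(1+\alpha)$ and $2/(1-\alpha)$ gives $\frac\varepsilon2\|\nabla^2g\|_{L^\infty}+C_{d,\alpha,\varepsilon}\|g\|_{L^\infty}$.

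The only point requiring care is the periodic geometry. The mean value theorem should be applied to the lift of $g$ on $\R^d$ along a straight segment realizing the torus distance. The large-scale cutoff keeps the constant independent of $g$. It also avoids any need for mean-zero assumptions, because no Poincar\'e-type step is used. Adding the two halves proves \eqref{eq:appC-periodic-interpolation}. I expect no genuine obstacle here, since this is a routine Ehrling-type estimate with an explicit exponent.
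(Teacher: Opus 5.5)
Your proof is correct and is essentially the paper's argument: close pairs are handled by the mean value theorem and the Hessian, far pairs by $\|\partial_jg\|_{L^\infty}$, and then Lemma~\ref{lem:appC-landau-kolmogorov} and Young's inequality finish. The only difference is that you optimize the splitting scale, which is just Lemma~\ref{lem:appC-holder-interpolation} applied to $\partial_jg$, to get the multiplicative bound $[\partial_jg]_{C^{0,\alpha}}\lesssim\|\nabla^2g\|_{L^\infty}^{(1+\alpha)/2}\|g\|_{L^\infty}^{(1-\alpha)/2}$ before a single Young step; the paper instead fixes the cutoff $\delta$ and the Young parameter $\eta$ in terms of $\varepsilon$.
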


\begin{proof}
We split the H\"older quotient of $\partial_jg$ into pairs of
points closer than $\delta$ and pairs farther apart, where
$0<\delta<1/2$ will be chosen.  The mean value theorem controls
the close pairs by the Hessian, while the far pairs are controlled
by the supremum norm of $\partial_jg$.  Consequently,
\[
  \|\partial_jg\|_{C^{0,\alpha}}
  \le C\delta^{1-\alpha}\|\nabla^2g\|_{L^\infty}
     +(1+2\delta^{-\alpha})
       \|\partial_jg\|_{L^\infty}.
\]
Lemma~\ref{lem:appC-landau-kolmogorov} and Young's inequality give,
for any $\eta>0$,
\[
  \|\partial_jg\|_{L^\infty}
  \le\eta\|\nabla^2g\|_{L^\infty}
     +C_d\eta^{-1}\|g\|_{L^\infty}.
\]
Choose $\delta$ so that $C\delta^{1-\alpha}\le\varepsilon/2$.
Then choose $\eta$ so that
$(1+2\delta^{-\alpha})\eta\le\varepsilon/2$.
Substitution gives \eqref{eq:appC-periodic-interpolation}.
\end{proof}

\begin{lemma}[H\"older interpolation]
\label{lem:appC-holder-interpolation}
Let $\alpha\in(0,1)$.  Every Lipschitz function
$g:\T^d\to\mathbb R$ satisfies
\begin{equation}
  [g]_{C^{0,\alpha}}
  \le 2^{1-\alpha}
     \|g\|_{L^\infty}^{1-\alpha}
     \|\nabla g\|_{L^\infty}^{\alpha}.
  \label{eq:appC-holder-interpolation}
\end{equation}
\end{lemma}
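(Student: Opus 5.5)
The estimate is a standard splitting of the Hölder quotient into near and far pairs of points. Fix $x\ne y$ in $\T^d$ and write $r:=d_{\T^d}(x,y)$. It suffices to show
\[
  |g(x)-g(y)|\le 2^{1-\alpha}\|g\|_{L^\infty}^{1-\alpha}\|\nabla g\|_{L^\infty}^{\alpha}\,r^{\alpha}.
\]
We record two elementary bounds. The first is the trivial bound $|g(x)-g(y)|\le 2\|g\|_{L^\infty}$. The second is the Lipschitz bound $|g(x)-g(y)|\le\|\nabla g\|_{L^\infty}r$. For the second we use that $g$ is Lipschitz, hence differentiable almost everywhere with $\|\nabla g\|_{L^\infty}$ equal to its Lipschitz constant with respect to the flat torus distance. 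A minimizing segment of length $r$ on the torus joins $x$ to $y$, so integrating along it gives the bound, after a standard approximation or Rademacher argument.

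Next we combine the two bounds by taking their minimum and using $\min\{a,b\}\le a^{1-\alpha}b^{\alpha}$ for $a,b\ge0$. This gives
\[
  |g(x)-g(y)|\le (2\|g\|_{L^\infty})^{1-\alpha}(\|\nabla g\|_{L^\infty}r)^{\alpha}.
\]
Dividing by $r^{\alpha}$ and taking the supremum over $x\ne y$ yields exactly the constant $2^{1-\alpha}$.

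The degenerate cases need no separate treatment. If $\nabla g=0$, then $g$ is constant and both sides vanish. No choice of a splitting radius is required, since the minimum inequality handles all scales at once.

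The only point needing care is the Lipschitz bound on the torus when $\nabla g$ is only an $L^\infty$ function. One clean way to handle it is to mollify $g$, apply the bound to the smooth approximations along the geodesic segment, and pass to the limit using uniform convergence. We expect this step to be routine rather than a real obstacle.
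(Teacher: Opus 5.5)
Your proposal is correct and is essentially the paper's proof. Both arguments combine the oscillation bound $2\|g\|_{L^\infty}$ with the Lipschitz bound $\|\nabla g\|_{L^\infty}\,d_{\T^d}(x,y)$ through $\min\{a,b\}\le a^{1-\alpha}b^{\alpha}$, which gives exactly the constant $2^{1-\alpha}$. The differences are cosmetic: the paper treats $\|g\|_{L^\infty}=0$ or $\|\nabla g\|_{L^\infty}=0$ as a separate case, and it takes the Lipschitz bound on the torus for granted where you justify it by mollification.
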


\begin{proof}
Put $A:=\|g\|_{L^\infty}$ and
$L:=\|\nabla g\|_{L^\infty}$.  If $A=0$ or $L=0$, then
$g$ is constant and the seminorm vanishes.  Otherwise, for
$r=d_{\T^d}(x,y)>0$, the boundedness and the Lipschitz property
give
\[
  \frac{|g(x)-g(y)|}{r^\alpha}
  \le\min\{2Ar^{-\alpha},Lr^{1-\alpha}\}
  \le(2A)^{1-\alpha}L^\alpha.
\]
Taking the supremum over $x\ne y$ proves
\eqref{eq:appC-holder-interpolation}.
\end{proof}

\begin{lemma}[Periodic Schauder estimate]
\label{lem:periodic-schauder}
For $\alpha\in(0,1)$, let
$h\in C^{0,\alpha}(\T^d)$ have zero mean.  Then
\begin{equation}
  \|K_1*h\|_{C^{2,\alpha}}
  \le C_{d,\alpha}\|h\|_{C^{0,\alpha}}.
  \label{eq:appC-schauder}
\end{equation}
In particular,
$\|\nabla^2K_1*h\|_{L^\infty}
 \le C_{d,\alpha}\|h\|_{C^{0,\alpha}}$.
\end{lemma}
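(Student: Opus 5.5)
The plan is to prove the estimate in Fourier variables. Put $u:=K_1*h$. Because $h$ has zero mean, $u$ is the unique mean-zero solution of $-\Delta u=h$ on $\T^d$, with $\widehat u(m)=\widehat h(m)/(4\pi^2|m|^2)$ for $m\ne0$. The argument splits into a lower-order bound and a second-derivative bound.

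\emph{Lower-order part.} Since $\nabla K_1\in L^1(\T^d)$ and $K_1\in L^1(\T^d)$, Young's inequality gives
\[
  \|u\|_{L^\infty}+\|\nabla u\|_{L^\infty}\le C_d\|h\|_{L^\infty}.
\]
The H\"older seminorms of $u$ and $\nabla u$ are then controlled by these sup norms together with $\|\nabla^2u\|_{L^\infty}$, using Lemma~\ref{lem:appC-holder-interpolation}. So everything reduces to proving
\[
  [\nabla^2u]_{C^{0,\alpha}}+\|\nabla^2u\|_{L^\infty}\le C\|h\|_{C^{0,\alpha}}.
\]

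\emph{Second derivatives.} Here $\partial_{ij}u=R_iR_jh$ is a periodic Riesz transform of $h$, a Calder\'on--Zygmund operator with kernel $\partial_{ij}K_1$. Near the origin this kernel equals the Euclidean kernel $\partial_{ij}\Gamma(z)$, with $\Gamma$ the Newtonian potential; the difference $K_1-\Gamma$ solves $-\Delta(K_1-\Gamma)=-1$ near the origin, so it is smooth there. I would fix a cutoff $\chi$ supported in $|z|<1/4$ and write
\[
  \partial_{ij}K_1=\chi\,\partial_{ij}\Gamma+S,
\]
where $S$ is smooth away from the origin and bounded near it, hence in $L^\infty$. Then, in the principal-value sense,
\[
  \partial_{ij}u=-\tfrac{\delta_{ij}}{d}h+\mathrm{p.v.}(\chi\,\partial_{ij}\Gamma)*h+S*h .
\]
The term $S*h$ is bounded by $\|S\|_{L^1}\|h\|_{L^\infty}$, and its H\"older seminorm is bounded because translations of $h$ are controlled by $[h]_{C^{0,\alpha}}$. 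The localized principal-value term is a Euclidean singular integral with a truncated kernel, so it is handled by the classical H\"older estimate for Calder\'on--Zygmund operators (Gilbarg--Trudinger, Lemma~4.4 and the proof of Theorem~4.6). The cancellation $\int_{r<|z|<R}\partial_{ij}\Gamma\,dz=0$ over annuli gives
\[
  \bigl|\mathrm{p.v.}(\chi\,\partial_{ij}\Gamma)*h(x)\bigr|\le C[h]_{C^{0,\alpha}}+C\|h\|_{L^\infty}.
\]
The H\"older seminorm comes from the standard near/far splitting at scale $\delta=|x-y|$: the near part is bounded by $\int_{|z|<2\delta}|z|^{\alpha-d}$, and the far part uses the gradient bound $|\nabla\partial_{ij}\Gamma|\lesssim|z|^{-d-1}$.

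The main obstacle is the periodic localization. One must check that the cutoff introduces only smooth, hence harmless, error kernels, and that the principal-value cancellation survives the truncation by $\chi$. Both follow because $\chi$ is radial, or can be chosen so, which preserves the vanishing of spherical averages of $\partial_{ij}\Gamma$. An alternative is to cite periodic Schauder theory directly via Littlewood--Paley: $C^{0,\alpha}=B^\alpha_{\infty,\infty}$ on $\T^d$, and the multiplier $m_im_j/|m|^2$ is bounded on each dyadic block. I would mention this as a second route.

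Combining the two parts, then using $\|\nabla^2K_1*h\|_{L^\infty}\le\|u\|_{C^{2,\alpha}}$, gives \eqref{eq:appC-schauder} and the stated consequence.
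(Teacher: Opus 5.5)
Your proof is correct, but it proves a step that the paper takes from the literature. The paper's argument is two lines:
\begin{itemize}
\item it identifies $\varphi=K_1*h$ as the mean-zero solution of $-\Delta\varphi=h$;
\item it quotes the periodic Schauder estimate $\|\varphi\|_{C^{2,\alpha}}\le C_{d,\alpha}(\|h\|_{C^{0,\alpha}}+\|\varphi\|_{L^\infty})$ from \cite[Lemma~2.21]{BoufadeneVialard2025};
\item it removes the lower-order term with $\|\varphi\|_{L^\infty}\le\|K_1\|_{L^1}\|h\|_{L^\infty}$.
\end{itemize}

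You use the same Young step, and extend it to $\nabla\varphi$ via $\nabla K_1\in L^1$, with Lemma~\ref{lem:appC-holder-interpolation} covering the intermediate H\"older seminorms. The difference is that you prove the top-order estimate yourself. You split $\partial_{ij}K_1=-\frac{\delta_{ij}}{d}\delta_0+\mathrm{p.v.}(\chi\,\partial_{ij}\Gamma)+S$, with $S$ smooth on $\T^d$. This leaves a compactly supported Euclidean Calder\'on--Zygmund kernel as the only singular piece, to which the classical H\"older estimate applies.

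The paper's route is shorter and leaves all kernel analysis to the reference. Yours is self-contained and shows directly why the constant depends only on $d$ and $\alpha$. It also matches the near/far splitting the paper uses in Proposition~\ref{prop:appC-coulomb-scale-estimates}.

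Choosing $\chi$ radial is exactly what keeps the principal-value cancellation. Your Littlewood--Paley alternative also works, provided the block multipliers $\psi(2^{-j}m)\,m_im_j/|m|^2$ have kernels with $L^1$ norms bounded uniformly in $j$, which holds by scaling.

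One piece your sketch leaves implicit is in the far-field part of the H\"older difference. Besides the bound $|\nabla(\chi\,\partial_{ij}\Gamma)(z)|\lesssim|z|^{-d-1}$, the term multiplying $h(x)-h(y)$ needs the truncated kernel integral over the complement of a ball not centered at the origin to be bounded. This follows from the same spherical cancellation, and the Gilbarg--Trudinger lemma you cite contains it.
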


\begin{proof}
Set $\varphi:=K_1*h$.  Since $h$ has zero mean and
$(-\Delta)K_1=\delta_0-1$, the function $\varphi$ is the
mean-zero solution of
\[
  -\Delta\varphi=h
  \quad\text{on }\T^d.
\]
The periodic Schauder estimate, for example
\cite[Lemma~2.21]{BoufadeneVialard2025}, bounds its full H\"older
norm by
\[
  \|\varphi\|_{C^{2,\alpha}}
  \le C_{d,\alpha}
      \bigl(\|h\|_{C^{0,\alpha}}
            +\|\varphi\|_{L^\infty}\bigr).
\]
Finally, $K_1\in L^1(\T^d)$ and Young's inequality give
$\|\varphi\|_{L^\infty}
 \le\|K_1\|_{L^1}\|h\|_{L^\infty}$.
Substituting this bound proves \eqref{eq:appC-schauder}.
\end{proof}
\subsection{Coulomb-field estimates}
\label{subsec:appC-coulomb-field-estimates}

The low-temperature argument needs two estimates for the Coulomb
potential generated by a mean-zero density error $f$.  The first uses
its $\dot H^{-1}$ norm to control the Coulomb field $\nabla K_1*f$.
The second controls the Hessian $\nabla^2K_1*f$ when one derivative of
$f$ is available.  We prove both estimates by splitting the kernel at
a radius $r$, then choose $r$ in the proofs of
Lemmas~\ref{lem:potential-estimate} and~\ref{lem:yudovich}.

\begin{proposition}[Coulomb estimates at a fixed scale]
\label{prop:appC-coulomb-scale-estimates}
Let $0<r\le 1/8$, and let $f$ have zero mean on $\T^d$.
If $f\in L^\infty(\T^d)\cap\dot H^{-1}(\T^d)$, then
\begin{equation}
  \|\nabla K_1*f\|_{L^\infty}
  \le C\bigl(r\|f\|_{L^\infty}
              +r^{-d/2}\|f\|_{\dot H^{-1}}\bigr).
  \label{eq:appC-field-fixed-scale}
\end{equation}
If, in addition, $f\in W^{1,\infty}(\T^d)$, then
\begin{equation}
  \|\nabla^2K_1*f\|_{L^\infty}
  \le C\left[
     \bigl(1+\log(1/r)\bigr)\|f\|_{L^\infty}
     +r\|\nabla f\|_{L^\infty}\right].
  \label{eq:appC-hessian-fixed-scale}
\end{equation}
Here $C$ depends only on $d$.
\end{proposition}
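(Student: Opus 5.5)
We split the kernel at scale $r$ with a smooth radial cutoff and bound the two pieces separately. Let $\chi$ be smooth, equal to $1$ on the ball of radius $1$ and supported in the ball of radius $2$, and set $\chi_r(z):=\chi(z/r)$, viewed as a function on $\T^d$ (legitimate since $2r\le1/4$). Write
\[
  \nabla K_1=\chi_r\nabla K_1+(1-\chi_r)\nabla K_1 .
\]
We use the local structure of the periodic Coulomb kernel: near the origin $K_1=\Phi_d+R$, where $\Phi_d$ is the Newtonian kernel and $R$ is smooth. This gives $|\nabla K_1(z)|\le C|z|^{1-d}$ and $|\nabla^2K_1(z)|\le C|z|^{-d}$ for $0<|z|\le1/2$.

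\emph{Field estimate \eqref{eq:appC-field-fixed-scale}.}
1. The near part satisfies
\[
  \|(\chi_r\nabla K_1)*f\|_{L^\infty}\le\|f\|_{L^\infty}\int_{|z|<2r}C|z|^{1-d}\,dz\le Cr\|f\|_{L^\infty}.
\]
2. For the far part, write $(1-\chi_r)\nabla K_1=\nabla\bigl((1-\chi_r)K_1\bigr)+(\nabla\chi_r)K_1=:\nabla k_r$ plus a correction. It is cleaner to use $f=-\Delta(K_1*f)$ and duality: for each fixed $x$,
\[
  (k*f)(x)=\int k(x-y)f(y)\,dy\le\|k(x-\cdot)\|_{\dot H^{1}}\,\|f\|_{\dot H^{-1}},
\]
after subtracting the mean of $k$, which is allowed because $f$ has mean zero. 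So it suffices to bound $\|\nabla[(1-\chi_r)\nabla K_1]\|_{L^2}$.
3. Its integrand is bounded by $C|z|^{-d}$ on $|z|\ge r$, plus the cutoff derivative $r^{-1}|z|^{1-d}\sim r^{-d}$ on the annulus $r\le|z|\le2r$. Squaring and integrating gives
\[
  \|\nabla[(1-\chi_r)\nabla K_1]\|_{L^2}^2\le C\int_{r}^{1}s^{-2d}s^{d-1}\,ds+C\le Cr^{-d}.
\]
This yields the factor $r^{-d/2}$. Smoothness of $R$ makes the remainder contribute $O(1)\le Cr^{-d/2}$.

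\emph{Hessian estimate \eqref{eq:appC-hessian-fixed-scale}.} Here the convolution is understood as a principal value, or equivalently via $-\Delta(K_1*f)=f$.
1. For the near part, write $\nabla^2K_1*f=\nabla K_1*\nabla f$ against the cutoff. This means
\[
  \bigl[(\chi_r\nabla K_1)*\nabla f\bigr]+\bigl[(\nabla\chi_r\otimes\nabla K_1)*f\bigr].
\]
The first term is bounded by $Cr\|\nabla f\|_{L^\infty}$ using $|\nabla K_1|\le C|z|^{1-d}$ integrated over $|z|<2r$. The second is bounded by
\[
  C\|f\|_{L^\infty}\int_{r<|z|<2r}r^{-1}|z|^{1-d}\,dz\le C\|f\|_{L^\infty}.
\]
2. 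For the far part, $(1-\chi_r)\nabla^2K_1$ is integrable with $L^1$ norm at most
\[
  C\int_r^1 s^{-d}s^{d-1}\,ds+C=C\bigl(1+\log(1/r)\bigr),
\]
so Young's inequality gives $C(1+\log(1/r))\|f\|_{L^\infty}$.

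The main point to check is the integration by parts in step 1 of the Hessian estimate. To justify it, first prove the identity for smooth $f$, integrating by parts on the region $|z|>\epsilon$ and letting $\epsilon\to0$. The boundary sphere term is $O(\epsilon\|f\|_{L^\infty})$ after using the cancellation of $\nabla K_1$ over spheres. Then pass to Lipschitz $f$ by mollification, since both sides are continuous under uniform convergence with bounded gradients. All constants depend only on $d$.
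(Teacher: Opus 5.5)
\textbf{Comparison of routes.} Your field estimate follows the paper's argument. The near piece costs $Cr\|f\|_{L^\infty}$. The far piece is handled by $\dot H^{-1}$--$\dot H^{1}$ duality after subtracting the mean of each translate of $(1-\chi_r)\nabla K_1$, with $\|\nabla[(1-\chi_r)\nabla K_1]\|_{L^2}^2\le Cr^{-d}$.

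For the Hessian you take a slightly different route. The paper writes
$\nabla^2K_1*f(x)=\int\chi_r\nabla^2K_1(z)[f(x-z)-f(x)]\,dz+f(x)\langle\nabla^2K_1,\chi_r\rangle+\int(1-\chi_r)\nabla^2K_1(z)f(x-z)\,dz$.
It uses the cancellation in $f(x-z)-f(x)$ for the first term and integrates the distributional pairing by parts onto $\nabla\chi_r$; this is done for smooth $f$ and then extended by mollification. You instead split $\nabla K_1$ before taking the last derivative, which gives the exact distributional identity
\[
  \nabla^2(K_1*f)
  =(\chi_r\nabla K_1)*\nabla f
   -(\nabla\chi_r\otimes\nabla K_1)*f
   +\bigl((1-\chi_r)\nabla^2K_1\bigr)*f .
\]
Your plus sign on the cutoff term should be a minus, but this is immaterial for the bound. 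The identity holds directly for $f\in W^{1,\infty}(\T^d)$, because $\chi_r\nabla K_1\in L^1$ and $(1-\chi_r)\nabla K_1$ is smooth. The three kernels have $L^1$ norms $Cr$, $C$, and $C(1+\log(1/r))$, so Young's inequality finishes the proof with no principal value and no mollification. The cutoff integral $\int\nabla K_1\otimes\nabla\chi_r$ is the same quantity the paper bounds, so your route is a clean alternative.

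\textbf{Error in the justification paragraph.} The paragraph justifying the integration by parts is wrong as written. Integrating by parts on $\{|z|>\epsilon\}$ leaves the boundary term
$-\int_{|z|=\epsilon}\partial_jK_1(z)f(x-z)\,z_i|z|^{-1}\,dS$.
The normal factor $z_i/|z|$ destroys the odd cancellation of $\nabla K_1$ over spheres. Since $\nabla K_1(z)=-z/(|S^{d-1}||z|^d)+O(1)$, this term converges to $\frac{\delta_{ij}}{d}f(x)$, which is the $\delta_0$ part of the distributional Hessian. It would be $O(\epsilon\|\nabla f\|_{L^\infty})$ only for the paper's subtracted integrand $f(x-z)-f(x)$.

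Consequently, the principal-value convolution and $\nabla^2(K_1*f)$ (what your appeal to $-\Delta(K_1*f)=f$ refers to) are not equivalent. They differ by $\frac1d f\,I$, as the traces show: $\mathrm{p.v.}\,\Delta K_1*f=\int f=0$, while $\Delta(K_1*f)=-f$.

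This does not damage the estimate. The discrepancy is bounded by $\|f\|_{L^\infty}$, and for the distributional Hessian (the object entering $\nabla u_t^\tau$) your decomposition is exact with no limiting argument. Simply drop the $\epsilon$-paragraph and state the distributional identity, or keep it with the correct limit.
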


\begin{figure}[t]
\centering
\begin{tikzpicture}[scale=0.95]
  \fill[gray!8] (-3.7,-2) rectangle (3.7,2);
  \fill[gray!18] (0,0) circle (1.5);
  \fill[gray!38] (0,0) circle (0.78);
  \draw (-3.7,-2) rectangle (3.7,2);
  \draw (0,0) circle (0.78);
  \draw (0,0) circle (1.5);
  \fill (0,0) circle (1.5pt);
  \node[above left] at (0,0) {$0$};
  \node at (0,-0.43) {$\chi_r=1$};
  \node at (2.45,0) {$1-\chi_r$};
  \node[above right] at (0.54,0.54) {$r$};
  \node[above right] at (1.06,1.06) {$2r$};
\end{tikzpicture}
\caption{The near and far splitting around the Coulomb singularity at
$0$, shown in a local chart of $\T^d$.  The cutoff $\chi_r$ equals one
inside radius $r$, varies between radii $r$ and $2r$, and vanishes
outside radius $2r$.  The near part is controlled by the local kernel
integral, while the far part is controlled by $\dot H^{-1}$ duality for
the field and by a logarithmic integral for the Hessian.}
\label{fig:appC-near-far-split}
\end{figure}

\begin{proof}
We first prove \eqref{eq:appC-field-fixed-scale}.  Choose a smooth
cutoff $\chi_r$ that equals one on the ball of radius $r$, vanishes
outside the ball of radius $2r$, and satisfies
$0\le\chi_r\le1$ and $\|\nabla\chi_r\|_{L^\infty}\le C/r$.
Figure~\ref{fig:appC-near-far-split} shows the regions separated by
this cutoff.
The Coulomb kernel satisfies
\[
  |\nabla K_1(z)|\le C|z|^{1-d},\qquad
  |\nabla^2K_1(z)|\le C|z|^{-d},
  \qquad 0<|z|\le\tfrac14.
\]
Thus the part of $\nabla K_1*f$ supported within distance $2r$
of the origin obeys
\[
  \|(\chi_r\nabla K_1)*f\|_{L^\infty}
  \le C\|f\|_{L^\infty}
        \int_0^{2r}s^{1-d}s^{d-1}\,ds
  \le Cr\|f\|_{L^\infty}.
\]

For the remaining part, set $G_r=(1-\chi_r)\nabla K_1$.
Since $f$ has zero mean, subtracting the spatial mean of each
component of $G_r$ does not change $G_r*f$.  Duality between
$\dot H^{-1}$ and $\dot H^1$, applied to every translate of $G_r$,
therefore gives
\[
  \|G_r*f\|_{L^\infty}
  \le \|f\|_{\dot H^{-1}}\|\nabla G_r\|_{L^2}.
\]
The derivative of $G_r$ contains both the Hessian of $K_1$ outside
the cutoff and $\nabla\chi_r\otimes\nabla K_1$ in the annulus
$r\le |z|\le2r$.  The preceding kernel bounds give
\[
\begin{aligned}
  \|\nabla G_r\|_{L^2}^2
  &\le C\left(1+\int_r^{1/4}s^{-2d}s^{d-1}\,ds
       +r^{-2}\int_r^{2r}s^{2(1-d)}s^{d-1}\,ds\right)\\
  &\le Cr^{-d}.
\end{aligned}
\]
The term $1$ accounts for the smooth part of the kernel away from
the origin.  Combining the near and far bounds proves
\eqref{eq:appC-field-fixed-scale}.

We next prove \eqref{eq:appC-hessian-fixed-scale}.  For a smooth $f$,
the distributional Hessian of $K_1$ can be decomposed as
\[
\begin{aligned}
  \nabla^2K_1*f(x)
  ={}&\int_{\T^d}\chi_r(z)\nabla^2K_1(z)
                      [f(x-z)-f(x)]\,dz\\
     &+f(x)\langle\nabla^2K_1,\chi_r\rangle
       +\int_{\T^d}(1-\chi_r(z))\nabla^2K_1(z)f(x-z)\,dz.
\end{aligned}
\]
The first integral is absolutely convergent because the difference
of the values of $f$ cancels one power of the singularity.  Its
absolute value is at most
\[
  C\|\nabla f\|_{L^\infty}
    \int_0^{2r}s^{-d}s\,s^{d-1}\,ds
  \le Cr\|\nabla f\|_{L^\infty}.
\]
The distributional term is uniformly bounded.  Indeed, integration
by parts and the support of $\nabla\chi_r$ give
\[
  |\langle\nabla^2K_1,\chi_r\rangle|
  =\left|\int_{\T^d}\nabla K_1\otimes\nabla\chi_r\,dz\right|
  \le Cr^{-1}\int_r^{2r}s^{1-d}s^{d-1}\,ds
  \le C.
\]
Finally, integration of the Hessian outside the cutoff gives
\[
  \int_{\T^d}(1-\chi_r)|\nabla^2K_1|\,dz
  \le C\left(1+\int_r^{1/4}s^{-d}s^{d-1}\,ds\right)
  \le C\bigl(1+\log(1/r)\bigr).
\]
These three bounds prove \eqref{eq:appC-hessian-fixed-scale} for
smooth $f$.  Periodic mollification gives the same bound for
$f\in W^{1,\infty}(\T^d)$.
\end{proof}

\begin{proof}[Proof of Lemma~\ref{lem:potential-estimate}]
The fixed-scale estimate \eqref{eq:appC-field-fixed-scale} uses the
near and far splitting of
\cite[Lemma~A.4(i)]{ChizatColomboColomboFernandezReal2026}.
We choose its radius so that the near term, which increases with $r$,
and the far term, which decreases with $r$, are comparable.  The
claim is immediate for $f=0$.  Otherwise, set
\[
  A:=\|f\|_{L^\infty},\qquad
  B:=\|f\|_{\dot H^{-1}},\qquad
  r:=\min\left\{\frac18,\left(\frac BA\right)^{2/(d+2)}\right\}.
\]
When $r=(B/A)^{2/(d+2)}$, both terms in
\eqref{eq:appC-field-fixed-scale} equal
$A^{d/(d+2)}B^{2/(d+2)}$.  Otherwise $B\ge 8^{-(d+2)/2}A$, so
\eqref{eq:appC-field-fixed-scale} is bounded by $CB$.
Together these cases give \eqref{eq:potential-estimate}.
\end{proof}

\begin{proof}[Proof of Lemma~\ref{lem:yudovich}]
The fixed-scale Hessian estimate \eqref{eq:appC-hessian-fixed-scale}
is a direct version of the logarithmic Newtonian estimate in
\cite[Lemma~A.5(i)]{ChizatColomboColomboFernandezReal2026}.
If $f=0$, both sides of \eqref{eq:yudovich-newtonian-estimate}
vanish.  For a nonzero mean-zero function $f$, let
\[
  A:=\|f\|_{L^\infty}>0,\qquad
  L:=\|\nabla f\|_{L^\infty}>0,\qquad
  r:=\min\left\{\frac18,\frac AL\right\}.
\]
This choice ensures $rL\le A$.  If $r=A/L$, then
$\log(1/r)=\log(L/A)$.  If $r=1/8$, then $L/A\le8$ and the
logarithm is bounded by $\log8$.  Substitution into
\eqref{eq:appC-hessian-fixed-scale} gives
\[
  \|\nabla^2K_1*f\|_{L^\infty}
  \le CA\left[1+\log_+\!\left(\frac LA\right)\right],
\]
as claimed in \eqref{eq:yudovich-newtonian-estimate}.
\end{proof}
\subsection{The error equation and Hamilton's maximum lemma}
\label{subsec:appC-centered-zero-order}

To estimate the relaxation toward equilibrium, we first write an equation
for the density difference.  We then give a maximum lemma that turns this
equation and its spatial derivatives into differential inequalities for
their supremum norms.  These two ingredients are used in the estimates of
Subsections~\ref{subsec:appC-first-order} and
\ref{subsec:appC-second-order}.

Fix $\tau>0$, and let $\rho_\infty^\tau$ be the stationary density
from Proposition~\ref{prop:thermal-equilibrium-family}.  On each finite
time interval $[0,T]$, the global regularity proved in
Subsection~\ref{subsec:appD-global-regularity} gives
\[
  \rho^\tau\in C([0,T];C^4(\T^d))
  \cap C^1([0,T];C^2(\T^d)).
\]
We write $\widetilde\rho_t^\tau:=\rho_t^\tau-\rho_\infty^\tau$
and, for $r=0,1,2$, set
\[
  M_r^\tau(t)
  :=\max_{|\beta|=r}
       \|\partial^\beta\widetilde\rho_t^\tau\|_{L^\infty}.
\]
Thus $M_r^\tau$ is the largest supremum norm among the derivatives
of order $r$, and
$\|\nabla^r\widetilde\rho_t^\tau\|_{L^\infty}
\le C_{d,r}M_r^\tau(t)$.

\begin{lemma}[Equation for the density difference]
\label{lem:appC-centered-error-equation}
For every $\tau>0$, the function
$\widetilde\rho^\tau=\rho^\tau-\rho_\infty^\tau$ satisfies
\begin{equation}
\begin{aligned}
  \partial_t\widetilde\rho^\tau
  &={}
  \tau\Delta\widetilde\rho^\tau
  +\nabla\cdot\left(
     \rho^\tau\nabla K_1*\widetilde\rho^\tau
     -\tau\widetilde\rho^\tau\nabla\log\rho_\infty^\tau
  \right)\\
  &={}
  \tau\Delta\widetilde\rho^\tau
  +\nabla\rho^\tau\cdot(\nabla K_1*\widetilde\rho^\tau)
  -\tau\nabla\log\rho_\infty^\tau\cdot
        \nabla\widetilde\rho^\tau
  -\rho^\tau\widetilde\rho^\tau
  -(\rho_\infty^\tau-\pi)\widetilde\rho^\tau.
\end{aligned}
\label{eq:appC-error-equation}
\end{equation}
In particular, the term $-\rho^\tau\widetilde\rho^\tau$ has a
coefficient bounded below by $c_*$, as given by
Proposition~\ref{prop:maximum-principle}.
\end{lemma}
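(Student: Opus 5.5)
We will derive \eqref{eq:appC-error-equation} by writing both the evolving and the stationary equation in a common divergence form and subtracting. First, the mean-field equation is $\partial_t\rho^\tau=\nabla\cdot(\rho^\tau u^\tau)$ with $u^\tau=\tau\nabla\log\rho^\tau+\nabla K_1*(\rho^\tau-\pi)$, so $\rho^\tau u^\tau=\tau\nabla\rho^\tau+\rho^\tau\nabla K_1*(\rho^\tau-\pi)$. Next we recenter the Coulomb drift. Taking the gradient of the equilibrium identity \eqref{eq:stationary-euler-lagrange} gives $\nabla K_1*(\rho_\infty^\tau-\pi)=-\tau\nabla\log\rho_\infty^\tau$. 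Since $\rho^\tau-\pi=\widetilde\rho^\tau+(\rho_\infty^\tau-\pi)$, we obtain
\[
  \nabla K_1*(\rho^\tau-\pi)=\nabla K_1*\widetilde\rho^\tau-\tau\nabla\log\rho_\infty^\tau .
\]
Substituting this, the flux becomes
\[
  \tau\nabla\rho^\tau+\rho^\tau\nabla K_1*\widetilde\rho^\tau-\tau\rho^\tau\nabla\log\rho_\infty^\tau .
\]
For the stationary density, the flux $\tau\nabla\rho_\infty^\tau-\tau\rho_\infty^\tau\nabla\log\rho_\infty^\tau$ vanishes identically. We may therefore subtract it. Because $\partial_t\rho_\infty^\tau=0$, the flux for $\widetilde\rho^\tau$ is
\[
  \tau\nabla\widetilde\rho^\tau+\rho^\tau\nabla K_1*\widetilde\rho^\tau-\tau\widetilde\rho^\tau\nabla\log\rho_\infty^\tau .
\]
This is the first line of \eqref{eq:appC-error-equation}.

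For the second line we expand the divergences by the product rule. The only nontrivial inputs are the two Laplacian identities. The first is $\Delta K_1*\widetilde\rho^\tau=-\widetilde\rho^\tau$, valid because $\widetilde\rho^\tau$ has zero mean and $-\Delta K_1=\delta_0-1$. The second is $\tau\Delta\log\rho_\infty^\tau=\rho_\infty^\tau-\pi$, which is \eqref{eq:stationary-poisson-boltzmann}. Using these, the term $\nabla\cdot(\rho^\tau\nabla K_1*\widetilde\rho^\tau)$ equals $\nabla\rho^\tau\cdot\nabla K_1*\widetilde\rho^\tau-\rho^\tau\widetilde\rho^\tau$. The term $-\tau\nabla\cdot(\widetilde\rho^\tau\nabla\log\rho_\infty^\tau)$ equals $-\tau\nabla\log\rho_\infty^\tau\cdot\nabla\widetilde\rho^\tau-(\rho_\infty^\tau-\pi)\widetilde\rho^\tau$. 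Together these give the second line.

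Finally, the damping coefficient is $\rho^\tau$, and it is bounded below by $c_*$ by \eqref{eq:maximum-principle}.

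The only point requiring care is regularity: every expression must make classical sense. The stated regularity $\rho^\tau\in C([0,T];C^4)\cap C^1([0,T];C^2)$ from Subsection~\ref{subsec:appD-global-regularity}, together with $\rho_\infty^\tau\in C^5$ from Proposition~\ref{prop:stationary-all-orders}, ensures this. The Laplacian identity for $K_1*\widetilde\rho^\tau$ holds classically because $\widetilde\rho^\tau$ is Hölder continuous, via Lemma~\ref{lem:periodic-schauder}.

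As a consistency check, the nondivergence form should agree with \eqref{eq:equilibrium-error-equation}. The two differ only in how the transport term is grouped: here $\nabla\rho^\tau=\nabla\widetilde\rho^\tau+\nabla\rho_\infty^\tau$, so the transport coefficient is $b_t^\tau$ and the extra term is $\nabla\rho_\infty^\tau\cdot\nabla K_1*\widetilde\rho^\tau$, exactly as in \eqref{eq:equilibrium-error-equation}.
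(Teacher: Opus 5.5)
Your proposal is correct and follows essentially the same route as the paper: recenter the Coulomb drift with $\nabla K_1*(\rho_\infty^\tau-\pi)=-\tau\nabla\log\rho_\infty^\tau$, subtract the stationary equation to get the divergence form, and expand it using $-\Delta K_1*\widetilde\rho^\tau=\widetilde\rho^\tau$ and $\rho_\infty^\tau-\pi=\tau\Delta\log\rho_\infty^\tau$. The only difference is cosmetic: you subtract the full stationary flux, which vanishes identically, whereas the paper subtracts the diffusion and Coulomb parts of the stationary equation separately.
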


\begin{proof}
We subtract the stationary form of the mean-field equation
\eqref{eq:intro-mean-field-pde} from the equation for $\rho^\tau$.
The stationary Euler--Lagrange equation
\eqref{eq:stationary-euler-lagrange} gives
\[
  \nabla K_1*(\rho_\infty^\tau-\pi)
  =-\tau\nabla\log\rho_\infty^\tau.
\]
Since $\rho^\tau=\rho_\infty^\tau+\widetilde\rho^\tau$, the
difference of the two fluxes is therefore
\[
\begin{aligned}
  \rho^\tau\nabla K_1*(\rho^\tau-\pi)
   -\rho_\infty^\tau\nabla K_1*(\rho_\infty^\tau-\pi)
  &=\rho^\tau\nabla K_1*\widetilde\rho^\tau
    -\tau\widetilde\rho^\tau\nabla\log\rho_\infty^\tau.
\end{aligned}
\]
Taking its divergence yields the first line of
\eqref{eq:appC-error-equation}.  To expand it, we use the fact that
$\int_{\T^d}\widetilde\rho^\tau=0$ and
$-\Delta K_1=\delta_0-1$.  The resulting Coulomb contribution is
\[
  \nabla\cdot(\rho^\tau\nabla K_1*\widetilde\rho^\tau)
  =\nabla\rho^\tau\cdot(\nabla K_1*\widetilde\rho^\tau)
   -\rho^\tau\widetilde\rho^\tau.
\]
Finally, $\rho_\infty^\tau-\pi
=\tau\Delta\log\rho_\infty^\tau$ by
\eqref{eq:stationary-log-equation}.  Expanding the remaining divergence
gives the second line of \eqref{eq:appC-error-equation}.
The form $-(\rho_\infty^\tau-\pi)\widetilde\rho^\tau$ is useful
at order zero, while
$-\tau(\Delta\log\rho_\infty^\tau)\widetilde\rho^\tau$
shows its explicit factor of $\tau$ after differentiation.
\end{proof}

To apply the spatial maximum principle to
\eqref{eq:appC-error-equation}, we also need to differentiate
$M_r^\tau$ in time.  The following form of Hamilton's maximum
lemma provides this step.  It also applies when the maximization
runs over a finite family of spatial derivatives
\cite[Lemma~3.5]{Hamilton1986} and
\cite[Lemma~9]{ChowLu2004}.

\begin{lemma}[Hamilton's maximum lemma]
\label{lem:hamilton}
Let $S$ be a compact metric space, and let $u$ and
$\partial_tu$ be continuous on $[0,T]\times S$.  Define
\[
  M(t):=\max_{x\in S}u(t,x),
  \qquad
  A(t):=\{x\in S:u(t,x)=M(t)\}.
\]
Then $M$ is Lipschitz and, for every $t<T$, its upper right
derivative satisfies
\begin{equation}
  D^+M(t)
  :=\limsup_{h\downarrow0}\frac{M(t+h)-M(t)}{h}
  =\max_{x\in A(t)}\partial_tu(t,x).
  \label{eq:appC-hamilton-dini}
\end{equation}
At every differentiability time $t\in(0,T)$ of $M$,
\begin{equation}
  M'(t)=\partial_tu(t,x)
  \qquad\text{for every }x\in A(t).
  \label{eq:appC-hamilton-ae}
\end{equation}
The assertions also hold for the maximum of finitely many such
functions by adjoining their index set to $S$.
\end{lemma}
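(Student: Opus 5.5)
The plan is to prove the Lipschitz property first and then the Dini-derivative formula, using only uniform continuity of $u$ and $\partial_tu$ on the compact set $[0,T]\times S$. For the Lipschitz bound, set $L:=\max_{[0,T]\times S}|\partial_tu|$. For each fixed $x$, the map $t\mapsto u(t,x)$ is $L$-Lipschitz by the fundamental theorem of calculus. A maximum of a family of $L$-Lipschitz functions is $L$-Lipschitz, so $M$ is Lipschitz, and hence differentiable almost everywhere by Rademacher's theorem.

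For the lower bound in \eqref{eq:appC-hamilton-dini}, fix $t<T$ and $x\in A(t)$. Since $M(t+h)\ge u(t+h,x)$, we get
\[
  \frac{M(t+h)-M(t)}{h}\ge\frac{u(t+h,x)-u(t,x)}{h}\to\partial_tu(t,x).
\]
Hence even the $\liminf$ is at least $\max_{x\in A(t)}\partial_tu(t,x)$; this maximum exists because $A(t)$ is a closed subset of a compact space.

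For the upper bound, which is the main step, take $h_n\downarrow0$ realizing the $\limsup$, and choose maximizers $x_n\in A(t+h_n)$. By compactness, pass to a subsequence with $x_n\to\bar x$. Continuity of $u$ and of $M$ gives $u(t,\bar x)=\lim u(t+h_n,x_n)=\lim M(t+h_n)=M(t)$, so $\bar x\in A(t)$. Then $M(t)\ge u(t,x_n)$, and the mean value theorem gives some $s_n\in(t,t+h_n)$ with
\[
  \frac{M(t+h_n)-M(t)}{h_n}\le\frac{u(t+h_n,x_n)-u(t,x_n)}{h_n}=\partial_tu(s_n,x_n).
\]
The right side tends to $\partial_tu(t,\bar x)\le\max_{A(t)}\partial_tu$ by joint continuity of $\partial_tu$. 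This proves \eqref{eq:appC-hamilton-dini}. The only subtle points are letting the maximizer move with $h$ and using joint continuity to pass to the limit.

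For \eqref{eq:appC-hamilton-ae}, at an interior differentiability time, the same argument applied with $h<0$ gives the left derivative. For any $x\in A(t)$, we have $M(t+h)\ge u(t+h,x)$, and dividing by a negative $h$ gives $M'(t)\le\partial_tu(t,x)$. Combined with $M'(t)\ge\partial_tu(t,x)$ from the right-hand quotient, this yields equality for every $x\in A(t)$. Finally, for finitely many functions $u_1,\dots,u_m$, apply the result on the compact space $S\times\{1,\dots,m\}$ with $u(t,(x,k)):=u_k(t,x)$, which is continuous since the index set is discrete.
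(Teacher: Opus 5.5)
Your proof is correct and follows essentially the same route as the paper. You obtain the Lipschitz bound by comparing $u$ at a common point, the lower Dini bound by freezing a maximizer, the upper bound via maximizers $x_n\in A(t+h_n)$ converging by compactness into $A(t)$, and the identity at differentiability times from increments of both signs; the only cosmetic difference is that you use the mean value theorem where the paper writes the quotient as $\int_0^1\partial_tu(t+\theta h_n,x_n)\,d\theta$.
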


\begin{proof}
We first control the time variation of the maximum.  For
$0\le s<t\le T$, comparison with the values at the same point gives
\[
  M(t)-M(s)
  \le\max_{x\in S}\bigl(u(t,x)-u(s,x)\bigr)
  \le\|\partial_tu\|_{L^\infty([0,T]\times S)}(t-s).
\]
Interchanging $s$ and $t$ proves that $M$ is Lipschitz.

We next identify its upper right derivative.  If $x\in A(t)$,
then keeping this point fixed at time $t+h$ gives
\[
  \frac{M(t+h)-M(t)}h
  \ge\frac{u(t+h,x)-u(t,x)}h,
  \qquad h>0.
\]
Passing to the upper limit and maximizing over $A(t)$ gives
the lower bound in \eqref{eq:appC-hamilton-dini}.

For the reverse bound, choose $h_n\downarrow0$ along which the
quotient on the left converges to $D^+M(t)$, and choose
$x_n\in A(t+h_n)$.  Compactness gives a subsequence with
$x_n\to x_*$.  Continuity of $u$ and $M$ implies
$x_*\in A(t)$.  Moreover,
\[
\begin{aligned}
  \frac{M(t+h_n)-M(t)}{h_n}
  &\le\frac{u(t+h_n,x_n)-u(t,x_n)}{h_n}\\
  &=\int_0^1\partial_tu(t+\theta h_n,x_n)\,d\theta
   \longrightarrow\partial_tu(t,x_*).
\end{aligned}
\]
This proves \eqref{eq:appC-hamilton-dini}.  If $M$ is
differentiable at $t\in(0,T)$, the fixed-point comparison above
for positive and negative increments shows that
$M'(t)=\partial_tu(t,x)$ for every $x\in A(t)$, proving
\eqref{eq:appC-hamilton-ae}.
\end{proof}

For later use, apply Lemma~\ref{lem:hamilton} to the functions
$\sigma\partial^\beta\widetilde\rho_t^\tau(x)$, where
$|\beta|=r$, $\sigma\in\{-1,1\}$, and $r=0,1,2$.
At every $t<T$, there are $\beta_t$, $\sigma_t$, and
$x_t\in\T^d$ for which
\[
\begin{gathered}
  M_r^\tau(t)
   =\sigma_t\partial^{\beta_t}\widetilde\rho_t^\tau(x_t),
  \qquad
  D^+M_r^\tau(t)
   =\sigma_t\partial_t\partial^{\beta_t}
      \widetilde\rho_t^\tau(x_t),\\
  \nabla\bigl(\sigma_t\partial^{\beta_t}
      \widetilde\rho_t^\tau\bigr)(x_t)=0,
  \qquad
  \Delta\bigl(\sigma_t\partial^{\beta_t}
      \widetilde\rho_t^\tau\bigr)(x_t)\le0.
\end{gathered}
\]
The last line is the spatial maximum principle on $\T^d$.
Together, these identities allow us to evaluate a differentiated
version of \eqref{eq:appC-error-equation} at a point where its
order-$r$ maximum norm is attained.
\subsection{First-order maximum-principle estimate and Osgood argument}
\label{subsec:appC-first-order}

The zeroth-order estimate \eqref{eq:linfty-relaxation} makes
$\widetilde\rho_t^\tau=\rho_t^\tau-\rho_\infty^\tau$ small in
$L^\infty$.  To obtain the H\"older decay needed for the Coulomb
Hessian, we first need a bound on its gradient that is uniform as
$\tau\downarrow0$.  The following differential inequality is the
main calculation behind Proposition~\ref{prop:uniform-lipschitz}.

\begin{lemma}[First derivative inequality]
\label{lem:appC-first-order-inequality}
There is $C>0$, independent of $0<\tau\le\tau_0$ and $t\ge0$,
such that, with $A(t):=M_0^\tau(t)$ and $B(t):=M_1^\tau(t)$,
\begin{equation}
\begin{aligned}
  D^+B(t)
  &\le -(c_*-C\tau)B(t)+CA(t)B(t)\\
  &\quad+C(1+B(t))A(t)
    \left[1+\log_+\left(\frac{CB(t)}{A(t)}\right)\right]
    +CA(t).
\end{aligned}
\label{eq:appC-first-order-inequality}
\end{equation}
The entire term containing the quotient is set to zero when $A(t)=0$.
When $A(t)>0$, the mean-zero function
$\widetilde\rho_t^\tau$ is nonconstant, so
$B(t)>0$ and the quotient is defined.
\end{lemma}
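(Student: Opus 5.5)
We differentiate the error equation \eqref{eq:appC-error-equation} once in $x_k$ and evaluate it at a point where $M_1^\tau$ is attained. Lemma~\ref{lem:hamilton}, applied to the family $\sigma\partial_k\widetilde\rho_t^\tau$, gives $\beta_t=e_k$, a sign $\sigma_t$ and a point $x_t$. At $x_t$ we have $\nabla(\sigma_t\partial_k\widetilde\rho^\tau)(x_t)=0$ and $\Delta(\sigma_t\partial_k\widetilde\rho^\tau)(x_t)\le0$. Hence $\tau\Delta\partial_k\widetilde\rho^\tau$ contributes a nonpositive amount, and every term of the form (coefficient)$\cdot\nabla\partial_k\widetilde\rho^\tau$ vanishes. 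This removes both transport pieces at top order: the term $\nabla\rho^\tau\cdot(\nabla K_1*\widetilde\rho^\tau)$ when it is written as $\nabla\widetilde\rho^\tau\cdot(\ldots)$ plus $\nabla\rho_\infty^\tau\cdot(\ldots)$, and the term $-\tau\nabla\log\rho_\infty^\tau\cdot\nabla\widetilde\rho^\tau$. The regularity needed to differentiate is supplied by $\rho^\tau\in C([0,T];C^4)\cap C^1([0,T];C^2)$ at fixed $\tau>0$.

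We then list the remaining terms of $\partial_k$ of the right side and bound each.

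(i) The damping term is $-\rho^\tau\partial_k\widetilde\rho^\tau-\partial_k\rho^\tau\,\widetilde\rho^\tau$. Multiplied by $\sigma_t$ at $x_t$, its first part is at most $-c_*B$ by Proposition~\ref{prop:maximum-principle}. For the second part we write $\partial_k\rho^\tau=\partial_k\rho_\infty^\tau+\partial_k\widetilde\rho^\tau$; Proposition~\ref{prop:stationary-all-orders} then bounds it by $CA+CAB$.

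(ii) The bias term is $-\partial_k[(\rho_\infty^\tau-\pi)\widetilde\rho^\tau]$. By \eqref{eq:stat-bias-linear} it is at most $C\tau B+C\tau A$.

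(iii) The term $-\tau\partial_k\nabla\log\rho_\infty^\tau\cdot\nabla\widetilde\rho^\tau$ is at most $C\tau B$, by \eqref{eq:uniform-stationary-regularity}.

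(iv) The Coulomb terms come from $\partial_k[\nabla\rho^\tau\cdot\nabla K_1*\widetilde\rho^\tau]$. After the vanishing top-order piece is discarded, what remains is $\partial_k\nabla\rho_\infty^\tau\cdot\nabla K_1*\widetilde\rho^\tau$, bounded by $CA$ because $\nabla K_1\in L^1$. There is also $\nabla\rho^\tau\cdot\partial_k\nabla K_1*\widetilde\rho^\tau$, bounded by $(C+B)\|\nabla^2K_1*\widetilde\rho^\tau\|_{L^\infty}$. Here we need the product $\nabla\widetilde\rho^\tau\cdot\nabla^2K_1*\widetilde\rho^\tau$ rather than a second derivative of $\widetilde\rho^\tau$, so we expand the product $\nabla\rho^\tau\cdot\nabla K_1*\widetilde\rho^\tau$ before differentiating.

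We bound the Coulomb Hessian by Lemma~\ref{lem:yudovich}, using $\|\nabla\widetilde\rho^\tau\|_{L^\infty}\le C_dB$. This gives $CA[1+\log_+(CB/A)]$ when $A>0$, and $0$ when $A=0$. In the latter case $\widetilde\rho^\tau\equiv0$, every term containing the quotient vanishes, and the convention in the statement holds. Multiplying by $(C+B)\le C(1+B)$ produces the quotient term in \eqref{eq:appC-first-order-inequality}.

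Summing (i)--(iv) gives $D^+B\le-(c_*-C\tau)B+CAB+C(1+B)A[1+\log_+(CB/A)]+CA$, which is the claimed inequality.

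The main obstacle is organizing the derivative of $\nabla\rho^\tau\cdot\nabla K_1*\widetilde\rho^\tau$. It must not produce $\nabla^2\widetilde\rho^\tau$ multiplied by a non-small field, and the only singular operator that appears should be $\nabla^2K_1$ acting on $\widetilde\rho^\tau$, so that the logarithmic estimate applies. Everything else is bookkeeping with the uniform stationary bounds and the density lower bound.
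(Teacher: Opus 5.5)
Your proposal is correct and follows the paper's proof essentially step by step. You differentiate the error equation once and evaluate at a maximizing signed first derivative via Lemma~\ref{lem:hamilton}, so that diffusion is nonpositive and the transport terms $(\cdot)\cdot\nabla\partial_k\widetilde\rho^\tau$ vanish; you take the damping $-c_*B$ from the density lower bound, bound $\nabla\rho^\tau\cdot\partial_k\nabla K_1*\widetilde\rho^\tau$ by Lemma~\ref{lem:yudovich}, and control the rest with the uniform stationary bounds. The only difference is cosmetic: you treat the bias term through \eqref{eq:stat-bias-linear}, whereas the paper writes it as $\tau(\Delta\log\rho_\infty^\tau)\widetilde\rho^\tau$ and absorbs the resulting $C\tau A$ into $CA$ using $\tau_0\le1$, a step you also use implicitly.
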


\begin{proof}
We derive the equation for a first derivative, then evaluate it at a
maximum of one of the signed derivatives.  The error equation
\eqref{eq:appC-error-equation} has the additional identity
$\rho_\infty^\tau-\pi=\tau\Delta\log\rho_\infty^\tau$, from
\eqref{eq:stationary-poisson-boltzmann}.  For brevity, write
$\rho=\rho_t^\tau$, $g=\widetilde\rho_t^\tau$,
$R=\rho_\infty^\tau$, $q=\log R$, and $b=\nabla K_1*g$.
Then \eqref{eq:appC-error-equation} reads
\[
  \partial_tg=\tau\Delta g+(b-\tau\nabla q)\cdot\nabla g
    +b\cdot\nabla R-\rho g-\tau(\Delta q)g.
\]

\noindent\emph{Step 1. Differentiate the error equation.}
Differentiating the displayed equation with respect to $x_k$ and
using $\nabla\rho=\nabla g+\nabla R$ gives
\begin{equation}
\begin{aligned}
  \partial_t\partial_kg
  &={}
  \tau\Delta\partial_kg
  +(b-\tau\nabla q)\cdot\nabla\partial_kg
  +(\nabla\partial_kR)\cdot b
  +\nabla\rho\cdot\partial_kb\\
  &\quad-\tau(\partial_k\nabla q)\cdot\nabla g
  -\rho\,\partial_kg
  -(\partial_kg+\partial_kR)g
  -\tau(\Delta q)\partial_kg
  -\tau(\partial_k\Delta q)g.
\end{aligned}
\label{eq:appC-first-differentiated-equation}
\end{equation}
In particular, the two products containing $\partial_kb$ have
combined into $\nabla\rho\cdot\partial_kb$.  This term contains
the Coulomb Hessian $\nabla^2K_1*g$, which cannot be bounded from
$\|g\|_{L^\infty}$ alone.

\noindent\emph{Step 2. Evaluate at a spatial maximum.}
By definition, $B(t)$ is the largest value of the signed first
derivatives.  Lemma~\ref{lem:hamilton} supplies a maximizing triple
$(k,\sigma,x_t)$ for which
\begin{equation}
\begin{gathered}
  \sigma\partial_kg_t(x_t)=B(t),\qquad
  \nabla(\sigma\partial_kg_t)(x_t)=0,\qquad
  \Delta(\sigma\partial_kg_t)(x_t)\le0,\\
  D^+B(t)=\sigma\partial_t\partial_kg_t(x_t).
\end{gathered}
\label{eq:appC-first-maximum}
\end{equation}
Thus the diffusion term is nonpositive and the transport term in
\eqref{eq:appC-first-differentiated-equation} vanishes at $x_t$.
The lower density bound in Proposition~\ref{prop:maximum-principle}
gives the main damping term
\begin{equation}
  -\sigma\rho_t(x_t)\partial_kg_t(x_t)
  =-\rho_t(x_t)B(t)\le-c_*B(t).
  \label{eq:appC-M1-damping}
\end{equation}

To control the Coulomb Hessian in the term identified in Step~1,
apply Lemma~\ref{lem:yudovich} to the mean-zero function $g$.
Since $\|\nabla g\|_{L^\infty}\le CB$ and the stationary
gradient is uniformly bounded by
Proposition~\ref{prop:stationary-all-orders}, we obtain
\[
\begin{aligned}
  \|\nabla\rho\|_{L^\infty}&\le C(1+B),\\
  |\nabla\rho\cdot\partial_kb|
  &\le C(1+B)A
      \left[1+\log_+\left(\frac{CB}{A}\right)\right].
\end{aligned}
\]
If $A=0$, then $g\equiv0$, so $B=0$ and the second
left-hand side also vanishes.

It remains to bound the terms without a Coulomb Hessian.
Proposition~\ref{prop:stationary-all-orders} bounds $R$ in
$C^2$ and $q$ in $C^3$, uniformly in temperature.
Since $\nabla K_1\in L^1(\T^d)$, we also have
$\|b\|_{L^\infty}\le C A$, and hence
\begin{equation}
\begin{aligned}
  |\nabla\partial_kR\cdot b|
     &\le CA,\\
  |(\partial_kg+\partial_kR)g|
     &\le C(1+B)A,\\
  \tau|\partial_k\nabla q\cdot\nabla g|
   +\tau|\Delta q|\,|\partial_kg|
     &\le C\tau B,\\
  \tau|\partial_k\Delta q|\,|g|
     &\le C\tau A.
\end{aligned}
\label{eq:appC-M1-tau-low}
\end{equation}
Combining these estimates with \eqref{eq:appC-M1-damping} proves
\eqref{eq:appC-first-order-inequality}, after taking $\tau_0\le1$
and absorbing $\tau A$ into $A$.
\end{proof}

\begin{proof}[Proof of Proposition~\ref{prop:uniform-lipschitz}]
We now integrate \eqref{eq:appC-first-order-inequality} to bound
$B=M_1^\tau$.  Reduce $\tau_0$, if necessary, so that
$C\tau_0\le c_*/2$.  The damping then remains nonpositive and may
be discarded.  The remaining obstacle is the logarithm of $B/A$
when the decaying quantity $A$ is small.

Proposition~\ref{prop:linfty-exp} provides $A(t)\le
C_0e^{-\gamma_1t}$, where $\gamma_1=c_*/8$ and $C_0$
does not depend on $\tau$.  For each $a>0$, the function
$x\mapsto x[1+\log_+(a/x)]$ is nondecreasing on $(0,\infty)$.
Use this with $a=\max\{1,CB(t)\}$ and
$x=A(t)$ to get
\[
\begin{aligned}
  A(t)\left[1+\log_+\left(\frac{CB(t)}{A(t)}\right)\right]
  &\le C_0e^{-\gamma_1t}
     \left[1+\log_+\left(
       \frac{\max\{1,CB(t)\}}{C_0e^{-\gamma_1t}}
     \right)\right]\\
  &\le C(1+t)e^{-\gamma_1t}\log(e+B(t)).
\end{aligned}
\]
The same exponential factor bounds $A(t)$, and its product with
$1+B(t)$ bounds $A(t)B(t)$.  Consequently,
\[
  D^+M_1^\tau(t)
  \le C(1+t)e^{-\gamma_1t}
    (1+M_1^\tau(t))\log(e+M_1^\tau(t)).
\]
This is the Osgood inequality \eqref{eq:first-derivative-osgood}
stated in Proposition~\ref{prop:uniform-lipschitz}.

By Lemma~\ref{lem:hamilton}, $M_1^\tau$ is locally Lipschitz,
so we may divide \eqref{eq:first-derivative-osgood} by
$(e+M_1^\tau)\log(e+M_1^\tau)$ at almost every time.
The resulting left-hand side equals
$\frac{d}{dt}\log\log(e+M_1^\tau(t))$.  Thus
\[
  \log\log(e+M_1^\tau(t))
  \le \log\log(e+M_1^\tau(0))
       +C\int_0^t(1+s)e^{-\gamma_1s}\,ds.
\]
The stationary $C^1$ bound in
Proposition~\ref{prop:stationary-all-orders} and the fixed initial
density $\rho_0$ bound $M_1^\tau(0)$ independently of $\tau$.
The integral is finite uniformly in $t$.  Hence $B(t)$ is
uniformly bounded, and
$\rho_t^\tau=g_t+\rho_\infty^\tau$, together with the
stationary bounds, proves \eqref{eq:uniform-lipschitz-bound}.
\end{proof}
\subsection{Second-order maximum-principle estimate and decay}
\label{subsec:appC-second-order}

The decay of the Coulomb field in
\eqref{eq:coulomb-field-relaxation} controls the first term of the
recentered velocity gradient
\eqref{eq:equilibrium-centered-deformation}.  To control its thermal
term, we need two derivatives of the density error
$\widetilde\rho_t^\tau=\rho_t^\tau-\rho_\infty^\tau$.  We therefore
prove a differential inequality for $M_2^\tau$, the largest
$L^\infty$ norm of its second partial derivatives, as defined in
Subsection~\ref{subsec:appC-centered-zero-order}.  This inequality
supplies the remaining estimate in
Proposition~\ref{prop:low-temperature-relaxation}.

\begin{lemma}[Second-order maximum estimate]
\label{lem:appC-second-order-maximum-estimate}
There are $\tau_0>0$ and $C<\infty$, depending only on the standing
data, such that, for $0<\tau\le\tau_0$ and $t\ge0$,
\[
  D^+M_2^\tau(t)
  \le
  \left(-\frac{c_*}{2}+Ce^{-\gamma_2t}\right)M_2^\tau(t)
  +Ce^{-\gamma_2t},
  \qquad \gamma_2=\frac{c_*}{16}.
\]
Here $D^+$ is the upper right derivative defined in
Lemma~\ref{lem:hamilton}.  The value of $\tau_0$ may be decreased
from its choice in Proposition~\ref{prop:uniform-lipschitz}.
\end{lemma}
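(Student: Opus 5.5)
We follow the pattern of Lemma~\ref{lem:appC-first-order-inequality}, one derivative higher. In the notation used there ($g=\widetilde\rho_t^\tau$, $R=\rho_\infty^\tau$, $q=\log R$, $b=\nabla K_1*g$), we differentiate \eqref{eq:appC-first-differentiated-equation} once more in $x_l$. The resulting equation for $\partial_{kl}g$ has the form
\[
  \partial_t\partial_{kl}g
  =\tau\Delta\partial_{kl}g+(b-\tau\nabla q)\cdot\nabla\partial_{kl}g
   -\rho\,\partial_{kl}g+\mathcal R_{kl}.
\]
The remainder $\mathcal R_{kl}$ collects four kinds of terms. The first is $\nabla\rho\cdot\partial_{kl}b$, the only term with three derivatives on the Coulomb potential. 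The second kind consists of products such as $\partial_l\rho\,\partial_kg$, $\partial_lg\,\partial_kg$, $\nabla\partial_l\rho\cdot\partial_kb$ and $\partial_lb\cdot\nabla\partial_kg$. The third consists of terms linear in $g$, $\nabla g$ or $b$, with coefficients built from $R$ and $q$. The last consists of terms carrying an explicit factor of $\tau$ multiplied by $\nabla^2 g$, $\nabla g$ or $g$. Applying Lemma~\ref{lem:hamilton} to the signed second derivatives, at a maximizing triple $(k,l,\sigma,x_t)$ the diffusion term is nonpositive and the transport term vanishes. Proposition~\ref{prop:maximum-principle} then yields the damping $-\rho_t(x_t)M_2^\tau\le-c_*M_2^\tau$. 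The regularity in Subsection~\ref{subsec:appD-global-regularity} justifies the differentiation at each fixed $\tau$.

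Next we bound $\mathcal R_{kl}$ with constants that do not depend on $\tau$. Write $A=M_0^\tau\le Ce^{-\gamma_1 t}$ (Proposition~\ref{prop:linfty-exp}). By Proposition~\ref{prop:uniform-lipschitz}, $M_1^\tau$ is bounded, and Corollary~\ref{cor:holder-decay} gives $\|b\|_{L^\infty}+\|\nabla b\|_{L^\infty}\le Ce^{-\gamma_2t}$.

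For the singular term, write $\partial_{kl}b=\partial_k\nabla K_1*\partial_lg$, which holds distributionally. Lemma~\ref{lem:periodic-schauder}, applied to the mean-zero function $\partial_lg$, together with Lemma~\ref{lem:appC-ehrling}, gives $\|\partial_{kl}b\|_{L^\infty}\le\varepsilon M_2^\tau+C_\varepsilon A$. Since $\|\nabla\rho\|_{L^\infty}\le C$, this term costs $C\varepsilon M_2^\tau+C_\varepsilon e^{-\gamma_2 t}$.

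For the product terms involving first derivatives of $g$, we use Lemma~\ref{lem:appC-landau-kolmogorov}: $M_1\le C(AM_2)^{1/2}\le\varepsilon M_2+C_\varepsilon A$. A quadratic term such as $\partial_lg\,\partial_kg$ is bounded by $M_1^2\le CAM_2$, which falls under the $Ce^{-\gamma_2 t}M_2^\tau$ coefficient.

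The term $\nabla\partial_l\rho\cdot\partial_kb$ splits as $(\nabla\partial_lR+\nabla\partial_l g)\cdot\partial_kb$. Its part with $\nabla\partial_l g$ is bounded by $CM_2\|\nabla b\|_{L^\infty}\le Ce^{-\gamma_2t}M_2$, and this is exactly the mechanism behind the decaying coefficient in the statement. Its part with $\nabla\partial_l R$, like the other coefficient terms, is bounded by $Ce^{-\gamma_2 t}$ thanks to the uniform $C^4$ bounds on $R$ and $q$ from Proposition~\ref{prop:stationary-all-orders}.

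The terms with an explicit factor of $\tau$ contribute at most $C\tau M_2+C\tau(M_1+A)$.

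Collecting everything gives
\[
  D^+M_2^\tau\le-(c_*-C\varepsilon-C\tau)M_2^\tau+Ce^{-\gamma_2t}M_2^\tau+C_\varepsilon e^{-\gamma_2t}.
\]
We then choose $\varepsilon$ with $C\varepsilon\le c_*/4$ and reduce $\tau_0$ so that $C\tau_0\le c_*/4$, which gives the claim. Wherever a rate $\gamma_1$ appears we use $\gamma_1>\gamma_2$.

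The main obstacle is the singular Coulomb term $\nabla\rho\cdot\partial_{kl}b$. It contains effectively one derivative more than $M_2$ controls, and the Schauder/Ehrling step must produce an arbitrarily small multiple of $M_2$ rather than a fixed constant times it. The point to check is that this $\varepsilon$ is chosen before $\tau_0$, and that $C_\varepsilon$ multiplies only decaying quantities, never $M_2$ itself.
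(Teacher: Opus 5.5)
Your proposal is correct and follows the paper's proof essentially step for step. The shared steps are Hamilton's lemma at a maximizing signed second derivative, the damping $-\rho\,\partial_{kl}g\le -c_*M_2^\tau$, and the Schauder--interpolation bound $\varepsilon M_2^\tau+C_\varepsilon M_0^\tau$ for the singular term $\nabla\rho\cdot\partial_{kl}b$. The decaying coefficient of $M_2^\tau$ comes, as in the paper, from $\nabla\partial_l g\cdot\nabla b$ and from $g\,\partial_{kl}g$; the paper folds the latter into the damping as $-(\rho+\widetilde\rho)\partial_{ij}\widetilde\rho$. Both arguments choose $\varepsilon$ before $\tau_0$.

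One imprecision: the terms linear in $\nabla g$, including those carrying a factor $\tau$, are not directly $O(e^{-\gamma_2 t})$. At this stage $M_1^\tau$ is only known to be bounded, not decaying. They must therefore also pass through $M_1^\tau\le\varepsilon M_2^\tau+C_\varepsilon M_0^\tau$, exactly as the paper does.
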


\begin{proof}
Fix $0<\tau\le\tau_0$ and suppress $\tau$ and $t$ on the densities
when no confusion is possible.  We differentiate the error equation
\eqref{eq:appC-error-equation}, identify the term that damps the second
derivatives, and estimate the remaining terms by earlier decay bounds.
The regularity established in Subsection~\ref{subsec:appC-centered-zero-order}
justifies these differentiations.

\medskip
\noindent\emph{Step 1: the differentiated equation.}
Write $V=\nabla K_1*\widetilde\rho$ and
$q_\tau=\log\rho_\infty^\tau$.  By the stationary equation
\eqref{eq:stationary-log-equation},
\[
  \rho_\infty^\tau-\pi=\tau\Delta q_\tau,
  \qquad
  \partial_t\widetilde\rho
  =\tau\Delta\widetilde\rho
   +(V-\tau\nabla q_\tau)\cdot\nabla\widetilde\rho
   +\nabla\rho_\infty^\tau\cdot V
   -\rho\widetilde\rho
   -\tau(\Delta q_\tau)\widetilde\rho.
\]
The first-order term in this form will be a transport term at a
spatial extremum of $\partial_{ij}\widetilde\rho$.  For the
zeroth-order product, the Leibniz rule gives
\[
\begin{aligned}
  -\partial_{ij}(\rho\widetilde\rho)
  ={}&-(\rho+\widetilde\rho)\partial_{ij}\widetilde\rho
       -(\partial_i\rho)(\partial_j\widetilde\rho)
       -(\partial_j\rho)(\partial_i\widetilde\rho)\\
     &-(\partial_{ij}\rho_\infty^\tau)\widetilde\rho.
\end{aligned}
\]
Thus $-\rho\,\partial_{ij}\widetilde\rho$ retains the same
positive coefficient as the damping term at zeroth order.  Applying
$\partial_{ij}$ to all the terms gives the full equation
\begin{equation}
\begin{aligned}
  \partial_t\partial_{ij}\widetilde\rho
  &={}
  \tau\Delta\partial_{ij}\widetilde\rho
  +\left(V-\tau\nabla q_\tau\right)
      \cdot\nabla\partial_{ij}\widetilde\rho
  +\nabla\partial_{ij}\rho_\infty^\tau\cdot V\\
  &\quad
  +(\partial_i\nabla\rho)\cdot\partial_jV
  +(\partial_j\nabla\rho)\cdot\partial_iV
  +\nabla\rho\cdot\partial_{ij}V\\
  &\quad
  -\tau(\partial_i\nabla q_\tau)\cdot
       \nabla\partial_j\widetilde\rho
  -\tau(\partial_j\nabla q_\tau)\cdot
       \nabla\partial_i\widetilde\rho
  -\tau(\partial_{ij}\nabla q_\tau)\cdot
       \nabla\widetilde\rho\\
  &\quad
  -(\rho+\widetilde\rho)\partial_{ij}\widetilde\rho
  -(\partial_i\rho)(\partial_j\widetilde\rho)
  -(\partial_j\rho)(\partial_i\widetilde\rho)
  -(\partial_{ij}\rho_\infty^\tau)\widetilde\rho\\
  &\quad
  -\tau(\Delta q_\tau)\partial_{ij}\widetilde\rho
  -\tau(\partial_i\Delta q_\tau)\partial_j\widetilde\rho
  -\tau(\partial_j\Delta q_\tau)\partial_i\widetilde\rho
  -\tau(\partial_{ij}\Delta q_\tau)\widetilde\rho.
\end{aligned}
\label{eq:appC-second-differentiated-equation}
\end{equation}
The derivatives of $q_\tau$ that appear here are uniformly bounded
through order four by Proposition~\ref{prop:stationary-all-orders}.

To turn \eqref{eq:appC-second-differentiated-equation} into an
inequality for $M_2$, apply Lemma~\ref{lem:hamilton} to the finite
family $\{\pm\partial_{ij}\widetilde\rho\}_{i,j}$.  At each time
$t$, choose a maximizing sign $\sigma$, indices $i,j$, and a point
$x_t$ that realize $D^+M_2(t)$.  At that point,
\[
\begin{gathered}
  \sigma\partial_{ij}\widetilde\rho_t(x_t)=M_2(t),
  \qquad
  D^+M_2(t)=
    \sigma\partial_t\partial_{ij}\widetilde\rho_t(x_t),\\
  \nabla\partial_{ij}\widetilde\rho_t(x_t)=0,
  \qquad
  \sigma\Delta\partial_{ij}\widetilde\rho_t(x_t)\le0.
\end{gathered}
\]
Hence diffusion is nonpositive and the first-order transport term
vanishes after multiplication by $\sigma$.  It remains to estimate
the other terms at $x_t$.

\medskip
\noindent\emph{Step 2: Coulomb terms.}
The field $V$ and its first derivative decay by
\eqref{eq:coulomb-field-relaxation}.  Since
$\|\rho_\infty^\tau\|_{C^3}\le C$ and
$\|\nabla^2\rho_t\|_{L^\infty}\le C(1+M_2(t))$, this gives
\begin{equation}
\begin{aligned}
  &|\nabla\partial_{ij}\rho_\infty^\tau\cdot V|
   +|\partial_i\nabla\rho\cdot\partial_jV|
   +|\partial_j\nabla\rho\cdot\partial_iV|\\
  &\hspace{30mm}\le
     Ce^{-\gamma_2t}\bigl(1+M_2(t)\bigr).
\end{aligned}
\label{eq:appC-M2-decaying-field}
\end{equation}
The remaining Coulomb term contains two derivatives of $V$.
Moving one of these derivatives onto $\widetilde\rho$ makes
the periodic Schauder estimate in Lemma~\ref{lem:periodic-schauder}
applicable.  Lemma~\ref{lem:appC-ehrling} then provides, for every
$\varepsilon>0$,
\begin{equation}
\begin{aligned}
  \partial_{ij}V
  &=\partial_i\nabla K_1*\partial_j\widetilde\rho,\\
  |\nabla\rho\cdot\partial_{ij}V|
  &\le C\|\partial_j\widetilde\rho\|_{C^{0,1/2}}
   \le\varepsilon M_2(t)
       +C_\varepsilon\|\widetilde\rho_t\|_{L^\infty}.
\end{aligned}
\label{eq:appC-M2-singular-term}
\end{equation}
The first identity holds in distributions and
$\partial_j\widetilde\rho$ has mean zero.  The constant multiplying
$M_2$ in the second line is included in the choice of $\varepsilon$.

\medskip
\noindent\emph{Step 3: damping and local products.}
The density lower bound \eqref{eq:maximum-principle} and the
maximizing identity at $x_t$ give
\begin{equation}
  -\sigma(\rho+\widetilde\rho)
      \partial_{ij}\widetilde\rho
  \le -c_*M_2(t)
       +\|\widetilde\rho_t\|_{L^\infty}M_2(t).
  \label{eq:appC-M2-local-damping}
\end{equation}
The products of first derivatives involve
$\|\nabla\rho_t\|_{L^\infty}$, which is bounded by
Proposition~\ref{prop:uniform-lipschitz}.  The
Landau--Kolmogorov inequality of
Lemma~\ref{lem:appC-landau-kolmogorov}, followed by Young's
inequality, gives
\[
\begin{aligned}
  &|\partial_i\rho\,\partial_j\widetilde\rho|
   +|\partial_j\rho\,\partial_i\widetilde\rho|\\
  &\qquad\le C\|\nabla\widetilde\rho_t\|_{L^\infty}
   \le C\bigl(\|\widetilde\rho_t\|_{L^\infty}M_2(t)\bigr)^{1/2}
   \le\varepsilon M_2(t)
      +C_\varepsilon\|\widetilde\rho_t\|_{L^\infty}.
\end{aligned}
\]
The other product from this line of the equation is bounded by
$|\partial_{ij}\rho_\infty^\tau\,\widetilde\rho|
\le C\|\widetilde\rho_t\|_{L^\infty}$.

The remaining terms have an explicit factor $\tau$.  The bounds on
$q_\tau$ through order four control their coefficients.  Terms with
two derivatives of $\widetilde\rho$ cost $C\tau M_2(t)$.
For the terms with one derivative, the same
Landau--Kolmogorov and Young estimates give
\begin{equation}
\begin{aligned}
  &\tau|\partial_{ij}\nabla q_\tau
                   \cdot\nabla\widetilde\rho|
    +\tau|\partial_i\Delta q_\tau\,
                   \partial_j\widetilde\rho|
    +\tau|\partial_j\Delta q_\tau\,
                   \partial_i\widetilde\rho|
    +\tau|\partial_{ij}\Delta q_\tau\,
                   \widetilde\rho|\\
  &\hspace{25mm}\le
    \varepsilon\tau M_2(t)
    +C_\varepsilon\tau\|\widetilde\rho_t\|_{L^\infty}.
\end{aligned}
\label{eq:appC-M2-tau-low}
\end{equation}
This accounts for every term in
\eqref{eq:appC-second-differentiated-equation} after diffusion and
transport have been removed.

\medskip
\noindent\emph{Step 4: absorption.}
The earlier density estimate \eqref{eq:linfty-relaxation} and
the Coulomb estimate \eqref{eq:coulomb-field-relaxation} give
\[
  \|\widetilde\rho_t\|_{L^\infty}
  \le Ce^{-\gamma_1t}
  \le Ce^{-\gamma_2t},
  \qquad
  \gamma_1=\frac{c_*}{8}>\gamma_2.
\]
Combining this decay with
\eqref{eq:appC-M2-decaying-field}--\eqref{eq:appC-M2-tau-low}
and the $C\tau M_2$ terms gives
\[
  D^+M_2(t)
  \le
  -\bigl(c_*-C\varepsilon-C\tau_0\bigr)M_2(t)
  +Ce^{-\gamma_2t}M_2(t)
  +C_\varepsilon e^{-\gamma_2t}.
\]
Choose $\varepsilon$ so that $C\varepsilon\le c_*/4$ and then
decrease $\tau_0$ so that $C\tau_0\le c_*/4$.  This proves the
claimed inequality, which is
\eqref{eq:C2-differential-inequality} in the main text.
\end{proof}

\begin{proof}[Proof of Proposition~\ref{prop:low-temperature-relaxation}]
We integrate the maximum estimate and then use it in the recentered
velocity identity.  Since $M_2$ is locally Lipschitz by
Lemma~\ref{lem:hamilton}, its differential inequality holds almost
everywhere.  For $0\le s\le t$ the integrating factor is bounded by
\[
  \exp\!\left(\int_s^t
       \left[-\frac{c_*}{2}+Ce^{-\gamma_2r}\right]dr\right)
  \le e^{C/\gamma_2}e^{-c_*(t-s)/2}.
\]
The initial value $M_2^\tau(0)$ is bounded uniformly in $\tau$ by
Assumption~\ref{assump:standing-data} and the stationary estimate
\eqref{eq:uniform-stationary-regularity}.  Variation of constants,
with $\gamma_2=c_*/16<c_*/2$, gives
\[
\begin{aligned}
  M_2^\tau(t)
  &\le Ce^{-c_*t/2}M_2^\tau(0)
       +C\int_0^t e^{-c_*(t-s)/2}e^{-\gamma_2s}\,ds\\
  &\le Ce^{-\gamma_2t}.
\end{aligned}
\]
To include lower derivatives, combine this bound with
$\|\widetilde\rho_t^\tau\|_{L^\infty}
\le Ce^{-\gamma_1t}$ from \eqref{eq:linfty-relaxation}.
Lemma~\ref{lem:appC-landau-kolmogorov} yields
\[
\begin{aligned}
  \|\widetilde\rho_t^\tau\|_{C^2}
  &\le C\left(
    \|\widetilde\rho_t^\tau\|_{L^\infty}
    +M_2^\tau(t)
    +\bigl(\|\widetilde\rho_t^\tau\|_{L^\infty}
             M_2^\tau(t)\bigr)^{1/2}\right)\\
  &\le Ce^{-\gamma_2t}.
\end{aligned}
\]
By \eqref{eq:maximum-principle} and the stationary bounds
\eqref{eq:uniform-stationary-regularity}, the two densities are
bounded below by $c_*$ and uniformly bounded in $C^2$.  Applying
the identity
\[
  \nabla^2\log r
  =r^{-1}\nabla^2r-r^{-2}\nabla r\otimes\nabla r
\]
to $r=\rho_t^\tau$ and $r=\rho_\infty^\tau$ therefore gives
\[
  \|\nabla^2\log\rho_t^\tau
       -\nabla^2\log\rho_\infty^\tau\|_{L^\infty}
  \le C\|\widetilde\rho_t^\tau\|_{C^2}
  \le Ce^{-\gamma_2t}.
\]
These are the two estimates in \eqref{eq:C2-relaxation}.

Finally, the recentered identity
\eqref{eq:equilibrium-centered-deformation} separates the velocity
gradient into Coulomb and thermal terms.  The first decays by
\eqref{eq:coulomb-field-relaxation}, while the estimate just proved
controls the second.  Since $\tau\le\tau_0$,
\[
\begin{aligned}
  \|\nabla u_t^\tau\|_{L^\infty}
  &\le \|\nabla^2K_1*\widetilde\rho_t^\tau\|_{L^\infty}
     +\tau\|\nabla^2\log\rho_t^\tau
                 -\nabla^2\log\rho_\infty^\tau\|_{L^\infty}\\
  &\le Ce^{-\gamma_2t}.
\end{aligned}
\]
This proves \eqref{eq:low-temperature-velocity-gradient}.
\end{proof}
\subsection{The zero-temperature limit}
\label{subsec:zero-temperature-limit}

The estimates for positive temperatures in
Subsection~\ref{subsec:low-temperature} have constants independent of
$\tau$ as $\tau\downarrow0$.  We first identify their limit with
the bounded solution of the deterministic mean-field equation.  We
then transfer the density and velocity estimates to that solution,
which proves Proposition~\ref{prop:tau0-endpoint} and supplies its
input to Theorem~\ref{thm:uniform-envelope}.

\begin{lemma}[Compactness at zero temperature]
\label{lem:appC-zero-temperature-compactness}
Let $\rho^\tau$, $0<\tau\le\tau_0$, be the solutions of
\eqref{eq:intro-mean-field-pde} under
Assumption~\ref{assump:standing-data}, and let $\rho^0$ be its
unique bounded weak solution at $\tau=0$ with initial density
$\rho_0$.  Then
\[
  \rho^\tau\longrightarrow\rho^0
  \quad\text{in }C_{\mathrm{loc}}
    \bigl([0,\infty);C^1(\T^d)\bigr)
  \quad\text{as }\tau\downarrow0.
\]
\end{lemma}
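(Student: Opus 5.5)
The argument has three steps: compactness from $\tau$-uniform bounds, passage to the limit in the weak formulation, and identification of the limit by uniqueness of bounded weak solutions.

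\emph{Step 1: uniform bounds.} From Proposition~\ref{prop:low-temperature-relaxation} (estimate \eqref{eq:C2-relaxation}) and the stationary bound \eqref{eq:uniform-stationary-regularity}, we have
\[
\sup_{0<\tau\le\tau_0}\sup_{t\ge0}\|\rho_t^\tau\|_{W^{2,\infty}}\le C .
\]
For the time derivative, write the equation in the nondivergence form \eqref{eq:density-nondivergence-form}. The term $\tau\Delta\rho^\tau$ is bounded by $C\tau_0$. The transport term is bounded using $\nabla K_1\in L^1$, which gives $\|\nabla K_1*(\rho-\pi)\|_{L^\infty}\le C$, together with the uniform Lipschitz bound. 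The reaction term is bounded by the maximum principle \eqref{eq:maximum-principle}. This yields $\|\partial_t\rho_t^\tau\|_{L^\infty}\le C$ uniformly in $\tau$ and $t$.

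\emph{Step 2: equicontinuity and compactness.} For $s<t$, set $h=\rho_t^\tau-\rho_s^\tau$. Then $\|h\|_{L^\infty}\le C|t-s|$ and $\|\nabla^2h\|_{L^\infty}\le C$. Lemma~\ref{lem:appC-landau-kolmogorov} then gives
\[
\|\nabla h\|_{L^\infty}\le C|t-s|^{1/2},
\]
so the family is equicontinuous in $C^1(\T^d)$ on every $[0,T]$. It is also pointwise relatively compact in $C^1$, since bounded sets of $W^{2,\infty}$ embed compactly into $C^1$. Arzel\`a--Ascoli on $[0,T]$ for each $T\in\mathbb N$, followed by a diagonal argument, gives $\tau_n\downarrow0$ and a limit $\bar\rho\in C([0,\infty);C^1)$ with $\rho^{\tau_n}\to\bar\rho$ locally uniformly in $C^1$.

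\emph{Step 3: identification of the limit.} Test the classical equation against $\phi\in C_c^\infty([0,\infty)\times\T^d)$ and pass to the limit term by term:
\begin{itemize}
\item the diffusion term satisfies $\tau_n\int\rho^{\tau_n}\Delta\phi\to0$;
\item the nonlinear flux $\rho^{\tau_n}\nabla K_1*(\rho^{\tau_n}-\pi)$ converges uniformly on compact time intervals, by Young's inequality with $\nabla K_1\in L^1$ and the uniform convergence of $\rho^{\tau_n}$;
\item the initial data are fixed at $\rho_0$.
\end{itemize}
Hence $\bar\rho$ is a bounded weak solution of the $\tau=0$ equation with initial density $\rho_0$, in the sense of \cite[Definition~1.1]{ChodronRosenzweig2026}. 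It also satisfies $c_*\le\bar\rho\le C_*$ and is uniformly $W^{2,\infty}$ by weak-* lower semicontinuity.

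Uniqueness \cite[Proposition~2.4]{ChodronRosenzweig2026} gives $\bar\rho=\rho^0$. Since every sequence $\tau_n\downarrow0$ has a subsequence converging to this same limit, the whole family converges to $\rho^0$ as $\tau\downarrow0$.

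\emph{Main obstacle.} The delicate point is matching the limit to the precise weak-solution notion of \cite{ChodronRosenzweig2026}, for instance any required time continuity or formulation of the initial trace. This is handled by the fact that $\bar\rho\in C([0,\infty);C^1)$ with $\bar\rho_0=\rho_0$, which is stronger than any such requirement. The derivative bounds themselves are already supplied by earlier results.
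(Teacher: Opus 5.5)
Your proposal is correct and follows the paper's proof essentially step for step. You use the uniform $W^{2,\infty}$ and $\partial_t$ bounds from \eqref{eq:C2-relaxation} and \eqref{eq:uniform-stationary-regularity}, Landau--Kolmogorov for $C^1$ equicontinuity of order $|t-s|^{1/2}$, Arzel\`a--Ascoli with a diagonal argument, passage to the limit in the weak formulation, and identification via \cite[Proposition~2.4]{ChodronRosenzweig2026} together with the subsequence principle.
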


\begin{proof}
We first obtain compactness from the uniform spatial estimates and
the equation.  We then identify every subsequential limit using the
uniqueness of bounded weak solutions at $\tau=0$.

\medskip
\noindent\emph{Step 1: compactness.}
The $C^2$-decay in \eqref{eq:C2-relaxation} and the stationary
regularity in \eqref{eq:uniform-stationary-regularity} give
\begin{equation}
  \sup_{0<\tau\le\tau_0}\sup_{t\ge0}
  \|\rho_t^\tau\|_{W^{2,\infty}}\le C.
  \label{eq:appC-endpoint-uniform-spatial-bound}
\end{equation}
To control the time increments, write the mean-field equation using
$-\Delta K_1*(\rho_t^\tau-\pi)=\rho_t^\tau-\pi$:
\[
  \partial_t\rho_t^\tau
  =\tau\Delta\rho_t^\tau
   +\nabla\rho_t^\tau\cdot
     \bigl(\nabla K_1*(\rho_t^\tau-\pi)\bigr)
   -\rho_t^\tau(\rho_t^\tau-\pi).
\]
The kernel satisfies $\nabla K_1\in L^1(\T^d)$.  Thus
\eqref{eq:appC-endpoint-uniform-spatial-bound}, the fixed bound on
$\pi$, and $\tau\le\tau_0$ imply
\[
  \sup_{0<\tau\le\tau_0}\sup_{t\ge0}
  \|\partial_t\rho_t^\tau\|_{L^\infty}\le C,
  \qquad
  \|\rho_t^\tau-\rho_s^\tau\|_{L^\infty}
     \le C|t-s|.
\]
The two uniform bounds above give
$\|g\|_{L^\infty}\le C\min\{|t-s|,1\}$.  The
Landau--Kolmogorov inequality in
Lemma~\ref{lem:appC-landau-kolmogorov}, applied to
$g=\rho_t^\tau-\rho_s^\tau$, upgrades this to
\[
  \|\rho_t^\tau-\rho_s^\tau\|_{C^1}
  \le \|g\|_{L^\infty}
       +C\bigl(\|g\|_{L^\infty}
                   \|\nabla^2g\|_{L^\infty}\bigr)^{1/2}
  \le C|t-s|^{1/2}.
\]
The bounded sets of $W^{2,\infty}(\T^d)$ are relatively compact
in $C^1(\T^d)$.  Arzel\`a--Ascoli on each bounded time interval
and a diagonal argument therefore show that every sequence
$\tau_n\downarrow0$ has a subsequence such that
\begin{equation}
  \rho^{\tau_n}\longrightarrow\bar\rho
  \quad\text{in }C_{\mathrm{loc}}
       \bigl([0,\infty);C^1(\T^d)\bigr).
  \label{eq:appC-endpoint-subsequential-limit}
\end{equation}

\medskip
\noindent\emph{Step 2: identification.}
The spatial bound in
\eqref{eq:appC-endpoint-uniform-spatial-bound} makes the diffusion
term vanish on every bounded time interval:
\[
  \|\tau_n\Delta\rho^{\tau_n}\|_{L^\infty([0,T]\times\T^d)}
  \le C\tau_n\longrightarrow0.
\]
The convergence in \eqref{eq:appC-endpoint-subsequential-limit} and
$\nabla K_1\in L^1(\T^d)$ also give
\[
  \rho^{\tau_n}\bigl(\nabla K_1*(\rho^{\tau_n}-\pi)\bigr)
  \longrightarrow
  \bar\rho\bigl(\nabla K_1*(\bar\rho-\pi)\bigr)
  \quad\text{uniformly on }[0,T]\times\T^d.
\]
Passing to the limit in the weak formulation of
\eqref{eq:intro-mean-field-pde}, we find that
\[
  \partial_t\bar\rho
   =\nabla\cdot\bigl(\bar\rho\,
       \nabla K_1*(\bar\rho-\pi)\bigr),
  \qquad \bar\rho\big|_{t=0}=\rho_0.
\]
The initial value follows from
\eqref{eq:appC-endpoint-subsequential-limit}.  Uniqueness in
\cite[Proposition~2.4]{ChodronRosenzweig2026} thus identifies
$\bar\rho=\rho^0$.  Since this applies to every convergent
subsequence, the whole family has the asserted limit.
\end{proof}

\begin{proof}[Proof of Proposition~\ref{prop:tau0-endpoint}]
Lemma~\ref{lem:appC-zero-temperature-compactness} proves
\eqref{eq:tau-zero-strong-convergence}.  We now pass the
positive-temperature relaxation estimates to this limit.  The
distinction is that the first derivatives converge strongly, while
the Hessians are only bounded in $L^\infty$.  Their limiting bound
will therefore be stated in $W^{2,\infty}$.

\medskip
\noindent\emph{Step 1: the density error.}
The stationary bias estimate \eqref{eq:stat-bias-linear} gives
$\rho_\infty^\tau\to\pi$ in $C^2$.  In combination with the
lemma, this yields, locally uniformly in time,
\[
  g_t^\tau:=\rho_t^\tau-\rho_\infty^\tau
  \longrightarrow g_t^0:=\rho_t^0-\pi
  \quad\text{in }C^1(\T^d).
\]
In particular, the bounds $c_*\le\rho_t^\tau\le C_*$ from
\eqref{eq:maximum-principle} pass to the limit.  Taking the limit in
the zeroth- and first-order parts of \eqref{eq:C2-relaxation} gives
\[
  c_*\le\rho_t^0\le C_*,
  \qquad
  \|g_t^0\|_{C^1}\le Ce^{-\gamma_2t},
  \qquad t\ge0.
\]
For the Hessian, fix any $t\ge0$.  The remaining part of
\eqref{eq:C2-relaxation} says that
\[
  \sup_{0<\tau\le\tau_0}
  \|\nabla^2g_t^\tau\|_{L^\infty}
  \le Ce^{-\gamma_2t}.
\]
Along any sequence $\tau_n\downarrow0$, a further subsequence
converges weak-* in $L^\infty(\T^d)$.  Strong $C^1$ convergence
identifies its limit in distributions as the weak Hessian of
$g_t^0$.  Weak-* lower semicontinuity therefore gives
\[
  \|\nabla^2g_t^0\|_{L^\infty}
  \le\liminf_{n\to\infty}
       \|\nabla^2g_t^{\tau_n}\|_{L^\infty}
  \le Ce^{-\gamma_2t}.
\]
Because $t$ was arbitrary, this bound and the $C^1$ bound
prove \eqref{eq:tau-zero-W2inf-relaxation} for every $t\ge0$.
The Hessian here is a weak derivative.  The argument does not
assert convergence in $C^2$.

\medskip
\noindent\emph{Step 2: the velocity gradient.}
At zero temperature the velocity in
\eqref{eq:free-energy-velocity} becomes
$u_t^0=\nabla K_1*g_t^0$.  To bound its gradient, use the
$C^1$ convergence of $g_t^\tau$ from Step~1 and the periodic
Schauder estimate of Lemma~\ref{lem:periodic-schauder}:
\[
  \nabla^2K_1*g_t^\tau
  \longrightarrow\nabla^2K_1*g_t^0
  \quad\text{in }L^\infty(\T^d)
  \qquad\text{for each }t\ge0.
\]
Indeed, $C^1$ convergence implies $C^{0,1/2}$ convergence,
which is the norm used by the Schauder estimate.  Passing to the
limit in \eqref{eq:coulomb-field-relaxation} now yields
\[
  \|\nabla u_t^0\|_{L^\infty}
  =\|\nabla^2K_1*g_t^0\|_{L^\infty}
  \le Ce^{-\gamma_2t},
\]
as claimed in \eqref{eq:tau-zero-velocity-gradient}.

The same convergence also transfers the weak relaxation estimate,
which is used later with the whole temperature range.  Since
$g_t^\tau\to g_t^0$ in $L^2$ and both have zero mean,
$\|h\|_{\dot H^{-1}}\le(2\pi)^{-1}\|h\|_{L^2}$ shows that
$g_t^\tau\to g_t^0$ in $\dot H^{-1}$.  Thus
\eqref{eq:hminus-relaxation} implies
\[
  \|\rho_t^0-\pi\|_{\dot H^{-1}}
  \le Ce^{-c_*t/4},
  \qquad t\ge0.
\]
\end{proof}

\section{Positive-temperature regularity}
\label{app:positive-temperature-wellposedness}

This appendix supplies the two positive-temperature inputs used in
Subsection~\ref{subsec:high-temperature-estimates}.
Lemma~\ref{lem:positive-temperature-wp} gives a global bound on four
derivatives of the mean-field solution when $\tau$ is bounded away
from zero.  Lemma~\ref{lem:appD-weak-strong-interpolation} combines
that bound with the $\dot H^{-1}$-decay of
Proposition~\ref{prop:hminus-exp} to control two derivatives in a
H\"older norm.  Together, they lead to the velocity estimate in
Proposition~\ref{prop:positive-temperature-halfline}.

\subsection{Global regularity on positive-temperature half-lines}
\label{subsec:appD-global-regularity}

This subsection proves Lemma~\ref{lem:positive-temperature-wp}.
The main point is the bound~\eqref{eq:positive-temperature-W4} for
all $\tau\ge\tau_{\mathrm L}>0$.  It supplies the strong norm used in
Subsection~\ref{subsec:high-temperature-estimates} to turn
$\dot H^{-1}$-decay into decay of the velocity gradient.
The proof first constructs a solution for a short time, then bounds its
first four derivatives on every time interval, and finally extends it
globally.  We use the contraction and continuation argument of
\cite[proof of Proposition~2.1]{ChodronRosenzweigSerfaty2025} and the
derivative estimates of
\cite[proof of Lemma~2.7]{ChodronRosenzweigSerfaty2025}.

The local construction uses the periodic heat semigroup
$e^{a t\Delta}$, where $a>0$ is its diffusion coefficient.
We record the bounds needed both for that construction and for the
later derivative estimates.

\begin{lemma}[Periodic heat-semigroup bounds]
\label{lem:appD-heat-smoothing}
For $a,t>0$, integers $k,m\ge0$, a bounded scalar function
$g$, and a vector field $G\in C^m(\T^d;\mathbb R^d)$,
\begin{equation}
  \|\nabla^k e^{at\Delta}g\|_{L^\infty}
  \le C_k(at)^{-k/2}\|g\|_{L^\infty},
  \qquad
  \|e^{at\Delta}\nabla\!\cdot G\|_{C^m}
  \le C_m(at)^{-1/2}\|G\|_{C^m}.
  \label{eq:appD-heat-smoothing}
\end{equation}
The constants depend only on $d,k,m$.
\end{lemma}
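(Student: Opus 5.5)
Both bounds follow from Fourier multiplier estimates on the periodic heat kernel, with constants independent of $a$ once we pass to the variable $s:=at$. Since $e^{at\Delta}=e^{s\Delta}$, it suffices to prove the estimates for $a=1$ and time $s>0$, with right-hand sides $C_k s^{-k/2}$ and $C_m s^{-1/2}$.

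\emph{First bound.} We write $e^{s\Delta}g=p_s*g$, where $p_s(x)=\sum_{n\in\mathbb Z^d}(4\pi s)^{-d/2}e^{-|x+n|^2/(4s)}$ is the periodized Gaussian. It is a nonnegative kernel of unit mass. Young's inequality gives $\|\nabla^k e^{s\Delta}g\|_{L^\infty}\le\|\nabla^k p_s\|_{L^1(\T^d)}\|g\|_{L^\infty}$. Periodization does not increase the $L^1$ norm: $\|\nabla^kp_s\|_{L^1(\T^d)}\le\|\nabla^k\Phi_s\|_{L^1(\R^d)}$, where $\Phi_s$ is the Euclidean heat kernel, because summing translates and integrating over a fundamental cell equals integrating over $\R^d$. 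Scaling $x=\sqrt s\,y$ then gives $\|\nabla^k\Phi_s\|_{L^1(\R^d)}=s^{-k/2}\|\nabla^k\Phi_1\|_{L^1(\R^d)}=:C_ks^{-k/2}$. This bound is valid for all $s>0$, large or small, so no separate large-time case is needed. That uniformity is the point to watch, since for $s\gtrsim1$ one might otherwise try to use spectral gap decay.

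\emph{Second bound.} Derivatives commute with both the semigroup and the divergence, so $\partial^\beta e^{s\Delta}\nabla\cdot G=\sum_{\ell}\partial_\ell e^{s\Delta}\partial^\beta G_\ell$ for $|\beta|\le m$. Applying the case $k=1$ of the first bound to each component $\partial^\beta G_\ell$ gives $\|\partial^\beta e^{s\Delta}\nabla\cdot G\|_{L^\infty}\le d\,C_1s^{-1/2}\|G\|_{C^m}$. Summing over $|\beta|\le m$ yields the $C^m$ estimate with a constant depending only on $d$ and $m$.

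The only potential obstacle is the uniformity of the periodic kernel bound for large times. The $L^1$ periodization argument above handles it directly, so I expect no real difficulty.
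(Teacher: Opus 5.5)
Your proposal is correct and follows the paper's own argument. Both bound the $L^1$ norm of derivatives of the periodized Gaussian by $C_k(at)^{-k/2}$ and apply Young's inequality; for the divergence estimate, both commute the $m$ derivatives past the semigroup and put the single divergence derivative on the kernel. Your extra details (the triangle inequality over lattice translates showing that periodization does not increase the $L^1$ norm, and Euclidean scaling valid for all $s>0$) just make explicit what the paper's short proof takes for granted.
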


\begin{proof}
The periodic heat kernel is the periodization of the Euclidean
Gaussian.  Its $k$-th derivative has $L^1$-norm at most
$C_k(at)^{-k/2}$.  Young's inequality gives the first bound.
For the second, commute the $m$ spatial derivatives with the heat
semigroup and put the single divergence derivative on its kernel.
The first-derivative $L^1$-bound gives the stated factor.
See also \cite[Section~3, Eq.~(3.3)]{ChodronRosenzweigSerfaty2025}.
\end{proof}

\begin{proof}[\normalfont\bfseries Proof of Lemma~\ref{lem:positive-temperature-wp}]
Fix $\tau_{\mathrm L}>0$.  Until the final uniqueness argument,
every constant denoted by $C_{\tau_{\mathrm L}}$ may depend on
$\tau_{\mathrm L}^{-1}$ and the data in
Assumption~\ref{assump:standing-data}, but is independent of
$t\ge0$ and $\tau\ge\tau_{\mathrm L}$.
We prove local existence, establish the uniform derivative bounds,
and use them to continue the solution for all time.

\medskip
\noindent\emph{Step 1: local existence and a density bound.}
Write the mean-field equation~\eqref{eq:intro-mean-field-pde} in its
mild form.  For a candidate density $v$, set
\[
  F(v):=v\,\nabla K_1*(v-\pi),
  \qquad
  \Phi_\tau(v)(t)
  :=e^{\tau t\Delta}\rho_0
    +\int_0^t e^{\tau(t-s)\Delta}\nabla\!\cdot F(v_s)\,ds.
\]
Here $F(v)$ is the flux in the equation and a fixed point of
$\Phi_\tau$ is a mild solution.  The Coulomb field
$\nabla K_1$ belongs to $L^1(\T^d)$, so differentiation can be
passed from the convolution kernel to $v-\pi$.  The product rule
therefore gives, for $m=0,4$,
\[
\begin{aligned}
  \|F(v)\|_{C^m}
  &\le C_m\|v\|_{C^m}
       \bigl(\|v\|_{C^m}+\|\pi\|_{C^m}\bigr),\\
  \|F(v)-F(w)\|_{C^m}
  &\le C_m\bigl(\|v\|_{C^m}+\|w\|_{C^m}
                   +\|\pi\|_{C^m}\bigr)\|v-w\|_{C^m}.
\end{aligned}
\]
For $m=0$, the same inequalities hold with the $L^\infty$-norm.
Since
$\int_0^T[\tau(T-s)]^{-1/2}ds=2\sqrt{T/\tau}$,
Lemma~\ref{lem:appD-heat-smoothing} makes $\Phi_\tau$ a contraction
on a suitable ball of
$C([0,T];C^4(\T^d))$ when $T$ is sufficiently small in terms
of $\tau_{\mathrm L}$ and $\|\rho_0\|_{C^4}+\|\pi\|_{C^4}$.
For example, take the ball of radius $R=2\|\rho_0\|_{C^4}$ and
choose $T$ so that
\[
  C\sqrt{T/\tau_{\mathrm L}}
      \bigl(2R+\|\pi\|_{C^4}\bigr)\le\tfrac12.
\]
The Banach fixed-point theorem gives a mild solution with four
continuous spatial derivatives on a
common initial interval $[0,h_0]$, for some $h_0>0$ independent
of $\tau\ge\tau_{\mathrm L}$, and
\begin{equation}
  \sup_{\tau\ge\tau_{\mathrm L}}\sup_{0\le t\le h_0}
  \|\rho_t^\tau\|_{C^4}
  \le C_{\tau_{\mathrm L}}.
  \label{eq:appD-local-C4}
\end{equation}
The equation and heat regularity make this solution classical.
The $L^\infty$ version of the difference estimate will give
uniqueness among bounded mild solutions in Step 3.

Integrating the mild equation over the torus shows that
$\int_{\T^d}\rho_t^\tau=1$ on the maximal classical interval
$[0,T_*^\tau)$.  Thus $\rho_t^\tau-\pi$ has mean zero, and
$-\Delta K_1*(\rho_t^\tau-\pi)=\rho_t^\tau-\pi$.
The local maximum-principle calculation used in
Proposition~\ref{prop:density-bounds} now applies on this interval:
\[
  \partial_t\rho_t^\tau
  =\tau\Delta\rho_t^\tau
   +\nabla K_1*(\rho_t^\tau-\pi)\cdot\nabla\rho_t^\tau
   -\rho_t^\tau(\rho_t^\tau-\pi).
\]
At a spatial extremum the transport term vanishes.
Lemma~\ref{lem:hamilton} and scalar comparison, starting from
$c_0\le\rho_0\le C_0$, give the bounds needed in Step 2:
\begin{equation}
  c_*\le\rho_t^\tau(x)\le C_*,
  \qquad 0\le t<T_*^\tau,\quad x\in\T^d.
  \label{eq:appD-density-on-maximal-interval}
\end{equation}

\medskip
\noindent\emph{Step 2: derivative bounds on the maximal interval.}
We show that, for $n=1,\ldots,4$,
\[
  \sup_{\tau\ge\tau_{\mathrm L}}
  \sup_{0\le t<T_*^\tau}
  \|\nabla^n\rho_t^\tau\|_{L^\infty}
  \le C_{n,\tau_{\mathrm L}}.
\]
The case $n=0$ is~\eqref{eq:appD-density-on-maximal-interval}.
Assume the claim through order $n-1$.  In $\nabla^nF(\rho)$,
the only factors containing an $n$-th derivative of $\rho$
are $\nabla^n\rho\,\nabla K_1*(\rho-\pi)$ and
$\rho\,\nabla K_1*\nabla^n\rho$.
Since $\nabla K_1\in L^1$, the induction hypothesis yields
\begin{equation}
  \|\nabla^nF(\rho_t^\tau)\|_{L^\infty}
  \le R_{n,\tau_{\mathrm L}}
       +C\|\nabla^n\rho_t^\tau\|_{L^\infty},
  \qquad 0\le t<T_*^\tau.
  \label{eq:appD-highest-derivative-linear}
\end{equation}
Here $R_{n,\tau_{\mathrm L}}$ depends on the already bounded
derivatives and on $\|\pi\|_{C^n}$.  The constant $C$ depends
only on the data and $n\le4$.

To use~\eqref{eq:appD-highest-derivative-linear} without assuming
an earlier bound for $\nabla^n\rho$, restart the mild formula at
$a=t-h$, where $t\ge h$ and $h\le h_0/2$ is fixed below.
For $0<u\le h$, define the weighted derivative maximum
\[
  Z_n(u):=\sup_{0<r\le u}
    (\tau r)^{n/2}
    \|\nabla^n\rho_{a+r}^\tau\|_{L^\infty}.
\]
We split the Duhamel integral at $u/2$, as shown in
Figure~\ref{fig:appD-duhamel-split}.  When $r\le u/2$, the heat
time $u-r$ is at least $u/2$, so we put all $n+1$
derivatives on the heat kernel.  When $r\ge u/2$, we put the
$n$ derivatives on the flux and leave only one heat-kernel
derivative, whose $(u-r)^{-1/2}$ singularity is integrable.

\begin{figure}[ht]
\centering
\begin{tikzpicture}[x=10cm,y=1cm]
  \draw[->] (0,0)--(1.08,0) node[right] {$r$};
  \draw (0,-0.08)--(0,0.08) node[above=2pt] {$0$};
  \draw (.5,-0.08)--(.5,0.08) node[above=2pt] {$u/2$};
  \draw (1,-0.08)--(1,0.08) node[above=2pt] {$u$};
  \draw[very thick] (0,-.12)--(.5,-.12);
  \draw[very thick] (.5,-.12)--(1,-.12);
  \node[align=center,anchor=north] at (.25,-.27)
    {$(n+1)$ derivatives on heat kernel\\$u-r\ge u/2$};
  \node[align=center,anchor=north] at (.75,-.27)
    {$n$ derivatives on flux\\$1$ derivative on heat kernel};
\end{tikzpicture}
\caption{The two parts of the Duhamel integral after restarting at
  time $a=t-h$.  The variable $r$ is the elapsed time after
  $a$, and $u\le h$ is the observation time.  The derivative
  placement shown on each interval keeps the heat-kernel singularity
  integrable in the estimate for $\nabla^n\rho_{a+u}^\tau$.}
\label{fig:appD-duhamel-split}
\end{figure}

Writing $F_{a+r}:=F(\rho_{a+r}^\tau)$, the split gives
\[
\begin{aligned}
  \nabla^n\rho_{a+u}^\tau
  ={}&\nabla^ne^{\tau u\Delta}\rho_a^\tau
    +\int_0^{u/2}\nabla^n\nabla\!\cdot
       e^{\tau(u-r)\Delta}F_{a+r}\,dr\\
  &+\int_{u/2}^{u}e^{\tau(u-r)\Delta}
          \nabla\!\cdot\nabla^nF_{a+r}\,dr.
\end{aligned}
\]
By Lemma~\ref{lem:appD-heat-smoothing} and
\eqref{eq:appD-density-on-maximal-interval}, the initial term,
after multiplication by $(\tau u)^{n/2}$, is at most $CC_*$.
The flux itself is bounded by $C$, so the early integral obeys
\[
  (\tau u)^{n/2}
  \int_0^{u/2}[\tau(u-r)]^{-(n+1)/2}
       \|F_{a+r}\|_{L^\infty}\,dr
  \le C\sqrt{u/\tau}.
\]
On the late interval, use~\eqref{eq:appD-highest-derivative-linear}
and $(\tau r)^{-n/2}\le2^{n/2}(\tau u)^{-n/2}$.  Its contribution
is at most
\[
  C R_{n,\tau_{\mathrm L}}(\tau u)^{n/2}\sqrt{u/\tau}
  +C2^{n/2}\sqrt{u/\tau}\,Z_n(u).
\]
The three estimates, applied at each time up to $u$, imply
\[
  Z_n(u)
  \le C\bigl(1+\sqrt{u/\tau_{\mathrm L}}\bigr)
     +C R_{n,\tau_{\mathrm L}}(\tau u)^{n/2}\sqrt{u/\tau}
     +C2^{n/2}\sqrt{u/\tau_{\mathrm L}}\,Z_n(u).
\]
Choose $h\in(0,h_0/2]$, independently of $t$ and
$\tau\ge\tau_{\mathrm L}$, so that
\begin{equation}
  C2^{n/2}\sqrt{h/\tau_{\mathrm L}}\le\tfrac12,
  \qquad n=1,\ldots,4.
  \label{eq:appD-window-choice}
\end{equation}
The last term can then be absorbed.  At $u=h$, divide by
$(\tau h)^{n/2}$ to obtain
\[
  \|\nabla^n\rho_t^\tau\|_{L^\infty}
  \le C(\tau_{\mathrm L}h)^{-n/2}
       \bigl(1+\sqrt{h/\tau_{\mathrm L}}\bigr)
      +C R_{n,\tau_{\mathrm L}}\sqrt{h/\tau_{\mathrm L}}.
\]
This controls $h\le t<T_*^\tau$.  The local estimate
\eqref{eq:appD-local-C4} controls $0\le t\le h$, completing
the induction through $n=4$.

\medskip
\noindent\emph{Step 3: continuation and uniqueness.}
Step 2 gives $\|\rho_{t_0}^\tau\|_{C^4}\le
C_{\tau_{\mathrm L}}$ for every $t_0<T_*^\tau$.
The contraction in Step 1 can therefore be restarted at any such
$t_0$ on an interval of length $T>0$ that depends on
$\tau_{\mathrm L}$ and this bound but not on $t_0$ or $\tau$:
\[
  C\sqrt{T/\tau_{\mathrm L}}
       \bigl(4C_{\tau_{\mathrm L}}+\|\pi\|_{C^4}\bigr)
  \le\tfrac12.
\]
If $T_*^\tau$ were finite, restarting sufficiently close to
$T_*^\tau$ would extend the original solution past that time,
contrary to maximality.  Thus $T_*^\tau=\infty$, and the bounds
from Steps 1 and 2 give~\eqref{eq:positive-temperature-W4}.

Finally, let $\rho_1$ and $\rho_2$ be bounded mild solutions
with the same initial density.  The mild formula and the integrable
heat-kernel bound~\eqref{eq:appD-heat-smoothing} give both solutions
the same $L^\infty$ trace at time zero and continuity in
$L^\infty$ on finite intervals.  Fix $T_0>0$ and write
$B:=\max_{j=1,2}\sup_{0\le t\le T_0}
\|\rho_{j,t}\|_{L^\infty}<\infty$.  The $L^\infty$ flux
difference estimate from Step 1 gives, on any interval
$[s,s+\delta]\subset[0,T_0]$ where they agree at $s$,
\[
  \|\rho_1-\rho_2\|_{C([s,s+\delta];L^\infty)}
  \le C\sqrt{\delta/\tau}
  \bigl(2B+\|\pi\|_{L^\infty}\bigr)
  \|\rho_1-\rho_2\|_{C([s,s+\delta];L^\infty)}.
\]
Choose $\delta$ to make the coefficient smaller than one and
reuse it at successive times up to $T_0$.  Since $T_0$ was
arbitrary, this proves uniqueness
among bounded mild solutions and completes the lemma.
\end{proof}

\subsection{A weak--strong interpolation estimate}
\label{subsec:appD-interpolation}

The global estimate \eqref{eq:positive-temperature-W4} bounds four
derivatives of the mean-field density, while
Proposition~\ref{prop:hminus-exp} gives decay in $\dot H^{-1}$.
To use these estimates in the velocity gradient, we need decay of two
derivatives in a H\"older norm.  The following lemma converts the weak
decay and the strong bound into that intermediate estimate.

\begin{lemma}[Periodic weak--strong interpolation]
\label{lem:appD-weak-strong-interpolation}
Let $0<\beta<1$ and
$\vartheta:=(2-\beta)/(5+d/2)$.  Every mean-zero function
$f\in\dot H^{-1}(\T^d)\cap W^{4,\infty}(\T^d)$ satisfies
\begin{equation}
  \|f\|_{C^{2,\beta}}
  \le C_{d,\beta}
     \|f\|_{\dot H^{-1}}^{\vartheta}
     \|f\|_{W^{4,\infty}}^{1-\vartheta}.
  \label{eq:appD-weak-strong-interpolation}
\end{equation}
\end{lemma}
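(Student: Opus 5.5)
We will prove \eqref{eq:appD-weak-strong-interpolation} by a Littlewood--Paley decomposition on $\T^d$. The low frequencies will be controlled by the weak norm $\dot H^{-1}$ and the high frequencies by the strong norm $W^{4,\infty}$. Since $f$ has mean zero, $\widehat f(0)=0$. Write $f=\sum_{j\ge0}\Delta_jf$, where $\Delta_j$ localizes to frequencies $|m|\sim2^j$ and $\Delta_0$ covers $1\le|m|\lesssim1$. We use the standard characterization of $C^{2,\beta}$ for $\beta\in(0,1)$ as the Besov space $B^{2+\beta}_{\infty,\infty}$, so it suffices to bound $\sup_j2^{j(2+\beta)}\|\Delta_jf\|_{L^\infty}$. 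Then $\|f\|_{C^{2,\beta}}\le C_{d,\beta}\sup_j2^{j(2+\beta)}\|\Delta_jf\|_{L^\infty}$. On the torus the equivalence holds because $C^{2,\beta}$ is a H\"older--Zygmund space with noninteger index.

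We estimate each block in two ways.

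\emph{Weak bound.} Bernstein's inequality on $\T^d$ gives $\|\Delta_jf\|_{L^\infty}\le C2^{jd/2}\|\Delta_jf\|_{L^2}$. Since $|m|\sim2^j$ on the block, $\|\Delta_jf\|_{L^2}\le C2^{j}\|\Delta_jf\|_{\dot H^{-1}}\le C2^j\|f\|_{\dot H^{-1}}$. Together these give $\|\Delta_jf\|_{L^\infty}\le C2^{j(1+d/2)}\|f\|_{\dot H^{-1}}$.

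\emph{Strong bound.} Write $\Delta_jf=2^{-4j}\tilde\Delta_j\nabla^4f$, where $\tilde\Delta_j$ is a modified block multiplier with uniformly $L^1$-bounded kernel. This gives $\|\Delta_jf\|_{L^\infty}\le C2^{-4j}\|f\|_{W^{4,\infty}}$ for $j\ge1$, and for $j=0$ the trivial bound $\|\Delta_0f\|_{L^\infty}\le C\|f\|_{L^\infty}$ suffices.

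Write $A:=\|f\|_{\dot H^{-1}}$ and $B:=\|f\|_{W^{4,\infty}}$. Combining the two bounds,
\[
2^{j(2+\beta)}\|\Delta_jf\|_{L^\infty}\le C\min\{2^{j(3+\beta+d/2)}A,\;2^{-j(2-\beta)}B\}.
\]
The minimum over the continuous variable $2^j$ is maximized where the two expressions cross, at $2^{j(5+d/2)}=B/A$. There the value is $A^{\vartheta}B^{1-\vartheta}$ with $\vartheta=(2-\beta)/(5+d/2)$, which gives the exponent in the statement. Since $\min\{x,y\}\le x^{\vartheta}y^{1-\vartheta}$, we can apply this termwise for every $j$ with exponents chosen so that the powers of $2^j$ cancel:
\[
\bigl(2^{j(3+\beta+d/2)}A\bigr)^{\vartheta}\bigl(2^{-j(2-\beta)}B\bigr)^{1-\vartheta}=A^{\vartheta}B^{1-\vartheta}.
\]
This cancellation is exactly the defining relation for $\vartheta$. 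Taking the supremum over $j$ then gives the claim. Because we only need a supremum and not a sum, no geometric series is needed. We also never need the ordering $A\le B$, since $\|f\|_{\dot H^{-1}}\lesssim\|f\|_{L^\infty}\le B$ holds anyway. The block $j=0$ is handled by the same minimum, since $2^{0}=1$.

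The main obstacle is justifying the Besov characterization $C^{2,\beta}\simeq B^{2+\beta}_{\infty,\infty}$ on $\T^d$ for noninteger $2+\beta$, together with the periodic Bernstein inequalities with constants uniform in $j$. We would obtain both by periodization. Each Littlewood--Paley multiplier on $\T^d$ is convolution with the periodization of a Schwartz kernel $2^{jd}\psi(2^j\cdot)$, which has $L^1(\T^d)$ norm bounded uniformly in $j$. Given these kernel bounds, the standard Euclidean proof of the H\"older--Zygmund characterization carries over verbatim. Alternatively, we would cite \cite[Section~3.5.4]{SchmeisserTriebel1987}, which the paper already uses for periodic function spaces. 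The paper quotes $\theta=3/(d+10)$, which is the special case $\beta=1/2$ of $\vartheta$.
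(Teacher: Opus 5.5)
Your proposal is correct and follows essentially the same route as the paper. Both arguments use a periodic Littlewood--Paley decomposition with the Bernstein bound $\|\Delta_jf\|_{L^\infty}\le C2^{j(1+d/2)}\|f\|_{\dot H^{-1}}$ and the bound $\|\Delta_jf\|_{L^\infty}\le C2^{-4j}\|f\|_{W^{4,\infty}}$ obtained by moving four derivatives onto $f$. Both then take a blockwise geometric mean with exponent $\vartheta$, so that the powers of $2^j$ cancel, and finish with the identification $B^{2+\beta}_{\infty,\infty}(\T^d)=C^{2,\beta}(\T^d)$ from Schmeisser--Triebel. The differences are cosmetic: you index the mean-zero low-frequency block as $j=0$ instead of $j=-1$, and you use the trivial $L^\infty$ bound on that block for the strong estimate.
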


\begin{proof}
We estimate the low and high Fourier frequencies separately.  Their
dyadic bounds have exponents $-1-d/2$ and $4$, respectively.
Interpolating those bounds produces $2+\beta$, the exponent
corresponding to $C^{2,\beta}$.

Let $(\Delta_j)_{j\ge-1}$ be a smooth periodic
Littlewood--Paley decomposition as in
\cite[Section~2, Eq.~(4)]{CuvillierFanelliSalguero2023}.
For $j\ge0$, the block $\Delta_j$ is supported at frequencies
$|k|\asymp2^j$.  The block $\Delta_{-1}$ contains finitely
many lattice frequencies.  Since $f$ has zero mean, the latter block
contains no zero mode, and hence
\[
  \|\Delta_{-1}f\|_{L^\infty}
  \le C\|f\|_{\dot H^{-1}}.
\]
On every block $j\ge0$, the periodic Bernstein inequality
\cite[Lemma~2.1]{CuvillierFanelliSalguero2023} and frequency
localization give
\begin{equation}
\begin{aligned}
  \|\Delta_jf\|_{L^\infty}
  &\le C2^{jd/2}\|\Delta_jf\|_{L^2}\\
  &\le C2^{j(1+d/2)}\|f\|_{\dot H^{-1}}.
\end{aligned}
\label{eq:appD-weak-frequency-bound}
\end{equation}
This bounds the low-regularity dyadic norm by
$\|f\|_{\dot H^{-1}}$.

For the high-regularity norm, we move four derivatives onto $f$.
On the frequency support of $\Delta_j$, the multiplier
$|k|^{-4}$ has size $O(2^{-4j})$.  Its smooth localized
kernel has a uniformly bounded $L^1$ norm after this factor
is removed.  Therefore
\begin{equation}
  \|\Delta_j f\|_{L^\infty}
  \le C2^{-4j}\|(-\Delta)^2f\|_{L^\infty}
  \le C2^{-4j}\|f\|_{W^{4,\infty}},
  \qquad j\ge0.
  \label{eq:appD-strong-frequency-bound}
\end{equation}
The finite low block satisfies the same estimate after enlarging
$C$.  The periodization or rescaling to the unit torus changes
only constants.

Write
$\|f\|_{B^s_{\infty,\infty}}
 :=\sup_{j\ge-1}2^{js}\|\Delta_jf\|_{L^\infty}$
for the periodic dyadic norm.  The preceding two estimates show
\[
  \|f\|_{B^{-1-d/2}_{\infty,\infty}}
    \le C\|f\|_{\dot H^{-1}},
  \qquad
  \|f\|_{B^4_{\infty,\infty}}
    \le C\|f\|_{W^{4,\infty}}.
\]
The choice of $\vartheta$ in the statement satisfies
\[
  2+\beta
  =\vartheta(-1-d/2)+(1-\vartheta)4.
\]
Raising the two dyadic bounds to these respective powers and
then taking the supremum gives
\begin{equation}
  \|f\|_{B^{2+\beta}_{\infty,\infty}}
  \le C\|f\|_{\dot H^{-1}}^{\vartheta}
         \|f\|_{W^{4,\infty}}^{1-\vartheta}.
  \label{eq:appD-interpolated-dyadic-bound}
\end{equation}
For the noninteger exponent $2+\beta$, the periodic
identification
$B^{2+\beta}_{\infty,\infty}(\T^d)=C^{2,\beta}(\T^d)$
\cite[Section~3.5.4, Eq.~(15)]{SchmeisserTriebel1987}
turns this into \eqref{eq:appD-weak-strong-interpolation}.
\end{proof}

\section{Distance estimates from the modulated energy}
\label{app:distance-estimates}

This section proves the spherical-smearing estimate used in
Lemma~\ref{lem:distance-coercivity}.  Smearing gives the empirical
measure finite Coulomb energy.  We compare that energy with the
diagonal-removed energy \(F_N\), and then use the comparison to bound
the Wasserstein and negative Sobolev distances.  The result is stated
separately so that the energy calculation can be used without repeating
the distance argument.

\begin{lemma}[Energy after spherical smearing]
\label{lem:app-distance-smearing}
Let \(N\ge2\), \(X_N=(x_1,\ldots,x_N)\in\mathcal X_N\), and let
\(\rho\) be a probability density with
\(\|\rho\|_{L^\infty}\le C_*\).  Write \(\delta_x^{(r)}\) for the
uniform probability measure on the sphere of radius \(r\) centered at
\(x\) in \(\T^d\).  Set
\[
  \eta:=\frac14\bigl(N\|\rho\|_{L^\infty}\bigr)^{-1/d},
  \qquad
  \mu_{X_N,\eta}:=\frac1N\sum_{i=1}^N\delta_{x_i}^{(\eta)}.
\]
With \(F_N\) and \(\err\) defined in
Section~\ref{sec:modulated-identity}, one has
\(\mathcal W_1(\mu_{X_N},\mu_{X_N,\eta})\le\eta\).
There is also a constant depending only on \(d\) and \(C_*\) such
that
\begin{equation}
  \begin{aligned}
    \|\mu_{X_N,\eta}-\rho\|_{\dot H^{-1}}^2
    &\le F_N(X_N,\rho)+C\err\\
    &\le C\left(\frac12F_N(X_N,\rho)+\err\right).
  \end{aligned}
  \label{eq:app-distance-smearing}
\end{equation}
\end{lemma}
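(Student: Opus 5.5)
The Wasserstein claim is immediate: couple each atom $\delta_{x_i}$ with the uniform measure on its sphere of radius $\eta$. Every transported unit of mass moves exactly distance $\eta$, so $\mathcal W_1(\mu_{X_N},\mu_{X_N,\eta})\le\eta$.

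For \eqref{eq:app-distance-smearing} I expand the smeared energy. Because $\delta_{x_i}^{(\eta)}$ has finite Coulomb energy,
\[
  \|\mu_{X_N,\eta}-\rho\|_{\dot H^{-1}}^2
  =\frac1{N^2}\sum_{i,j}\iint K_1\,d\delta_{x_i}^{(\eta)}d\delta_{x_j}^{(\eta)}
   -\frac2N\sum_i\int K_1*\rho\,d\delta_{x_i}^{(\eta)}
   +\iint K_1\,d\rho^{\otimes2}.
\]
I then compare this term by term with the expansion \eqref{eq:deterministic-energy-expansion} of $F_N$. Write $K_1=\mathsf g+R$ near the origin, where $\mathsf g$ is the whole-space Coulomb kernel and $R$ is smooth with $\Delta R=1$. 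There are three groups of terms.

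\emph{Diagonal terms.} Each self-energy satisfies $\iint K_1\,d\delta^{(\eta)}d\delta^{(\eta)}=\mathsf g(\eta)+O(1)$. This is $\log(1/\eta)+O(1)$ for $d=2$ and $C\eta^{2-d}$ for $d\ge3$. After multiplying by $N\cdot N^{-2}$ and using $\eta\asymp(NC_*)^{-1/d}$, the sum is at most $C\err$, because the definition \eqref{eq:finite-particle-error} contains exactly $\log(NC_*)/N$ and $C_*^{(d-2)/d}N^{-2/d}$.

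\emph{Off-diagonal terms.} By superharmonicity of $\mathsf g$ (Newton's theorem: the spherical average of $\mathsf g$ is at most the point value), $\iint\mathsf g\,d\delta_{x_i}^{(\eta)}d\delta_{x_j}^{(\eta)}\le\mathsf g(x_i-x_j)$. The smooth remainder $R$ satisfies $\Delta R=1$, so its double spherical average differs from $R(x_i-x_j)$ by $O(\eta^2)$. Summing over the $N(N-1)$ ordered pairs with weight $N^{-2}$ gives an error $O(\eta^2)\le C\err$. Using the periodic kernel directly, $\Delta K_1=1$ away from $0$ together with the sub-mean-value property handles this cleanly.

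\emph{Cross terms with $\rho$.} Here $\int K_1*\rho\,d\delta_{x_i}^{(\eta)}-K_1*\rho(x_i)$ is the spherical mean minus the center value of a function $\psi$ with $-\Delta\psi=\rho-1$. By Green's representation on the ball, this difference is at most $C\|\rho\|_{L^\infty}\eta^2$ in absolute value, which is again $O(\err)$ after using $C_*\eta^2\lesssim C_*^{1-2/d}N^{-2/d}$.

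Collecting the three groups gives $\|\mu_{X_N,\eta}-\rho\|_{\dot H^{-1}}^2\le F_N(X_N,\rho)+C\err$. The second inequality in \eqref{eq:app-distance-smearing} then follows from the lower bound \eqref{eq:modulated-energy-lower-bound}: it gives $\frac12F_N+\err\ge\frac12\err$, hence $F_N+C\err\le2(\frac12F_N+\err)+C\err\le C(\frac12F_N+\err)$.

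The main obstacle is doing the off-diagonal comparison on the torus. The spheres may overlap when $|x_i-x_j|<2\eta$, and the periodic kernel is not exactly radial. Newton's theorem for $\mathsf g$ remains valid for overlapping spheres, since it only uses that $\mathsf g$ is superharmonic. The correction from $R$ and from periodization is handled by smoothness, which costs $O(\eta^2)$ per pair. If this bookkeeping becomes delicate, I would instead cite the smearing estimate underlying \cite[Remark~6.5]{ChodronRosenzweigSerfaty2025}.
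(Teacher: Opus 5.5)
Your proposal is correct and follows the paper's proof essentially step for step. The pieces match: the sphere coupling for $\mathcal W_1$; the three-way split into self-interaction (Newton's theorem gives $O(\eta^{2-d}/N)$, resp.\ $O(\log(1/\eta)/N)$, hence $\le C\err$); off-diagonal sphere interactions; cross terms controlled through $\Delta(K_1*\rho)=1-\rho$; and the final use of \eqref{eq:modulated-energy-lower-bound}. For the off-diagonal terms the paper does not use the local splitting $K_1=\mathsf g+R$, which by itself does not cover pairs whose spheres leave the chart where that splitting holds. It uses exactly the direct periodic route you mention at the end: $\frac{d}{dr}A_rK_1(z)\le r/d$ from $\Delta K_1=1-\delta_0$, giving $A_\eta K_1\le K_1+\eta^2/(2d)$ in each variable.
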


\begin{proof}
We first couple each particle to its sphere.  We then compare the
interactions between distinct spheres with those between the original
particles.  The remaining self-interaction and density corrections
are estimated at the chosen radius \(\eta\).

\medskip
\noindent\emph{Step 1: the transport cost of smearing.}
Let \(d_{\T^d}\) be the geodesic distance on the unit torus.  The
probability measure
\[
  \gamma:=\frac1N\sum_{i=1}^N
              \delta_{x_i}\otimes\delta_{x_i}^{(\eta)}
\]
has marginals \(\mu_{X_N}\) and \(\mu_{X_N,\eta}\).  Since
\(\eta<1/4\), each transported pair is at torus distance \(\eta\).
The definition of \(\mathcal W_1\) therefore gives
\begin{equation}
  \mathcal W_1(\mu_{X_N},\mu_{X_N,\eta})
  \le\int_{\T^d\times\T^d}d_{\T^d}(x,y)\,d\gamma(x,y)
  =\eta.
  \label{eq:app-distance-transport}
\end{equation}
Figure~\ref{fig:app-distance-smearing} shows the transport in this
coupling and the two interactions in the energy calculation.

\begin{figure}[ht]
\centering
\begin{tikzpicture}[scale=0.9,>=stealth]
  \coordinate (a) at (0,0);
  \coordinate (b) at (2.35,0);
  \draw (a) circle (1.25);
  \draw (b) circle (1.25);
  \fill (a) circle (1.5pt);
  \fill (b) circle (1.5pt);
  \coordinate (p) at (0.625,1.083);
  \coordinate (q) at (1.725,1.083);
  \fill (p) circle (1.5pt);
  \fill (q) circle (1.5pt);
  \draw[->] (a) -- (p) node[midway,left] {$\eta$};
  \draw[dashed] (p) -- (q);
  \node[below] at (a) {$x_i$};
  \node[below] at (b) {$x_j$};
  \node[above] at (p) {$y_i$};
  \node[above] at (q) {$y_j$};
  \node at (1.15,-1.68) {$y_i\in\partial B_\eta(x_i),\quad
                       y_j\in\partial B_\eta(x_j)$};
\end{tikzpicture}
\caption{A planar cross-section of the spherical smearing.  The
point \(x_i\) is coupled to a point \(y_i\) on its radius-\(\eta\)
sphere, while the interaction of distinct particles is averaged over
\(y_i\) and \(y_j\).  The spheres may overlap.  The proof uses
spherical averages to compare their interaction and treats the
interaction of a sphere with itself separately.}
\label{fig:app-distance-smearing}
\end{figure}

\medskip
\noindent\emph{Step 2: interactions between distinct spheres.}
For a function \(h\), denote its spherical average about \(z\) by
\[
  A_rh(z):=\int_{\T^d}h(z-y)\,d\delta_0^{(r)}(y).
\]
The relation \(\Delta K_1=1-\delta_0\) and the divergence theorem
give, for \(z\ne0\) and almost every \(0<r<\eta\),
\[
  \frac{d}{dr}A_rK_1(z)
  =\frac{|B(0,r)|-\mathbf1_{\{z\in B(0,r)\}}}
          {|\partial B(0,r)|}
  \le\frac rd.
\]
Here the balls are read in the local Euclidean chart, which is
unambiguous because \(r<1/4\).  Since \(A_rK_1(z)\to K_1(z)\) as
\(r\downarrow0\), integration yields
\begin{equation}
  A_\eta K_1(z)\le K_1(z)+\frac{\eta^2}{2d},
  \qquad z\ne0.
  \label{eq:app-distance-spherical-average}
\end{equation}
Apply this inequality in each variable of the interaction between
the \(i\)-th and \(j\)-th spheres.  When one sphere passes through
the other center, the exceptional coincidence point still has zero
spherical measure.  Thus, for \(i\ne j\),
\begin{equation}
  \iint K_1(x-y)\,
    d\delta_{x_i}^{(\eta)}(x)d\delta_{x_j}^{(\eta)}(y)
  \le K_1(x_i-x_j)+\frac{\eta^2}{d}.
  \label{eq:app-distance-two-spheres}
\end{equation}
Thus the off-diagonal interaction increases by at most
\((N-1)\eta^2/(dN)\) after division by \(N^2\).

\medskip
\noindent\emph{Step 3: interaction with the density.}
Put \(V:=K_1*\rho\), so that \(\Delta V=1-\rho\).  Applying the
same spherical-average identity to \(V\) gives
\[
  \left|A_\eta V(x_i)-V(x_i)\right|
  \le\int_0^\eta\frac{|B(0,r)|}{|\partial B(0,r)|}
                   \|1-\rho\|_{L^\infty}\,dr
  =\frac{\eta^2}{2d}\|1-\rho\|_{L^\infty}.
\]
One may first derive this identity for smooth \(\rho\) and then
approximate a bounded density.  Indeed, \(K_1\in L^1(\T^d)\)
implies that \(V\) is continuous, and its mollifications converge
uniformly.  The resulting bound controls the change in the
particle-density interaction by
\begin{equation}
  \left|\frac2N\sum_{i=1}^N
       \bigl(A_\eta V(x_i)-V(x_i)\bigr)\right|
  \le\frac{\eta^2}{d}\|1-\rho\|_{L^\infty}.
  \label{eq:app-distance-density-correction}
\end{equation}

The interaction of a sphere with itself has not been included in
\(F_N\).  By translation invariance, all these diagonal terms have
the same value.  Denote their combined contribution by
\[
  D_\eta:=\frac1N\iint K_1(x-y)\,
                   d\delta_0^{(\eta)}(x)d\delta_0^{(\eta)}(y).
\]
The Coulomb singularity is integrable against the product of two
spherical surface measures.  In local coordinates on a sphere, the
corresponding radial integral near the diagonal is bounded by
\[
  \int_0^\eta |\log r|\,dr<\infty\quad(d=2),
  \qquad
  \int_0^\eta r^{2-d}r^{d-2}\,dr<\infty\quad(d\ge3).
\]
We may therefore expand the finite \(\dot H^{-1}\) energy of
\(\mu_{X_N,\eta}-\rho\).  Applying
\eqref{eq:app-distance-two-spheres} and
\eqref{eq:app-distance-density-correction} now gives
\begin{equation}
  \|\mu_{X_N,\eta}-\rho\|_{\dot H^{-1}}^2
  \le F_N(X_N,\rho)+D_\eta
       +\frac{N-1}{dN}\eta^2
       +\frac{\eta^2}{d}\|1-\rho\|_{L^\infty}.
  \label{eq:app-distance-energy-expansion}
\end{equation}
The next step bounds the one term in this expansion that comes from
particle self-interaction.

\medskip
\noindent\emph{Step 4: self-interaction and the choice of radius.}
The Euclidean Coulomb kernel and its normalizing constant are
\[
  K_E(r):=
  \begin{cases}
    \log(1/r),&d=2,\\
    r^{2-d},&d\ge3,
  \end{cases}
  \qquad
  c_d^{\mathrm R}:=
  \begin{cases}
    2\pi,&d=2,\\
    \displaystyle\frac{4\pi^{d/2}}{\Gamma((d-2)/2)},&d\ge3.
  \end{cases}
\]
Both \(K_1(z)\) and \(K_E(|z|)/c_d^{\mathrm R}\) have the same
point-mass singularity.  Their difference is smooth near the origin
\cite[Eq.~(3.1)]{ChodronRosenzweigSerfaty2025}.  That result covers
\(|z|<1/4\).  On the compact annulus
\(1/4\le|z|\le1/2\), both kernels are smooth and their difference
is bounded.  Since \(2\eta<1/2\), this comparison applies
to every pair of points on one radius-\(\eta\) sphere.

The Euclidean potential of a uniformly charged sphere is constant
inside the sphere and equals \(K_E(\eta)\) on its boundary.  Hence
\begin{equation}
  D_\eta
  \le\frac{K_E(\eta)}{c_d^{\mathrm R}N}+\frac CN
  \le\frac CN
  \begin{cases}
    1+\log(1/\eta),&d=2,\\
    1+\eta^{2-d},&d\ge3.
  \end{cases}
  \label{eq:app-distance-self-energy}
\end{equation}
In the second inequality, the case distinction means the
corresponding expression in each dimension.

Finally, put \(M:=\|\rho\|_{L^\infty}\).  As \(\rho\) is a
probability density, \(1\le M\le C_*\) and
\(\|1-\rho\|_{L^\infty}\le1+M\le2M\).  The definition of
\(\eta\) then gives
\[
  \eta^2\bigl(1+\|1-\rho\|_{L^\infty}\bigr)
  \le C\|\rho\|_{L^\infty}^{(d-2)/d}N^{-2/d}.
\]
The definition \eqref{eq:finite-particle-error} bounds
\(N^{-2/d}\) by \(C\err\), and also bounds
\((\log N)/N\) by \(C\err\) when \(d=2\).  Hence the
right-hand side and \(D_\eta\) are at most \(C\err\).
Substitution in
\eqref{eq:app-distance-energy-expansion} proves the first energy
bound in \eqref{eq:app-distance-smearing}.  For its shifted form, set
\(S_N:=\tfrac12F_N(X_N,\rho)+\err\).  The lower bound
\eqref{eq:modulated-energy-lower-bound} gives
\(S_N\ge\err/2\).  Therefore
\[
  F_N(X_N,\rho)+C\err
  =2S_N+(C-2)\err
  \le C' S_N,
\]
after increasing the constant if needed.  This completes the proof.
\end{proof}

\begin{proof}[Proof of Lemma~\ref{lem:distance-coercivity}]
The estimate just proved supplies the energy comparison
\eqref{eq:smeared-energy-coercivity} used in the main text.  For
completeness, we show how it gives the two distances.  For any
\(1\)-Lipschitz function \(\phi\), the signed measure
\(\mu_{X_N,\eta}-\rho\) has zero mass, and
\(\|\nabla\phi\|_{L^2}\le1\).  Kantorovich--Rubinstein duality and
\(\dot H^{-1}\)--\(\dot H^1\) duality consequently give
\[
  \mathcal W_1(\mu_{X_N,\eta},\rho)
  \le\|\mu_{X_N,\eta}-\rho\|_{\dot H^{-1}}.
\]
The triangle inequality and \eqref{eq:app-distance-smearing} imply
\[
  \mathcal W_1(\mu_{X_N},\rho)^2
  \le 2\eta^2+2\|\mu_{X_N,\eta}-\rho\|_{\dot H^{-1}}^2
  \le C\left(\frac12F_N(X_N,\rho)+\err\right),
\]
where \(\eta^2\le C\err\le2C S_N\) follows from
\eqref{eq:finite-particle-error} and
\eqref{eq:modulated-energy-lower-bound}.  This is
\eqref{eq:wasserstein-coercivity}.  For \(\zeta>d/2+1\), the
Sobolev-to-Wasserstein comparison
\eqref{eq:negative-sobolev-wasserstein-comparison} gives
\[
  \|\mu_{X_N}-\rho\|_{\dot H^{-\zeta}}
  \le C_\zeta\mathcal W_1(\mu_{X_N},\rho).
\]
Squaring this inequality proves
\eqref{eq:negative-sobolev-coercivity}.
\end{proof}

\end{document}